\newlength{\XHeight}
\newlength{\XWidth}
\setlist[itemize,1]{leftmargin=\dimexpr 26pt-.1in}
\newtheorem{PARA}{}[section]
\newtheorem{theorem}[PARA]{Theorem}
\newtheorem{corollary}[PARA]{Corollary}
\newtheorem{lemma}[PARA]{Lemma}
\newtheorem{proposition}[PARA]{Proposition}
\newtheorem{definition}[PARA]{Definition}
\newtheorem{definition-proposition}[PARA]{Definition-Proposition}
\theoremstyle{definition}
\newtheorem{remark}[PARA]{Remark}
\theoremstyle{theorem}
\newtheorem{example}[PARA]{Example}
\newcommand{\para}{\begin{PARA}\rm}
\newcommand{\arap}{\end{PARA}\rm}
\newcommand{\dfn}{\begin{definition}\rm}
\newcommand{\nfd}{\end{definition}\rm}
\newcommand{\rmk}{\begin{remark}\rm}
\newcommand{\kmr}{\end{remark}\rm}
\newcommand{\xmpl}{\begin{example}\rm}
\newcommand{\lpmx}{\end{example}\rm}
\newcommand{\cH}{\mathcal{H}}
\newcommand{\cI}{\mathcal{I}}
\newcommand{\cL}{\mathcal{L}}
\newcommand{\cM}{\mathcal{M}}
\newcommand{\cN}{\mathcal{N}}
\newcommand\co{
  \mathchoice
    {{\scriptstyle\mathcal{O}}}% \displaystyle
    {{\scriptstyle\mathcal{O}}}% \textstyle
    {{\scriptscriptstyle\mathcal{O}}}% \scriptstyle
    {\scalebox{.7}{$\scriptscriptstyle\mathcal{O}$}}%\scriptscriptstyle
  }
\newcommand{\cP}{\mathcal{P}}
\newcommand{\C}{{\mathbb{C}}}
\renewcommand{\H}{{\mathbb{H}}}
\newcommand{\N}{{\mathbb{N}}}
\newcommand{\Q}{{\mathbb{Q}}}
\newcommand{\R}{{\mathbb{R}}}
\newcommand{\Z}{{\mathbb{Z}}}
\newcommand{\coker}{\mathrm{ coker }}  % cokernel
\newcommand{\Cr}{\mathrm{ Cr}\, }  % Critical value
\newcommand{\im}{\mathrm{im}\,}        % image
\newcommand{\ind}{\mathrm{ind}}
\newcommand{\ev}{\mathrm{ev}}
\newcommand{\Hom}{\mathrm{Hom}}
\newcommand{\bk}{\mathbf{k}}
\newcommand{\Top}{\mathrm{Top}}
\DeclareMathOperator{\hatotimes}{{\hat{\otimes}}}
\newcommand{\eps}{{\varepsilon}}
\newcommand{\om}{{\omega}}
\newcommand{\Om}{{\Omega}}
\newcommand{\CZ}{\mathrm{CZ}}
\def\NABLA#1{{\mathop{\nabla\kern-.5ex\lower1ex\hbox{$#1$}}}}
\def\Nabla#1{\nabla\kern-.5ex{}_{#1}}
\def\Tabla#1{\Tilde\nabla\kern-.5ex{}_{#1}}
\renewcommand{\Tilde}{\widetilde}
\newcommand{\p}{{\partial}}
\newenvironment{enum}
{\begin{enumerate}}
{\end{enumerate}}
\newcommand{\la}{\langle}
\newcommand{\ra}{\rangle}
\newcommand{\wh}{\widehat}
\newcommand{\ol}{\overline}
\newcommand{\Crit}{{\rm Crit}}
\newcommand{\wt}{\widetilde}
\newcommand{\into}{\hookrightarrow}
\newcommand*\Hamlambdamuaxis[2]{\ensuremath{
       \settowidth{\XWidth}{$\lambda$}
       \addtolength{\XWidth}{.6em}
       \setlength{\XHeight}{\XWidth}
       \tikz[baseline]{
         \draw[thick] (-\XWidth,2\XHeight) -- (0,2\XHeight) -- (2\XWidth,0) -- (3\XWidth,\XHeight); 
         \draw[arrows={[sep] - Stealth[length=3pt]}] (-\XWidth,-.5\XHeight) -- (-\XWidth,3\XHeight);
         \draw[arrows={[sep] - Stealth[length=3pt]}] (-1.5\XWidth,0) -- (3.5\XWidth,0);
         \node at (0.6\XWidth,0.9\XHeight)[scale=0.5] {#1};
         \node at (2.9\XWidth,.5\XHeight)[scale=0.5] {#2};
         \node at (2\XWidth,2\XHeight)[scale=0.75] {$H$};
        % \draw[-Stealth] (-\XWidth,0) -- (4\XWidth,0);
       }
     }
}
\newcommand*\HamLlambdaaxis[1]{\ensuremath{
       \settowidth{\XWidth}{$\lambda$}
       \addtolength{\XWidth}{.6em}
       \setlength{\XHeight}{\XWidth}
       \tikz[baseline]{
         \draw[thick] (-\XWidth,2\XHeight) -- (0,2\XHeight) -- (3\XWidth,-\XHeight);
         \draw[arrows={[sep] - Stealth[length=3pt]}] (-\XWidth,-.5\XHeight) -- (-\XWidth,3\XHeight);
         \draw[arrows={[sep] - Stealth[length=3pt]}] (-1.5\XWidth,0) -- (3.5\XWidth,0);
         \node at (0.6\XWidth,0.9\XHeight)[scale=0.5] {#1};
         \node at (2\XWidth,2\XHeight)[scale=0.75] {$L$};
       }
     }
}
\newcommand*\HamZaxis[3]{\ensuremath{
       \settowidth{\XWidth}{$\lambda$}
       \addtolength{\XWidth}{.6em}
       \setlength{\XHeight}{\XWidth}
       \tikz[baseline]{
         \draw[thick] (-\XWidth,2\XHeight) -- (0,2\XHeight) -- (1.5\XWidth,0.5\XHeight) -- (1.75\XWidth,-0.5\XHeight) -- (2\XWidth,0) -- (3\XWidth,-\XHeight);
         \draw[arrows={[sep] - Stealth[length=3pt]}] (-\XWidth,-.5\XHeight) -- (-\XWidth,3\XHeight);
         \draw[arrows={[sep] - Stealth[length=3pt]}] (-1.5\XWidth,0) -- (3.5\XWidth,0);
         \node at (0.6\XWidth,0.9\XHeight)[scale=0.5] {#1};
         \node at (1.95\XWidth,0.35\XHeight)[scale=0.5] {#2};
         \node at (2\XWidth,-0.5\XHeight)[scale=0.5] {#3};
         \node at (2\XWidth,2\XHeight)[scale=0.75] {$Z$};
	}
     }
}
\newcommand*\HamKaxis[3]{\ensuremath{
       \settowidth{\XWidth}{$\lambda$}
       \addtolength{\XWidth}{.6em}
       \setlength{\XHeight}{\XWidth}
       \tikz[baseline]{
         \draw[thick] (-\XWidth,2\XHeight) -- (0,2\XHeight) -- (1.5\XWidth,0.5\XHeight) -- (1.75\XWidth,-0.5\XHeight) 
         %-- (2\XWidth,0) -- (3\XWidth,2\XHeight);
         -- (3.25\XWidth,1.5\XHeight);
         \draw[arrows={[sep] - Stealth[length=3pt]}] (-\XWidth,-.5\XHeight) -- (-\XWidth,3\XHeight);
         \draw[arrows={[sep] - Stealth[length=3pt]}] (-1.5\XWidth,0) -- (3.5\XWidth,0);
         \node at (0.6\XWidth,0.9\XHeight)[scale=0.5] {#1};
         \node at (1.95\XWidth,0.35\XHeight)[scale=0.5] {#2};
         \node at (2.05\XWidth,-0.5\XHeight)[scale=0.5] {#3};
         \node at (2\XWidth,2\XHeight)[scale=0.75] {$K$};
       }
     }
}
\definecolor{darkgreen}{rgb}{0.12, 0.3, 0.17}
\definecolor{burntorange}{rgb}{0.8, 0.33, 0.0}
\definecolor{chromeyellow}{rgb}{1.0, 0.65, 0.0}
\definecolor{darkorange}{rgb}{1.0, 0.55, 0.0}
\definecolor{flame}{rgb}{0.89, 0.35, 0.13}
\newcommand{\boldmu}{{\boldsymbol{\mu}}}
\newcommand{\boldlambda}{{\boldsymbol{\lambda}}}
\newcommand{\boldeta}{{\boldsymbol{\eta}}}
\newcommand{\boldeps}{{\boldsymbol{\eps}}}
\newcommand{\boldc}{{\boldsymbol{c}}}
\newcommand{\boldp}{{\boldsymbol{p}}}
\newcommand{\boldzeta}{{\boldsymbol{\zeta}}}
\newcommand{\boldi}{{\boldsymbol{i}}}
\begin{document}

\title[Poincar\'e duality for loop spaces]{Poincar\'e duality for loop spaces}
\author{Kai Cieliebak}
\address{Universit\"at Augsburg \newline Universit\"atsstrasse 14, D-86159 Augsburg, Germany}
\email{kai.cieliebak@math.uni-augsburg.de}
\author{Nancy Hingston}
\address{Department of Mathematics and Statistics, College of New Jersey \newline Ewing, New
Jersey 08628, USA\newline
{\it Current address:} 4 Jeffrey Ln., Princeton Jct., NJ 08550
}
\email{hingston@tcnj.edu} 
\author{Alexandru Oancea}
\address{Universit\'e de Strasbourg \newline 
Institut de recherche math\'ematique avanc\'ee (IRMA) \newline
7 Rue Descartes, 67084 Strasbourg Cedex, France}
\email{oancea@unistra.fr}
\date{\today}

%%%%%%%%%%%%%%%%%%%%%%%%%%%%%%%%%%%%%%%%%%%%%%
%%%%%%%%%%%%%%%%%%%%%%%%%%%%%%%%%%%%%%%%%%%%%%
%%%%%%%%%%%%%%%% Abstract %%%%%%%%%%%%%%%%%%%%
%%%%%%%%%%%%%%%%%%%%%%%%%%%%%%%%%%%%%%%%%%%%%%
%%%%%%%%%%%%%%%%%%%%%%%%%%%%%%%%%%%%%%%%%%%%%%

\begin{abstract}
We show that Rabinowitz Floer homology and cohomology carry the structure of a graded Frobenius algebra for both closed and open strings. We prove a Poincar\'e duality theorem between homology and cohomology that preserves this structure. This lifts to a duality theorem between graded open-closed TQFTs. We use in a systematic way the formalism of Tate vector spaces.

Specializing to the case of cotangent bundles, we define Rabinowitz loop homology and cohomology and explain from a unified perspective pairs of dual results that have been observed over the years in the context of the search for closed geodesics. These concern critical levels, relations to the based loop space, manifolds all of whose geodesics are closed, Bott index iteration, and level-potency. Moreover, the graded Frobenius algebra structure gives meaning and proof to a relation conjectured by Sullivan between the loop product and coproduct.
\end{abstract}

\maketitle

%{\tableofcontents}

%%%%%%%%%%%%%%%%%%%%%%%%%%%%%%%%%%%%%%%%%%%%%%%%%
%%%%%%%%%%%%%%%%%%%%%%%%%%%%%%%%%%%%%%%%%%%%%%%%%
%%%%%%%%%%%%%%%%% Introduction %%%%%%%%%%%%%%%%%%
%%%%%%%%%%%%%%%%%%%%%%%%%%%%%%%%%%%%%%%%%%%%%%%%%
%%%%%%%%%%%%%%%%%%%%%%%%%%%%%%%%%%%%%%%%%%%%%%%%%

\parskip=0pt
\hspace{2cm}{\it One Ring to rule them all, One Ring to find them,}

\hspace{2cm}{\it One Ring to bring them all and in the darkness bind them}
\parskip=4pt
\bigskip

%%%%%%%%%%%%%%%%%%%%%%%%%%%%%%%%%%%%%%%%%%%%%%%%%%%%%%%%%%%%%%%%%%%%%%
\section{Introduction}\label{sec:introduction}
%%%%%%%%%%%%%%%%%%%%%%%%%%%%%%%%%%%%%%%%%%%%%%%%%%%%%%%%%%%%%%%%%%%%%%

It has been understood since the ground-breaking work of Floer, Hofer, Viterbo and others in the 1990s that the symplectic geometry of a cotangent bundle is intimately related to the topology of the loop space of the underlying manifold.
Building on classical results on loop spaces, this relation has provided an important source of examples in symplectic geometry and has inspired many developments on the symplectic side, see e.g.~\cite{Viterbo99,Fukaya-Lag,Cieliebak-Latschev,AS,AS2,Hingston_Conley_conjecture,Ginzburg_Conley_conjecture,Abouzaid2011a,Abouzaid2012a,Abouzaid-Kragh,Shelukhin19}.  
This paper tells a story about how some open questions concerning loop spaces lead to new results in symplectic geometry, which in turn serve to gain new insights on loop space topology. 

{\bf Puzzles in string topology. }
Loop spaces have played a prominent role in algebraic topology since
its early days. For example, the fact that a based loop space is an
H-space makes its homology a commutative Hopf algebra, a rigid
algebraic structure which can be completely classified under some
finiteness assumptions~\cite{Pontrjagin,Hopf-Gruppen}.
By contrast, the algebraic structure on the homology of a {\em free}
loop space still remains a mystery (cf.~\cite[\S2.4]{Sullivan-present-state}).
In the case of an $n$-dimensional manifold $M$, Chas and Sullivan in their seminal 1999
article~\cite{CS} describe a collection of operations on the
homology of its free loop space $\Lambda=\Lambda M$ which
became collectively known under the name {\em string topology}.  
On the $S^1$-equivariant homology $H_*^{S^1}(\Lambda,\Lambda_0)$ relative to the subspace $\Lambda_0\subset \Lambda$ of constant loops, the string bracket and cobracket induce the structure of an involutive Lie bialgebra~\cite{CS04}. The corresponding operations on non-equivariant homology are:
\begin{itemize}
\item the {\em loop product} $\mu=\bullet$ on $H_*\Lambda$ defined in~\cite{CS}, see also~\cite{Chataur,Laudenbach-CS,Irie,Hingston-Wahl-prod-coprod}. It is graded commutative, associative and unital of degree $-n$;
\item the {\em loop coproduct} $\lambda$ on $H_*(\Lambda,\Lambda_0)$ defined in~\cite{Sullivan-open-closed,Hingston-Wahl-prod-coprod}, which recently came to prominence in relation with simple homotopy invariance~\cite{Naef,Naef-Safronov,Kenigsberg-Porcelli,Wahl}, or dually the {\em cohomology product} $\oast$ on $H^*(\Lambda,\Lambda_0)$ studied in~\cite{Goresky-Hingston}. This is graded commutative and associative of degree $n-1$.\footnote{
The loop product and cohomology product are also known as the {\em Chas--Sullivan product} and {\em Goresky--Hingston product}, respectively. }
\end{itemize}
In subsequent studies of these operations
the following puzzles arose:

(a) Note that $\mu$ is defined on all of $\Lambda$, and $\lambda$ only relative to the constant loops. Can we remove this asymmetry and define both operations on a common domain?

(b) Heuristically, it looks as if ``the cohomology product is the loop product on Morse cochains on loop space''.
%the space of constant speed loops''.
Can this statement be given precise mathematical sense?

(c) The cohomology product has the flavour of a ``secondary product'', e.g.~its degree is shifted by $1$ with respect to the loop product. Can $\oast$ indeed be constructed as a secondary product derived from $\bullet$?

(d) Assuming a positive answer to (a), what is the algebraic structure defined by $\mu$ and $\lambda$? In particular, do they satisfy the relation
\begin{equation}\label{eq:Sullivan}
   \lambda\mu = (1\otimes\mu)(\lambda\otimes 1) + (\mu\otimes 1)(1\otimes\lambda)
\end{equation}
stated by Sullivan~\cite{Sullivan-open-closed}? Note that this is {\em not} a TQFT type relation, but it rather resembles Drinfeld compatibility for Lie bialgebras. 

(e) Many results concerning $\bullet$ and $\oast$ arise in dual pairs. For example, the {\em critical levels} $\Cr(X)$ for $X\in H_*\Lambda$ and ${\rm cr}(x)$ for $x\in H^*(\Lambda,\Lambda_0)$ defined in~\cite{Goresky-Hingston} satisfy the dual inequalities
$$
   \Cr(X\bullet Y)\le \Cr(X) + \Cr(Y),\qquad
   {\rm cr}(x\oast y )\ge {\rm cr}(x) + {\rm cr}(y). 
$$
See \S\ref{sec:applications} for more details on this and other pairs of dual results concerning 
relations to the based loop space, 
manifolds all of whose geodesics are closed,
Bott index iteration, 
and level-potency.
Can each such dual pair be derived from one common result, with a unified proof, via some kind of ''Poincar\'e duality''?

It turns out that all the puzzles get resolved at once by introducing a new player into string topology.
This new player arises from symplectic geometry and our main results hold in the more general setting of Rabinowitz Floer homology, as we describe next. 

{\bf Poincar\'e duality for Rabinowitz Floer homology. }
Let $V$ be a {\em Liouville domain} of dimension $2n$, i.e., a compact connected manifold $V$ equipped with a $1$-form $\lambda$ such that $d\lambda$ is symplectic and $\lambda|_{\p V}$ is a positive contact form  (see~\cite{CO} for more background).
We assume that $2c_1(V)=0$ and the square of the canonical line bundle of $TV$ is trivialized, so that Floer chain complexes are canonically $\Z$-graded by the Conley-Zehnder indices of periodic orbits 
(see Appendix~\ref{sec:grading}).\footnote{
Everything remains true without this assumption if we replace $\Z$-gradings by $\Z/2\Z$-gradings.
}
We denote by\footnote{By default we use coefficients in a commutative unital ring $R$.}
$$
   RFH_*(\p V) = SH_*(\p V) %\qquad\text{and}\qquad RFH^*(\p V) = SH^*(\p V)
$$
the {\em Rabinowitz Floer homology}~\cite{Cieliebak-Frauenfelder}, or in the terminology of~\cite{CO} the (V-shaped) symplectic homology, of $\p V$ with respect to the filling $V$. It was proved in~\cite{CO} that the pair-of-pants product $\boldmu$ (of degree $-n$) makes Rabinowitz Floer homology a $\Z$-graded unital associative and commutative algebra. 
Moreover, it is related to ordinary symplectic homology and cohomology by a long exact sequence 
\begin{equation}\label{eq:les-intro}
\xymatrix
@C=18pt
{
    \ar[r]& SH^{-*}(V) \ar[r]^-\eps & SH_*(V) \ar[r]^-\iota & RFH_*(V) \ar[r]^-\pi & SH^{1-*}(V)\ar[r]& 
}
\end{equation}
in which the map $\iota$ intertwines the pair-of-pants products.
Our first result is

\begin{theorem}[Poincar\'e duality for Rabinowitz Floer homology]\label{thm:PD-RFH-intro} 
{\em Rabinowitz Floer cohomology} $RFH^*(\p V)$ carries a canonical secondary pair-of-pants product $\boldlambda^\vee \tau$ 
of degree $n-1$,
\footnote{Given a graded module $A$ the twist $\tau:A\otimes A\to A\otimes A$ acts by $\tau(a\otimes b)=(-1)^{|a||b|}b\otimes a$. We write the product in the form $\boldlambda^\vee\tau$ in order to emphasize the operation $\boldlambda^\vee$, which is the dual of a coproduct.
The appearance of the twist $\tau$ reflects the fact that, when represented as $Y$-graphs with ends on a circle as in~\cite[\S 7]{CO-cones}, the inputs of a product are numbered in the clockwise order, whereas the outputs of a coproduct are numbered in the counterclockwise order. In Theorem~\ref{thm:PD-RFH-intro} the cocommutativity of $\boldlambda$ implies the simple relation $\boldlambda^\vee\tau=-\boldlambda^\vee$, which is no longer true with Lagrangian boundary conditions as in Theorem~\ref{thm:TQFT-RFH-intro}.}
and there is a canonical Poincar\'e duality isomorphism of unital algebras
$$
   PD: (RFH_*(\p V),\boldmu)\stackrel\simeq\longrightarrow (RFH^{1-*}(\p V),\boldlambda^\vee \tau).  
$$
\end{theorem}

This isomorphism should be understood as generalizing Poincar\'e duality for the (canonically oriented) closed manifold $\p V$. The latter is realized in the zero energy sector, cf.~\S\ref{sec:length-filtration}. 

{\bf Graded Frobenius algebra structure. } 
It turns out that the algebraic structure is in fact much richer. To state this, we introduce the degree shifted versions
$$
   RF\H_*(\p V) = RFH_{*+n}(\p V),\qquad RF\H^*(\p V) = RFH^{*+n}(\p V).
$$
We denote the coproducts dual to $\boldmu,\boldlambda^\vee$ 
%\marginpar{\tiny alex Added footnote}
by $\boldmu^\vee,\boldlambda$.\footnote{Whenever we deal with coproducts and/or Tate vector spaces, we use coefficients in a field $\bk$. See also the discussion on the K\"unneth isomorphism at the beginning of~\S\ref{sec:PD-coproducts}.} 

\begin{theorem}[Graded Frobenius algebra structure on Rabinowitz Floer homology]\label{thm:Frob-RFH-intro}
Degree shifted Rabinowitz Floer homology and cohomology with field coefficients are commutative cocommutative graded Frobenius algebras, and Poincar\'e duality yields an isomorphism of such (bi)algebras
$$
   PD: (RF\H_*(\p V),\boldmu,\boldlambda)
   \stackrel\simeq\longrightarrow 
   (RF{\H}^{1-2n-*}(\p V),\boldlambda^\vee \tau, \tau\boldmu^\vee).
$$
\end{theorem}
   
Here a {\em graded Frobenius algebra} is a graded Tate vector space $A$ endowed with an associative degree zero product $\boldmu$ with unit $\boldeta$, and a coassociative coproduct $\boldlambda$ with counit $\boldeps$, such that the pairing $\boldp=(-1)^{|\boldlambda|}\boldeps\mu$ is symmetric and induces an isomorphism $\vec\boldp:A\to A^\vee$.
{\em Tate vector spaces} are a suitable class of topological vector spaces which accommodate the fact that, in contrast to ordinary Frobenius algebras, Rabinowitz Floer homology is often infinite dimensional.
See~\S\S\ref{sec:Tate}--\ref{sec:coFrob} for the precise definitions and further discussion. The Poincar\'e duality isomorphism is actually given by $-\vec\boldp$, so in this context it appears naturally as part of the algebraic structure. 

The appearance of such a structure on Rabinowitz Floer homology came as a surprise to us. For example, as pointed out above, Sullivan's relation~\eqref{eq:Sullivan} is {\em not} a relation in a graded Frobenius algebra. We will discuss below how it emanates (in modified form) from Theorem~\ref{thm:Frob-RFH-intro} when passing to reduced loop homology. 

%Rabinowitz Floer homology is often infinite dimensional, while it is well-known that Frobenius algebras are finite dimensional. This apparent contradiction is resolved by the fact that, in our definition, the coproduct takes values in a completed tensor product. See Remark~\ref{rem:completed-tensor-product}. 

{\bf Graded open-closed TQFT structure. }
The preceding results have analogues with Lagrangian boundary conditions. For this, let $(V,\lambda)$ be a $2n$-dimensional Liouville domain and $L\subset V$ an {\em exact Lagrangian submanifold with Legendrian boundary}, i.e., a compact $n$-dimensional submanifold $L\subset V$ which is conical (with respect to the Liouville vector field) near its boundary $\p L\subset\p V$ such that $\lambda|_L$ is exact and $\lambda|_{\p L}=0$ (see again~\cite{CO}).
We assume that $L$ is oriented, Spin and satisfies $2c_1(V,L)=0$, and we endow it with a Spin structure and a grading in the sense of Seidel~\cite{Seidel-graded} with respect to the fixed grading on $V$. 
%$2c_1(V,L)=0$, and a trivialization is chosen so that 
This ensures that Floer moduli spaces carry a coherent orientation, and chain complexes relative to $L$ are canonically $\Z$-graded by the Conley-Zehnder indices of Hamiltonian chords. We refer to Seidel~\cite[11i-j]{Seidel-book} and Chen-Zinger~\cite{Chen-Zinger} for a discussion of orientations of moduli spaces of Floer discs in the context of Spin and Pin structures, and to Abouzaid~\cite{Abouzaid2012a} for a discussion of how orientations carry over to the situation where Floer homology is twisted by a local system that is transgressed from the base (``background class''), e.g. for cotangent bundles the local system $\sigma$ determined by the second Stiefel-Whitney class, which is used in this paper and discussed later on. See also Appendix~\ref{sec:grading} and footnote 2.

We denote by
$$
   RFH_*(\p L) = SH_*(\p L),\qquad RF\H_*(\p L) = RFH_{*+n}(\p L) %\qquad\text{and}\qquad RFH^*(\p L) = SH^*(\p L)
$$
the {\em Rabinowitz Floer homology}, or (V-shaped) symplectic homology, of $\p L$ with respect to the filling $L$ defined in~\cite{CO}, and its degree shifted version.  
%The usual counts of discs with one positive interior puncture and one negative boundary puncture, respectively one positive boundary puncture and one negative interior puncture (see~\S\ref{sec:TQFT}), give rise to {\em closed-open and open-closed maps}
%$$
%   \boldzeta : RF\H_*(\p V)\to RF\H_*(\p L), \qquad \boldzeta^*:RF\H_*(\p L)\to RF\H_{*-n}(\p V).
%$$

%\marginpar{\tiny alex Moved this paragraph before the statement of the theorem, cf. request by referee}
Our next theorem states the existence of a graded open-closed TQFT structure on Rabinowitz Floer homology. A {\em graded open-closed TQFT} consists of two graded Frobenius algebras $(C,\boldmu_C,\boldlambda_C)$ (the closed sector) and $(A,\boldmu_A,\boldlambda_A)$ (the open sector) together with a closed-open map $\boldzeta:C\to A$ and an open-closed map $\boldzeta^*:A\to C$ satisfying suitable relations stated in~\S\ref{ss:graded-open-closed-TQFT}. If the dimensions are finite and all operations have even degrees this coincides with the description of an open-closed TQFT by Lauda and Pfeiffer~\cite{Lauda-Pfeiffer}. Rabinowitz Floer homology differs from this classical setup in two distinct ways: it is typically infinite dimensional, and it is a graded theory with product and coproduct of opposite parities in the closed sector. To deal with the first aspect we introduce the formalism of Tate vector spaces, and to deal with the second aspect we keep track of nontrivial signs in the relations.

\begin{theorem}[Graded open-closed TQFT structure on Rabinowitz Floer homology]\label{thm:TQFT-RFH-intro}
The graded Frobenius algebra structure on $RF\H_*(\p V)$ with field coefficients canonically extends to a graded open-closed TQFT structure on the pair 
$$
(RF\H_*(\p V),RF\H_*(\p L)),
$$ 
with coproducts of degrees $|\boldlambda_C|=1-2n$ and $|\boldlambda_A|=1-n$. The Poincar\'e duality isomorphisms intertwine this structure with the corresponding structure on cohomology $$
(RF{\H}^{1-2n-*}(\p V),RF{\H}^{1-n-*}(\p L)).
$$ 
\end{theorem}

%Moved before the statement of the theorem, cf. request by referee. 
%Here a {\em graded open-closed TQFT} consists of two graded Frobenius algebras $(C,\boldmu_C,\boldlambda_C)$ (the closed sector) and $(A,\boldmu_A,\boldlambda_A)$ (the open sector) together with a closed-open map $\boldzeta:C\to A$ and an open-closed map $\boldzeta^*:A\to C$ satisfying suitable relations stated in~\S\ref{ss:graded-open-closed-TQFT}. If the dimensions are finite and all operations have even degrees this coincides with the description of an open-closed TQFT by Lauda and Pfeiffer~\cite{Lauda-Pfeiffer}. On Rabinowitz loop homology, by contrast, the product and coproduct always have opposite parity, so it requires the graded setting which introduces nontrivial signs in the relations.

To our knowledge, the notion of a graded open-closed TQFT has not been considered before in the literature. This is somewhat surprising because, as explained in~\cite{CHO-algebra}, this structure appears naturally on the cohomology rings $(H^*P,H^*Q)$ of a closed oriented manifold $P$ with closed oriented submanifold $Q\subset P$. The graded open-closed TQFT in Theorem~\ref{thm:TQFT-RFH-intro} extends the one on the manifold pair $\p L\subset\p V$ under mild topological assumptions  discussed in~\cite{CHO-algebra}. 

TQFT-like structures have appeared earlier in symplectic topology:\break M.~Schwarz has constructed a Frobenius algebra structure on the Floer homology of a closed symplectic manifold~\cite{Schwarz95}, whereas P.~Seidel~\cite{Seidel07} and A.~Ritter~\cite{Ritter} have constructed a noncompact open-closed TQFT structure on the symplectic homology and wrapped Floer homology of a Liouville domain and an exact Lagrangian submanifold with Legendrian boundary. Note that the first case is finite dimensional, whereas the infinite dimensional second case carries only part of the TQFT operations. Theorem~\ref{thm:TQFT-RFH-intro} provides a large class of infinite dimensional examples carrying full (graded) open-closed TQFT structures. We refer to the paper of Moore-Segal~\cite{Moore-Segal} for a comprehensive list of references on open-closed TQFT structures outside of symplectic topology. In particular, we consider it interesting to study our structure in relation with the work of  Costello~\cite{Costello}, Godin~\cite{Godin} and Wahl-Westerland~\cite{Wahl-Westerland}. 

{\bf Application to loop spaces. }
Let now $M$ be an $n$-dimensional closed connected manifold $M$ and $q\in M$ a basepoint.
Its unit disk cotangent bundle $D^*M\subset T^*M$ (with respect to some Riemannian metric) with its canonical Liouville form is a Liouville domain with boundary the unit sphere cotangent bundle $S^*M$,
and the fibre $D_q^*M\subset D^*M$ is an exact Lagrangian submanifold with Legendrian boundary $S_q^*M$.
Their symplectic homologies are related to the homologies of the free loop space $\Lambda=\Lambda M$ and the based loop space $\Om=\Om_qM$ by {\em Viterbo's isomorphisms}
$$
   SH_*(D^*M)\cong H_*\Lambda,\qquad SH_{*+n}(D_q^*M)\cong H_*\Om
$$
(see Theorems~\ref{thm:Viterbo} and~\ref{thm:Viterbo-Lag} for the precise statements and references). 
%\marginpar{\tiny alex I significantly enhanced the footnote and took it into the main text, to answer the referee's specific request}
Here the loop space homologies need to be twisted by a suitable local system, see Appendix~\ref{ss:local-systems}. Specifically, as in~\S\ref{ss:local-systems}, let $\sigma$ be the local system on $\Lambda$ determined by the second Stiefel-Whitney class, $\co$ the orientation local system on $M$, and $\tilde\co=\ev_0^*\co$ its pull-back to $\Lambda$ under the evaluation map. Then the Viterbo isomorphisms write $SH_*(D^*M;\sigma)\simeq H_*(\Lambda;\tilde\co)$ and $SH_{*+n}(D_q^*M;\sigma|_\Omega)\simeq H_*\Omega$. In the rest of this introduction we omit the local systems from the notation and we systematically compute Hamiltonian Floer homology groups in $D^*M$ with local coefficients in $\sigma$, and Lagrangian Floer homology groups for $D_q^*M$ with local coefficients in $\sigma|_\Omega$. Accordingly, our notation $H_*\Lambda$ stands for $H_*(\Lambda;\tilde\co)$. Note that $\tilde\co$ restricts to the trivial local system on $\Omega$, so that $H_*\Omega$ stands for homology with constant coefficients.

We define the {\em Rabinowitz loop homology} and its degree shifted version by
$$
   \wh H_*\Lambda := RFH_*(S^*M) = SH_*(S^*M),\qquad \wh\H_*\Lambda = \wh H_{*+n}\Lambda,
$$
and the {\em based Rabinowitz loop homology} by\footnote{We do \emph{not} use the notation $\wh \H_*\Om$ in order to stress that $\wh H_*\Om$ contains the Pontryagin ring $H_*\Om$, with its product of degree zero. See Theorem~\ref{thm:splitting-intro} below.}
$$
   \wh H_*\Om := RFH_{*+n}(S_q^*M) = SH_{*+n}(S_q^*M). 
$$
Now the preceding theorems specialize to results on (based) Rabinowitz loop homology. For the precise statement, note that the inclusion $i:\Om\into\Lambda$ induces pushforward and shriek maps on homology
$$
   i_*:H_*\Om\to H_*\Lambda,\qquad i_!:H_{*+n}\Lambda\to H_*\Om.
$$

\begin{theorem}[TQFT structure and Poincar\'e duality for loop spaces]\label{thm:TQFT-loop-intro}
The pair $(\wh\H_*\Lambda,\wh H_*\Om)$ with field coefficients carries a canonical graded open-closed TQFT structure whose closed-open map $\boldi_!:\wh\H_*\Lambda\to\wh H_*\Om$ extends the shriek map $i_!$, and whose open-closed map $\boldi_*:\wh H_{*+n}\Om\to\wh\H_*\Lambda$ extends the pushforward map $i_*$. The Poincar\'e duality isomorphisms intertwine this structure with the corresponding structure on Rabinowitz loop cohomology $(\wh\H^{1-2n-*}\Lambda,\wh H^{1-n-*}\Om)$. 
\end{theorem}

A partial version of the open-closed graded Frobenius algebra structure on Rabinowitz loop homology has been defined from an algebraic perspective at chain level by Rivera and Wang~\cite{Rivera-Wang} using Tate-Hochschild cohomology of a differential graded symmetric Frobenius algebra. We expect a full equivalence between the geometric and the algebraic perspective. 
See~\cite{Naef-Rivera-Wahl,Naef-Willwacher} for more details in the algebraic direction, and~\cite{Legout} for relevant work at chain level on the symplectic side. 

{\bf Reduced loop homology and splitting. }
We restrict to coefficients in a field $\bk$ and assume that the manifold $M$ is $\bk$-orientable. 
In view of Viterbo's isomorphism, the long exact sequence~\eqref{eq:les-intro} becomes
\begin{equation}\label{eq:les-loop-intro}
\xymatrix
@C=30pt
{
    \ar[r]& H^{-*}\Lambda \ar[r]^-\eps & H_*\Lambda \ar[r]^-\iota & \wh H_*\Lambda \ar[r]^-\pi & H^{1-*}\Lambda\ar[r]& 
}
\end{equation}
It was shown in~\cite{Cieliebak-Frauenfelder-Oancea} that the map $\eps$ lives only in degree zero, where it is given by multiplication with the Euler characteristic $\chi(M)$ of $M$. Therefore, the  
{\em reduced loop homology and cohomology groups}
$$
   \ol H_*\Lambda := \coker\,\eps,\qquad \ol H^*\Lambda := \ker\eps
$$
differ from $H_*\Lambda$ and $H^*\Lambda$ only by $\chi(M)$ times the point class (and not at all if $\chi(M)=0$).
Since $\iota$ is a ring map, the loop product $\mu=\bullet$ descends to a product $\bar\mu$ on $\ol H_*\Lambda$.
Replacing loop homology and cohomology by their reduced counterpart turns the long exact sequence~\eqref{eq:les-loop-intro} into the short exact sequence
\begin{equation}\label{eq:ses-intro}
\xymatrix
@C=30pt
{
   0 \ar[r] & \ol{H}_*\Lambda \ar[r]^-{\iota} & \wh H_*\Lambda \ar[r]^-{\pi} & \ol{H}^{1-*}\Lambda \ar[r] & 0.
}
\end{equation}
In the based loop case, the corresponding map $\eps$ vanishes for degree reasons and we obtain a short exact sequence
\begin{equation}\label{eq:ses-based-intro}
\xymatrix
@C=30pt
{
   0 \ar[r] & H_*\Om \ar[r]^-\iota & \wh{H}_*\Om \ar[r]^-\pi & H^{1-n-*}\Om \ar[r] & 0. 
}
\end{equation}

To formulate the next theorem, we denote the Pontrjagin product (of degree $0$) on $H_*\Om$ by $\bullet_\Om$, and the based cohomology product (of degree $n-1$) on $H^*(\Om,q)$ (relative to the constant loop $q$ at the basepoint) by $\oast_\Om$. 

\begin{theorem}[{Splitting~\cite{CO-cones,CHO-MorseFloerGH,CHO-reducedSH}}]\label{thm:splitting-intro}
(a) For $\bk$-orientable $M$ of dimension $n\ge 3$ the short exact sequence~\eqref{eq:ses-intro} admits a splitting (which is canonical if $H_1(M)=0$)
\begin{equation}\label{eq:splitting}
   \wh{H}_*\Lambda = \ol{H}_*\Lambda\oplus \ol{H}^{1-*}\Lambda
\end{equation}
such that the product on $\wh{H}_*\Lambda$ restricts to the loop product $\bar\mu$ on the subring $\ol{H}_*\Lambda$, and to an extension $\bar\oast$ of the cohomology product on the subring (not containing the unit) $\ol{H}^{1-*}\Lambda$. 

(b) The short exact sequence~\eqref{eq:ses-based-intro} admits a splitting (which is canonical for $n\geq 2$) 
\begin{equation}\label{eq:splitting-based}
   \wh{H}_*\Om = H_*\Om\oplus H^{1-n-*}\Om
\end{equation}
such that the product on $\wh{H}_*\Om$ restricts to the Pontrjagin product on the subring $H_*\Om$, 
%and to the based cohomology product on the subring (not containing the unit) $H^{1-n-*}\Om$. 
and to an extension $\bar\oast_\Om$ of the based cohomology product on the subring (not containing the unit) $H^{1-n-*}\Om$.

(c) The cohomologies $\wh{H}^*\Lambda$ and $\wh{H}^*\Om$ admit similar splittings such that the Poincar\'e duality isomorphisms from Theorem~\ref{thm:TQFT-loop-intro} simply flip the two factors in the splittings.
\end{theorem}

The statement is seen to hold in the case $n=1$ by direct inspection (see~\S\ref{sec:n=1}). We also expect item~(a) to hold for $n=2$, but this case is not covered by the method from~\cite{CHO-reducedSH}.

On the level of modules, these splittings recover the computations of $\wh{H}_*\Lambda$ in~\cite{Cieliebak-Frauenfelder-Oancea} and of $\wh{H}_*\Om$ in~\cite{Merry}. One can heuristically understand these splittings by thinking of the generators of the Floer complex that computes $\wh{\H}_*\Lambda$, which are of three kinds: positively parametrized closed Reeb orbits on $S^*M$, constants in $S^*M$, and negatively parametrized closed Reeb orbits on $S^*M$. These in turn respectively correspond to nonconstant closed geodesics, to one homological copy of $M$ and one cohomological copy of $M$ bound together as a copy of $S^*M$, and to nonconstant closed geodesics ``traversed backwards'', which we also interpret as cohomological data. A more formal perspective is that of the so-called \emph{Drinfel'd double construction}: in the closed string case e.g. it is proved in~\cite{CHO-reducedSH} that, if $\dim M\ge 3$ and $H^{n-1}(M)=0$, reduced loop homology $B=\ol{\H}_*\Lambda$ carries the structure of a commutative cocommutative  infinitesimal bialgebra. Identifying reduced loop cohomology with $B^\vee$, the bialgebra structure on $B$ induces canonically a Frobenius algebra structure on the direct sum $B\oplus B^\vee$, and this structure is isomorphic to $\wh{\H}_*\Lambda$~\cite{CO-cones,Latschev-Oancea}. It is worth noting that the algebra structure on $B\oplus B^\vee$ has mixed terms that multiply nontrivially the two factors (these mixed terms are already visible in the explicit computations from~\S\ref{sec:spheres}). The Splitting Theorem therefore expresses the fact that Rabinowitz loop homology is the Drinfel'd double of reduced loop homology. A similar statement holds for based loop homology. 

The extended product $\bar\oast$ on $\ol{H}^*\Lambda$ can have nontrivial contributions involving classes of constant loops. This happens, for example, for the loop spaces of odd-dimensional spheres discussed in~\S\ref{sec:spheres}. 

{\bf Puzzles resolved. }
Now we can resolve the puzzles above. 

Puzzle (a) is resolved in two ways. The first one is given by Theorem~\ref{thm:splitting-intro}: the loop product $\mu=\bullet$ descends and the loop coproduct $\lambda$ extends to reduced loop homology $\ol H_*\Lambda$. Here $\ol H_*\Lambda$ is the unique space with this property: we need to mod out at least $\chi(M)$ times the point class for the coproduct to extend, and we cannot mod out more for the product still to descend. A drawback of this solution is the non-canonicity of the extension $\bar\lambda$. A more satisfactory solution is given by Theorem~\ref{thm:Frob-RFH-intro} (applied to $S^*M$), which provides {\em canonical} extensions of $\mu$ and $\lambda$ to $\wh H_*\Lambda$. 

Puzzle (b) is resolved by Theorem~\ref{thm:splitting-intro}: the loop product has a canonical extension from $\ol H_*\Lambda$ (Morse homology on $\Lambda$) to a product on $\wh{H}_*\Lambda=\ol H_*\Lambda\oplus\ol H^{1-*}\Lambda$ whose restriction to the second summand $\ol H^{1-*}\Lambda$ (Morse cohomology of $\Lambda$) is the extended cohomology product. 

Puzzle (c) is resolved by the proof of Theorem~\ref{thm:PD-RFH-intro}, which constructs the extended cohomology product $\boldlambda^\vee$ as a secondary product derived from a vanishing primary product.  

Puzzle (d) proved the most tricky one.
%In many examples, the extension $\bar\lambda$ of the loop coproduct to reduced loop homology is canonical and the pair $(\bar\mu,\bar\lambda)$ satisfies Sullivan's relation (see~\S\ref{sec:spheres} for spheres of odd dimension $\geq 3$, and~\cite{CHO-reducedSH} for more general sufficient conditions for canonicity).
If $H^{n-1}(M)=0$, then the extension $\bar\lambda$ of the loop coproduct to reduced loop homology is canonical and the pair $(\bar\mu,\bar\lambda)$ satisfies Sullivan's relation (see~\cite{CHO-reducedSH}).
In general (e.g.~for $M=S^1$ in~\S\ref{sec:spheres}), however, the extensions are non-canonical and Sullivan's relation involves an extra term arising from the unit $\bar\eta$, 
\begin{equation}\label{eq:gen-Sullivan}
\bar\lambda\bar\mu = (1\otimes\bar\mu)(\bar\lambda\otimes 1) + (\bar\mu\otimes 1)(1\otimes\bar\lambda) - (\bar\mu\otimes \bar\mu)(1\otimes \bar\lambda\bar\eta\otimes 1).
\end{equation}
This failure of Sullivan's relation
%turned out to be a feature, not a bug. It
led to the discovery of the graded Frobenius algebra structure on Rabinowitz Floer 
homology (Theorem~\ref{thm:Frob-RFH-intro}), from which the generalized Sullivan relation~\eqref{eq:gen-Sullivan}
derives algebraically. So, contrarily to its appearance, Sullivan's relation (in its generalized form) {\em does} arise from a TQFT after all!

%It is precisely the extra term $\boldc=\boldlambda\boldeta$ entering~\eqref{eq:gen-Sullivan} that gives rise to the Poincar\'e duality isomorphism in Theorem~\ref{thm:PD-intro}, intertwining the extended products $\wh\bullet=\boldmu$ and $\wh\oast=\boldlambda^\vee$ and thus resolving puzzle (c)! 

Puzzle (e) is resolved as follows.  
According to Theorem~\ref{thm:PD-RFH-intro} (applied to $S^*M$) the loop product $\bullet$ and
the cohomology product $\oast$ 
have natural extensions to products $\wh\bullet$ and $\wh\oast$ on
Rabinowitz loop homology $\wh{H}_*\Lambda$ and its dual $\wh{H}^*\Lambda$,
respectively, and these extensions are related by the Poincar\'e
duality isomorphism.
Using this, we extend in \S\ref{sec:applications} each pair of
results for $\bullet$ and $\oast$ to a pair of results for
$\wh\bullet$ and $\wh\oast$ which is related via Poincar\'e duality. 
In particular, the result for $\wh\bullet$ implies the classical results
for $\bullet$ and $\oast$. While the latter had topological proofs,
the result for $\wh\bullet$ will be proved in each case by {\em
  symplectic} methods.

{\bf Structure of the paper and relation to other papers. }
This is the ``master ring'' of a series on Poincar\'e duality for loop spaces.
Here we introduce Rabinowitz loop homology, establish its basic properties (Poincar\'e duality, graded Frobenius algebra structure, open-closed TQFT structure) in the context of Liouville domains, and discuss its implications for the study of loop spaces. 
It is related to the other papers~\cite{CHO-index,CHO-MorseFloerGH,CHOS-Cross,CHO-algebra,CO-cones,CO-Tate,CHO-reducedSH} as follows. 

Theorems~\ref{thm:PD-RFH-intro},~\ref{thm:Frob-RFH-intro} and~\ref{thm:TQFT-RFH-intro} on Rabinowitz Floer homology are proved in~\S\ref{sec:PD-RFH}--\S\ref{sec:TQFT} of this paper.
Theorem~\ref{thm:TQFT-loop-intro} is proved in~\S\ref{sec:loop-proofs}, using as an input the results in~\cite{CHO-MorseFloerGH} relating various constructions of secondary coproducts. 
The Splitting Theorem~\ref{thm:splitting-intro} is proved in~\cite{CHO-reducedSH} in the more general context of certain Weinstein domains. 
Paper~\cite{CO-cones} recasts some results of this paper in a more general framework of algebraic structures on cones; it is used as an input for~\cite{CHO-MorseFloerGH} and~\cite{CHO-reducedSH}. 
Papers~\cite{CHO-algebra} and~\cite{CO-Tate} develop the theory of graded Frobenius algebras and open-closed TQFTs in the context of linearly topologized vector spaces.

In \S\ref{sec:applications} we prove several applications of Poincar\'e duality:
In~\S\ref{sec:crit-levels} we generalize results in~\cite{Goresky-Hingston} concerning the behaviour of critical levels with respect to products.
In~\S\ref{sec:Gysin} we derive the Hopf-Freudenthal-Gysin formulas in~\cite{Goresky-Hingston} from the graded open-closed TQFT structure. 
In~\S\ref{sec:point-class} we give a new proof of a result of Tamanoi~\cite{Tamanoi} on the loop product with the point class and compute it for various examples. 
In~\S\ref{sec:Uebele} we use a theorem of Uebele to describe the
Rabinowitz loop homology ring of manifolds all of whose geodesics are closed.
This is applied in upcoming joint work with Shelukhin (\cite{CHOS-Cross}, see~\S\ref{sec:string-point-inv} for a summary) to answer the question of string point invertibility of
constant rank one symmetric spaces, which is in turn related
to resonances and a conjecture of Viterbo concerning spectral norms.
In~\S\ref{sec:index} we prove a duality theorem between index and index+nullity for closed geodesics as a consequence of an iteration formula due to Liu and Long.
As an application, we show in~\S\ref{sec:nilpotency} that two sufficient conditions for the existence of infinitely many geodesics have generalizations related by Poincar\'e duality; the generalized statements are related to the Conley conjecture and will be pursued in~\cite{CHO-index}.

{\bf Acknowledgements. }
The first author thanks Stanford University, Institut Mittag--Leffler, and the Institute for Advanced Study (IAS) for their hospitality over the duration of this project.

The second author is grateful for support over
the years from the IAS, and in particular during
the academic year 2019-20. 

The third author thanks the IAS and Helmut Hofer for their hospitality. He was supported by ANR grants MICROLOCAL 15-CE40-0007, ENUMGEOM 18-CE40-0009, COSY 21-CE40-000, and by the University of Strasbourg Institute for Advanced Study (USIAS).

The authors benefited from discussions with
P.~Albers, 
R.~Cohen,
U.~Frauenfelder,
R.~Kaufmann,
J.~Latschev,
T.~Mazuir,
F.~Naef,
M.~Rivera,\break
P.~Safronov,
N.~Wahl,
and M.~Schwarz who long, long
ago pointed out the concept and importance of nilpotence of products in the
context of the pair-of-pants product.

%%%%%%%%%%%%%%%%%%%%%%%%%%%%%%%%%%%%%%%%%%%%%%%%%%%%%%%%%%%%%%%%%%%%%%
\section{Applications of Poincar\'e duality in string topology}\label{sec:applications}
%%%%%%%%%%%%%%%%%%%%%%%%%%%%%%%%%%%%%%%%%%%%%%%%%%%%%%%%%%%%%%%%%%%%%%

In this section we present evidence for Poincar\'e duality that was collected over the past years by the second author (and got this project started). In each case, we present a pair of theorems valid in the classical setting of homology/cohomology of the free or based loop space. We then present an extension of that pair of theorems to $\wh{H}_*$ and $\wh{H}^*$ and we explain that, on the one hand, the extended statements are related via Poincar\'e duality, and on the other hand, the classical statements are implied by the extended statements. 
In all cases, the extended statements have {\em symplectic} proofs. 

%%%
\subsection{Length filtration}\label{sec:length-filtration}
%%%

In this preparatory section we discuss the compatibility of our algebraic structures with suitable length filtrations.
Let us fix a Riemannian metric on $M$ and denote by $\Lambda^{<a}\subset\Lambda^{\leq a}\subset\Lambda$ the subspaces of loops of length $<a$ resp.~$\leq a$. For $a<b$ we set
$$
   H_*^{(a,b)}\Lambda := H_*(\Lambda^{<b},\Lambda^{\leq a}),\qquad 
   H^*_{(a,b)}\Lambda := H^*(\Lambda^{<b},\Lambda^{\leq a}).  
$$
These groups form a {\em double filtration} in the sense that we have canonical maps $H_*^{(a,b)}\Lambda\to H_*^{(a',b')}\Lambda$ for $a\leq a'$ and $b\leq b'$ with the obvious properties, and similarly on cohomology. 

\begin{theorem}[Filtered Poincar\'e duality for free loop spaces]\label{thm:filtered-PD}
A choice of Riemannian metric on $M$ induces double filtrations $\wh H_*^{(a,b)}\Lambda$ on $\wh H_*\Lambda$ and $\wh H^*_{(a,b)}\Lambda$ on $\wh H^*\Lambda$ which are compatible with
\begin{itemize}
\item the products on homology and cohomology in the sense that
\begin{gather*}
   \boldsymbol{\mu}: \wh{H}_*^{(a,b)}\Lambda\otimes \wh{H}_*^{(a',b')}\Lambda \to \wh{H}_*^{(\max\{a+b',a'+b\},b+b')}\Lambda,\cr
   \boldsymbol{\lambda}^\vee: \wh{H}^*_{(a,b)}\Lambda\otimes \wh{H}^*_{(a',b')}\Lambda \to \wh{H}^*_{(a+a',\min\{a+b',a'+b\})}\Lambda;
\end{gather*}
\item Poincar\'e duality in the sense that it induces isomorphisms
$$
   PD^{(a,b)}: \wh{H}_*^{(a,b)}\Lambda\stackrel\simeq\longrightarrow \wh{H}^{1-*}_{(-b,-a)}\Lambda;
$$
\item the Splitting Theorem~\ref{thm:splitting-intro} in the sense that
$$
   \wh{H}_*^{(a,b)}\Lambda = \ol H_*^{(a,b)}\Lambda\oplus \ol H^{1-*}_{(-b,-a)}\Lambda.
$$
\end{itemize}
\end{theorem}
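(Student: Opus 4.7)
Proof plan. The plan is to realize every structure at the chain level of Rabinowitz Floer theory and to deduce the filtered assertions from standard action estimates. The key input is that for a V-shaped Hamiltonian $H$ on the Liouville completion of $D^*M$ with large slope at $S^*M$, the $1$-periodic orbits of $X_H$ split into constants near the zero section, positive-branch orbits with symplectic action $\cA_H(\gamma)$ equal to the length of the underlying geodesic, and negative-branch orbits with action equal to minus the length. Setting
\begin{equation*}
    \wc{CF}_*^{\leq b}\Lambda := \bigoplus_{\cA_H(\gamma)\leq b} R\langle\gamma\rangle, \qquad
    \wc{CF}_*^{(a,b)}\Lambda := \wc{CF}_*^{<b}\Lambda/\wc{CF}_*^{\leq a}\Lambda
\end{equation*}
and passing to (co)homology produces $\wc{H}_*^{(a,b)}\Lambda$ and its cohomological analogue. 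A filtered refinement of Viterbo's isomorphism (Theorem~\ref{thm:Viterbo}) identifies the positive-branch subquotient with $\ol{H}_*^{(a,b)}\Lambda$; the negative-branch subquotient, via a chain-level duality pairing that negates action, matches $\ol{H}^{1-*}_{(-b,-a)}\Lambda$. This yields the filtered splitting.

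For the product bounds I would invoke the standard Floer action inequality: the pair-of-pants operation defining $m$ satisfies $\cA_H(\mathrm{output})\leq \cA_H(\mathrm{input}_1)+\cA_H(\mathrm{input}_2)$, so $m$ descends to $\wc{CF}_*^{\leq b}\Lambda\otimes\wc{CF}_*^{\leq b'}\Lambda\to \wc{CF}_*^{\leq b+b'}\Lambda$. If one of the inputs has action $\leq a$ (resp.~$\leq a'$), the output has action at most $a+b'$ (resp.~$a'+b$), so $m(X,Y)$ vanishes modulo $\wc{CF}_*^{\leq \max\{a+b',a'+b\}}\Lambda$ whenever $X$ or $Y$ lies in its respective sublevel. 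This gives the stated bound for $m$; the bound for $c^*$ follows either by running the same argument on the dual Floer complex, or by composing with the filtered Poincar\'e duality of the next step (the arithmetic of swapping upper/lower bounds under $(a,b)\mapsto(-b,-a)$ converts $\max$ into $\min$, exactly as claimed).

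For filtered Poincar\'e duality: the isomorphism $PD$ of Theorem~\ref{thm:PD} has a chain-level model in which an orbit $\gamma$ of action $\alpha$ is sent to the cochain dual to its time-reverse $\bar\gamma$, which has action $-\alpha$. Hence $PD$ carries $\wc{CF}_*^{\leq b}\Lambda$ isomorphically onto the subcomplex of $\wc{CF}^{1-*}\Lambda$ dual to orbits of action $>-b$, inducing the claimed filtered isomorphism
\begin{equation*}
    PD^{(a,b)}:\wc{H}_*^{(a,b)}\Lambda\stackrel\simeq\longrightarrow \wc{H}^{1-*}_{(-b,-a)}\Lambda.
\end{equation*}
Compatibility of $PD^{(a,b)}$ with the splitting is then automatic, since the splitting is precisely the action-sign decomposition at the chain level and $PD$ negates action.

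The main obstacle I expect is to verify that the chain-level maps $p,i$ realizing the canonical splitting of Theorem~\ref{thm:splitting}, together with the chain isomorphism implementing Theorem~\ref{thm:PD}, can all be chosen to respect the action filtration. This is naturally the case because each of these maps arises from continuation morphisms or from projections onto action-sign components of $\wc{CF}_*\Lambda$; however, tracking the filtration through the cone-decomposition arguments that underlie Theorems~\ref{thm:PD} and~\ref{thm:splitting} is the real technical content. Once this bookkeeping is performed, all three bullet points in the statement reduce to the elementary action estimates sketched above.
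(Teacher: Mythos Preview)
Your approach is essentially the paper's: the double filtration is the action filtration on the V-shaped Floer complex for $V=D^*M$, the product bound for $m$ comes from the standard energy estimate for pairs-of-pants (spelled out in \S\ref{sec:TQFT-Floer}), filtered Poincar\'e duality is the action-negating involution, and the filtered splitting comes from the filtered Viterbo isomorphism (Remark~\ref{rem:filtrations}). Two points deserve sharpening.

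First, your remark that ``the splitting is precisely the action-sign decomposition at the chain level'' is only literally true away from action zero. The constant orbits (action~$0$) do not split by sign; they must be distributed between the two summands via the Gysin-sequence analysis of \S\ref{sec:Gysin-splitting} (Lemma~\ref{lem:Gysin-Morse}), which is exactly the ``bookkeeping'' you flag in your last paragraph. The paper handles this in the proof of Theorem~\ref{thm:splitting2}, and the resulting maps $i,p$ are indeed filtration-preserving because the only subtle part lives at a single action level.

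Second, your first alternative for the $c^*$ bound (``running the same argument on the dual Floer complex'') does not work as stated: $c^*$ is a \emph{secondary} product, so there is no primary pair-of-pants on the cochain side to which the action estimate directly applies. Your second alternative---transporting the bound for $m$ through the filtered isomorphism $PD^{(a,b)}$---is the correct one, and is how the paper obtains it (see the filtered commuting square at the end of the proof of Theorem~\ref{thm:coh-product}). The $\max\to\min$ arithmetic under $(a,b)\mapsto(-b,-a)$ that you describe is exactly what happens there.
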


\begin{remark}
In the context of loop homology and loop cohomology, a statement analogous to Theorem~\ref{thm:filtered-PD}.(a) was proved and plays a key role in~\cite{Goresky-Hingston}. Statements (b) and (c) in Theorem~\ref{thm:filtered-PD} give meaning to the fact that the homological and cohomological analogues of (a) are dual to each other.  
\end{remark}

For $\eps>0$ smaller than the length of the shortest closed geodesic on $M$ the long exact sequence~\eqref{eq:les-intro} becomes (by~\cite{CO} and Poincar\'e duality on $M$) 
\begin{equation} \label{eq:Gysin_sequence}
\xymatrix
@C=20pt
{
   \cdots H^{-*}_{(-\eps,\eps)}\Lambda \ar[r]^-\eps \ar[d]^\cong & H_*^{(-\eps,\eps)}\Lambda \ar[r]^-\iota \ar[d]^\cong & \wh{H}_*^{(-\eps,\eps)}\Lambda \ar[r]^-\pi \ar[d]^\cong & H^{1-*}_{(-\eps,\eps)}\Lambda\cdots \ar[d]^\cong \\
   \cdots H^{-*}(M;\co) \ar[r]^-{\cup[e]} & H^{n-*}(M)\ar[r]^-{p^*} & H^{n-*}(S^*M) \ar[r]^-{p_*} & H^{1-*}(M;\co)\cdots
}
\end{equation}
where the bottom row is the Gysin sequence of the sphere bundle $p:S^*M\to M$ with Euler class $[e]$ and $\co$ is the orientation local system on $M$.
The products on $H_*^{(-\eps,\eps)}\Lambda$ and $\wh{H}_*^{(-\eps,\eps)}\Lambda$ translate into the cup products on $H^{n-*}(M)$ and $H^{n-*}(S^*M)$, for which $p^*$ is a ring map.  
Poincar\'e duality becomes
$$
\xymatrix
@C=50pt
{
   \wh{H}_*^{(-\eps,\eps)}\Lambda \ar[r]^{PD^{(-\eps,\eps)}}_\cong \ar[d]_\cong & \wh{H}^{-*+1}_{(-\eps,\eps)}\Lambda \ar[d]^{\cong}\\
   H^{n-*}(S^*M)\ar[r]_{PD}^\cong & H_{*+n-1}(S^*M)
}
$$
where the bottom horizontal arrow is the classical Poincar\'e duality isomorphism. 
This supports our previous claim that the Poincar\'e duality theorem~\ref{thm:PD-RFH-intro} should be seen as generalizing Poincar\'e duality for $S^*M$ (which is always an orientable manifold), and not as generalizing Poincar\'e duality on $M$ itself. 

All the results in this subsection have obvious counterparts for the based loop space $\Om$.

%%%
\subsection{Critical levels}\label{sec:crit-levels} 
%%%

The length filtrations of the previous subsection yield an increasing filtration on $H_*\Lambda$ by 
$$
   H_*^{(-\infty,a)}\Lambda = H_*(\Lambda^{<a}),
$$
and a decreasing filtration on $H^*\Lambda$ by 
$$
   H^*_{(a,\infty)}\Lambda = H^*(\Lambda,\Lambda^{\le a}). 
$$

\begin{definition}[Critical levels] \label{defi:critical-levels-H}
(1) For a homology class $X\in H_*\Lambda$ denote 
$$
   \Cr(X) = \inf \{a\in\R \mid X\in \im(H_*^{(-\infty,a)}\Lambda \to H_*\Lambda)\}.
$$
In other words, $\Cr(X)$ is the infimum of the values of $a$ such that $X$ is represented by a cycle contained in $\Lambda^{<a}$, i.e., $X$ is supported in $\Lambda^{<a}$. 

(2) For a cohomology class $x\in H^*\Lambda$ denote 
$$
   \Cr(x) = \sup \{a\in\R \mid x\in \im(H^*_{(a,\infty)}\Lambda\to H^*\Lambda)\}.
$$
In other words, $\Cr(x)$ is the supremum of the values of $a$ such that $x$ is represented by a cochain that vanishes on all chains contained in $\Lambda^{\le a}$, i.e., $x$ is supported in $\Lambda^{> a}$.   
\end{definition}

\begin{theorem}[Goresky-Hingston~\cite{Goresky-Hingston}] \label{thm:critical-values-H}
(1) For any two homology classes $X,Y\in H_*\Lambda$ we have 
$$
   \Cr(X\bullet Y)\le \Cr(X) + \Cr(Y). 
$$
(2) For any two cohomology classes $x,y\in H^*(\Lambda,\Lambda_0)$ we have 
$$
\Cr(x\oast y )\ge \Cr(x) + \Cr(y). 
$$
\end{theorem}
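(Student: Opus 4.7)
The plan is to deduce both inequalities from the filtered compatibility of the extended products established in Theorem~\ref{thm:filtered-PD}, using the canonical splitting to identify the loop product $\bullet$ on $\ol H_*\Lambda$ and the cohomology product $\oast$ on $\ol H^*\Lambda$ as the restrictions of $m$ and $c^*$ to the respective summands of $\wc{H}_*\Lambda$ and $\wc{H}^*\Lambda$.

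For part (1), fix $\alpha' > \Cr(X)$ and $\beta' > \Cr(Y)$, so that $X$ and $Y$ lift to classes in $H_*^{(-\infty,\alpha')}\Lambda$ and $H_*^{(-\infty,\beta')}\Lambda$ respectively. Applying the filtration-preserving ring morphism $\iota:H_*\Lambda \to \wc{H}_*\Lambda$ produces classes $\iota(X)$ and $\iota(Y)$ in the corresponding filtered subgroups. The filtered product bound in Theorem~\ref{thm:filtered-PD} then places
$$
   \iota(X\bullet Y) = m\bigl(\iota(X),\iota(Y)\bigr) \in \wc{H}_*^{(-\infty,\,\alpha'+\beta')}\Lambda,
$$
using that $\max\{-\infty+\beta',-\infty+\alpha'\}=-\infty$. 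Via the filtered splitting $\wc{H}_*^{(a,b)}\Lambda = \ol H_*^{(a,b)}\Lambda \oplus \ol H^{1-*}_{(-b,-a)}\Lambda$ and the projection $p$ onto $\ol H_*\Lambda$ (which commutes with the filtration and satisfies $p\iota = \id$), we recover $X\bullet Y$ as the image of a class in $\ol H_*^{(-\infty,\alpha'+\beta')}\Lambda$, i.e.~$X\bullet Y$ is supported in $\Lambda^{<\alpha'+\beta'}$. Letting $\alpha'\searrow \Cr(X)$ and $\beta'\searrow \Cr(Y)$ yields the subadditivity in (1). Classes in the image of $\eps$ have critical level $0$ and their loop products reduce to the intersection product on $M$, so the inequality holds trivially for them and extends to all of $H_*\Lambda$.

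Part (2) admits a directly symmetric argument using $c^*$ and the cohomological filtration $\wc{H}^*_{(a,b)}\Lambda$: for $\alpha' < \Cr(x)$ and $\beta' < \Cr(y)$, one has $x \in \wc{H}^*_{(\alpha',\infty)}\Lambda$ and $y \in \wc{H}^*_{(\beta',\infty)}\Lambda$ after lifting via the cohomological splitting, and the filtered bound forces $x\oast y = c^*(x,y) \in \wc{H}^*_{(\alpha'+\beta',\infty)}\Lambda$, since $\min\{\alpha'+\infty,\beta'+\infty\}=\infty$. Equivalently, part (2) can be derived from part (1) via the filtered Poincar\'e duality isomorphism $PD^{(a,b)}:\wc{H}_*^{(a,b)}\Lambda \stackrel{\simeq}{\to} \wc{H}^{1-*}_{(-b,-a)}\Lambda$, which swaps filtration windows $(a,b) \leftrightarrow (-b,-a)$ and intertwines $m$ with $c^*$ by Theorem~\ref{thm:closed-TQFT}(c); the sign reversal precisely converts the subadditivity of $\Cr$ into superadditivity.

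The main technical point will be verifying the filtered identifications of products carefully: that the classical loop product on $H_*\Lambda$ and cohomology product on $H^*(\Lambda,\Lambda_0)$ agree with the restrictions of $m$ and $c^*$ not merely on the limit (unfiltered) groups but at the level of each filtered piece, so that the filtration windows behave as expected under the maps $\iota$ and $p$ and their cohomological analogues. Once this compatibility between classical and extended products on each filtration piece is in place, Theorem~\ref{thm:filtered-PD} delivers both inequalities immediately.
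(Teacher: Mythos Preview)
Your approach matches the paper's exactly: the paper cites Theorem~\ref{thm:critical-values-H} from Goresky--Hingston and then derives it (via the Splitting Theorem~\ref{thm:splitting}) from the extended Theorem~\ref{thm:critical-values-checkH}, which in turn is an immediate consequence of the filtered product compatibility in Theorem~\ref{thm:filtered-PD}.

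One small point: the paper is explicit that this derivation recovers the classical statement only ``except in the homological range $\{0,\dots,n\}$'', where the passage between $H_*\Lambda$ and $\ol H_*\Lambda$ (resp.\ $H^*(\Lambda,\Lambda_0)$ and $\ol H^*\Lambda$) via the constant loops needs care. Your sentence about $\im\eps$ gestures at this, but note that $p\iota$ is the quotient map $H_*\Lambda\to\ol H_*\Lambda$, not the identity on $H_*\Lambda$, so the argument as written only controls the image of $X\bullet Y$ in $\ol H_*\Lambda$; the paper simply leaves that residual range to the original reference rather than filling it in.
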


We now consider an extension of the previous theorem to the setting of $\wh{H}_*$ and $\wh{H}^*$. Recall from Theorem~\ref{thm:filtered-PD}
that our choice of Riemannian metric determines an increasing filtration $\wh{H}_*^{(-\infty.a)}\Lambda$ on $\wh{H}_*\Lambda$, and a decreasing filtration $\wh{H}^*_{(a,\infty)}\Lambda$ on $\wh{H}^*\Lambda$. The following definition is analogous to Definition~\ref{defi:critical-levels-H} above.

\begin{definition}
(1) For a homology class $X\in \wh{H}_*\Lambda$ denote 
$$
   \Cr(X) = \inf \{a\in\R \mid X\in \im(\wh{H}_*^{(-\infty,a)}\Lambda \to \wh{H}_*\Lambda)\}.
$$
(2) For a cohomology class $x\in \wh{H}^*\Lambda$ denote 
$$
   \Cr(x) = \sup \{a\in\R \mid x\in \im(\wh{H}^*_{(a,\infty)}\Lambda\to \wh{H}^*\Lambda)\}.
$$
\end{definition} 

Recall that we denote the product on $\wh{H}_*\Lambda$ by $\wh{\bullet}$ and the product on $\wh{H}^*\Lambda$ by $\wh\oast$. The following extension of Theorem~\ref{thm:critical-values-H} is now an immediate consequence of the compatibility of these products with the filtrations in Theorem~\ref{thm:filtered-PD}.

\begin{theorem}\label{thm:critical-values-checkH}
(1) For any two homology classes $X,Y\in \wh{H}_*\Lambda$ we have 
$$
   \Cr(X\wh\bullet Y)\le \Cr(X) + \Cr(Y). 
$$
(2) For any two cohomology classes $x,y\in \wh{H}^*\Lambda$ we have 
$$
   \Cr(x\wh\oast y )\ge \Cr(x) + \Cr(y). 
$$
\qed
\end{theorem}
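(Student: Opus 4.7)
The plan is to deduce both inequalities directly from the filtered product compatibility in Theorem~\ref{thm:filtered-PD}, applied to half-infinite intervals. All of the real content sits inside that filtration; the present theorem should amount to unwrapping the formulas for the product filtration degrees.

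For part (1), fix any $\alpha > \Cr(X)$ and $\beta > \Cr(Y)$. By the very definition of $\Cr$, the classes $X$ and $Y$ lift respectively to $\wc{H}_*^{(-\infty,\alpha)}\Lambda$ and $\wc{H}_*^{(-\infty,\beta)}\Lambda$. Specializing the filtered product formula
$$
   m:\wc{H}_*^{(a,b)}\Lambda\otimes \wc{H}_*^{(a',b')}\Lambda \to \wc{H}_*^{(\max\{a+b',\,a'+b\},\,b+b')}\Lambda
$$
to $a=a'=-\infty$, $b=\alpha$, $b'=\beta$ produces a map landing in $\wc{H}_*^{(-\infty,\alpha+\beta)}\Lambda$, since $\max\{-\infty,-\infty\}=-\infty$. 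Thus $X\bullet Y$ is supported below level $\alpha+\beta$, giving $\Cr(X\bullet Y)\le\alpha+\beta$, and passing to the infimum over admissible $\alpha,\beta$ yields the claimed inequality.

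For part (2), the argument will be the mirror image. Fix $\alpha<\Cr(x)$ and $\beta<\Cr(y)$, represent $x$ and $y$ in $\wc{H}^*_{(\alpha,\infty)}\Lambda$ and $\wc{H}^*_{(\beta,\infty)}\Lambda$ respectively, and apply the filtered product formula for $c^*$ with $b=b'=\infty$: this places $x\oast y$ in $\wc{H}^*_{(\alpha+\beta,\infty)}\Lambda$ since $\min\{\infty,\infty\}=\infty$, so $\Cr(x\oast y)\ge\alpha+\beta$, and passing to the supremum finishes the argument.

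No real obstacle is expected beyond cautious colimit/limit bookkeeping at $\pm\infty$; the content is packaged inside Theorem~\ref{thm:filtered-PD}. As a sanity check, one can observe that the filtered Poincar\'e duality $PD^{(a,b)}$ of Theorem~\ref{thm:filtered-PD} sends the homological $(-\infty,b)$-filtration to the cohomological $(-b,\infty)$-filtration and intertwines $m$ with $c^*$, so part (2) can also be recovered directly from part (1) without repeating the argument. This duality explains conceptually the reversal of the inequality between (1) and (2).
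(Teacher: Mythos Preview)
Your proof is correct and follows exactly the paper's approach: the paper states that the theorem ``is now an immediate consequence of the compatibility of these products with the filtrations in Theorem~\ref{thm:filtered-PD}'', and your argument simply spells out that immediate consequence. Your closing remark that (1) and (2) are equivalent via filtered Poincar\'e duality is also made explicitly in the paper right after the theorem.
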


Note that each of the statements (1) and (2) is a consequence of the other one via the Filtered Poincar\'e Duality Theorem~\ref{thm:filtered-PD}.    
Moreover, the Splitting Theorem~\ref{thm:splitting-intro} ensures that, except in the homological range $\{0,\dots,n\}$, Theorem~\ref{thm:critical-values-H} is a consequence of either of the statements (1) or (2) in Theorem~\ref{thm:critical-values-checkH}.

%%%
\subsection{Hopf-Freudenthal-Gysin formulas}\label{sec:Gysin}\footnote{
{\it Historical note.} The name ``Gysin formulas" seems to have been coined by Fulton in his book on intersection theory~\cite{Fulton-1st-ed} in connection with the geometric interpretation of shriek maps in fibered setups. Gradually, this name came to designate a variety of algebraic relations involving shriek maps. The ones that we prove in this section should more appropriately be called ``Hopf-Freudenthal formulas". The situation is clearly summarized in the Introduction of~\cite{Cohen-Klein} (but the reference to Hopf's paper is wrong). Hopf~\cite{Hopf-Umkehr} first defined a ``reverse" ({\it umkehr}) homomorphism in singular homology associated to a map between manifolds of the same dimension, and Freudenthal~\cite{Freudenthal} made the connection to Poincar\'e duality and henceforth extended the definition to maps between manifolds of any dimension. In particular, Hopf~\cite[(4)]{Hopf-Umkehr} and Freudenthal~\cite[(V)]{Freudenthal} proved the finite dimensional analogues of the second formulas in Theorem~\ref{thm:Gysin}(1) and~(2) below. While these also appear in the later---and unique---paper of Gysin~\cite[(11.1)]{Gysin}, the new contribution of that paper is to construct the ``Gysin long exact sequence" associated to a sphere bundle and its salient point is a geometric interpretation of an {\it umkehr} map in a fibered context. This long exact sequence happens to play an important role in our paper. The {\it umkehr} homomorphisms of Hopf-Freudenthal are also referred to notationally as ``shriek" maps. 
}
%%%

Here we discuss the relations between the product structures on the free loop space $\Lambda$ and on the based loop space $\Om=\Om_{q}M$. We denote the Pontrjagin product (of degree $0$) on $H_*\Om$ by $\bullet_\Om$, and the based cohomology product (of degree $n-1$) on $H^*(\Om,q)$ (relative to the constant loop $q$ at the basepoint) by $\oast_\Om$. Recall that the inclusion $i:\Om\into\Lambda$ induces pushforward/pullback maps $i_*:H_*\Om\to H_*\Lambda$ and $i^*:H^*\Lambda\to H^*\Om$, as well as shriek maps (induced by intersection with the codimension $n$ submanifold $\Om\subset\Lambda$) $i_!:H_*\Lambda\to H_{*-n}\Om$ and $i^!:H^*\Om\to H^{*+n}\Lambda$. 

\begin{theorem}[Goresky-Hingston~\cite{Goresky-Hingston}] \label{thm:Gysin}
(a) For all $A,B\in H_*\Lambda$ and $C\in H_*\Omega$ we have 
$$
   i_!(A\bullet B)=i_!A\bullet_\Om i_!B,
$$ 
$$
   (i_*C)\bullet A = i_*(C\bullet_\Om i_!A). 
$$
(b) For all $a,b\in H^*(\Lambda,\Lambda_0)$ and $c\in H^*(\Omega,q)$ we have 
$$
   i^*(a \oast b)=i^*a \oast_\Omega i^*b,
$$
$$
   (i^!c)\oast a = i^!(c\oast_\Omega i^*a).
$$
\end{theorem}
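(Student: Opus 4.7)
The strategy is to interpret Theorem~\ref{thm:Gysin} as the restriction to the classical summands of extended Gysin-type formulas that live on $\wc{H}_*\Lambda, \wc{H}_*\Omega$ and their cohomological duals, where all four maps $i_*, i_!, i^*, i^!$ arise naturally as operations in the open-closed TQFT of Theorem~\ref{thm:TQFT}. First, I would promote the inclusion-induced maps to operations of the open-closed TQFT as follows: the cobordism in $2\text{-}Cob^+$ consisting of a single disk with one incoming circle and one outgoing interval (``opening up'' a closed string at the basepoint $q$) induces a degree-$(-n)$ map $i_!:\wc{H}_*\Lambda\to \wc{H}_*\Omega$, while the reverse cobordism (``closing up'') induces $i_*:\wc{H}_*\Omega\to \wc{H}_*\Lambda$. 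The cohomological counterparts $i^*,i^!$ are the duals under Poincar\'e duality.

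Next, I would prove extended versions of the four formulas in Theorem~\ref{thm:Gysin}, with $\bullet$ replaced by the product $m$ on $\wc{H}_*\Lambda$ and $\bullet_\Omega$ replaced by the product on $\wc{H}_*\Omega$, and similarly for $\oast$ and $\oast_\Omega$. Each extended identity becomes an equality of two decompositions of a \emph{single} cobordism in $2\text{-}Cob^+$ into elementary pieces, and is therefore an immediate consequence of the monoidal functoriality of the TQFT in Theorem~\ref{thm:TQFT}. Concretely, the identity $i_!(A\bullet B)=i_!A\bullet_\Om i_!B$ corresponds to the cobordism with two incoming circles and one outgoing interval, which can be realized either as (pair of pants in the closed sector) $\circ$ (open-up on the output), or as (open pair of pants in the open sector) $\circ$ (open-up on each input); the identity $(i_*C)\bullet A=i_*(C\bullet_\Om i_!A)$ corresponds to the cobordism with one incoming interval, one incoming circle, and one outgoing circle decomposed in two ways. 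The cohomology identities in (2) follow from the homological ones by Poincar\'e duality (Theorems~\ref{thm:PD} and~\ref{thm:PD-based}).

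Finally, I would derive the classical statement from the extended one by restricting to the subrings picked out by the canonical splittings. The splitting of Theorem~\ref{thm:splitting} identifies $\ol H_*\Lambda$ as a subring of $\wc{H}_*\Lambda$ on which $m$ restricts to $\bullet$, and dually the summand $\ol H^{1-*}\Lambda$ carries the cohomology product $\oast$; an entirely analogous statement holds for based loops via Theorem~\ref{thm:splitting-based}. The key compatibility to verify is that the extended $i_*, i_!, i^*, i^!$ preserve the splittings and restrict on the classical factors to the honest topological pushforward, pullback and shriek maps. For $i_*$ and $i^*$ this is straightforward: these are induced at the chain/cochain level by the inclusion $\Omega\hookrightarrow \Lambda$, which extends to Floer theory via the Viterbo-type isomorphisms (Theorems~\ref{thm:Viterbo} and~\ref{thm:Viterbo-Lag}). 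The main obstacle is the corresponding identification for the shriek maps $i_!$ and $i^!$: one must check that the ``opening-up'' cobordism implements, under the Viterbo isomorphisms, the classical transfer map given by intersection with the codimension-$n$ fibre $\Omega\subset\Lambda$. Once this identification is in place---which amounts to computing a Floer continuation/cap operation over a half-cylinder with a Lagrangian boundary condition on $D^*_qM$ and matching it with the topological Thom class of the normal bundle of $\Omega$ in $\Lambda$---the two classical formulas in each part follow from the extended ones by restricting to the classical summands.
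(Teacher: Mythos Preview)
Your proposal is correct and follows essentially the same approach as the paper: prove the extended identities on $\wc{H}_*\Lambda$, $\wc{H}_*\Omega$ (and their cohomological counterparts) as equalities of open-closed TQFT operations associated to the same punctured surface (this is Theorem~\ref{thm:Gysin-check}), then restrict to the classical summands via the splittings of Theorems~\ref{thm:splitting} and~\ref{thm:splitting-based}, using precisely the identification you flag as the main obstacle---that the zipper and cozipper maps restrict to the topological $i_!$, $i_*$, $i^!$, $i^*$---which is the content of Lemma~\ref{lem:topological_shriek}. One small imprecision: $i^*$ and $i^!$ are defined in the paper as the zipper/cozipper operations for the \emph{cohomological} TQFT (i.e.\ algebraic duals of $i_*$ and $i_!$), not as Poincar\'e duals; Poincar\'e duality instead \emph{exchanges} zipper and cozipper (Proposition~\ref{prop:zipper-cozipper}), which is what makes parts (1) and (2) equivalent.
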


The above theorem extends to $\wh{H}_*$ and $\wh{H}^*$ as follows.  
We denote the products on $\wh{H}_*\Lambda$ by $\wh\bullet$ (of degree $-n$), on $\wh{H}_*\Omega$ by $\wh\bullet_\Om$ (of degree $0$), on $\wh{H}^*\Lambda$ by $\wh\oast$ (of degree $n-1$), and on $\wh{H}^*\Omega$ by $\wh\oast_\Omega$ (of degree $n-1$). The graded open-closed TQFT structures on $(\wh{H}_*\Lambda,\wh{H}_*\Om)$ and $(\wh{H}^*\Lambda,\wh{H}^*\Om)$ from Theorem~\ref{thm:TQFT-RFH-intro} include in particular operations 
\begin{gather*}
   \boldi_!:\wh{H}_*\Lambda\to \wh{H}_{*-n}\Omega,\quad
   \boldi^!:\wh{H}^*\Omega\to \wh{H}^{*+n}\Lambda,\cr
   \boldi_*:\wh{H}_*\Omega\to \wh{H}_*\Lambda,\quad 
   \boldi^*:\wh{H}^*\Lambda\to \wh{H}^*\Omega.
\end{gather*}
Here $\boldi_!$ is the closed-open map defined by the zipper, $\boldi_*$ is the open-closed map defined by the cozipper, and $\boldi^!,\boldi^*$ are their algebraic duals. See Figure~\ref{fig:zipper-cozipper}. 

\begin{figure} [ht]
\centering
\input{zipper-cozipper.pstex_t}
\caption{The zipper and the cozipper.}
\label{fig:zipper-cozipper}
\end{figure}

\begin{theorem} \label{thm:Gysin-check}
(a) For all $A,B\in \wh{H}_*\Lambda$ and $C\in \wh{H}_*\Omega$ we have 
$$
   \boldi_!(A\, \wh\bullet \,  B) = \boldi_!A\, \wh\bullet_\Om \, \boldi_!B,
$$ 
$$
   (\boldi_*C)\, \wh\bullet \, A = \boldi_*(C\, \wh\bullet_\Om \, \boldi_!A). 
$$
(b) For all $a,b\in \wh{H}^*\Lambda$ and $c\in \wh{H}^*\Omega$ we have 
$$
   \boldi^*(a \, \wh\oast \, b) = \boldi^*a \, \wh\oast_\Omega \, \boldi^*b,
$$
$$
   (\boldi^!c)\, \wh\oast \, a = \boldi^!(c\, \wh\oast_\Omega \, \boldi^*a).
$$
\end{theorem}

\begin{proof} 
Part (a) is an immediate consequence of the graded open-closed TQFT structure on $(\wh{H}_*\Lambda,\wh{H}_*\Om)$ from Theorem~\ref{thm:TQFT-loop-intro}: the first relation says that the zipper is an algebra map, which is condition (3) in Definition~\ref{defi:gradedTQFT}, and the second relation says that the cozipper intertwines the canonical right module structures, which is derived from the axioms of a graded open-closed TQFT in~\cite[Lemma 6.6]{CHO-algebra}. See Figure~\ref{fig:Gysin} for picture proofs.
Similarly, part (b) is a consequence of the graded open-closed TQFT structure on $(\wh{H}^*\Lambda,\wh{H}^*\Om)$. 
\end{proof}

\begin{figure} [ht]
\centering
\input{Gysin.pstex_t}
\caption{TQFT proof of the Hopf-Freudenthal-Gysin formulas.}
\label{fig:Gysin}
\end{figure} 

According to Theorem~\ref{thm:TQFT-loop-intro}, statements (a) and (b) in Theorem~\ref{thm:Gysin-check} imply one another via Poincar\'e duality. 
Theorem~\ref{thm:Gysin-check} implies Theorem~\ref{thm:Gysin} via the inclusions in Theorem~\ref{thm:TQFT-loop-intro} and their based loop counterparts, using the fact that the maps $\boldi_!,\boldi^!,\boldi_*,\boldi^*$ agree on $H_*$ and $H^*$ with the topological shriek and pushforward/pullback maps, see~\S\ref{sec:topological-oc-co}.

%%%
\subsection{Loop product with the point class}\label{sec:point-class}
%%%%

In the following discussion we assume that $M$ is oriented and we use $\Z$-coefficients. 
The long exact sequence~\eqref{eq:les-intro} and the fact that $\iota$ is a ring map imply that $\im\eps=\ker\iota$ is an ideal. By the description
of the map $\eps$ in the Introduction we have $\im\eps=\Z\chi(M)[q]$, where $[q]\in H_0\Lambda$ is the class of the constant loop at the basepoint $q\in M$. Since the loop product with $[q]$ is given by the composition
$$
   H_{*+n}\Lambda\stackrel{i_!}\longrightarrow H_*\Om\stackrel{i_*}\longrightarrow H_*\Lambda, 
$$
we recover the following result of Tamanoi. 

\begin{corollary}[Tamanoi~\cite{Tamanoi}]\label{cor:Tamanoi}
If $\chi(M)\neq 0$, then $\Z\chi(M)[q]$ is an ideal in the ring $H_*\Lambda$.  
Thus $\chi(M)i_*i_!a=\chi(M)[q]\bullet a$ is an integer multiple of $\chi(M)[q]\in H_0\Lambda$ for each $a\in H_*\Lambda$, in particular it vanishes whenever $\deg a\neq n$ or $a$ lives in a nontrivial path component of $\Lambda$. 
\end{corollary}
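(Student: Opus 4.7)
The plan is to combine three ingredients already assembled in the excerpt: exactness of the long sequence~\eqref{eq:les}, the ring homomorphism property of $\iota$, and the explicit factorization~\eqref{eq:eps-factorization-cotangent} of $\eps$. The only additional input needed is the geometric identification $[q]\bullet a = i_*i_!a$ of the loop product with the constant loop at the basepoint.

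First, I would observe that $\ker\iota\subset H_*\Lambda$ is an ideal. This is because $\iota$ is a ring map (with respect to the loop product on $H_*\Lambda$ and the product $m$ on $\wc{H}_*\Lambda$), so the kernel of $\iota$ is automatically a two-sided ideal of the source ring. By exactness of~\eqref{eq:les} this kernel equals $\im\eps$.

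Next, I would identify $\im\eps$ with $\Z\chi(M)[q]$. Under our assumption that $M$ is oriented and that the coefficients are $\Z$, the orientation local system $\co$ is trivial, so $H^0(M;\co)=H^0(M)=\Z$ and $H_0(M;\co)=H_0(M)=\Z$, and the middle map $e$ in~\eqref{eq:eps-factorization-cotangent} is multiplication by $\chi(M)$. The map $H^0\Lambda\to H^0(M)$ is the restriction along $\ev$, which is surjective on the trivial component, while $H_0(M)\to H_0\Lambda$ sends the generator to $[q]$ (the class of a constant loop, in the component of $q$). Composing, $\im\eps=\Z\chi(M)[q]$.

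Third, I would invoke the standard geometric identity $[q]\bullet a = i_*i_!a$, which follows from the Chas--Sullivan definition of the loop product: intersecting $a$ with the fibre $\Omega\subset\Lambda$ produces $i_!a$, and concatenation with the constant loop at $q$ is just inclusion $i_*$. Combining the first two steps, $\chi(M)[q]\bullet a = \chi(M)i_*i_!a$ lies in the ideal $\Z\chi(M)[q]\subset H_0\Lambda$, hence must be an integer multiple of $\chi(M)[q]$.

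Finally, I would read off the vanishing statements from degree and path-component considerations. The loop product has degree $-n$, so $[q]\bullet a\in H_{\deg a-n}\Lambda$; since the target ideal $\Z\chi(M)[q]$ is concentrated in degree $0$ and in the component of the constant loop $q$, we must have $\chi(M)[q]\bullet a=0$ whenever $\deg a\neq n$ or $a$ is supported in a nontrivial component of $\Lambda$. The main (and only really nontrivial) obstacle here is the identification $[q]\bullet a=i_*i_!a$; but this is built into the geometric definition of the Chas--Sullivan product and, in the Floer-theoretic language of this paper, follows from the TQFT structure in Theorem~\ref{thm:TQFT} applied to the pair-of-pants with one input closed off by the constant loop, yielding precisely the cozipper-zipper composition.
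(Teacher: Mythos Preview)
Your proof is correct and follows essentially the same route as the paper: the paper also argues that $\im\eps=\ker\iota$ is an ideal because $\iota$ is a ring map, identifies $\im\eps=\Z\chi(M)[q]$ via the factorization~\eqref{eq:eps-factorization-cotangent}, and invokes the identity $[q]\bullet a=i_*i_!a$ coming from the composition $H_{*+n}\Lambda\stackrel{i_!}\to H_*\Omega\stackrel{i_*}\to H_*\Lambda$. Your final paragraph merely spells out the degree and component considerations that the paper leaves implicit in the statement itself.
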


The corollary is derived in~\cite{Tamanoi} from a partial TQFT structure on $H_*\Lambda$. Note that we always have the nontrivial product $[q]\bullet[M]=[q]$. 

\begin{example}\label{ex:projective-spaces}
In this example we use $\Z$-coefficients and the computations of the (degree shifted) loop homology rings in~\cite{Cohen-Jones-Yan}.  

(a) For $M=\C P^n$ we have
$$
   \H_{*}\Lambda\C P^n = \Lambda[w]\otimes\Z[c,s]/\la c^{n+1},(n+1)c^ns,wc^n\ra
$$ 
with $|w|=-1$, $|c|=-2$ and $|s|=2n$. The point class is $[q]=c^n$, the Euler characteristic is $(n+1)$, and we see that $\Z\cdot(n+1)c^n$ is an ideal. Note that $\Z\cdot c^n$ is not an ideal. 

(b) For $M=S^n$ with $n$ even we have
$$
   \H_{*}\Lambda S^n = \Lambda[b]\otimes\Z[a,s]/\la a^2,ab,2as\ra
$$ 
with $|a|=-n$, $|b|=-1$ and $|s|=2n-2$. The point class is $[q]=a$, the Euler characteristic is $2$, and we see that $\Z\cdot 2a$ is an ideal. Note that $\Z\cdot a$ is not an ideal. 

(c) For $M=S^n$ with $n\geq 3$ odd we have
$$
   \H_{*}\Lambda S^n = \Lambda[a]\otimes\Z[u] = \H_{*}(S^n)\otimes H_*\Om S^n
$$ 
with $|a|=-n$ and $|u|=n-1$. The point class is $[q]=a$ and the Euler characteristic is $0$. It follows that in the composition
$$
   \Lambda[a]\otimes\Z[u]\stackrel{i_!}\longrightarrow \Z[u]\stackrel{i_*}\longrightarrow \Lambda[a]\otimes\Z[u]
$$
the first map is the canonical projection and the second one multiplication with $a$, so the map $i_*i_!$ is the projection (with infinite dimensional image)
$$
   H_{n+*}(S^n)\otimes H_*\Om S^n\to aH_*\Om S^n\cong H_*\Om S^n.
$$
(d) Let $M=G$ be a compact Lie group. Then the Euler characteristic is zero, since there always exists a nowhere vanishing left-invariant vector field. To compute the image of the map $i_*i_!$ note that we have $\Lambda G\cong G\times\Om G$, so the K\"unneth formula gives an injection
$$
   H_*G\otimes H_*\Om G\into H_*\Lambda G. 
$$
It follows that in the composition $H_{n+*}\Lambda G\stackrel{i_!}\longrightarrow H_*\Om G\stackrel{i_*}\longrightarrow H_*\Lambda G$ 
the first map is surjective (with right inverse $a\mapsto[G]\otimes a$) and the second one is the canonical inclusion, so the map $i_*i_!$ is the composition (with infinite dimensional image)
$$
   H_{n+*}\Lambda G\supset [M]\otimes H_*\Om G\to H_*\Om G \to H_*\Lambda G.
$$
\end{example}

%%%
\subsection{Manifolds all of whose geodesics are closed}\label{sec:Uebele}
%%%

Manifolds all of whose geodesics are 
simple and closed of the same primitive length
have a long history of study, see~\cite{Besse}. In~\cite{Goresky-Hingston} it was observed that the loop and cohomology products have special properties on such manifolds. 

\begin{theorem}[{Goresky-Hingston~\cite{Goresky-Hingston}}] 
Let $M$ be a closed Riemannian $n$-manifold all of whose geodesics are simple (i.e.~without self-intersections) and closed of the same primitive length. Let $\lambda$ denote their Morse index and set $b:=n-1+\lambda$.  

(a) Let $\Theta\in H_{2n-1+\lambda}\Lambda$ be the homology class of the cycle determined by all simple closed geodesics. Then the loop product with $\Theta$ defines an injective map
$$
   \bullet\Theta:H_*(\Lambda,\Lambda_0)\to H_{*+b}(\Lambda,\Lambda_0).  
$$
(b) Let $\om\in H^\lambda(\Lambda,\Lambda_0)$ be the Morse cohomology class determined by one simple closed geodesic. Then the cohomology product with $\om$ defines an injective map
$$
   \oast\om:H^*(\Lambda,\Lambda_0)\to H^{*+b}(\Lambda,\Lambda_0).  
$$
\end{theorem}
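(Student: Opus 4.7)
The plan is to derive (a) and (b) as two facets of a single extended statement on $\wc H_*\Lambda$, mirroring the unified approach of the previous subsections. Define $\wh\Theta \in \wc H_{n-1+\lambda}\Lambda$ to be the lift of $\Theta$ through the inclusion $\iota: \ol H_*\Lambda \hookrightarrow \wc H_*\Lambda$ from Theorem~\ref{thm:splitting}. Using Theorem~\ref{thm:PD} together with Remark~\ref{rmk:PD-flips-factors}, I would first verify that under Poincar\'e duality $\wh\Theta$ is identified with a class in $\wc H^{1-*}\Lambda$ whose image under the canonical projection to $\ol H^{1-*}\Lambda$ is precisely $\om$; at the Morse-Bott level this is the statement that the Poincar\'e-duality pairing on the critical manifold of simple geodesics matches its fundamental class with its top cohomology generator. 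Then by Theorem~\ref{thm:m-components} the diagonal $m^{++}_+(\cdot, \wh\Theta)$ and $m^{--}_-(\cdot, \wh\Theta)$ blocks of multiplication by $\wh\Theta$ with respect to the decomposition $\wc H_*\Lambda = \ol H_*\Lambda \oplus \ol H^{1-*}\Lambda$ are exactly the loop product with $\Theta$ and the cohomology product with $\om$, so (a) and (b) follow from injectivity of these two blocks.

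The central step is to prove this injectivity simultaneously via the length filtration of Theorem~\ref{thm:filtered-PD}. Since $\Theta$ is represented by a cycle concentrated at the common critical length $L$, multiplication by $\wh\Theta$ raises filtration by exactly $L$ on $\ol H_*\Lambda$ and, dually, lowers filtration by exactly $L$ on $\ol H^{1-*}\Lambda$. On the associated graded of the length filtration, the Morse-Bott structure of the energy functional identifies the complex with a sum over iterates $k \geq 1$ of the (co)homology of the critical manifold $\Crit_1$ of simple geodesics, with uniform grading shift $b = n-1+\lambda$ between consecutive levels coming from Bott's iteration formula applied when all primitive Morse indices coincide. On these associated gradeds both blocks become, up to duality, the geometric iteration map $k \mapsto k+1$, which is a diffeomorphism of orbit spaces and hence induces injective maps in (co)homology. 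Injectivity then lifts from the associated graded to the full groups $H_*(\Lambda,\Lambda_0)$ and $H^*(\Lambda,\Lambda_0)$ because the filtration is bounded below by the constants which have been quotiented out.

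The main obstacle is the chain-level identification of both multiplication blocks, on associated gradeds, with the iteration map. This requires a careful analysis of the Morse-Bott pair-of-pants moduli spaces with one input in $\Crit_1$, showing that the leading geometric contribution is the concatenation producing the $(k+1)$-fold iterate and that any filtration-preserving corrections vanish at the chain level. A more elementary route, which I would attempt first, is to import the classical Morse-Bott proof of Goresky-Hingston for (a) together with its cohomological dual for (b), and then verify ex post that both correspond to the two diagonal blocks of a single multiplication on $\wc H_*\Lambda$, using Poincar\'e duality (Theorem~\ref{thm:PD}) to match the two halves of the argument.
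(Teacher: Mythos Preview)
The paper does not prove this theorem; it is quoted from \cite{Goresky-Hingston}. What the paper does instead is exhibit a \emph{common generalization} (Corollary~\ref{cor:Uebele}) deduced from Uebele's theorem~\ref{thm:Uebele}: the class $s=\iota(\Theta)\in\wc H_*\Lambda$ is \emph{invertible}, so multiplication by $s$ is an isomorphism on $\wc H_*\Lambda$; then $\ol H_*\Lambda$ is a free $\K[s]$-module and $\ol H^{1-*}\Lambda$ is a free $\K[s^{-1}]$-module. Part~(a) is the injectivity of $\cdot\,s$ on the first summand, and part~(b) is the injectivity of $\cdot\,s^{-1}$ on the second summand, where $s^{-1}$ lies in $\ol H^{1-*}\Lambda$ and corresponds to $\om$. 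The two halves are thus unified not by a single multiplication-by-$\wh\Theta$ operator, but by the existence of an inverse.

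Your proposal contains a genuine gap at the point where you invoke Theorem~\ref{thm:m-components}. Since $\wh\Theta=\iota(\Theta)$ lies in the $+$ summand $\ol H_*\Lambda$, the map ``$m^{--}_-(\cdot,\wh\Theta)$'' is not defined: the second input of $m^{--}_-$ must come from the $-$ summand. The block of $m(\cdot,\wh\Theta)$ that sends $\ol H^{1-*}\Lambda$ to itself is $m^{-+}_-$, and by Theorem~\ref{thm:m-components}(5) this is the \emph{adjoint of the loop product} with $\Theta$, not the cohomology product with $\om$. Relatedly, your Poincar\'e duality claim is incorrect: by Remark~\ref{rmk:PD-flips-factors}, $PD$ maps the $\ol H_*\Lambda$ summand of $\wc H_*\Lambda$ to the $\ol H_*\Lambda$ summand of $\wc H^{1-*}\Lambda$, so $PD(\wh\Theta)$ projects to zero in $\ol H^{1-*}\Lambda$, not to $\om$. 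The class that corresponds to $\om$ under the splitting is $s^{-1}$, not $s$; this is exactly why the paper needs Uebele's invertibility, which is a genuinely nontrivial input that your filtration/iteration argument does not supply.
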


A common generalization of this pair of results arises from the following special case of a theorem of P.~Uebele on Rabinowitz Floer homology. 

\begin{theorem}[Uebele~\cite{Uebele-products}]\label{thm:Uebele}
Consider a Liouville domain $V$ such that $\p V$ is connected and the Reeb flow on $\p V$ is $T$-periodic. (Here $T$ is the minimal common period, but there can be Reeb orbits of smaller periods.) Assume $2c_1(V)=0$ and we are given a trivialization of the square of the canonical line bundle such that:
\begin{itemize}
\item all closed Reeb orbits on $\p V$ have Conley--Zehnder index $>3-n$. 
\item given $s\in SH_{n+b}(\p V)$ the class 
corresponding to the maximum on the Bott manifold of Reeb orbits of period $T$, we have $b>0$.
\end{itemize} 
Then $b$ is even and the following hold with coefficients in a field $\bk$.

(a) The class $s$ is invertible and makes Rabinowitz Floer homology $SH_*(\p V)$ a free and finitely generated module over the ring of Laurent polynomials $\bk[s,s^{-1}]$. 

(b) This module is (not necessarily freely) generated by 
classes that can be expressed as linear combinations of
Reeb orbits of period at most $T$. 

(c) $SH_*(\p V)$ and $SH_*(V)$ are finitely generated as $\bk$-algebras. 
\end{theorem}

\begin{remark}
In~\cite{Uebele-products} the result is stated with $\Z_2$-coefficients, without reference to a trivialization of the square of the canonical line bundle and under the additional hypothesis $\pi_1(\p V)=0$. The hypothesis $\pi_1(\p V)=0$ was  
imposed in order to have well-defined Conley--Zehnder indices independently of the choice of trivialization. 
The extension to $\bk$-coefficients is straightforward using coherent orientations in Floer theory. The restriction to field coefficients is essential because the proof uses the fact that $\bk[s,s^{-1}]$ is a principal ideal domain. 
\end{remark}

\begin{corollary}\label{cor:Uebele}
Let $M$ be a closed $n$-dimensional Riemanian manifold all of whose geodesics are closed of (not necessarily primitive) length $\ell$. Suppose that all closed geodesics have index $>3-n$. Let $s\in\wh{H}_{n+b}\Lambda$ be the class of a principal closed geodesic, corresponding to the maximum on the Bott manifold of Reeb orbits of period $\ell$. Then $b$ is even and the following hold with coefficients in a field $\bk$.

(a) The class $s$ is invertible and makes Rabinowitz loop homology $\wh{H}_*\Lambda$ a free and finitely generated module over the ring of Laurent polynomials $\bk[s,s^{-1}]$. 

(b) This module is (not necessarily freely) generated by 
classes that can be expressed as linear combinations of closed geodesics of length at most $\ell$. 

(c) In the splitting $\wh{H}_*\Lambda=\ol H_*\Lambda\oplus\ol H^{1-*}\Lambda$, the summand $\ol H_*\Lambda$ inherits the structure of a free and finitely generated $\bk[s]$-module, and $\ol H^{1-*}\Lambda$ inherits the structure of a free and finitely generated $\bk[s^{-1}]$-module.

(d) $\wh{H}_*\Lambda$, $\ol H_*\Lambda$ and $\ol H^{1-*}\Lambda$ are finitely generated as $\bk$-algebras. 
\end{corollary}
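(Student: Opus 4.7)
The plan is to reduce everything to Uebele's Theorem~\ref{thm:Uebele} applied to $W = D^*M$, and then to track how the splitting from Theorem~\ref{thm:splitting} interacts with the resulting $\K[s,s^{-1}]$-module structure. First I would verify the hypotheses of Theorem~\ref{thm:Uebele} for $W = D^*M$: the contact boundary $\p W = S^*M$ carries the canonical Liouville form, under which the Reeb flow is the geodesic flow and hence is $\ell$-periodic by assumption; the Chern class satisfies $c_1(D^*M) = 0$ via the canonical almost complex structure on $T^*M$; and the Conley--Zehnder index of a closed Reeb orbit on $S^*M$ agrees with the Morse index of the corresponding closed geodesic via the standard dictionary for cotangent bundles, so the hypothesis ``closed geodesics have index $> 3-n$'' translates directly to the Reeb-orbit hypothesis in Theorem~\ref{thm:Uebele}. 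Invoking that theorem together with the identification $\wc{H}_*\Lambda = SH_*(S^*M)$ gives parts (a), (b), and the parity of $b$ at once.

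Next I would establish part (c). By Theorem~\ref{thm:m-components}(1)--(2), $\ol{H}_*\Lambda$ is a unital subring of $\wc{H}_*\Lambda$ and $\ol{H}^{1-*}\Lambda$ is a (non-unital) subring. Since $s$ is represented by a Morse--Bott cycle at positive action $\ell$, it lies in the image of $\iota$, hence in $\ol{H}_*\Lambda$; conversely its inverse $s^{-1}$ sits at the ``negative action'' level $-\ell$ and therefore lies in $\ol{H}^{1-*}\Lambda$, as can be checked via the filtered Poincar\'e duality of Theorem~\ref{thm:filtered-PD}. It follows that multiplication by $s$ preserves $\ol{H}_*\Lambda$ and multiplication by $s^{-1}$ preserves $\ol{H}^{1-*}\Lambda$, endowing them with the required module structures. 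To establish freeness I would use (b) to pick Morse--Bott generators $e_1, \dots, e_r$ of $\wc{H}_*\Lambda$ over $\K[s,s^{-1}]$; since by hypothesis every closed geodesic on $M$ has length in $\{0,\ell\}$, these generators can be taken to have action in $[0,\ell]$ and thus to lie in $\ol{H}_*\Lambda$. The splitting of rings $\K[s,s^{-1}] = \K[s] \oplus s^{-1}\K[s^{-1}]$ then induces
$$
   \wc{H}_*\Lambda \;=\; \bigoplus_{i=1}^r \K[s]\cdot e_i \;\oplus\; \bigoplus_{i=1}^r s^{-1}\K[s^{-1}]\cdot e_i ,
$$
and an action-filtration comparison through Theorem~\ref{thm:filtered-PD} identifies the first summand with $\ol{H}_*\Lambda$ and the second with $\ol{H}^{1-*}\Lambda$, which yields (c).

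Part (d) is then an immediate consequence: $\wc{H}_*\Lambda$ is $\K$-generated by $s, s^{-1}, e_1,\dots,e_r$; the subring $\ol{H}_*\Lambda$ is $\K$-generated by $s, e_1,\dots,e_r$; and $\ol{H}^{1-*}\Lambda$ is $\K$-generated by $s^{-1}, s^{-1}e_1,\dots,s^{-1}e_r$.

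The main obstacle will be in part (c), specifically the precise matching of the algebraic decomposition $\K[s,s^{-1}] = \K[s] \oplus s^{-1}\K[s^{-1}]$ with the topological splitting $\wc{H}_*\Lambda = \ol{H}_*\Lambda \oplus \ol{H}^{1-*}\Lambda$. One must verify that the Morse--Bott generators coming out of Uebele's theorem can all be placed at non-negative action, and that no ``negative powers of $s$ times a generator'' accidentally acquire non-negative action and slip into $\ol{H}_*\Lambda$; these are precisely the checks made possible by the action filtration of Theorem~\ref{thm:filtered-PD}, whose careful use is the technical heart of the argument.
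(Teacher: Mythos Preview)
Your approach to parts (a), (b), and (d) matches the paper's and is fine. The verification of $b>0$ is left implicit in your write-up; the paper notes $b=n-1+\lambda$ with $\lambda>3-n$, hence $b>2$.

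There is a genuine gap in your argument for (c). You invoke part (b) to choose Morse--Bott generators $e_1,\dots,e_r$, but part (b) explicitly says these generate the module \emph{not necessarily freely}, so you cannot write $\wc{H}_*\Lambda=\bigoplus_i\K[s,s^{-1}]\cdot e_i$ as a direct sum. You would need an honest free basis, and there is no a priori reason a free basis can be chosen inside the action window $[0,\ell]$. Second, your assertion that generators with action in $[0,\ell]$ automatically lie in $\ol H_*\Lambda$ is false at action $0$: the constant loops split as $H^{-*}(S^*M)=\ol H_{n+*}(M)\oplus \ol H^{1-n-*}(M)$, and the second summand sits in $\ol H^{1-*}\Lambda$, not $\ol H_*\Lambda$. (Your parenthetical ``every closed geodesic on $M$ has length in $\{0,\ell\}$'' is also incorrect---the flow is $\ell$-periodic but geodesics can have smaller primitive periods---though this does not affect the conclusion that generators live in $[0,\ell]$.)

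The paper avoids both problems by not seeking an explicit free basis. Instead it shows directly that $\ol H_*\Lambda$ equals the $\K[s]$-submodule generated by $V_*:=\ol H^{n-*}(M)\oplus H_*^{(0,\ell]}\Lambda$ (the delicate inclusion uses the action argument you anticipate: if $s^ka\in\ol H_*\Lambda$ with $a\in H_*^{(0,\ell]}\Lambda$ then $k\geq -1$, and the $k=-1$ case lands in the constant part). This exhibits $\ol H_*\Lambda$ as a finitely generated $\K[s]$-module which is torsion free because it embeds in a free $\K[s,s^{-1}]$-module; since $\K[s]$ is a PID, freeness follows from the structure theorem, with no explicit basis required.
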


\begin{remark}
Although the splitting $\wh{H}_*\Lambda=\ol H_*\Lambda\oplus\ol H^{1-*}\Lambda$ is in general not canonical, the statement and proof of the Corollary should be understood as being valid for {\em any} splitting. Indeed, two splittings differ only in the 0-energy sector of constant loops, which is finite dimensional. 
\end{remark}

\begin{remark}
As shown by the explicit calculations from Example~\ref{ex:spheres} below, one outstanding application of Uebele's theorem is that for manifolds with periodic geodesic flow either of the loop products, homological or cohomological, can be derived from the other one.  
\end{remark}

\begin{proof}[Proof of Corollary~\ref{cor:Uebele}] 
We apply Theorem~\ref{thm:Uebele} to the unit disk cotangent bundle $V=D^*M$. Note that $2c_1(D^*M)=0$ and the square of the canonical line bundle carries a canonical trivialization determined by its canonical structure of Lagrangian fibration. In this trivialization the Conley--Zehnder index equals the Morse index, see also Appendix~\ref{sec:grading1}, and all closed geodesics have Conley--Zehnder index ($=$ Morse index) 
$>3-n$ by assumption (this is automatic if $n>3$ because the Morse index is $\ge 0$, and excludes the case $n=1$, hence $S^*M$ is necessarily connected).
Moreover, $b=n-1+\lambda>0$, where $\lambda>3-n$ is the Morse index of a principal closed geodesic. So the hypotheses of Theorem~\ref{thm:Uebele} are satisfied, and parts (a) and (b) follow immediately. 

Let $\co$ be the orientation local system on $M$. For part (c), we refer to the Gysin sequence~\eqref{eq:Gysin_sequence} and consider the induced splitting 
\begin{equation}\label{eq:splitting-constant}
H^{n-*}(S^*M) = \ol H^{n-*}(M)\oplus \ol H^{1-*}(M;\co)
\end{equation}
on the constant loops, where the first summand $\ol H^{n-*}(M)\simeq \ol H_*(M;\co)$ is contained in $\ol H_*\Lambda$ and the second one in $\ol H^{1-*}\Lambda$. Denote by $H_*^{(0,\ell]}\Lambda\subset\wh{H}_*\Lambda$ the subspace generated by the positively traversed Reeb orbits of period (or action) in $(0,\ell]$. We claim that $\ol H_*\Lambda$ is the $\bk[s]$-submodule of $\wh{H}_*\Lambda$ generated by the $\bk$-vector space 
$$
   V_* := \ol H^{n-*}(M)\oplus H_*^{(0,\ell]}\Lambda.
$$
To see this we use that, by construction of the splitting, 
the summand $\ol H_*\Lambda$ is generated by the positively traversed Reeb orbits and the constant orbits generating $\ol H^{n-*}(M)$, while $\ol H^*\Lambda$ is generated by the negatively traversed Reeb orbits and the constant orbits generating 
$\ol H^*(M;\co)$. 
In particular, $s\in\ol H_*\Lambda$. 
Since $\ol H_*\Lambda\subset\wh{H}_*\Lambda$ is a subring, this proves the inclusion $\bk[s]V_*\subset \ol H_*\Lambda$. 
For the converse inclusion, we use an argument from~\cite{Uebele-products}. By Theorem~\ref{thm:Uebele}(b), the $\bk$-vector space $\wh{H}_*\Lambda$ is generated by elements of the form $s^ka$ with $k\in\Z$ and $a\in H_*^{(0,\ell]}\Lambda$. If $s^ka\in\ol H_*\Lambda$, then by action reasons we must have $k\geq -1$. If $k\geq 0$, then $s^ka\in\bk[s]H_*^{(0,\ell]}\Lambda\subset\bk[s]V_*$. If $k=-1$, then $s^ka$ must belong to the constant part $\ol H^{n-*}(M)\subset V_*$ and the claim is proved.

By the claim, $\ol H_*\Lambda$ is finitely generated as a $\bk[s]$-submodule. It is torsion free because $\wh{H}_*\Lambda$ is torsion free as a $\bk[s,s^{-1}]$-module. Since $\bk[s]$ is a principal ideal domain, it follows that the $\bk[s]$-module $\ol H_*\Lambda$ is free. This proves the assertion on $\ol H_*\Lambda$. An analogous argument gives the assertion on $\ol H^{1-*}\Lambda$, which is the $\bk[s^{-1}]$-submodule of $\wh{H}_*\Lambda$ generated by the $\bk$-vector space 
$$
V^* := H^{1-*}_{(0,\ell]}\Lambda\oplus \ol H^{1-*}(M;\co).
$$
Part (d) is an immediate consequence of part (c). 
\end{proof}

Corollary~\ref{cor:Uebele} requires neither that the geodesics have the same primitive length, nor that they are simple. In order to describe the algebra structure in examples, suppose now that all geodesics are closed with the same primitive length $\ell$. Then the spaces $V_*$ and $V^*$ in the proof of Corollary~\ref{cor:Uebele} can be replaced by $\ol H_{n+*}(M;\co)\oplus s\,\ol H^{1-n-*}(M;\co)$ and $s^{-1}\ol H_{n+*}(M;\co)\oplus \ol H^{1-n-*}(M;\co)$, respectively, and we extract from the proof the following statements:
\begin{itemize}
\item The $\bk[s,s^{-1}]$-module $\wh{H}_*\Lambda$ is (not necessarily freely) generated by 
$$
   H^{-*}(S^*M) = \ol H_{n+*}(M;\co)\oplus \ol H^{1-n-*}(M;\co).
$$ 
\item The $\bk[s]$-submodule $\ol H_{n+*}\Lambda$ is (not necessarily freely) generated by 
$$
   \wt V_* := \ol H_{n+*}(M;\co)\oplus s\,\ol H^{1-n-*}(M;\co).
$$
\item The $\bk[s^{-1}]$-submodule $\ol H^{1-n-*}\Lambda$ is (not necessarily freely) generated by 
$$
   \wt V^* := s^{-1}\ol H_{n+*}(M;\co)\oplus \ol H^{1-n-*}(M;\co).
$$
\end{itemize}
Let us introduce the degree shifted algebra
$$
   \wh{\H}_*\Lambda := \wh{H}_{*+n}\Lambda = \ol H_{*+n}\Lambda\oplus \ol H^{1-n-*}\Lambda,
$$
graded by the shifted degree $|\gamma|=\CZ(\gamma)-n$. Then the product has degree zero and is graded commutative, and the class $s$ above has degree
$$
   |s| = b = n-1+\lambda>0, 
$$
where $\lambda$ is the Morse index of a principal closed geodesic. 

\begin{example}[Spheres]\label{ex:spheres}
For the loop space of $S^n$ the loop product and the cohomology product have been computed in~\cite{Cohen-Jones-Yan} and~\cite{Goresky-Hingston}, respectively. Corollary~\ref{cor:Uebele} provides a simple way to derive one product from the other in the case $n\geq 3$. For this, note first that in this case each closed geodesic has index at least $\lambda=n-1>3-n$, so Corollary~\ref{cor:Uebele} is applicable with the generator $s$ of shifted degree 
$$
   |s|=n-1+\lambda=2n-2.
$$
Now we distinguish two cases. 

{\em The case $n\geq 3$ odd. }
In this case the $\bk[s,s^{-1}]$-module $\wh{\H}_*\Lambda S^n$ is generated by the graded vector space
$$
   H^{-*}(S^*S^n) = {\rm span}_\bk\{1,a,b,ab\}
$$
in degrees 
$$
   |1|=0,\quad |a|=-n,\quad |b|=1-n,\quad |ab|=1-2n,
$$
where $1,a$ generate the first summand and $b,ab$ the second one in the splitting~\eqref{eq:splitting-constant}. For degree reasons there can be no nontrivial relations involving different powers of $s$ in ${\rm span}_\bk\{1,a,b,ab\}\otimes_\bk\bk[s,s^{-1}]$ and we conclude that
$$
   \wh{\H}_*\Lambda S^n = {\rm span}_\bk\{1,a,b,ab\}\otimes_\bk\bk[s,s^{-1}]
$$ 
as a $\bk[s,s^{-1}]$-module. The preceding discussion then gives
$$
   \H_{*}\Lambda S^n = {\rm span}_\bk\{1,a,u,au\}\otimes_\bk\bk[s],\qquad u:=sb,\quad |u|=n-1
$$ 
as a $\bk[s]$-module, and
$$
   \H^{1-2n-*}\Lambda S^n = s^{-1}{\rm span}_\bk\{1,a,u,au\}\otimes_\bk\bk[s^{-1}]
$$ 
as a $\bk[s^{-1}]$-module. Here the reduced (co)homologies are the same because $\chi(S^n)=0$.
To determine the ring structure we use an input from~\cite{Cohen-Jones-Yan} (see also Example~\ref{ex:projective-spaces} above), where it is shown that the ring structure on $\H_{*}\Lambda S^n$ has only one additional relation $u^2=s$, hence
$$
   \H_{*}\Lambda S^n = \bk[a,u]/\la a^2\ra = \Lambda[a,u]
$$ 
as a $\bk$-algebra. Since any relation in $\wh{\H}_*\Lambda S^n$ gives rise under multiplication by a large negative power of $s$ to a relation in $\H_{*}\Lambda S^n$ and vice versa, it follows that 
$$
   \wh{\H}_*\Lambda S^n =\Lambda[a,u,u^{-1}] 
$$ 
as a $\bk$-algebra. This in turn implies that
$$
   \H^{1-2n-*}\Lambda S^n = u^{-1}\Lambda[a,u^{-1}] 
$$ 
as a $\bk$-algebra. Since the classes $u^{-1},u^{-1}a$ correspond to the constant loops, the cohomology relative to the constant loops becomes
$$
   \H^{1-2n-*}(\Lambda S^n,\Lambda_0S^n) = u^{-2}\Lambda[a,u^{-1}] 
$$ 
as a $\bk$-algebra, in accordance with~\cite{Goresky-Hingston}. 

{\em The case $n\geq 3$ even. }  
If $\bk$ has characteristic $2$ the (co)homology rings are exactly as in the case $n$ odd. Suppose now that $\bk$ has characteristic $\neq 2$.  
Then the $\bk[s,s^{-1}]$-module $\wh{\H}_*\Lambda S^n$ is generated by the graded vector space
$$
   H^{-*}(S^*S^n) = {\rm span}_\bk\{1,c\}
$$
in degrees 
$$
   |1|=0,\quad |c|=1-2n,
$$
where $1$ generates the first summand and $c$ the second one in the splitting~\eqref{eq:splitting-constant}. Again there can be 
no nontrivial relations and we conclude that
$$
   \wh{\H}_*\Lambda S^n = {\rm span}_\bk\{1,c\}\otimes_\bk\bk[s,s^{-1}]
$$ 
as a $\bk[s,s^{-1}]$-module, 
$$
   \ol\H_{*}\Lambda S^n = {\rm span}_\bk\{1,b\}\otimes_\bk\bk[s],\qquad b:=sc,\quad |b|=-1
$$ 
as a $\bk[s]$-module, and
$$
   \ol\H^{1-2n-*}\Lambda S^n = s^{-1}{\rm span}_\bk\{1,b\}\otimes_\bk\bk[s^{-1}]
$$ 
as a $\bk[s^{-1}]$-module. Note that the {\em non-reduced} loop space homology 
$$
   \H_{*}\Lambda S^n = \Bigl({\rm span}_\bk\{1,b\}\otimes_\bk\bk[s]\Bigr)\oplus \bk a,\qquad |a|=-n
$$ 
is {\em not} free as a $\bk[s]$-module because $sa=0$. 
The ring structure is again determined in~\cite{Cohen-Jones-Yan} (see Example~\ref{ex:projective-spaces} above), where it is shown that 
$$
   \H_{*}\Lambda S^n = \bk[a,b,s]/\la a^2,ab,b^2,2sa\ra
$$ 
as a $\bk$-algebra. (Here the factor $2$ can be dropped because it is invertible in $\bk$, but the homology as written also gives the correct answer for $\bk$ replaced by $\Z$.) From this we again deduce the $\bk$-algebras
\begin{gather*}
   \wh{\H}_*\Lambda S^n = \bk[b,s,s^{-1}]/\la b^2\ra = \Lambda[b,s,s^{-1}],
   \cr
   \ol\H_{*}\Lambda S^n = \Lambda[b,s] 
   , \qquad
   \ol\H^{1-2n-*}\Lambda S^n = s^{-1} \Lambda[b,s^{-1}] 
   , \cr
   \H^{1-2n-*}(\Lambda S^n,\Lambda_0S^n) =  s^{-2}\Lambda[b,s^{-1}] 
   ,
\end{gather*}
the last one in accordance with~\cite{Goresky-Hingston}. Here we use the notation $\Lambda[b,s,s^{-1}]$ to designate the free commutative algebra with generators $b,s,s^{-1}$, with $b$ of odd degree and $s$ of even degree, and similarly for $\Lambda[b,s]$ and $\Lambda[b,s^{-1}]$.
\end{example}

%%%
\subsection{String point invertibility and resonances for CROSS}\label{sec:string-point-inv}
%%%

Example~\ref{ex:spheres} can be generalized to all {\em compact rank one symmetric spaces\break (CROSS)}, i.e., the projective spaces $\R P^n$, $\C P^n$, $\H P^n$, and the Cayley plane $CaP^2$. This is carried out in joint work with E.~Shelukhin~\cite{CHOS-Cross} where we compute for each CROSS the Rabinowitz loop homology ring together with its BV operator, and thus by restriction the BV algebra structures on its loop homology and loop cohomology. Moreover, we apply these computations to the following two questions. 

{\em String point invertibility. } 
Consider a closed manifold $M$ and denote by $\{\cdot,\cdot\}$ the Chas-Sullivan loop bracket on $H_*\Lambda$.\footnote{This is defined by 
$\{A,B\}=(-1)^{|A|}\Delta(AB) - (-1)^{|A|}(\Delta A)B - A \Delta B$, with $\Delta$ the BV-operator on $\H_*\Lambda=H_{*+n}\Lambda$ (cf.~\cite{CHOS-Cross}).
}
For any given class $a\in H_*\Lambda$, consider the operator $P_a:H_*(M)\to H_{*+\deg\, a -n+1}(M)$ defined by 
$$
   P_a=\ev_* \circ \{\cdot,a\}\circ i_*,
$$
with $i:M\hookrightarrow \Lambda$ the inclusion of constant loops, and $\ev:\Lambda\to M$ the evaluation. 
We call $M$ \emph{string point invertible} if there exists a coefficient field $\bk$ such that $M$ is $\bk$-orientable, and a collection of classes $a_1,\dots,a_N\in H_*\Lambda$ such that 
$$
   [M]=P_{a_N}\circ\dots\circ P_{a_1}([pt]).
$$
This property was introduced by Shelukhin~\cite{Shelukhin19}, who used it to prove the following conjecture of Viterbo in the case of string point invertible manifolds: {\em The spectral norm of the pair consisting of the zero-section inside $T^*M$ and its image under a Hamiltonian diffeomorphism supported in the unit disc bundle is uniformly bounded.} Shelukhin proved that spheres are string point invertible, and also that string point invertibility is preserved under taking products. Generalizing this, we have

\begin{theorem}\cite{CHOS-Cross}\label{thm:string-point} 
(a) Let $M$ be a CROSS modelled on $\C P^d$, $\H P^d$, or $CaP^2$ (set $d=2$ in this last case). Then $M$ is string point invertible if and only if the Euler characteristic $\chi(M)=d+1$ is prime (with coefficient field $\bk=\Z/(d+1)\Z)$. 

(b) Let $M$ be a CROSS modelled on $\R P^n$, $n\ge 3$. Then $M$ is not string point invertible with $\Z/2$-coefficients.   
\end{theorem}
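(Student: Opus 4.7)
The plan is to reduce string point invertibility for each CROSS to an explicit reachability problem inside the BV algebra structure on $\wc{H}_*\Lambda$ computed in~\cite{CHOS-Cross}, and then to exploit the finite structure imposed by Corollary~\ref{cor:Uebele}. Since every CROSS has all geodesics closed of a common primitive length, that corollary presents $\wc{H}_*\Lambda$ as a finitely generated free $\K[s,s^{-1}]$-module, with $H_*\Lambda$ the $\K[s]$-summand. The BV operator $\Delta$ coming from the $S^1$-action on $\Lambda$ is computed in~\cite{CHOS-Cross} on the small set of module generators, so the loop bracket $\{X,Y\} = (-1)^{|X|}\Delta(X\bullet Y) - (-1)^{|X|}(\Delta X)\bullet Y - X\bullet \Delta Y$ and hence the operator $P_a = \ev_*\circ\{\cdot,a\}\circ i_*$ become explicit polynomial expressions in the generators of $a$. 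String point invertibility then amounts to the statement that $[M]\in H_n(M;\K)$ lies in the $\K$-linear span of all iterated evaluations $P_{a_N}\circ\cdots\circ P_{a_1}([pt])$.

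For the sufficiency direction of part (a), when $\chi(M) = d+1$ is prime and $\K = \Z/(d+1)$, I would construct a short sequence $a_1,\dots,a_N$ chosen from the BV generators of $\wc{H}_*\Lambda$ for which a direct computation in the explicit presentation of~\cite{CHOS-Cross} yields $P_{a_N}\cdots P_{a_1}([pt]) = [M]$. The point is that over such $\K$ the obstruction from Corollary~\ref{cor:Tamanoi}, namely the ideal $\K\chi(M)[q]$, collapses, freeing the bracket-evaluation cycle to span all of $H_*(M;\K)$. For the necessity direction, I would show that when $\chi(M) = d+1$ is composite, for any prime $p\mid d+1$ and $\K = \Z/p$ the orbit of $[pt]$ under all compositions $P_{a_N}\cdots P_{a_1}$ is contained in a proper $\K$-subspace of $H_*(M;\K)$ avoiding $[M]$. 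The obstruction is encoded in the action of $\Delta$ on the point ideal: modulo $p$ the structure constants of the BV algebra degenerate so that every bracket factors through a proper submodule, and $\ev_*$ of this submodule does not contain $[M]$.

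For part (b), on $\R P^n$ with $n\ge 3$ and $\K = \Z/2$, I would exhibit an $\ev_*$-invariant subalgebra $A\subset H_*\Lambda\R P^n$ that contains $i_*[pt]$, is stable under $\{\cdot,a\}$ for every $a\in H_*\Lambda\R P^n$, and whose image under $\ev_*$ misses $[M]$. The natural candidate is the subalgebra generated by the classes supported on loops whose homotopy class lies in the index-two subgroup of $\pi_1(\R P^n) = \Z/2$, equivalently loops that lift to closed loops in the universal cover $S^n$. The BV algebra computation of~\cite{CHOS-Cross} and the explicit form of the generators should show that $A$ is stable under $\Delta$ and under multiplication by all generators, hence under all brackets, and that $\ev_*$ sends $A$ to the subring generated by $[pt]$ together with lower-dimensional classes, in particular missing $[M]$.

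The main obstacle I anticipate is the necessity direction of part (a): identifying a uniform algebraic invariant of the BV algebra across the three families $\C P^d$, $\H P^d$, $CaP^2$ that cleanly obstructs reaching $[M]$ whenever $d+1$ is composite. Once that invariant is pinned down, the sufficiency in part (a) and the invariant-subalgebra construction in part (b) both reduce to finite manipulations in the explicit algebras of~\cite{CHOS-Cross}.
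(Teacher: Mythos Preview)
This theorem is not proved in the present paper: it is stated with a citation to the companion paper~\cite{CHOS-Cross}, and no argument is given here. So there is no proof in this paper to compare your proposal against. That said, your outline has two genuine gaps worth flagging.

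For the necessity direction of (a), you propose to show failure over $\K=\Z/p$ for primes $p\mid d+1$. But string point invertibility asks for the existence of \emph{some} field $\K$ over which $M$ is $\K$-orientable and the chain $P_{a_N}\cdots P_{a_1}([pt])=[M]$ can be realized. Since $\C P^d$, $\H P^d$, $CaP^2$ are simply connected and hence orientable over every field, ruling out the finitely many fields $\Z/p$ with $p\mid d+1$ is not enough: you must obstruct every field, including $\Q$ and all $\Z/p$ with $p\nmid d+1$. Your sketch does not address this, and the mechanism you invoke (degeneration of structure constants modulo $p$) is specific to primes dividing $d+1$.

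For (b), the proposed invariant subalgebra $A$---classes supported on contractible loops in $\Lambda\R P^n$---is not stable under $\{\cdot,a\}$ for arbitrary $a$. The loop product respects the $\pi_1$-grading multiplicatively, so if $X\in A$ lies in the trivial component and $a$ lies in the nontrivial component of $\Lambda\R P^n$, then $X\bullet a$ and hence $\{X,a\}$ lie in the nontrivial component, outside $A$. You would need a different invariant, or an argument that after applying $\ev_*$ the contributions from the nontrivial component still miss $[M]$; neither is supplied.
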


{\em Resonances. }
Consider a closed Riemannian manifold $M$ and fix a coefficient ring $R$. To (co)homology classes $X\in H_k(\Lambda M)$ and $x\in H^k(\Lambda M)$ we associate their degrees $\deg(X)=k$, $\deg(x)=k$ and critical levels $\Cr(X)$, $\Cr(x)$ as defined in \S\ref{sec:crit-levels}. We say that $M$ is {\em resonant} with $R$-coefficients if there exists a constant $\ol\alpha>0$ such that 
$$
   \deg(X) - \ol\alpha\,\Cr(X)\quad\text{and}\quad \deg(x)-\ol\alpha\,\Cr(x)
$$
are uniformly bounded for all $X\in H_*(\Lambda M;R)$ and $x\in H^*(\Lambda M;R)$.  
This property is introduced in~\cite{Hingston-Rademacher} and its implications for indices and lengths of closed geodesics are discussed. Moreover, it is proved in~\cite{Hingston-Rademacher} that spheres of dimension at least $3$ are resonant with field coefficients. Generalizing this, we have

\begin{theorem}\cite{CHOS-Cross}\label{thm:resonance}
The string point invertible CROSS from the statement of Theorem~\ref{thm:string-point}(a) are resonant with coefficients in the field $\Z/(d+1)\Z$. 
\end{theorem}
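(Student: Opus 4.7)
The plan is to deduce resonance from the $\K[s,s^{-1}]$-module structure on extended loop homology that we already have in hand, refined by the explicit ring computations carried out in the companion paper~\cite{CHOS-Cross} for each family of CROSS. Since the string point invertible CROSS all have Euler characteristic $d+1$ prime and we work with $\K=\Z/(d+1)\Z$, we have $\chi(M)=0$ in $\K$, so $\ol H_*\Lambda=H_*\Lambda$ and the Morse index growth on iterates of a principal geodesic is regular enough that Corollary~\ref{cor:Uebele} applies with $s$ the class of a principal closed geodesic, having shifted degree $|s|=n-1+\lambda$ and critical level $\Cr(s)=\ell$.

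First I would invoke Corollary~\ref{cor:Uebele} to write
\[
   \wc{\H}_*\Lambda M \;=\; \K[s,s^{-1}]\otimes_\K V,
\]
where $V$ is a finite-dimensional graded $\K$-vector space of generators coming from constant loops and from Morse--Bott classes of closed geodesics of length $\le\ell$. The degree and critical level of each monomial $s^k a$ (with $a$ a homogeneous generator) are $|s^k a|=k|s|+|a|$ and at most $k\ell+\Cr(a)$ by Theorem~\ref{thm:critical-values-checkH}. With $\bar\alpha:=|s|/\ell=(n-1+\lambda)/\ell$, one gets the bound
\[
   \deg(s^k a)-\bar\alpha\,\Cr(s^k a)\;\ge\;|a|-\bar\alpha\,\Cr(a),
\]
and the right-hand side is uniformly bounded because $a$ ranges over the finite generating set for $V$. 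This already yields one half of the resonance condition for monomials, and the Poincar\'e duality isomorphism of Theorem~\ref{thm:PD} (together with the sign-reversal of filtrations under $PD$ in Theorem~\ref{thm:filtered-PD}) transports the argument to $\wc H^*\Lambda$.

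Second, I would upgrade the inequality $\Cr(s^k a)\le k\ell+\Cr(a)$ to an equality in the relevant range, using the explicit CROSS-specific ring computations of~\cite{CHOS-Cross}. Because every CROSS modelled on $\C P^d$, $\H P^d$, or $CaP^2$ has a totally degenerate action spectrum concentrated at integer multiples of $\ell$, every non-zero homology class $X$ has a well-defined top-action monomial in its decomposition relative to the generating set of $V$; the ring structure produced in~\cite{CHOS-Cross} shows that this top-action monomial cannot be killed by cancellation when $\chi(M)$ is prime in $\K$, so $\Cr(X)$ is exactly $k\ell+\Cr(a)$ for the leading monomial $s^ka$. Combining this with the degree identity above gives two-sided uniform bounds on $\deg(X)-\bar\alpha\,\Cr(X)$ and hence resonance.

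The main obstacle is precisely the step of ruling out level-cancellation: one needs to know that when two monomials at the same action level appear in the decomposition of $X$, their sum does not drop to a lower filtration level. This is where the primality hypothesis on $\chi(M)=d+1$ really enters, via the ring computations of~\cite{CHOS-Cross}: over $\K=\Z/(d+1)\Z$ the corresponding extended loop homology rings are free modules over an $s$-Laurent-polynomial ring with no torsion-induced collapses, so the leading-monomial argument goes through cleanly. Everything else---the module structure, the filtration compatibility of products, and the passage between homology and cohomology---is furnished by Corollary~\ref{cor:Uebele}, Theorem~\ref{thm:critical-values-checkH}, and Theorem~\ref{thm:filtered-PD}.
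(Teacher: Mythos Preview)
The paper does not contain a proof of this theorem: it is merely announced here as a result of the companion paper~\cite{CHOS-Cross}, so there is no proof in this paper to compare against. Your outline is therefore being measured against an absent argument.

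That said, your strategy is broadly the natural one in the spirit of this paper: pass to $\wc{\H}_*\Lambda$, use the free $\K[s,s^{-1}]$-module structure from Corollary~\ref{cor:Uebele} together with the filtered splitting of Theorem~\ref{thm:filtered-PD}, and convert a uniform bound on $\deg - \bar\alpha\,\Cr$ for $\wc{\H}_*\Lambda$ into the two bounds for $H_*\Lambda$ and $H^*\Lambda$ via the splitting (your appeal to Poincar\'e duality accomplishes the same thing). The half of the estimate that comes from $\Cr(s^ka)\le k\ell+\Cr(a)$ is fine and follows from Theorem~\ref{thm:critical-values-checkH} exactly as you say.

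The genuine gap is the other half, and you name it but do not close it. What you need is a \emph{lower} bound on $\Cr(X)$ for a general $X$, i.e., that a nonzero linear combination of basis monomials $s^{j}a$ cannot drop to an action level strictly below that of its leading monomial. Freeness of the $\K[s,s^{-1}]$-module does not give this by itself: it is a statement about the global module, not about the associated graded of the length filtration. What one really needs is that the chosen generators remain linearly independent in \emph{level} homology at each critical level, equivalently that the map from the associated graded of the length filtration to $\wc\H_*\Lambda$ is injective on each graded piece. Your sentence ``the ring structure produced in~\cite{CHOS-Cross} shows that this top-action monomial cannot be killed by cancellation when $\chi(M)$ is prime'' is exactly the assertion that needs proof, and it is not a formal consequence of anything stated in the present paper; it requires the explicit level-by-level computations carried out in~\cite{CHOS-Cross}. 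So your proposal is a correct reduction, but the substantive step lies entirely in the companion paper and is not supplied here.
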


%%%
\subsection{Index growth} \label{sec:index}
%%%

Consider the following result on the index growth of an iterated closed geodesic.

\begin{theorem}[Goresky-Hingston~{\cite[Proposition~6.1]{Goresky-Hingston}}] \label{thm:iteration-Bott} 
Let $\gamma$ be a closed geodesic with index $\lambda$ and (transverse) nullity $\nu$ on a manifold of dimension $n$. Let $\lambda_m$ and $\nu_m$ denote the index and nullity of the $m$-fold iterate $\gamma^m$. Then $\nu_m\le 2(n-1)$ and 
\begin{equation} \label{eq:iteration-index}
|\lambda_m-m\lambda|\le (m-1)(n-1),
\end{equation}
\begin{equation}\label{eq:iteration-index-plus-nullity}
|\lambda_m+\nu_m-m(\lambda+\nu)|\le (m-1)(n-1).
\end{equation}
\end{theorem}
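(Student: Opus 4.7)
The plan is to deduce both the nullity bound and the two index bounds from Bott's iteration theory for the Morse index of closed geodesics. The starting point is the linearised Poincar\'e return map $P\in\mathrm{Sp}(2(n-1),\R)$ of $\gamma$, acting on the transverse symplectic normal space of real dimension $2(n-1)$. The nullity bound $\nu_m\le 2(n-1)$ is then immediate: $\nu_m$ equals $\dim\ker(P^m-I)$, which is a subspace of a $2(n-1)$-dimensional space.

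For the index bounds, I would invoke Bott's decomposition of the Morse index along the unit circle. Bott constructs two $\Z_{\ge 0}$-valued functions $\omega\mapsto\Lambda_\gamma(\omega)$ and $\omega\mapsto N_\gamma(\omega)$ on $\omega\in S^1\subset\C$ (the subscript $\gamma$ distinguishes Bott's index function from the free loop space $\Lambda$), with the properties that $\Lambda_\gamma(1)=\lambda$, $N_\gamma(1)=\nu$, and the iteration formulas
\[
   \lambda_m \;=\; \sum_{\omega^m=1} \Lambda_\gamma(\omega), \qquad \nu_m \;=\; \sum_{\omega^m=1} N_\gamma(\omega)
\]
hold for every $m\ge 1$. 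The crucial additional input is \emph{Bott's inequality}: for every $\omega\in S^1$ one has
\[
   |\Lambda_\gamma(\omega) - \Lambda_\gamma(1)| \le n-1, \qquad |\Lambda_\gamma(\omega) + N_\gamma(\omega) - \Lambda_\gamma(1) - N_\gamma(1)| \le n-1.
\]

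Granted these inputs, the conclusion is a one-line computation:
\[
   \lambda_m - m\lambda \;=\; \sum_{\omega^m=1,\ \omega\ne 1} \bigl(\Lambda_\gamma(\omega) - \Lambda_\gamma(1)\bigr)
\]
is a sum of $m-1$ terms each bounded by $n-1$ in absolute value, which gives \eqref{eq:iteration-index}; the analogous argument applied to the function $\Lambda_\gamma + N_\gamma$ gives \eqref{eq:iteration-index-plus-nullity}.

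The main obstacle is establishing Bott's inequality. In Bott's original approach this is obtained by expressing the Morse index of the $\omega$-twisted boundary value problem as a count of $\omega$-focal points along $\gamma$, and then using Sturm-type comparison to control how this count depends on $\omega$. Equivalently, in the modern symplectic reformulation, the Morse index is realised as a Maslov-type intersection number of the linearised Jacobi flow $t\mapsto\Phi_t\in\mathrm{Sp}(2(n-1),\R)$ with the $\omega$-twisted graph Lagrangian in $\mathrm{Sp}(2(n-1),\R)\times\mathrm{Sp}(2(n-1),\R)$, and the bound $n-1$ reflects the fact that a Lagrangian subspace of the $2(n-1)$-dimensional symplectic normal space has real dimension at most $n-1$, which controls the maximal possible jump of the Maslov index under a one-parameter deformation of the boundary condition.
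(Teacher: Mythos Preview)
Your proof is correct and follows exactly the approach the paper indicates: the paper does not spell out a proof of this theorem but simply remarks that ``these inequalities follow from standard properties of the Bott function $S^1\to\Z$'' and cites \cite{Bott56,Goresky-Hingston}, which is precisely the iteration formula and Bott inequality you invoke. The paper then separately records an alternative derivation via the Liu--Long symplectic generalization (Theorem~\ref{thm:iteration-symplectic}), but for the geodesic statement itself your Bott-function argument is the intended one.
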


These inequalities follow from standard properties of the Bott function $S^1\to \Z$ determined by the linearization of the geodesic flow along $\gamma$, see~\cite{Bott56,Goresky-Hingston}. In the context of the present paper, we wish to explain that~\eqref{eq:iteration-index} and~\eqref{eq:iteration-index-plus-nullity} are dual statements. We proceed as in the previous sections: first generalize each of these inequalities to a symplectic setting, then prove a duality theorem for the generalized statements.

The linearization of the geodesic flow along $\gamma$ determines a path in $\mathrm{Sp}(2(n-1))$ starting at $\mathrm{Id}$, canonically defined up to conjugation. Based on Bott~\cite{Bott56}, Long~\cite{Long-book} developed an index iteration theory for general paths of symplectic matrices, not necessarily obtained as linearizations of geodesic flows. To any path $P:[0,1]\to \mathrm{Sp}(2N)$ such that $P(0)=\mathrm{Id}$ is assigned its \emph{Bott-Long index} 
$$
i(P)\in\Z.
$$
See~\cite[Definitions~5.2.7 and~5.4.1]{Long-book}. (In the notation of Long~\cite{Long-book} we have $i(P)=i_1(P)$.) This is defined to be the Conley-Zehnder index of the concatenation $P\# P(1)\xi$ where $\xi$ is a ``\emph{minus} curve", i.e. a path of the form $\{t\mapsto \exp(tJS)\, : \, t\in[0,\varepsilon]\}$ with $S$ a symmetric negative definite 
matrix and $\varepsilon>0$ small. 
\footnote{As an example, we have $i(P\equiv \mathrm{Id})=-N$.}
The \emph{nullity} of such a path is
$$
\nu(P)=\nu(P(1))=\dim\ker (P(1)-\mathrm{Id}).
$$ 
The key property that is of interest to us regarding the Bott-Long index is that, if $P=P_\gamma:[0,1]\to\mathrm{Sp}(2(n-1))$ is the linearized transverse geodesic flow along a given geodesic $\gamma$, then $i(P_\gamma)$ equals the Morse index of $\gamma$, cf.~\cite[Theorem~7.3.4]{Long-book} and~\cite[Theorem~A]{Bott56}. Similarly, $\nu(P_\gamma)$ equals the nullity of $\gamma$ (in the transverse direction).

To formulate the iteration inequalities for the Bott-Long index, define $P^m:[0,m]\to \mathrm{Sp}(2N)$ by
$$
P^m(t)=P(t-j)P(1)^j,\qquad j\le t\le (j+1),\qquad j=0,\dots,m-1.
$$
If $P=P_\gamma:[0,1]\to\mathrm{Sp}(2(n-1))$ is the linearized transverse geodesic flow along a given geodesic $\gamma$, 
then $P^m$ is the linearized transverse geodesic flow along the $m$-th iterate $\gamma^m$. 

The following generalization of Theorem~\ref{thm:iteration-Bott} is just a reformulation of a result by Liu and Long. It specializes to Theorem~\ref{thm:iteration-Bott} if $P=P_\gamma$ is the linearized transverse geodesic flow along some geodesic $\gamma$. 

\begin{theorem}[Liu-Long~\cite{Liu-Long2000}]\label{thm:iteration-symplectic}
Let $P:[0,1]\to \mathrm{Sp}(2N)$ be a continuous path with $P(0)=\mathrm{Id}$. Then for all $m\ge 1$ we have 
\begin{equation} \label{eq:symplectic-index}
|i(P^m)-m i(P)|\le (m-1)N,
\end{equation}
\begin{equation} \label{eq:symplectic-index+nullity}
\big| (i(P^m)+\nu(P^m)) - m(i(P)+\nu(P)) \big| \le (m-1)N. 
\end{equation}
\end{theorem}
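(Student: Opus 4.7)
The plan is to reduce the assertion to the classical Bott-type iteration formula for the generalized Conley--Zehnder index, together with uniform bounds on certain $\omega$-indices as $\omega$ varies over the unit circle $S^1 \subset \C$. This is essentially the strategy of Liu and Long, which I would follow closely.

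First I would set up the $\omega$-index formalism: for each $\omega \in S^1$ set $\nu_\omega(P) := \dim_\C \ker(P(1) - \omega\,\mathrm{Id})$ and define $i_\omega(P) \in \Z$ to be the $\omega$-twisted Conley--Zehnder index of $P$, constructed by concatenating $P$ with a small ``minus curve'' and counting crossings with the $\omega$-shifted Maslov cycle. For $\omega = 1$ these reduce to $i(P)$ and $\nu(P)$. I would then invoke Bott's iteration formula, in the form due to Long, which states
\begin{equation*}
i(P^m) = \sum_{\omega^m = 1} i_\omega(P), \qquad \nu(P^m) = \sum_{\omega^m = 1} \nu_\omega(P),
\end{equation*}
expressing the iterated indices as sums over $m$-th roots of unity and making the dependence on $m$ transparent.

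The crux of the argument is then the pair of uniform bounds, valid for every $\omega \in S^1$:
\begin{equation*}
|i_\omega(P) - i(P)| \leq N, \qquad \bigl|(i_\omega + \nu_\omega)(P) - (i + \nu)(P)\bigr| \leq N.
\end{equation*}
These are proved via Long's splitting numbers $S^\pm_{P(1)}(\omega)$: the map $\omega \mapsto i_\omega(P)$ is locally constant on $S^1$ away from the unit-circle eigenvalues of $P(1)$, and jumps by amounts controlled by the splitting numbers; a counting argument using the fact that the total algebraic multiplicity of such eigenvalues is at most $2N$ yields the bound $N$ on the total variation from the base point $\omega = 1$. With these bounds in hand the conclusion is immediate: since $\omega = 1$ is always among the $m$-th roots of unity and contributes $i(P)$ with zero deviation, Bott's formula gives
\begin{equation*}
i(P^m) - m\,i(P) = \sum_{\substack{\omega^m = 1 \\ \omega \neq 1}} \bigl( i_\omega(P) - i(P) \bigr),
\end{equation*}
a sum of $m-1$ terms each bounded by $N$ in absolute value, which is~\eqref{eq:symplectic-index}. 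The same argument applied to the second uniform bound yields~\eqref{eq:symplectic-index+nullity}.

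The main obstacle is the splitting-number estimate, which is the combinatorial heart of the argument and its most delicate ingredient. Rather than reproducing Long's detailed analysis in the body of the paper, I would cite~\cite{Long-book} directly for these bounds, since the theorem is explicitly attributed to Liu and Long and the splitting-number calculus is thoroughly developed in that reference.
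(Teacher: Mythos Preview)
Your approach is correct but genuinely different from what the paper does. The paper simply quotes the double inequality
\[
m\bigl(i(P)+\nu(P)-N\bigr)+N-\nu(P)\ \le\ i(P^m)\ \le\ m\bigl(i(P)+N\bigr)-N-\bigl(\nu(P^m)-\nu(P)\bigr)
\]
directly from \cite[Theorem~1.2]{Liu-Long2000} (equivalently \cite[Theorem~10.1.3]{Long-book}) as a black box, and then derives both~\eqref{eq:symplectic-index} and~\eqref{eq:symplectic-index+nullity} by elementary algebra using only the obvious bounds $0\le\nu(P)\le\nu(P^m)\le 2N$. You instead unpack the machinery that Liu--Long themselves use to \emph{prove} that double inequality: the Bott iteration formula $i(P^m)=\sum_{\omega^m=1}i_\omega(P)$ together with the uniform $\omega$-index bounds coming from the splitting-number calculus. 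Your route is more self-contained and explains where the factor $(m-1)N$ really comes from (namely, $m-1$ nontrivial roots each contributing a deviation bounded by $N$), at the cost of invoking a deeper layer of Long's theory. The paper's route is shorter and keeps the index-theoretic input encapsulated in a single cited inequality, which fits the expository role this theorem plays here as background for the duality discussion that follows.
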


\begin{proof}
According to~\cite[Theorem~1.2]{Liu-Long2000} (see also~\cite[Theorem~10.1.3]{Long-book}), the following inequalities hold for all $m\ge 1$:  
\begin{eqnarray*}
\lefteqn{m\big(i(P)+\nu(P)-N\big)+N -\nu(P)}\\
& \le & i(P^m) \\
& \le & m\big(i(P)+N\big)-N-(\nu(P^m)-\nu(P)).
\end{eqnarray*}
Theorem~\ref{thm:iteration-symplectic} follows directly from these using the additional obvious inequalities $2N\ge \nu(P^m)\ge \nu(P)\ge 0$. For example, the second inequality implies 
\begin{equation*}
   i(P^m)-m i(P) \le (m-1)N - (\nu(P^m)-\nu(P)) \le (m-1)N.
\end{equation*}
\end{proof}

\begin{remark}
The proof of Theorem~\ref{thm:iteration-symplectic} ultimately relies on properties of the Bott function determined by the path $P$.
\end{remark}

The key definition for the duality statement is the following. 

\begin{definition} \label{defi:reverse-path}
Given a path $P:[0,1]\to \mathrm{Sp}(2N)$ with $P(0)=\mathrm{Id}$, the \emph{reverse path} $\ol{P}:[0,1]\to \mathrm{Sp}(2N)$ is defined by 
$$
\ol{P}(t)=P(1-t) P(1)^{-1},\qquad t\in[0,1]. 
$$
\end{definition}

Note that $\ol{P}(0)=\mathrm{Id}$. The motivation for the definition is the following. Consider a $1$-periodic compactly supported Hamiltonian $H:\R/\Z \times \R^{2N}\to \R$ and denote $\varphi_t$, $t\in\R$ the flow of the Hamiltonian vector field $X_H^t$, $t\in\R/\Z$, which solves the equation $\frac{d}{dt}\varphi_t=X_H^t\circ \varphi_t$ with initial condition $\varphi_0=\mathrm{Id}$. The $1$-periodicity of the Hamiltonian implies 
$\varphi_t\circ\varphi_1=\varphi_1\circ\varphi_t=\varphi_{1+t}$ for all $t\in\R$.   
The \emph{reverse flow} $\ol{\varphi}_t=\varphi_{-t}$
satisfies the equation $\ol{\varphi}_t\circ\varphi_1=\varphi_{1-t}$, and its linearization satisfies the equation $d\ol{\varphi}_t = d\varphi_{1-t}\circ d\varphi_1^{-1}$.  
Thus, reversing the time direction for a Hamiltonian flow corresponds at the linearized level to reversal of the path as in  Definition~\ref{defi:reverse-path}.

\begin{proposition} \label{prop:index_reverse} Given a path $P:[0,1]\to \mathrm{Sp}(2N)$ with $P(0)=\mathrm{Id}$, the index of the reverse path $\ol{P}$ is 
\begin{equation} \label{eq:index_reverse}
i(\ol{P})=-i(P)-\nu(P).
\end{equation}
\end{proposition}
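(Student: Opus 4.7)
The plan is to establish the formula in two steps: first reduce the reverse path to the matrix-inverse path $P^{-1}(t) := P(t)^{-1}$, and then compute $i(P^{-1})$ by unpacking the defining minus-curve perturbation.

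For the first step, I would show that $\ol{P}$ and $P^{-1}$ are homotopic rel endpoints as paths from $\mathrm{Id}$ to $P(1)^{-1}$ via the explicit homotopy
\begin{equation*}
H(s,t) = P\bigl(s(1-t)\bigr)\,P\bigl(s+(1-s)t\bigr)^{-1},\qquad (s,t)\in[0,1]^2,
\end{equation*}
which takes values in $\mathrm{Sp}(2N)$ since each factor does. A direct check gives $H(0,t)=P(t)^{-1}$, $H(1,t)=P(1-t)P(1)^{-1}=\ol{P}(t)$, $H(s,0)=\mathrm{Id}$ and $H(s,1)=P(1)^{-1}$. Homotopy invariance of the Bott--Long index rel endpoints (defined by concatenating with a fixed minus-curve $\xi$ at $P(1)^{-1}$ and taking the Conley--Zehnder index of the resulting non-degenerate path, which varies continuously in $s$) then yields $i(\ol{P})=i(P^{-1})$.

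For the second step, fix a minus curve $\xi(t)=\exp(tJS)$ with $S$ symmetric and negative-definite, $t\in[0,\varepsilon]$, so that $\gamma:=P\#P(1)\xi$ is non-degenerate and $i(P)=\CZ(\gamma)$ by definition. The pointwise matrix inverse $\gamma^{-1}(t):=\gamma(t)^{-1}$ is then a non-degenerate path starting at $\mathrm{Id}$ which equals $P^{-1}$ on $[0,1]$ followed by $t\mapsto \xi(t)^{-1}P(1)^{-1}$; since $\xi^{-1}(t)=\exp\bigl(tJ(-S)\bigr)$ with $-S$ positive-definite, this is a \emph{plus}-curve endpoint perturbation at $P(1)^{-1}$. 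Two standard facts from Long's iteration theory~\cite{Long-book} now conclude the computation: (i) for a non-degenerate path starting at identity, $\CZ(\gamma^{-1})=-\CZ(\gamma)$, since matrix inversion acts as $-1$ on $\pi_1(\mathrm{Sp}(2N))=\Z$ and the statement extends from loops to paths by concatenation additivity; (ii) the Bott--Long index defined by a plus-curve endpoint perturbation exceeds the one defined by a minus-curve perturbation by exactly the nullity. Using $\nu(P^{-1})=\dim\ker(P(1)^{-1}-\mathrm{Id})=\nu(P)$ we obtain
\begin{equation*}
-i(P) \,=\, \CZ(\gamma^{-1}) \,=\, i(P^{-1}) + \nu(P^{-1}) \,=\, i(P^{-1}) + \nu(P),
\end{equation*}
so $i(P^{-1}) = -i(P) - \nu(P)$, which together with Step~1 gives the desired $i(\ol{P})=-i(P)-\nu(P)$.

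The main technical obstacle is the sign bookkeeping in Step~2: one must verify carefully that matrix-inverting the concatenation $P\#P(1)\xi$ really does convert the minus-curve perturbation into a plus-curve perturbation at $P(1)^{-1}$, and that the difference between the plus and minus Bott--Long indices is $+\nu$ rather than $-\nu$. Both points are standard in~\cite{Long-book} but are easy to get backwards; once they are fixed, the rest of the argument is entirely formal.
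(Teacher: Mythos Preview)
Your proof is correct and in Step~2 arrives at the same core identity $i(R^{-1})=-i(R)-\nu(R)$ that the paper establishes as equation~\eqref{eq:iR-1}. The route, however, is genuinely different in Step~1: the paper passes through the auxiliary path $P_-(t)=P(1-t)$, the Long index $i_L$ for paths not starting at the identity, and the product/concatenation homotopy $P_-P_-^{-1}\simeq\mathrm{Id}$, extracting $i_L(\ol P)=-i_L(P_-^{-1})$ and $i_L(P_-)=-i_L(P)$ separately. Your explicit homotopy $H(s,t)=P(s(1-t))P(s+(1-s)t)^{-1}$ between $\ol P$ and $P^{-1}$ rel endpoints bypasses all of this in one stroke, which is cleaner. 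In Step~2 the two arguments are essentially equivalent reformulations: the paper uses the $\sup\inf$/$\inf\sup$ characterization of $i(R)$ to handle the degenerate case, while you use the equivalent fact that plus-curve and minus-curve perturbations differ by the nullity.

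One point to tighten: the tail of $\gamma^{-1}$ is $t\mapsto\xi(t)^{-1}P(1)^{-1}$, which is \emph{left} multiplication by $\exp(tJ(-S))$, whereas the Bott--Long definition concatenates by \emph{right} multiplication $P(1)^{-1}\xi'$. You should note that
\[
\xi(t)^{-1}P(1)^{-1}=P(1)^{-1}\bigl(P(1)\exp(-tJS)P(1)^{-1}\bigr)=P(1)^{-1}\exp(tJS'),
\]
where $S'=-(P(1)^{-1})^TSP(1)^{-1}$ is symmetric positive definite because $P(1)$ is symplectic. This confirms that the tail is indeed a plus-curve perturbation in the required form; once this is said, your sign bookkeeping goes through without issue.
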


This proposition is proved in Appendix~\ref{sec:index_reverse}. Using it, we can state and prove our duality theorem for the index. 

\begin{theorem}[Duality for the index] \label{thm:duality_index} 
Let $P:[0,1]\to\mathrm{Sp}(2N)$ be a path with $P(0)=\mathrm{Id}$ and $\ol{P}$ be the reverse path. 

(i) The index inequality~\eqref{eq:symplectic-index} for $P$ is equivalent to the index+nullity inequality~\eqref{eq:symplectic-index+nullity} for $\ol{P}$. 

(ii) The index+nullity inequality~\eqref{eq:symplectic-index+nullity} for $P$ is equivalent to the index inequality~\eqref{eq:symplectic-index} for $\ol{P}$.
\end{theorem}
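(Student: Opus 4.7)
The plan is to reduce the duality statement to Proposition~\ref{prop:index_reverse} applied to $P^m$. The crucial algebraic input will be that iteration commutes with path reversal. Concretely, extending Definition~\ref{defi:reverse-path} to a path $Q$ on $[0,m]$ by setting $\ol{Q}(t)=Q(m-t)Q(m)^{-1}$, I would first establish the identity
\[
\ol{P^m} = \ol{P}^m.
\]
This identity would follow from a direct computation on each subinterval $[j,j+1]$, $j=0,\dots,m-1$: the concatenation rule for $\ol{P}^m$ gives $\ol{P}^m(t)=\ol{P}(t-j)\ol{P}(1)^j=P(j+1-t)P(1)^{-(j+1)}$, while substituting $s=m-t$ into the concatenation rule for $P^m$ and post-multiplying by $P^m(m)^{-1}=P(1)^{-m}$ yields the same expression for $\ol{P^m}(t)$.

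Once this identity is available, I would apply Proposition~\ref{prop:index_reverse} to both $P$ and $P^m$ to obtain
\[
i(\ol{P}) = -i(P)-\nu(P), \qquad i(\ol{P}^m) = -i(P^m)-\nu(P^m),
\]
and combine these with the elementary observation $\nu(\ol{Q})=\nu(Q)$ (valid because $\ker(Q(1)^{-1}-\mathrm{Id})=\ker(Q(1)-\mathrm{Id})$) to deduce
\[
i(\ol{P})+\nu(\ol{P}) = -i(P), \qquad i(\ol{P}^m)+\nu(\ol{P}^m) = -i(P^m).
\]

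Substituting these four identities into the index-plus-nullity inequality \eqref{eq:symplectic-index+nullity} written for $\ol{P}$ transforms it, after cancellation of signs inside the absolute value, into the index inequality \eqref{eq:symplectic-index} for $P$; this is the content of (i). Statement (ii) then follows by the symmetric substitution, or more elegantly by applying (i) to the path $\ol{P}$ and invoking the self-inverse property of reversal, namely $\overline{\ol{P}}=P$, which is a direct check from the definition: $\overline{\ol{P}}(t)=\ol{P}(1-t)\ol{P}(1)^{-1}=P(t)$.

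The step I expect to be the main content---and the only real obstacle---is the commutation identity $\ol{P^m}=\ol{P}^m$, since the inequalities themselves are then simply algebraic reformulations of one another via Proposition~\ref{prop:index_reverse}. It is precisely the nullity term $-\nu(P)$ appearing in that proposition that is responsible for swapping the index inequality with the index-plus-nullity inequality, so the duality ultimately stems from this single term.
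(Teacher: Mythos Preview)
Your proposal is correct and follows essentially the same route as the paper: reduce to Proposition~\ref{prop:index_reverse} via the commutation identity $\ol{P^m}=\ol{P}^m$ and the involutivity of reversal. The only cosmetic differences are that the paper proves (ii) directly and deduces (i) from involutivity (thereby avoiding the explicit use of $\nu(\ol{Q})=\nu(Q)$), and that it simply states the identity $\ol{P^m}=\ol{P}^m$ without the verification you spell out.
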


\begin{proof} Since taking the reverse of a path is an involutive operation, assertions (i) and (ii) are equivalent. To prove~(ii) we use Proposition~\ref{prop:index_reverse} and the equality $\ol{P}^m=\ol{P^m}$ to get
{\scriptsize
\begin{eqnarray*} 
& mi(\ol{P})-(m-1)N  \le i(\ol{P}^m)\le m i(\ol{P}) + (m-1)N \\
\Leftrightarrow & m(-i(P)-\nu(P))-(m-1)N  \le -i(P^m)-\nu(P^m) \le m(-i(P)-\nu(P))+(m-1)N \\
\Leftrightarrow & m(i(P)+\nu(P))+(m-1)N \ge i(P^m)+\nu(P^m) 
 \ge m(i(P)+\nu(P)) - (m-1)N.
\end{eqnarray*}
}
\end{proof}

\begin{remark}
To the best of our knowledge, Theorem~\ref{thm:duality_index} did not appear in the literature before. We interpret it as an explanation for the fact that, in various contexts related to the problem of the existence and multiplicity of closed geodesics, the index and the index+nullity play a ``dual'' role, see~\S\ref{sec:nilpotency} below. Note that, given a path $P:[0,1]\to \mathrm{Sp}(2N)$, $P(0)=\mathrm{Id}$ which is the linearization of the geodesic flow along a closed geodesics, the path $\ol{P}$ is not the linearization of the geodesic flow along the same geodesic traversed backwards. Thus, the duality between index and index+nullity is not of a Riemannian nature, but rather of a symplectic nature. 
\end{remark}

%%%
\subsection{Level-Potency}\label{sec:nilpotency}
%%%

Given a nonconstant closed geodesic $c$ of length $L$, denote $Sc=S^{1}\cdot c \subset \Lambda$ its saturation with
respect to the circle action and define its \textit{local level homology/cohomology} 
$$
H_*(Sc) = \lim_{U\supset Sc} H_*(\Lambda \cap U,\Lambda ^{<L}\cap U),\
H^*(Sc) = \lim_{U\supset Sc} H^{\ast }(\Lambda \cap U,\Lambda ^{<L}\cap U),
$$
where $U$ is an open set in $\Lambda$ containing $Sc$. 
Here and in the rest of this subsection we use rational coefficients.

For a closed orientable Riemannian $n$-manifold $M$,
consider the following ensemble of dual results, essentially contained in~\cite{Hingston93,Hingston97,Goresky-Hingston}. 

\begin{theorem}
\label{thm:level-potency-Hingston}
Any of the following conditions implies the existence of infinitely many closed geodesics on $M$:

(a1) There exists a nonzero class $x\in H^*(\Lambda,\Lambda_0)$ such that $\Cr(x^m)=m\Cr(x)$ for all $m\ge 1$ (we say that $x$ is \emph{level-potent}).

(a2) There exists an isolated closed geodesic $c$ and $x\in H^*(Sc)$ such that $x^m\in H^*(Sc^m)$ is nonzero for all $m\ge 1$.

(a3) There exists an isolated closed geodesic $c$ of index $\lambda$ such that $H^\lambda(Sc)\neq 0$ and
$$
   \text{index}(c^m) \geq m\lambda + (m-1)(n-1)\quad\text{for all }m\geq 1.
$$
 
(b1) There exists a nonzero class $X\in H_*\Lambda$ such that $\Cr(X^m)=m\Cr(X)$ for all $m\ge 1$ (we say that $X$ is \emph{level-potent}).
   
(b2) There exists an isolated closed geodesic $c$ and $X\in H_*(Sc)$ such that $X^m\in H_*(Sc^m)$ is nonzero for all $m\ge 1$.

(b3) There exists an isolated closed geodesic $c$ of index $\lambda$ and nullity $\nu$ such that $H_{\lambda+\nu+1}(Sc)\neq 0$ and
$$
   (\text{index+nullity})(c^m) \leq m(\lambda+\nu) - (m-1)(n-1)\quad\text{for all }m\geq 1.
$$
\end{theorem}

We defer the proof of this theorem to Appendix~\ref{app:level-potency}. 
Note that, by Theorem~\ref{thm:iteration-Bott}, part (a3) corresponds to the {\em fastest} possible growth of the index, while part (b3) corresponds to the {\em slowest} possible growth of the index + nullity under iteration. 

Our goal is to generalize (a1-3) and (b1-3) to statements on Rabinowitz loop homology that are related by Poincar\'e duality. For this, consider $S^*M$ with its canonical contact form $\alpha$. By a {\em generalized closed Reeb orbit} $\gamma$ we mean a closed Reeb orbit (i.e. the lift of a closed geodesic on $M$) or its backward parametrization. Recall that, together with the constants on $S^*M$, after a Morse perturbation these are the generators of Rabinowitz Floer homology $SH_*(S^*M)=\wh{H}_*\Lambda$. 

Let $\gamma$ be a generalized closed Reeb orbit of action $\int_\gamma\alpha = L\neq 0$ and $S\gamma=S^1\cdot\gamma$ its saturation. 
By \textit{local level Rabinowitz homology/cohomology}, denoted $\wh{H}_{\ast }(S\gamma)$ and $\wh{H}^*(S\gamma)$, we mean symplectic homology/cohomology of $S^*M$ localized near the isolated set $S\gamma$ in the following sense: we choose a Hamiltonian $H$ as in the definition of $SH_*(S^*M)$ with negative slope $-\mu$ and positive slope $\tau$ such that $L\in (-\mu,\tau)$, and consider local Floer (co)homology of the isolated fixed point set which corresponds to $S\gamma$ in the convex region of $H$. This is a mild variation on McLean's definition of local symplectic homology in~\cite{McLean-LocalFH}.
In case $\gamma$ is the lift of a nonconstant closed geodesic $c$, the Viterbo isomorphism (see~\cite{Viterbo-cotangent,AS2,CHO-MorseFloerGH} and references therein) specializes to isomorphisms which intertwine the product structures
\begin{equation}\label{eq:Viterbo-local}
\wh{H}_*(S\gamma)\simeq H_*(Sc), \qquad \wh{H}^*(S\gamma)\simeq H^*(Sc).
\end{equation} 

\begin{proposition}\label{prop:level-potency}
For $\textrm{i}=1,2,3$ the following conditions (A\textrm{i}) and (B\textrm{i}) are equivalent under the Poincar\'e duality isomorphism, and either of them is equivalent to the conditions (a\textrm{i}) and (b\textrm{i}) in Theorem~\ref{thm:level-potency-Hingston}: 

(A1) There exists $x\in \wh{H}^*\Lambda$ 
such that $\Cr(x^m)=m\Cr(x)$ for all $m\ge 1$. 

(A2) There exists an isolated generalized closed Reeb orbit $\gamma$ and $x\in \wh{H}^*(S\gamma)$ such that $x^m\in \wh{H}^*(S\gamma^m)$ is nonzero for all $m\ge 1$. 
  
(A3) $S^*M$ carries an isolated generalized closed Reeb orbit $\gamma$ of index $\lambda$ with fastest possible index growth and $\wh{H}^\lambda(S\gamma)\neq 0$. 

(B1) There exists $X\!\in\!\wh{H}_*\Lambda$ 
such that $\Cr\!(X^m)=m\Cr\!(X)$ for all $m\ge 1$. 

(B2) There exists an isolated generalized closed Reeb orbit $\gamma$ and $X\in \wh{H}_*(S\gamma)$ such that $X^m\in \wh{H}_*(S\gamma^m)$ is nonzero for all $m\ge 1$. 

(B3) $S^*M$ carries an isolated generalized closed Reeb orbit $\gamma$ of index $\lambda$ and nullity $\nu$ with slowest possible index+nullity growth and $\wh{H}_{\lambda+\nu+1}(S\gamma)\neq 0$.
\end{proposition}

\begin{proof}
(A1) $\Leftrightarrow$ (B1) follows from the compatibility of Poincar\'e duality with the products and with the length filtrations, see~\S\ref{sec:length-filtration}.

Let $\gamma$ be an isolated generalized closed Reeb orbit and $\ol\gamma$ its backward parametrization. 
Then (A2) for $\gamma$ is equivalent to (B2) for $\ol\gamma$ because  
Poincar\'{e} duality in Theorem~\ref{thm:PD-RFH-intro} specializes to an isomorphism
\begin{equation*}
   \wh{H}^{*}(S\gamma) \cong \wh{H}_{1-*}(S\ol\gamma).
\end{equation*}
Condition (A3) for $\gamma$ is equivalent to (B3) for $\ol\gamma$: By Proposition~\ref{prop:index_reverse}, the indices and nullities are related by $i(\gamma)=-i(\ol\gamma)-\nu(\ol\gamma)$, hence $\wh{H}^{i(\gamma)}(S\gamma) \cong \wh{H}_{1-i(\gamma)}(S\ol\gamma) = \wh{H}_{i(\ol\gamma)+\nu(\ol\gamma)+1}(S\ol\gamma)$. 
On the other hand, by the proof of Theorem~\ref{thm:duality_index}, fastest possible index growth for $\gamma$ is equivalent to slowest possible index+nullity growth for $\ol\gamma$. 

To prove the equivalence with conditions (a1-3) and (b1-3), let $\gamma$ be the lift of an isolated closed geodesic $c$. Then by~\eqref{eq:Viterbo-local} and the above we have the following equivalences for $\mathrm{i}=1,2,3$: 
\begin{gather*}
  \text{condition (ai) for $c$} \Longleftrightarrow \text{condition (Ai) for $\gamma$} \Longleftrightarrow \text{condition (Bi) for $\ol\gamma$},\cr 
  \text{condition (bi) for $c$} \Longleftrightarrow \text{condition (Bi) for $\gamma$} \Longleftrightarrow \text{condition (Ai) for $\ol\gamma$}.
\end{gather*}
\end{proof}

Proposition~\ref{prop:level-potency} suggests a generalization of Theorem~\ref{thm:level-potency-Hingston} in terms of local level Rabinowitz homology and closed Reeb orbits, which we will pursue in~\cite{CHO-index}. It will rely on ideas from the proof of the Conley conjecture in~\cite{Hingston_Conley_conjecture,Ginzburg_Conley_conjecture,Ginzburg_Gurel_Conley_revisited} and the study of symplectically degenerate maxima in~\cite{GG-localFH,GHHM,HHM}.

%%%%%%%%%%%%%%%%%%%%%%%%%%%%%%%%%%%%%%%%%%%%%%%%%%%%%%%%%%%%%%%%%%%%%
\section{Poincar\'e duality in Rabinowitz Floer homology}\label{sec:PD-RFH}
%%%%%%%%%%%%%%%%%%%%%%%%%%%%%%%%%%%%%%%%%%%%%%%%%%%%%%%%%%%%%%%%%%%%%

Poincar\'e duality $H_*(\Sigma)\cong H^{m-*}(\Sigma)$ for an $m$-dimensional
closed oriented manifold $\Sigma$ is known to be induced by a canonical chain isomorphism\break 
$MC_*(f)\cong MC^{m-*}(-f)$ between the Morse chain complex of a Morse
function $f:\Sigma\to\R$ and the Morse cochain complex of $-f$. In this
section we show that this isomorphism canonically extends to
Rabinowitz Floer homology if $\Sigma$ is the boundary of a Liouville domain. 

%%%
\subsection{Rabinowitz Floer homology}
%%%

Recall from~\cite{Cieliebak-Frauenfelder} the definition of Rabinowitz Floer homology.
Consider the completion $(\wh V,\lambda)$ of a Liouville domain
$(V,\lambda)$ with boundary $\Sigma=\p V$. We abbreviate by
$\mathcal{L}=C^\infty(S^1,\wh V)$ the free loop space of $\wh V$,
where $S^1=\R/\Z$. A {\em defining Hamiltonian} for $\Sigma$ is a smooth
function $H:\wh V\to\R$ with 
regular level set $\Sigma=H^{-1}(0)$ whose Hamiltonian vector field $X_H$
(defined by $i_{X_H}\om=-dH$) has compact support and agrees with the
Reeb vector field $R$ along $\Sigma$. Given such a Hamiltonian, the {\em
Rabinowitz action functional} is defined by
$$
   A_H: \mathcal{L} \times \R \to \R, \qquad 
   A_H(x,\eta):= \int_0^1 x^*\lambda - \eta \int_0^1 H(x(t))dt.
$$
Critical points of $A_H$ are pairs $(x,\eta)$ such that $x:S^1\to \Sigma$
solves 
\begin{equation*}
   \dot x = \eta X_H(x) = \eta R(x). 
\end{equation*}
So there are three types of critical points: closed Reeb orbits on $\Sigma$
which are positively parametrized
and correspond to $\eta>0$, closed Reeb orbits on $\Sigma$ which
are negatively parametrized and correspond to $\eta<0$, and constant
loops on $\Sigma$ which correspond to $\eta=0$. The action of a critical
point $(x,\eta)$ is $A_H(x,\eta)=\eta$. Because of general homotopy invariance properties of Floer homology, for the purpose of defining Rabinowitz Floer homology we can assume without loss of generality that the Reeb flow on $\Sigma$ is nondegenerate. In this case the critical points of $A_H$ form Morse-Bott manifolds: on the one hand $\Sigma$, and on the other hand one circle for each closed Reeb orbit, parametrized positively or negatively.

Pick a smooth family $(J_t)_{t\in S^1}$ of compatible almost complex
structures on $\wh V$ that are cylindrical at infinity. It induces 
a metric $g=g_J$ on $\mathcal{L}\times \mathbb{R}$ which at a point
$(x,\eta) \in \mathcal{L}\times \mathbb{R}$
and two tangent vectors $(\hat{x}_1,\hat{\eta}_1),(\hat{x}_2,\hat{\eta}_2)
\in T_{(x,\eta)}(\mathcal{L}\times \mathbb{R})=\Gamma(S^1,x^*T\wh V)
\times \mathbb{R}$ is given by
$$
   g_{(x,\eta)}\big((\hat{x}_1,\hat{\eta}_1),(\hat{x}_2,\hat{\eta}_2)\big)
=\int_0^1\omega\big(\hat{x}_1(t),J_t(x(t))\hat{x}_2(t)\big)dt+
\hat{\eta}_1\cdot\hat{\eta}_2.
$$
Positive gradient flow lines of the Rabinowitz action functional
$A_{H}$ with respect to this metric are solutions
$(x,\eta)\in C^\infty(\mathbb{R}\times S^1,\widehat{V}) \times
C^\infty(\mathbb{R},\mathbb{R})$ of the Rabinowitz Floer equation
\begin{equation}\label{eq:RF}
\left\{\begin{array}{c}
\partial_s x+J_t(x)\big(\partial_t x-\eta X_{H}(x)\big)=0,\\
\partial_s\eta + \int_0^1H(x(t))dt=0.
\end{array}\right.
\end{equation}
We fix action values $-\infty<a<b<\infty$ outside the action
spectrum of $A_H$, and we pick an additional small Morse
function $f$ on the critical manifold $\Crit(A_H)$. Thus $f$
consists of a Morse function $f_\Sigma:\Sigma\to\R$ and Morse functions
$f_{\gamma^k}:\mathrm{im}(\gamma)\to\R$ for each simple closed Reeb orbit $\gamma$ and
$k\in\Z\setminus\{0\}$, where we assume the latter to have unique
minima $m_{\gamma^k}$ and maxima $M_{\gamma^k}$.

We fix a commutative unital coefficient ring $R$.
Then $RFC_*^{(a,b)}(H,f;J)$ is the free $R$-module generated by the
critical points of $f$ with action in $(a,b)$, and the boundary operator 
$$
   \p: RFC_*^{(a,b)}(H,f;J)\to RFC_{*-1}^{(a,b)}(H,f;J)
$$
counts cascades as in~\cite{Cieliebak-Frauenfelder}. They combine the {\em negative}
gradient flow of $A_H$ with respect to the metric $g$ and the {\em negative} gradient flow of $f$
with respect to some metric on $\Crit(A_H)$. 
As grading we use the integer grading obtained by shifting up by $\frac 12$ the half-integer grading defined in~\cite{Cieliebak-Frauenfelder}.
The resulting {\em filtered Rabinowitz Floer homology} groups 
$$
   RFH_*^{(a,b)}(\Sigma,\lambda):=RFH_*^{(a,b)}(H,f;J),
$$
are well-defined and do not depend on the choice of $J$, $H$ and $f$,
though they do depend on the contact form $\lambda|_\Sigma$. 
The {\em Rabinowitz Floer homology} of $\Sigma$ is defined as the limit
$$
   RFH_*(\Sigma) := \lim_{\stackrel \longrightarrow b} \lim_{\stackrel
   \longleftarrow a} RFH_*^{(a,b)}(\Sigma,\lambda), \qquad
   a\to-\infty,\ b\to\infty.
$$
By~\cite[Theorem~A]{CF}, this definition is equivalent to the
original one in~\cite{Cieliebak-Frauenfelder}. 
By similar direct-inverse limits one defines as in~\cite{CO} the groups
$$
   RFH_*^\heartsuit(\Sigma),\qquad \heartsuit\in\{\varnothing, >0,\ge 0, =0, \le 0, <0\},
$$
with the meaning that $RFH_*^\varnothing=RFH_*$. 

We define the Rabinowitz Floer cohomology groups by a similar procedure using the dual complex $RFC^*_{(a,b)}(H,f;J)=RFC_*^{(a,b)}(H,f;J)^\vee$. The \emph{filtered Rabinowitz Floer cohomology} groups are defined as 
$$
RFH^*_{(a,b)}(\Sigma,\lambda):= RFH^*_{(a,b)}(H,f;J), 
$$
and the \emph{Rabinowitz Floer cohomology} of $\Sigma$ is the limit 
$$
RFH^*(\Sigma):= \lim_{\stackrel \longrightarrow a} \lim_{\stackrel
   \longleftarrow b}  RFH^*_{(a,b)}(\Sigma,\lambda),
$$
with variants $RFH^*_\heartsuit(\Sigma)$ for $\heartsuit\in\{\varnothing, >0,\ge 0, =0, \le 0, <0\}$. The grading is the same as for homology.  This definition is analogous to~\cite[Definition~3.1]{CO}, given in a symplectic homology framework.

These Rabinowitz Floer co/homology groups depend on the contact structure
on $\Sigma$ (though not on the contact form) as well as the Liouville filling $V$ (up
to Liouville homotopy). 
As in~\cite{CO} we do not indicate the filling in the notation since it will always be clear from the context.

%%%
\subsection{Poincar\'e duality}
%%%

Poincar\'e duality results from the following observation: 
Under the canonical involution
$$
   {\mathcal L}\times\R \to {\mathcal L}\times\R, \quad
   (x,\eta)\mapsto(\bar x,\bar\eta),\quad \bar x(t)=x(-t),\quad \bar\eta=-\eta
$$
the Rabinowitz action functional changes sign,
$$
   A_H(\bar x,\bar\eta) = -A_H(x,\eta).
$$
It follows that the involution maps positive gradient lines of $A_H$
to negative gradient lines of $A_H$, provided that we also replace the
family $J_t$ by $\bar J_t:=J_{-t}$ (and the resulting metric
accordingly). In other words, if $(x,\eta)\in C^\infty(\mathbb{R}\times
S^1,\widehat{V}) \times 
C^\infty(\mathbb{R},\mathbb{R})$ solves the Rabinowitz Floer
equation~\eqref{eq:RF} with positive asymptotic $(x_+,\eta_+)$ and
negative asymptotic $(x_-,\eta_-)$, then $(\bar x,\bar\eta)$ defined by
$$
   \bar x(s,t):=x(-s,-t),\quad \bar\eta(s):=-\eta(-s)
$$
solves equations~\eqref{eq:RF} with positive asymptote $(\bar x_-,\bar\eta_-)$ and
negative asymptote $(\bar x_+,\bar\eta_+)$. 

When applying the involution, we also replace the Morse functions
$f_\Sigma:\Sigma\to\R$ and $f_{\gamma^k}:\mathrm{im}(\gamma)\to\R$ by
$$
   \bar f_\Sigma:=-f_\Sigma,\quad \bar f_{\gamma^k}:=-f_{\gamma^k}. 
$$
Then the preceding discussion shows that the involution
$(x,\eta)\mapsto(\bar x,\bar\eta)$ defines a chain isomorphism
between the Rabinowitz Floer chain and cochain groups
$$
   RFC_*^{(a,b)}(H,f;J)\stackrel{\cong}\longrightarrow
   RFC^{1-*}_{(-b,-a)}(H,\bar f;\bar J).
$$
Therefore, we have shown

\begin{theorem}[Poincar\'e duality in Rabinowitz Floer homology]\label{thm:PD-RFH}
The involution $(x,\eta)\mapsto(\bar x,\bar\eta)$ induces isomorphisms
between filtered Rabinowitz Floer homology and cohomology groups
$$
   PD: RFH_*^{(a,b)}(\Sigma,\lambda)\stackrel{\cong}\longrightarrow RFH^{1-*}_{(-b,-a)}(\Sigma,\lambda),
$$
and between the Rabinowitz Floer homology and cohomology groups
$$
   PD: RFH_*^\heartsuit(\Sigma)\stackrel{\cong}\longrightarrow RFH^{1-*}_{-\heartsuit}(\Sigma)
$$
for $\heartsuit\in\{\varnothing, >0,\ge 0, =0, \le 0, <0\}$. \hfill \qed
\end{theorem}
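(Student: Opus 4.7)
The plan is to upgrade the observations preceding the statement into a rigorous chain-level construction. First I would verify that the involution $\iota:(x,\eta)\mapsto(\bar x,\bar\eta)$ with $\bar x(t)=x(-t)$, $\bar\eta=-\eta$ restricts to a bijection on $\Crit(A_H)$ satisfying $A_H\circ\iota=-A_H$, exchanging positively and negatively parametrized Reeb orbits and fixing constant critical points. Choosing the Morse data $f_\Sigma:\Sigma\to\R$ and $f_{\gamma^k}:\mathrm{im}(\gamma)\to\R$, the simultaneous substitution $f\mapsto\bar f=-f$ sends the minimum $m_{\gamma^k}$ to the maximum $M_{\bar\gamma^{-k}}$ and vice versa, so $\iota$ yields a bijection between the generators of $RFC_*^{(a,b)}(H,f;J)$ and those of $RFC_*^{(-b,-a)}(H,\bar f;\bar J)$. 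The action window flips to $(-b,-a)$ by the identity $A_H\circ\iota=-A_H$. The grading flips from $*$ to $-*$ by a careful bookkeeping of the half-integer grading of~\cite{Cieliebak-Frauenfelder}: the Conley--Zehnder contribution negates under time reversal (this is the content of Proposition~\ref{prop:index_reverse} above), and the Morse index $i(c)$ of an auxiliary critical point $c$ on a critical submanifold $C$ is replaced by $\dim(C)-i(c)$ under $f\mapsto-f$, the two corrections combining to an overall degree negation.

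Second, I would check that $\iota$ intertwines the differential of $RFC_*^{(a,b)}(H,f;J)$ with the differential of $RFC_*^{(-b,-a)}(H,\bar f;\bar J)$ read with asymptotics swapped, that is, with the codifferential of the dual complex $RFC^*_{(-b,-a)}(H,\bar f;\bar J)$. At the Floer level this is essentially the calculation already indicated in the excerpt: if $(x,\eta)$ solves the Rabinowitz Floer equation~(\ref{eq:RF}) with positive asymptotic $(x_+,\eta_+)$ and negative asymptotic $(x_-,\eta_-)$, then $\bar x(s,t):=x(-s,-t)$ and $\bar\eta(s):=-\eta(-s)$ solve~(\ref{eq:RF}) with $J_t$ replaced by $\bar J_t=J_{-t}$ and with swapped asymptotics, and the analogous statement holds for the negative gradient trajectories of $f$ on the critical submanifolds under $f\mapsto-f$. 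Composing this bijection over the cascade structure exactly converts the definition of the differential into that of the codifferential, producing the announced filtered chain isomorphism, hence the first assertion of the theorem.

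Finally, I would pass to limits. Since $\iota$ is compatible with the canonical continuation maps relating different action windows, the filtered isomorphism commutes with the direct/inverse systems used to define $RFH_*^\heartsuit(\Sigma)$: direct limits $b\to\infty$ become inverse limits $-b\to-\infty$ and vice versa, and the subspaces $\heartsuit\in\{>0,\geq 0,=0,\leq 0,<0\}$ distinguishing the sign of the action map to their counterparts $-\heartsuit$ on the cohomological side. The main technical obstacle I anticipate is the bookkeeping of coherent orientations on cascade moduli spaces: one must verify that the time-reversal involution is orientation-preserving on the relevant determinant line bundles of the linearized Rabinowitz Floer operator along $(x,\eta)$, so that the resulting chain map is genuinely $\Z$-linear rather than merely $\Z/2$-linear. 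This is a standard but delicate sign check in the spirit of the orientation theory underlying the very definition of $RFH_*$ in~\cite{Cieliebak-Frauenfelder}; the grading computation involving Conley--Zehnder and Morse contributions described above is the other point requiring genuine care.
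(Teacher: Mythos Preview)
Your proposal is correct and follows essentially the same approach as the paper: the paper's proof consists precisely of the observations in the paragraphs immediately preceding the theorem (note the \qed in the statement), namely that the involution negates $A_H$, exchanges positive and negative gradient flow lines upon replacing $J_t$ by $\bar J_t$ and $f$ by $-f$, and hence yields the chain isomorphism $RFC_*^{(a,b)}(H,f;J)\cong RFC^{-*}_{(-b,-a)}(H,\bar f;\bar J)$. Your write-up elaborates on grading, orientations, and passage to limits more than the paper does, but the underlying argument is identical.
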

 
Given $\heartsuit\in\{\varnothing, >0,\ge 0, =0, \le 0, <0\}$, the meaning of $-\heartsuit$ is that equalities are preserved and inequalities are reversed, e.g., if $\heartsuit=\varnothing$ then $-\heartsuit=\varnothing$, and if $\heartsuit=``\ge 0"$ then $-\heartsuit=``\le 0"$. 

\begin{remark}
With the half-integer grading from~\cite{Cieliebak-Frauenfelder}, Poincar\'e duality would take the more symmetric form $RFH_*\simeq RFH^{-*}$. Our use of the shifted integer grading is motivated by~\eqref{eq:iso-RFH-SH} below, which ensures that Rabinowitz Floer homology as defined in this section is isomorphic to the one from the Introduction. Also, this grading relates more easily to the gradings in singular cohomology and loop space homology. 
For example, with $a=-\eps$, $b=\eps$ and $\eps>0$ smaller than the period of the shortest closed Reeb orbit (``energy 0''), the filtered Poincaré duality isomorphism for Rabinowitz Floer homology from Theorem~\ref{thm:PD-RFH} becomes the usual Poincaré duality isomorphism $H_{*+n-1}(\Sigma)\simeq H^{-*+n}(\Sigma)$.
\end{remark}

\begin{remark} Rabinowitz Floer homology has a Lagrangian version defined in terms of Lagrange multipliers as above, see~\cite{Merry}. Theorem~\ref{thm:PD-RFH} has a straightforward counterpart in that setting. 
\end{remark}

%%%%%%%%%%%%%%%%%%%%%%%%%%%%%%%%%%%%%%%%%%%%%%%%%%%%%%%%%%%%%%%%%
\section{Poincar\'e duality in symplectic homology} \label{sec:PD-SH}
%%%%%%%%%%%%%%%%%%%%%%%%%%%%%%%%%%%%%%%%%%%%%%%%%%%%%%%%%%%%%%%%%

In this section we discuss Poincar\'e duality in (V-shaped) symplectic homology. While being more involved than the one in Rabinowitz Floer homology, the description in symplectic homology has three additional features: it is directly related to loop space (co)homology in the case of cotangent bundles; it is compatible with the pair-of-pants products and coproducts; and it carries non-canonical splittings in the case of cotangent bundles. 

This section is concerned with general Liouville domains and leads to the proof of Theorem~\ref{thm:PD-RFH-intro}.
For general background on symplectic homology we refer to~\cite{CO}. 
We use coefficients in a commutative unital ring $R$.

%%%
\subsection{Recollections on Poincar\'e duality and exact sequences} \label{sec:recollections-PD}
%%%

The main result in~\cite{Cieliebak-Frauenfelder-Oancea} states that
\begin{equation} \label{eq:iso-RFH-SH}
   RFH_*^\heartsuit(\p V) \cong SH_*^\heartsuit(\p V),
\end{equation}
where $V$ is a Liouville domain of dimension $2n$ and 
$SH_*^\heartsuit(\p V)$ denotes the symplectic homology of the
trivial cobordism $[\frac 1 2,1]\times \p V$ in the sense of~\cite{CO}. 
We assume that $2c_1(V)=0$ and that the square of the canonical bundle of $TV$ is trivialized, so that Floer chain complexes are  canonically $\Z$-graded by the Conley-Zehnder indices of periodic orbits, see Appendix~\ref{sec:grading}. This assumption is used only for grading purposes and has no essential bearing on the sequel arguments. 
Then Theorem~\ref{thm:PD-RFH} has the following formulation in symplectic homology. 

\begin{theorem}[{Poincar\'e duality in symplectic homology of a trivial cobordism~\cite[Theorem 9.4]{CO}}]\label{thm:PD-SH}
There exist canonical isomorphisms between the symplectic homology
and cohomology groups of a trivial cobordism
$$
   PD: SH_*^\heartsuit(\p V)\stackrel{\cong}\longrightarrow SH^{1-*}_{-\heartsuit}(\p V)
$$
for $\heartsuit\in\{\varnothing, >0,\ge 0, =0, \le 0, <0\}$. \hfill\qed
\end{theorem}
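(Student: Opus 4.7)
The plan is to transport the Poincar\'e duality on Rabinowitz Floer homology (Theorem~\ref{thm:PD-RFH}) to symplectic homology of the trivial cobordism via the comparison isomorphism~\eqref{eq:iso-RFH-SH} established in~\cite{Cieliebak-Frauenfelder-Oancea,CF}. Since the statement only asserts existence of canonical isomorphisms of groups, no product structure needs to be carried across, and the argument reduces to a three-term composition together with a careful bookkeeping of degrees, filtrations, and signs.

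First I will use~\eqref{eq:iso-RFH-SH} to identify $SH_*^\heartsuit(\p V)$ with $RFH_{*-1/2}^\heartsuit(\p V)$, and dualize the chain-level isomorphism behind it to identify $SH^*_\heartsuit(\p V)$ with $RFH^{*-1/2}_\heartsuit(\p V)$; the latter uses the convention $C^k=(C_k)^{\vee}$ together with the fact that the RFH/SH comparison is a chain isomorphism with a $1/2$-shift of degree. In both versions, the decoration $\heartsuit\in\{\varnothing,>0,\ge 0,=0,\le 0,<0\}$ is preserved: on the Rabinowitz side it selects the sign of the Lagrange multiplier $\eta$, and on the V-shaped symplectic side it selects $1$-periodic orbits on the positive end, on the negative end, or in the constants region. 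Under the construction in~\cite{Cieliebak-Frauenfelder-Oancea,CF}, the action of such a $1$-periodic orbit limits to $\eta$ at the corresponding critical point of $A_H$, so the two decorations match.

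Next I will define $PD$ for symplectic homology as the composition
\begin{align*}
SH_*^\heartsuit(\p V)
\;\stackrel{\cong}{\longrightarrow}\;
RFH_{*-1/2}^\heartsuit(\p V)
\;\stackrel{PD}{\longrightarrow}\;
RFH^{-*+1/2}_{-\heartsuit}(\p V)
\;\stackrel{\cong}{\longrightarrow}\;
SH^{1-*}_{-\heartsuit}(\p V),
\end{align*}
where the middle arrow is the involution-induced isomorphism of Theorem~\ref{thm:PD-RFH}. The total degree shift works out as $*\mapsto *-\tfrac12\mapsto -*+\tfrac12\mapsto -*+1=1-*$, matching the claim. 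The change $\heartsuit\mapsto -\heartsuit$ is produced by the involution $(x,\eta)\mapsto(\bar x,-\eta)$, which reverses the sign of the action and hence swaps positive and negative $\eta$ sectors.

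The principal technical point, and the one requiring genuine verification, is the claim in the second paragraph that~\eqref{eq:iso-RFH-SH} respects the $\heartsuit$-decoration so that the subscripts correspond under the comparison map. This is essentially contained in~\cite{CF} but not stated there in exactly this form; one runs the comparison of~\cite{CF} on the sub- and quotient complexes determined by $\eta\le c$ or $\eta\ge c$ for a chosen cutoff $c\in\R$, and checks that the adiabatic deformation used does not cross any critical level along the way, so that it restricts to isomorphisms on the corresponding action-filtered pieces. Granting this compatibility, Theorem~\ref{thm:PD-SH} follows immediately from the composition above.
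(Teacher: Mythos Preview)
Your approach is genuinely different from the paper's. The paper does not give a proof here: the theorem carries a \qed and is cited from \cite[Theorem~9.4]{CO}, and immediately afterward the paper remarks that ``This isomorphism is constructed in~\cite{CO} without relying on the connection between symplectic homology and Rabinowitz Floer homology. We expect the isomorphisms of Theorems~\ref{thm:PD-RFH} and~\ref{thm:PD-SH} to be compatible with the identification~\eqref{eq:iso-RFH-SH}.'' So the intended route is an intrinsic construction in V-shaped symplectic homology, while yours transports Theorem~\ref{thm:PD-RFH} across the comparison~\eqref{eq:iso-RFH-SH}. Your three-term composition does produce a canonical isomorphism with the correct degree shift and $\heartsuit\mapsto -\heartsuit$ behaviour, so as a proof of the bare existence statement it is reasonable. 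What it buys is economy; what it costs is that the resulting isomorphism is, by the paper's own admission, only \emph{expected} to coincide with the one from~\cite{CO}, and it is the latter that is used in everything that follows (Theorems~\ref{thm:PD-LES} and~\ref{thm:coh-product}, the splitting, the filtered statements), all of which are proved directly in the V-shaped Hamiltonian model.

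One technical step in your argument deserves more care. The cohomological comparison $RFH^{*-1/2}_\heartsuit(\p V)\cong SH^*_\heartsuit(\p V)$ is not stated anywhere; you propose to obtain it by ``dualizing the chain-level isomorphism behind''~\eqref{eq:iso-RFH-SH}. But the comparison in~\cite{Cieliebak-Frauenfelder-Oancea,CF} is not a single chain isomorphism: it is an adiabatic-limit argument through continuation maps, and both $RFH^*$ and $SH^*(\p V)$ are defined as direct--inverse limits taken in prescribed orders. Dualization does not automatically commute with such limits, so one must verify that the filtered chain maps underlying~\eqref{eq:iso-RFH-SH} dualize to maps inducing the claimed isomorphisms on the cohomology groups as defined here. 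This is plausible and probably routine, but it is a genuine check rather than a formality.
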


This isomorphism is constructed in~\cite{CO} without relying on the
connection between symplectic homology and Rabinowitz Floer
homology. We expect the isomorphisms of Theorems~\ref{thm:PD-RFH}
and~\ref{thm:PD-SH} to be compatible with~\eqref{eq:iso-RFH-SH}. 
In the rest of the paper we will refer to Poincar\'e duality for a
trivial cobordism as being the isomorphism in Theorem~\ref{thm:PD-SH}.   

The Poincar\'e duality isomorphism has the following properties, see~\cite{CO}. 

{\em (A) Compatibility with the exact sequence of the pair $(V,\p V)$.}

\begin{theorem}[{\cite[Theorem 9.5]{CO}}]\label{thm:PD-LES}
For every Liouville domain $V$ and $\heartsuit\in\{\varnothing, >0,\ge
0, =0, \le 0, <0\}$ there exists a commuting diagram
\begin{equation}\label{eq:PD-LES}
{\scriptsize
\xymatrix{
   \dots SH_*^\heartsuit(V,\p V) \ar[r] \ar[d]^\cong_{PD} & SH_*^\heartsuit(V)
   \ar[r] \ar[d]^\cong_{PD} & SH_*^\heartsuit(\p V) \ar[r] \ar[d]^\cong_{PD} &
   SH_{*-1}^\heartsuit(V,\p V) \dots \ar[d]^\cong_{PD} \\
   \dots SH^{-*}_{-\heartsuit}(V) \ar[r] & SH^{-*}_{-\heartsuit}(V,\p
   V) \ar[r] & SH^{1-*}_{-\heartsuit}(\p V) \ar[r] & SH^{1-*}_{-\heartsuit}(V) \dots \\
}
}
\end{equation}
where the rows are the long exact sequences of the pair $(V,\p V)$
from~\cite{CO} and the vertical arrows are the Poincar\'e duality
isomorphisms from Theorem~\ref{thm:PD-SH} (the third one) and for
pairs as defined in~\cite{CO} (the other ones). 
Moreover, the Poincar\'e duality isomorphisms are compatible with 
filtration exact sequences. \hfill\qed
\end{theorem}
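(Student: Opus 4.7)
The plan is to realize both rows of the diagram as the long exact sequences induced by a short exact sequence of chain complexes, and then to exhibit the vertical Poincar\'e duality arrows as coming from a single chain-level involution which intertwines all the maps in sight. Recall from~\cite{CO} that the long exact sequence of the pair $(V,\partial V)$ in symplectic homology is obtained from a short exact sequence of filtered chain complexes
\[
   0 \longrightarrow SC^\heartsuit_*(V,\partial V) \longrightarrow SC^\heartsuit_*(V) \longrightarrow SC^\heartsuit_*(\partial V) \longrightarrow 0,
\]
built out of generators (capped $1$-periodic Hamiltonian orbits) lying in different regions of the completion $\widehat V$ and selected by the action window $\heartsuit$. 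Dually, the cohomological version is obtained by applying $\Hom(-,R)$ to this short exact sequence, which reverses the roles of relative and total complexes and produces the bottom row of~\eqref{eq:PD-LES} after an action-window reversal $\heartsuit\mapsto -\heartsuit$.

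The second step is the construction of the chain-level Poincar\'e duality map, following the same template as in the proof of Theorem~\ref{thm:PD-RFH}. The orientation-reversing involution $(x,\eta)\mapsto(\bar x,\bar\eta)$ on loops and Lagrange multipliers has a direct analogue in symplectic homology: given a defining Hamiltonian $H$ and an $S^1$-family $(J_t)$, send each generator $x(t)$ to $\bar x(t):=x(-t)$ with $\bar J_t:=J_{-t}$, which flips the sign of the action and sends positive Floer trajectories to negative ones. This defines an isomorphism $SC_*^\heartsuit(\,\cdot\,)\stackrel{\cong}\longrightarrow SC^{-*}_{-\heartsuit}(\,\cdot\,)$ of $R$-modules (with the integer-graded convention of Remark~\ref{rem:grading}) on each of the three chain complexes above. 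Because the inclusion and projection maps in the short exact sequence are defined purely by selecting generators according to their geometric location in $\widehat V$ and their action window, and because the involution preserves the geometric location while reversing the action window, one sees that the involution is compatible with the three horizontal maps in the short exact sequence, i.e.~it induces a morphism of short exact sequences of chain complexes from the chain model of the top row to the chain model of the bottom row.

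The third step is to pass to homology. The morphism of short exact sequences of complexes induces a morphism between the associated long exact sequences in homology, and compatibility with the connecting homomorphism is then automatic from the naturality of the zig-zag lemma. The vertical arrows thus obtained are precisely the Poincar\'e duality isomorphisms of Theorem~\ref{thm:PD-SH} (third arrow) and its analogue for the pair $(V,\partial V)$ from~\cite{CO} (the remaining arrows), since all of them are induced by the same involution restricted to the appropriate subcomplex or quotient complex. This yields the commutativity of~\eqref{eq:PD-LES}.

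The final step is the compatibility with filtration exact sequences: here one simply observes that the involution acts with degree $0$ on each filtered piece, but sends a generator of action in $(a,b)$ to one of action in $(-b,-a)$; consequently the induced action-window reversal $(a,b)\mapsto(-b,-a)$ is built into the construction, and the whole argument above goes through verbatim with $\heartsuit$ replaced by a fixed action window. The main technical point---and the step I would expect to require the most care---is verifying at the chain level that the involution genuinely descends to the quotient complex computing $SC^\heartsuit_*(\partial V)$ compatibly with the choice of auxiliary Morse functions on the Reeb orbit families, because the Morse function $f_{\gamma^k}$ and its reverse $\bar f_{\gamma^k}=-f_{\gamma^k}$ swap the roles of minima and maxima; this is handled exactly as in the proof of Theorem~\ref{thm:PD-RFH} and requires no new ingredients beyond those already present in~\cite{CO}.
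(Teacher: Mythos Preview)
The paper does not prove this theorem; it is quoted from~\cite{CO} with a \qed. So the relevant comparison is between your sketch and the mechanism for symplectic-homology Poincar\'e duality that the paper actually describes (in \S\ref{sec:PD-products} and the discussion after Theorem~\ref{thm:PD-SH}).

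There is a genuine gap in your argument. You try to transport the Rabinowitz involution $(x,\eta)\mapsto(\bar x,\bar\eta)$ directly to the symplectic homology chain complex by sending a Hamiltonian orbit $x(t)$ to $\bar x(t)=x(-t)$. But in Hamiltonian Floer theory (as opposed to Rabinowitz Floer theory) this does not do what you claim. First, $\bar x$ solves $\dot{\bar x}=-X_H(\bar x)=X_{-H}(\bar x)$, so it is a $1$-periodic orbit of $-H$, not of $H$; there is no Lagrange multiplier to flip. Second, the Hamiltonian action $A_H(x)=\int x^*\lambda-\int H(x)\,dt$ does \emph{not} change sign under $x\mapsto\bar x$: the first term changes sign but the second does not. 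So the claimed action reversal, and hence the claimed bijection of generators within the same chain complex, fails. The paper is explicit about this: just after Theorem~\ref{thm:PD-SH} it says the symplectic-homology Poincar\'e duality ``is constructed in~\cite{CO} without relying on the connection between symplectic homology and Rabinowitz Floer homology.''

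The actual mechanism, visible in the proof of Theorem~\ref{thm:coh-product}, is different: one uses a ``dented'' Hamiltonian profile whose Floer complex in a fixed action window is a cone $C=C^-\oplus C^+$ on a chain map $f\colon C^+[1]\to C^-$, and it is this $f$ (induced in homology) that realizes the Poincar\'e duality isomorphism. Compatibility with the long exact sequence of the pair then comes from the geometric placement of orbit groups I, II, III relative to the filling $V$ and the trivial cobordism, not from an involution on generators. Your overall strategy---exhibit a morphism of short exact sequences of complexes and invoke naturality of the connecting map---is sound, but the specific chain-level map you propose does not exist.
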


{\em (B) Relation to singular cohomology. }
Recall from~\cite{CO} that at action zero symplectic homology specializes to singular
cohomology, 
$$
   SH_*^{=0}(V) \cong H^{n-*}(V),
$$
and similarly for the other versions. Therefore, we obtain

\begin{corollary}[{\cite[Corollary 9.7]{CO}}]\label{cor:PD-LES-zero}
The commuting diagram in Theorem~\ref{thm:PD-LES} specializes at
action zero to

\begin{equation}\label{eq:PD-LES-zero}
{\scriptsize
\xymatrix{
   \dots H^{n-*}(V,\p V) \ar[r] \ar[d]^\cong_{PD} & H^{n-*}(V)
   \ar[r] \ar[d]^\cong_{PD} & H^{n-*}(\p V) \ar[r] \ar[d]^\cong_{PD} &
   H^{n-*+1}(V,\p V) \dots \ar[d]^\cong_{PD} \\
   \dots H_{n+*}(V) \ar[r] & H_{n+*}(V,\p
   V) \ar[r] & H_{n+*-1}(\p V) \ar[r] & H_{n+*-1}(V) \dots \\
}
}
\end{equation}

where the rows are the long exact sequences of the pair $(V,\p V)$
and the vertical arrows are the Poincar\'e duality
isomorphisms for the closed manifold $\p V$ (the third one) and the
manifold-with-boundary $V$ (the other ones). \hfill\qed
\end{corollary}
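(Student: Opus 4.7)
The plan is to specialize the commuting diagram~\eqref{eq:PD-LES} of Theorem~\ref{thm:PD-LES} to $\heartsuit = {=0}$ and then replace every entry and every arrow by its topological counterpart. The action-zero isomorphism $SH_*^{=0}(V) \cong H^{n-*}(V)$ recalled just above the corollary is proved via a Morse-theoretic model: one chooses a defining Hamiltonian for $\p V$ whose restriction to $V$ is a $C^2$-small Morse function, so that the action-zero critical points of $A_H$ are exactly the constant loops at critical points of that function, and the Floer differential restricts to the Morse differential with the indicated grading. The identical argument gives the analogous identifications $SH_*^{=0}(V,\p V)\cong H^{n-*}(V,\p V)$ and $SH_*^{=0}(\p V)\cong H^{n-*}(\p V)$, and dualizing at the chain level produces the cohomological variants appearing in the bottom row of~\eqref{eq:PD-LES-zero}.

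Under these identifications, the horizontal maps in~\eqref{eq:PD-LES} at action zero are induced from the action-zero quotient of the short exact sequence of Floer chain complexes that defines the pair sequence in~\cite{CO}. Because that short exact sequence is constructed from the inclusion of orbits lying over the interior of $V$ into all orbits, at action zero it becomes the Morse-theoretic short exact sequence computing the classical cochain-level pair sequence of $(V,\p V)$. Hence the rows of the specialized diagram are the standard singular (co)homology long exact sequences of the pair, as asserted.

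The main step, and the main obstacle, is to identify the three vertical Poincar\'e duality arrows with classical Poincar\'e/Lefschetz duality. At action zero all generators are constant loops with $\eta = 0$, so the Floer involution $(x,\eta)\mapsto(\bar x,\bar\eta)$ acts as the identity on the generating set while reversing the direction of the Floer gradient flow. In the Morse model above this is exactly the canonical chain isomorphism from the Morse chain complex of a function to the Morse cochain complex of the same function, which is the standard Morse-theoretic realization of Poincar\'e duality with the correct degree shift $k\leftrightarrow \dim - k$. For the closed $(2n-1)$-manifold $\p V$ this reproduces the classical isomorphism $H^{n-*}(\p V)\cong H_{n+*-1}(\p V)$; for the $2n$-manifold-with-boundary $V$ it reproduces the Lefschetz duality isomorphisms $H^{n-*}(V)\cong H_{n+*}(V,\p V)$ and $H^{n-*}(V,\p V)\cong H_{n+*}(V)$. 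The one piece of bookkeeping to be checked is that a single coherent choice of defining Hamiltonian and perturbation data makes all three Morse models and the pair short exact sequence compatible with the involution simultaneously; once this is in place the corollary follows at once from the commutativity of diagram~\eqref{eq:PD-LES} at $\heartsuit = {=0}$.
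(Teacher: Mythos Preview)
Your overall strategy---specialize the diagram~\eqref{eq:PD-LES} to $\heartsuit={=}0$ and identify all entries and maps with their Morse/topological counterparts---is exactly right and is what the paper (and~\cite{CO}) have in mind; in the paper the corollary is simply quoted from~\cite{CO} with a $\qed$.

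There is however a genuine mismatch in your argument for the vertical arrows. You identify the Poincar\'e duality maps at action zero by invoking the Rabinowitz involution $(x,\eta)\mapsto(\bar x,\bar\eta)$ and reading it as the Morse chain isomorphism $MC_*(f)\cong MC^{m-*}(-f)$. But the vertical maps in Theorem~\ref{thm:PD-LES} are \emph{not} defined via that involution: they are the Poincar\'e duality isomorphisms of Theorem~\ref{thm:PD-SH} and the pair duality $SH_*(V,\p V)\cong SH^{-*}(V)$, both constructed directly in~\cite{CO} in the symplectic homology framework. The paper is explicit about this distinction---just after Theorem~\ref{thm:PD-SH} it says that the isomorphism is built ``without relying on the connection between symplectic homology and Rabinowitz Floer homology'' and that compatibility with the Rabinowitz involution is only \emph{expected}. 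So your reduction of the vertical maps to classical Poincar\'e/Lefschetz duality is using an identification the paper does not establish.

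To fix the argument you must instead use the actual construction in~\cite{CO}: the pair duality $SH_*(V,\p V)\cong SH^{-*}(V)$ is realized by the tautological isomorphism between the Floer complex of a Hamiltonian of negative slope (computing $SH_*(V,\p V)$) and the dual of the Floer complex of the opposite Hamiltonian, which has positive slope (computing $SH^{-*}(V)$). At action zero both reduce to the Morse (co)chain complex of a $C^2$-small function on $V$, and this tautological identification is precisely the Morse-theoretic Poincar\'e--Lefschetz duality; similarly for $\p V$. Once you replace the Rabinowitz involution by this description, the rest of your proof goes through.
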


{\em (C) Description of the first map in~\eqref{eq:PD-LES}. }
The following result is simply a restatement of~\cite[Proposition 1.3]{Cieliebak-Frauenfelder-Oancea}.

\begin{proposition} \label{prop:map}

(a) The map $SH_*(V,\p V)\to SH_*(V)$ in~\eqref{eq:PD-LES} (for
$\heartsuit=\varnothing$) fits into a commutative diagram
\begin{equation} \label{eq:map}
\xymatrix
@R=15pt
{
SH_*(V,\p V) \ar[r] \ar[d]_{c^*} & SH_*(V) \\
H^{n-*}(V,\partial V) \ar[r] & H^{n-*}(V) \ar[u]_{c_*}
}
\end{equation}
in which the bottom arrow is the restriction map and the vertical arrows
are compositions of action zero isomorphisms with action truncation maps 
\begin{gather*}
   c_*: H^{n-*}(V)\cong SH_*^{=0}(V)\to SH_*^{\geq 0}(V)=SH_*(V), \cr
   c^*: SH_*(V,\p V)=SH_*^{\leq 0}(V,\p V)\to SH_*^{=0}(V,\p V)\cong H^{*+n}(V,\partial V).
\end{gather*}

(b) The map $SH^{-*}(V)\to SH^{-*}(V,\p V)$ in~\eqref{eq:PD-LES} (for
$\heartsuit=\varnothing$) fits into a commutative diagram
\begin{equation} \label{eq:map-coh}
\xymatrix
@R=15pt
{
SH^{-*}(V) \ar[r] \ar[d]_{c^*} & SH^{-*}(V,\p V) \\
H_{n+*}(V) \ar[r] & H_{n+*}(V,\p V) \ar[u]_{c_*}
}
\end{equation}
in which the bottom arrow is induced by inclusion and the vertical arrows
are compositions of action zero isomorphisms with action truncation maps 
\begin{gather*}
   c_*: H_{n+*}(V,\p V)\cong SH^{-*}_{=0}(V,\p V)\to SH^{-*}_{\leq 0}(V,\p V)=SH^{-*}(V,\p V), \cr
   c^*: SH^{-*}(V)=SH^{-*}_{\geq 0}(V)\to SH^{-*}_{=0}(V)\cong H_{n+*}(V).
\end{gather*} 

(c) The above two diagrams are isomorphic via the Poincar\'e duality isomorphism $PD$. 
\hfill\qed
\end{proposition}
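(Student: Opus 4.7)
The plan is to reduce each of the three parts to a bookkeeping argument about action filtrations, relying on the long exact sequence of the pair from~\cite{CO}, the identification of action-zero symplectic (co)homology with singular (co)homology, and the compatibility of Poincar\'e duality with both, as established in Theorem~\ref{thm:PD-LES}. Since the statement itself is presented as a restatement of~\cite[Proposition~1.3]{Cieliebak-Frauenfelder-Oancea}, the essential content can be extracted from that reference.

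For part (a), I would first observe that the map $SH_*(V,\p V)\to SH_*(V)$ is a connecting-type map in the long exact sequence~\eqref{eq:PD-LES} with $\heartsuit=\varnothing$. Using the equalities $SH_*(V,\p V)=SH_*^{\le 0}(V,\p V)$ and $SH_*(V)=SH_*^{\ge 0}(V)$ from~\cite{CO}, this map necessarily factors through the action-zero truncations on both sides, giving
$$
SH_*^{\le 0}(V,\p V)\xrightarrow{\,c^*\,}SH_*^{=0}(V,\p V)\xrightarrow{\,\phi\,}SH_*^{=0}(V)\xrightarrow{\,c_*\,}SH_*^{\ge 0}(V)
$$
for a uniquely determined middle map $\phi$. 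The second step is to invoke the identifications $SH_*^{=0}(V,\p V)\cong H^{n-*}(V,\p V)$ and $SH_*^{=0}(V)\cong H^{n-*}(V)$ from~\cite{CO} and to verify that under these $\phi$ becomes the restriction map on singular cohomology. To do this I would choose a defining Hamiltonian that is a small Morse perturbation supported suitably near $V$, so that the action-zero Floer complex reduces to a Morse complex for the pair $(V,\p V)$ and $\phi$ becomes a tautological chain-level comparison map.

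Part (b) follows by the parallel cohomological argument: the map $SH^{-*}(V)\to SH^{-*}(V,\p V)$ factors through the action-zero truncations, which by~\cite{CO} are identified with $H_{n+*}(V)$ and $H_{n+*}(V,\p V)$ respectively, and the induced middle map becomes the inclusion-induced map on singular homology.

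For part (c), the two diagrams will be related by Poincar\'e duality because, by Theorem~\ref{thm:PD-LES}, the duality isomorphisms are simultaneously compatible with the long exact sequence of the pair and with the action filtrations, hence in particular with the truncation maps $c^*$ and $c_*$. At action zero, Poincar\'e duality specializes by Corollary~\ref{cor:PD-LES-zero} to classical Poincar\'e duality on $V$, which exchanges the restriction map $H^{n-*}(V,\p V)\to H^{n-*}(V)$ with the inclusion-induced map $H_{n+*}(V)\to H_{n+*}(V,\p V)$, producing the required identification. The principal obstacle throughout is the identification of the action-zero component $\phi$ in part (a) with the topological restriction map: this is a chain-level comparison that requires careful choices of Hamiltonians and an unravelling of the identification $SH_*^{=0}\cong H^{n-*}$, but the verification is exactly the one performed in~\cite{Cieliebak-Frauenfelder-Oancea}, so the proof reduces to an appeal to that reference.
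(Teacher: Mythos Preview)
Your proposal is correct and matches the paper's treatment: the paper gives no independent proof of this proposition, simply declaring it to be a restatement of~\cite[Proposition~1.3]{Cieliebak-Frauenfelder-Oancea} and placing a \qed. Your outline of the action-filtration factorization and the appeal to Theorem~\ref{thm:PD-LES} and Corollary~\ref{cor:PD-LES-zero} is exactly the content of that reference, and you correctly recognize that the only nontrivial step---identifying the action-zero map $\phi$ with the topological restriction---is what is carried out there.
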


\begin{remark}[Lagrangian case] The previous results 
have Lagrangian counterparts, with symplectic homology replaced by Lagrangian symplectic homology, or wrapped Floer homology, cf.~\cite{CO}. We spell out some of these statements, mainly with the purpose of explaining the effect of the grading convention described in Appendix~\ref{sec:grading}.  

Given an exact $n$-dimensional Lagrangian $L$ which is conical near its boundary $\p L$, the Poincar\'e duality isomorphisms from~\cite{CO} read
$$
SH_*(L,\p L)\simeq SH^{n-*}(L),
$$
and
\begin{equation} \label{eq:PD-Lag-SH}
SH_*(\p L)\simeq SH^{n-*+1}(\p L). 
\end{equation}
This fixes the grading for the Lagrangian counterpart of~\eqref{eq:PD-LES}. 

The action zero part of Lagrangian symplectic homology and cohomology is expressed in topological terms as 
$$
SH_*^{=0}(\cL)\simeq H^{n-*}(\cL), \qquad SH^*_{=0}(\cL)\simeq H_{n-*}(\cL),
$$
where $\cL$ stands for any of the symbols $L$, $(L,\p L)$, or $\p L$. This fixes the grading for the Lagrangian counterpart of~\eqref{eq:PD-LES-zero}. 

The Lagrangian analogue of~\eqref{eq:map} is 
$$
\xymatrix
@R=15pt
{
SH_*(L,\p L) \ar[r] \ar[d]_{c^*} & SH_*(L) \\
H^{n-*}(L,\partial L) \ar[r] & H^{n-*}(L) \ar[u]_{c_*}.
}
$$
In particular, if $L$ is a disc the bottom map vanishes for degree reasons, hence the map $SH_*(L,\p L)\to SH_*(L)$ vanishes as well. 
\end{remark}

%%%
\subsection{TQFT operations on Floer homology}\label{sec:TQFT-Floer}
%%%

In this subsection we recall the definition of TQFT operations on
Hamiltonian Floer homology from~\cite{Seidel07,Ritter}, see
also~\cite{CO,Ekholm-Oancea}. 
Consider a punctured Riemann surface $S$ with $p$ negative and $q$
positive punctures and chosen cylindrical ends
$Z_i^-=(-\infty,0]\times S^1$ near the negative punctures $z_i^-$,
resp.~$Z_j^+=[0,\infty)\times S^1$ near the positive punctures $z_j^+$. 
Let $H:S\times\wh V\to\R$ be an $S$-dependent 
Hamiltonian on a completed Liouville domain $\wh V$ which is linear
outside a compact subset of $\wh V$, and $S$-independent equal to
$H_\ell^\pm$ near each puncture $z_\ell^\pm$ of $S$. Pick 
nonzero weights $A_\ell^\pm\in\R^*$ (one usually considers positive weights, but we will also need to allow negative ones)
and a $1$-form $\beta$ on $S$ with the following properties: 
\begin{enumerate}
\item[(i)] $d_S(H\beta)\leq 0$;
\item[(ii)] $\beta=A_\ell^\pm dt$ in cylindrical coordinates $(s,t)\in
  Z_\ell^\pm$ near the puncture $z_\ell^\pm$.
\end{enumerate}
We consider maps $u:S\to\wh V$ that are perturbed holomorphic in the sense that
$(du-X_H\otimes\beta)^{0,1}=0$ and have finite energy
$E(u)=\frac{1}{2}\int_S|du-X_H\otimes\beta|^2{\rm vol}_S$. They
converge at the punctures to $1$-periodic orbits
$x_\ell^\pm$ of $A_\ell^\pm H_\ell^\pm$ and satisfy the energy estimate
\begin{equation}\label{eq:energy}
   0 \leq E(u) \leq \sum_{j=1}^qA_{A_j^+H_j^+}(x_j^+) -
   \sum_{i=1}^pA_{A_i^-H_i^-}(x_i^-).
\end{equation}
The signed count of such maps yields an operation
$$
   \psi_S:\bigotimes_{j=1}^qFH_*(A_j^+H_j^+) \to
   \bigotimes_{i=1}^pFH_*(A_i^-H_i^-).  
$$
of degree $n(2-2g-p-q)$ which does not increase the action. These
operations are graded commutative if the degrees are shifted by $-n$ and
satisfy the usual TQFT composition rules. As they respect the action,
the operations descend to operations between suitable filtered Floer
homology groups and thus to sympletic homology. 

Suppose now that $H:\wh V\to\R$ is $S$-independent. 
Then $\beta$ and the weights are related by Stokes' theorem
$$
   \sum_{j=1}^qA_j^+ - \sum_{i=1}^pA_i^- = \int_Sd\beta. 
$$
Conversely, if the quantity on the left-hand side is nonnegative (resp. zero,
resp. nonpositive), then we find a $1$-form $\beta$ with properties (i) and
(ii) such that $d\beta\geq 0$ (resp.~$=0$, resp.~$\leq0$). Thus for
$S$-independent $H$ we can arrange
conditions (i)--(ii) in the following situations:
\begin{enumerate}
\item[(a)] $H$ arbitrary, $d\beta\equiv 0$, $p,q\geq 1$;
\item[(b)] $H\geq 0$, $d\beta\leq 0$, $p\geq 1$;
\item[(c)] $H\leq 0$, $d\beta\geq 0$, $q\geq 1$.
\end{enumerate}
Let us now specialize to the case that $S$ is a pair-of-pants with two
positive punctures and one negative puncture. Then the operation
$\psi_S$ induces a pair-of-pants product on filtered Floer homology
\begin{align*}\psi_S:FH_i^{(a_1,b_1)}(A_1^+H)\otimes FH_j^{(a_2,b_2)} & (A_2^+H) \\
   & \longrightarrow
   FH_{i+j-n}^{(\max\{a_1+b_2,a_2+b_1\},b_1+b_2)}(A_1^-H). 
\end{align*}
By taking suitable inverse and direct limits (see~\cite{CO}) this leads to degree $-n$ pair-of-pants products on $SH_*(V)$, $SH_*(V,\p V)\cong SH^{-*}(V)$, $SH_*(\p V)$, and $SH_*(W,\p W)\cong SH^{-*}(\p V)$, where $W=[\frac12,1]\times\p V$. 

More generally, one can consider open-closed 
TQFT operations in Floer homology, which mix together Hamiltonian Floer homology inputs and outputs and Lagrangian Floer homology inputs and outputs. The discussion about weights carries over without modification, and we refer to Appendix~\ref{sec:grading} for a discussion of gradings in this setting. 

%%%
\subsection{Products and the mapping cone} \label{sec:prod-algebraic} 
%%%

We consider in this section an algebraic setup that will help us organize the subsequent geometric arguments regarding the compatibility of the Poincar\'e duality isomorphism with product structures.  

Consider a chain complex $(C,\p)$ of the form 
$$
   C=C^-\oplus C^+,\qquad \p = \left(\begin{array}{cc}\p^- & f \\0 & \p^+ \end{array}\right).
$$
Thus $(C^\pm,\p^\pm)$ are chain complexes, $C^-$ is a subcomplex of $C$, and $C$ is the cone of the chain map $f:C^+[1]\to C^-$. We use the conventions of~\cite{CO} for cones and degree shifts: $C^+[1]_i=C_{i+1}$ and $\p_{C^+[1]}=-\p^+$. 

Assume now that {\em $C$ is acyclic} as a consequence of $\mathrm{Id}_C$ being homotopic to zero, i.e., there exists $K:C\to C[1]$ such that 
$$
   \mathrm{Id}_C=\p K + K\p. 
$$
Writing $K$ with respect to the decomposition $C=C^-\oplus C^+$ as 
$$
  K=\left(\begin{array}{cc}k^- & h \\g & k^+ \end{array}\right),
$$
this is equivalent to the system of four equations 
$$
\left\{\begin{array}{rcl}
\p^+g+g\p^- & = & 0, \\
\p^-k^- + fg + k^-\p^- & = & \mathrm{Id}_{C^-}, \\
\p^+k^+ + gf + k^+\p^+ & = & \mathrm{Id}_{C^+}, \\
\p^-h + fk^+ + k^-f + h\p^+ & = & 0.
\end{array}\right.
$$
The first three equations amount to the fact that 
$$
   g:C^-\to C^+[1]
$$
is a chain map which is a homotopy inverse for $f$, the homotopies between $fg$ and $gf$ and the corresponding identity maps being given by 
$$
   k^-:C^-\to C^-[1],\qquad k^+:C^+\to C^+[1]. 
$$
The fourth equation gives extra information which we will not use.

Let now $(\ol C,\ol\p)$ be another chain complex of the form
$$
   \ol C=\ol C^-\oplus \ol C^+,\qquad \ol\p = \left(\begin{array}{cc}\ol\p^- & \ol f \\0 & \ol\p^+ \end{array}\right).
$$
The complex $\ol C$ need not be acylic, though an important special case is $\ol C=C$. 
In the following we will adopt the following
conventions for a linear map $\phi:C\otimes C\to\ol C$: 
\begin{itemize}
\item $\phi^{+-}_+$ is the part of $\phi$ mapping $C^+\otimes C^-\to
  \ol C^+$, etc;
\item we abbreviate $\phi^+:=\phi^{++}_+$ and $\phi^-:=\phi^{--}_-$;
\item $[\p,\phi]:=\ol\p\circ\phi - (-1)^{|\phi|}\phi\circ(\p\otimes 1+1\otimes\p)$.
\end{itemize}
Suppose now that we are given a ``product'' $\mu:C\otimes C\to\ol C$ of
degree zero satisfying
$$
   [\p,\mu]=0 \quad\text{and}\quad \mu^{--}_+ = \mu^{-+}_+ = \mu^{+-}_+ = 0.
$$
Thus $\mu$ descends to homology and the pair of subcomplexes $(C^-,\ol C^-)$ is a
``two-sided ideal pair'' with respect to $\mu$.
\footnote{I.e., whenever one of the inputs of $\mu$ is in $C^-$, the output is in $\ol C^-$.}  
It follows that 
$$
   0=[\p,\mu]^{--}_-=[\p^-,\mu^{--}_-] \quad\text{and}\quad 0=[\p,\mu]^{++}_+=[\p^+,\mu^{++}_+],
$$
so $\mu^-=\mu^{--}_-:C^-\otimes C^-\to\ol C^-$ and
$\mu^+=\mu^{++}_+:C^+\otimes C^+\to\ol C^+$ descend to products on homology. 

\begin{remark}
The case of a product of any degree can always be reduced to the
degree zero case by a shift of the grading on $C$ and $\ol C$.
\end{remark}

The following lemma is a key result for this section.

\begin{lemma}\label{lem:secondary-product}
For $(C,\p)$, $K$, $(\ol C,\ol\p)$ and $\mu$ as above the following hold. 

(a) The negative parts $\nu^-,\wt\nu^-:C^-\otimes C^-\to\ol C^-[1]$ of the
  maps $\nu:=\mu(K\otimes 1):C\otimes C\to\ol C[1]$ and
  $\wt\nu:=\mu(1\otimes K)$ satisfy
$$
   \mu^- = [\p^-,\nu^-] = [\p^-,\wt\nu^-].  
$$
In particular, the {\em primary product} $\mu^-$ vanishes on the homology of $C^-$. 

(b) The negative part $\sigma^-:C^-[-1]\otimes C^-[-1]\to\ol C^-[-1]$ of the
difference $\sigma:=\wt\nu-\nu$ satisfies $[\p^-,\sigma^-]=0$ and thus
descends to a {\em secondary product} on the homology of $C^-[-1]$. 

(c) The negative part $\eta^-=\eta^{--}_-:C^-[-1]\otimes C^-[-1]\to\ol C^-$ of
$\eta:=\mu(K\otimes K)$ satisfies
$$
   [\p^-,\eta^-] = \sigma^--\ol f\mu^+(g\otimes g).
$$
Thus the secondary product $\sigma^-$ agrees on homology with the
product $\mu^+$ transferred from $C^+$ to $C^-[-1]$ via the chain map $\ol f$
and the homotopy inverse $g$ of $f$.
\end{lemma}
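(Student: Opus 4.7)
The plan is to derive all three assertions from two ambient identities on $C \otimes C$, and then read off what happens on the negative summand using the triangular shape of both differentials together with the vanishing hypothesis $\mu^{--}_+ = \mu^{-+}_+ = \mu^{+-}_+ = 0$.

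First I would establish the ambient identities
\[
   [\partial, \nu] \;=\; [\partial, \wt\nu] \;=\; \mu, \qquad [\partial, \eta] \;=\; \wt\nu - \nu \;=\; \sigma,
\]
which follow formally from three inputs: that $\mu$ is a chain map ($[\partial, \mu] = 0$), that $K$ is a contracting homotopy ($[\partial, K] = \mathrm{Id}_C$), and the graded Leibniz rule for $[\partial, A \otimes B]$. For instance $[\partial, K \otimes K] = \mathrm{Id} \otimes K - K \otimes \mathrm{Id}$, and precomposing with $\mu$ gives exactly $\sigma$.

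Next I would project these identities onto $\ol C^-$ on inputs from $C^- \otimes C^-$. The key observation is that the hypothesis on $\mu$ forces $\nu^+$ and $\wt\nu^+$ to vanish on $C^- \otimes C^-$: writing $Kx = k^-x + gx$ for $x \in C^-$, the $\ol C^+$-component of $\mu(Kx \otimes y)$ is killed by $\mu^{--}_+ = 0$ on one piece and by $\mu^{+-}_+ = 0$ on the other. Thanks to this vanishing, and the fact that $C^-$ is a subcomplex (so $\partial x,\partial y \in C^-$), the $\ol C^-$-component of the identity $[\partial,\nu]=\mu$ simplifies with no contribution from $\ol f$, yielding $[\partial^-, \nu^-] = \mu^-$; the same argument applies to $\wt\nu$. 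Part (b) is then immediate by subtraction.

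For part (c) the same projection strategy works, but now the $\ol C^+$-part of $\eta$ on $C^- \otimes C^-$ is genuinely nonzero. Expanding $\eta(x \otimes y)=\pm\mu(Kx\otimes Ky)$ with $Kx = k^-x + gx$ and $Ky = k^-y + gy$, three of the four terms in the $\ol C^+$-component are annihilated by the hypothesis on $\mu$, leaving precisely $\mu^+(gx \otimes gy)$. Projecting $[\partial, \eta] = \sigma$ onto $\ol C^-$ then produces the extra term $\ol f\, \mu^+(g \otimes g)$ from the off-diagonal block of $\ol\partial$, and rearranging gives the claimed formula. The hard part will be bookkeeping of Koszul signs in the Leibniz rules and of the degree shifts $[-1]$ appearing in the statement; the structural identities themselves are essentially dictated by $[\partial,\mu]=0$ and $[\partial,K]=\mathrm{Id}$.
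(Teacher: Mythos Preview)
Your proposal is correct and follows essentially the same approach as the paper's proof: compute the ambient identities $[\partial,\nu]=\mu$ and $[\partial,\eta]=\sigma$ from $[\partial,\mu]=0$ and $[\partial,K]=\mathrm{Id}$, then project to the $\ol C^-$ component on $C^-\otimes C^-$ inputs. Your explicit verification that $\nu^{--}_+=\wt\nu^{--}_+=0$ and $\eta^{--}_+=\mu^+(g\otimes g)$, using the vanishing hypotheses on $\mu$, is exactly the content the paper leaves implicit when it writes ``taking the $C^-$ part''; this makes your argument slightly more transparent than the paper's terse version.
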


\begin{proof}
For part (a) we compute, using $[\p,\mu]=0$ and $[\p,K]=1$,
\begin{align*}
   [\p,\nu] 
   &= \ol\p\mu(K\otimes 1) + \mu(K\otimes 1)(\p\otimes 1+1\otimes\p) \cr
   &= \mu\bigl([\p\otimes 1+1\otimes\p,K\otimes 1]\bigr) \cr
   &= \mu\bigl([\p,K]\otimes 1\bigr) = \mu.
\end{align*}
We now use $\nu^{--}_+=0$ and then take the $C^-$ part to get
$$
   \mu^- = [\p,\nu]^{--}_- = [\p^-,\nu^-].
$$
Part (b) follows directly from part (a). 
For part (c) we compute, using $[\p,\mu]=0$ and $[\p,K]=1$,
\begin{align*}
   [\p,\eta] 
   &= \ol\p\mu(K\otimes K) + \mu(K\otimes K)(\p\otimes 1+1\otimes\p) \cr
   &= \mu\bigl([\p\otimes 1+1\otimes\p,K\otimes K]\bigr) \cr
   &= \mu\bigl([\p,K]\otimes K-K\otimes[\p,K]\bigr) \cr
   &= \mu(1\otimes K-K\otimes 1).
\end{align*}
Taking the $C^-$ part the right hand side becomes
$\wt\nu^--\nu^-=\sigma^-$, while the left hand side becomes
$$
   [\p,\eta]^{--}_- = [\p^-,\eta^-] + \ol f\eta^{--}_+ = [\p^-,\eta^-] +
   \ol f\mu^+(g\otimes g).
$$
\end{proof}

%%%
\subsection{Poincar\'e duality with products}\label{sec:PD-products}
%%%

As in the previous sections $V$ is a Liouville domain of dimension $2n$, and $W=[\frac 1 2, 1]\times \p V\subset V$ is the trivial cobordism realized by a collar neighbourhood of $\p V$ in $V$. Recall from \S\ref{sec:TQFT-Floer} that $SH_*(W,\p W)$ carries a product of degree $-n$ determined by counts of rigid pairs-of-pants in combination with suitable action truncations. We refer to this product as the \emph{primary product on $SH_*(W,\p W)$}. By Poincar\'e duality we have $SH_*(W,\p W)\simeq SH^{-*}(W)$, and we refer to the corresponding degree $n$ pair-of-pants product on $SH^*(W)$ as the \emph{primary product on $SH^*(W)=SH^*(\p V)$}. 

The following result corresponds to Theorem~\ref{thm:PD-RFH-intro} from the Introduction. Recall that $SH_*(\p V)$ carries a unital product $\boldmu$ of degree $-n$.  

\begin{theorem} \label{thm:coh-product}
Let $V$ be a Liouville domain of dimension $2n$. Then:

(a) The primary product on $SH^*(\p V)$ vanishes. As a consequence, $SH^*(\p V)$ carries a degree $n-1$ secondary product $\boldlambda^\vee \tau$ which is associative, graded commutative, and unital (if degrees are shifted by $1-n$). 

(b) The Poincar\'e duality isomorphism 
$$
   PD:SH_*(\p V)\stackrel\simeq \longrightarrow SH^{1-*}(\p V)
$$
is a ring homomorphism, where $SH_*(\p V)$ is endowed with its degree $-n$ pair-of-pants product $\boldsymbol{\mu}$ and $SH^*(\p V)$ is endowed with its degree $n-1$ secondary product $\boldsymbol{\lambda}^\vee \tau$. 
\end{theorem}

Part (b) shows that, in contrast to the secondary product on $SH^*_{>0}(V)$ defined in~\cite{CO}, the secondary product on $SH^*(\p V)$ has a unit. 

It is interesting to note that our proof of (a) is inseparable from the proof of (b). 
In particular, unitality of $\boldlambda^\vee \tau$ is only implicitly inferred from unitality of $\boldmu$ via the isomorphism in (b). We refer to~\cite{CO-cones} for an alternative, more direct description of the unit for $\boldlambda^\vee \tau$ within the framework of multiplicative structures on cones.

Before giving the proof of Theorem~\ref{thm:coh-product}, we state its Lagrangian counterpart. 
We consider Maslov 0 exact Lagrangians $L\subset V$ with boundary such that $\p L=L\cap \p V$ and $L$ is conical near $\p L$. 

\begin{theorem} \label{thm:coh-product-Lag}
Let $V$ be a Liouville domain of dimension $2n$ and $L\subset V$ be an exact Lagrangian as above. Then:

(a) The primary product on $SH^*(\p L)$ vanishes. As a consequence, $SH^*(\p L)$ carries a secondary product $\boldsymbol{\lambda}^\vee \tau$ of degree $-1$, which has a unit in degree $1$. 

(b) The Poincar\'e duality isomorphism 
$$
   PD:SH_*(\p L)\stackrel\simeq \longrightarrow SH^{n-*+1}(\p L)
$$
is a ring homomorphism, where $SH_*(\p L)$ is endowed with its degree $-n$ pair-of-pants product $\boldsymbol{\mu}$ and $SH^*(\p L)$ is endowed with its degree $-1$ secondary product $\boldsymbol{\lambda}^\vee \tau$. 
\end{theorem}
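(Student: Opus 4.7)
The plan is to run the proof of Theorem~\ref{thm:coh-product} essentially verbatim, replacing Hamiltonian Floer homology of $1$-periodic orbits on $\wh V$ by wrapped Floer homology of Hamiltonian chords from $L$ to $L$; only the grading bookkeeping changes, and it is recorded in Appendix~\ref{sec:grading}.

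Concretely, first I would fix $\tau>0$ outside the Reeb chord spectrum of $\p L$ and take the same dented profile $h=h_{-\tau',\tau,\epsilon,\epsilon'}$ of Figure~\ref{fig:dent}, viewed as a Hamiltonian $H=H_\tau$ on $\wh V$ after a small Morse-type perturbation. The Hamiltonian chords of $H$ with endpoints on $L$ fall into the same three groups $I,II,III$ localized in $V_{1/4}$, near $r=1-\epsilon'$ and near $r=1$ respectively. For an action window $(a,b)$ with $-\infty<a<0<b<\infty$ and $\tau$ so large that chords of type $I$ have action below $a$, I would let $C_{a,b}:=CW_*^{(a,b)}(H;L)$ be the wrapped Floer chain complex in that window, with the decomposition
$$
   C_{a,b}=C_{a,b}^-\oplus C_{a,b}^+,
$$
where $C_{a,b}^-$ is generated by chords of type $III$ and $C_{a,b}^+$ by chords of type $II$. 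The Lagrangian versions of the no-escape/action estimates used in~\cite[Lemmas~2.2, 2.3, and 2.5]{CO} still force $C_{a,b}^-$ to be a subcomplex, so that $C_{a,b}$ is the mapping cone of a chain map $f:C_{a,b}^+\to C_{a,b}^-[-1]$ which, on homology after inverse and direct limits, realizes the Lagrangian Poincar\'e duality isomorphism $SH_*^{(a,b)}(\p L)\stackrel\simeq\longrightarrow SH^{n+1-*}_{(-b,-a)}(\p L)$ of~\eqref{eq:PD-Lag-SH}.

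Next I would construct the chain contraction $K:C_{a,b}\to C_{a,b}[1]$ with $[\p,K]=\mathrm{Id}_{C_{a,b}}$ by the same argument: $H$ is joined by a monotone homotopy supported near $\{1-\epsilon\le r\le 1\}$ to the linear Hamiltonian of slope $-\tau$, whose wrapped complex in $(a,b)$ vanishes, and the Lagrangian analogue of the continuation inversion~\cite[Lemma~7.2]{CO} provides a homotopy inverse, with the homotopy of homotopies yielding $K$. Then the disk with two incoming and one outgoing boundary puncture, equipped with a weighted closed $1$-form as in \S\ref{sec:TQFT-Floer}, defines a degree $-n$ half-pair-of-pants product
$$
   \mu:C_{a,b}\otimes C_{a,b}\to C_{a+b,2b}
$$
for which the same estimates yield $\mu^{--}_+=\mu^{-+}_+=\mu^{+-}_+=0$. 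We are then exactly in the algebraic framework of \S\ref{sec:prod-algebraic}, and Lemma~\ref{lem:secondary-product} immediately gives (a) the vanishing of the primary product $\mu^-$ on $H_*(C_{a,b}^-)$, (b) a secondary product $\sigma^-$ on $H_*(C_{a,b}^-[-1])$, and the identification of $\sigma^-$ with $\mu^+$ transferred through $f$ and its homotopy inverse $g$. Passing to the limit $a\to-\infty$ and then $b\to+\infty$, and using that Lagrangian PD raises cohomological degree by $n+1$ rather than by $1$, the induced secondary product on $SH^*(\p L)$ comes out in degree $-1$; unitality of $c^*$ follows from unitality of $m$ via the ring isomorphism $PD$.

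The main obstacle, exactly as in the closed case, is arranging at the chain level that the splitting $C_{a,b}=C_{a,b}^-\oplus C_{a,b}^+$ and the three vanishings for $\mu$ hold \emph{strictly} and not merely up to homotopy; everything afterwards is a purely algebraic consequence of Lemma~\ref{lem:secondary-product}. This reduces to proving Lagrangian no-escape/monotonicity estimates for Floer strips and half-pants with boundary on $L$ conical near $\p L$, which are routine adaptations of the closed statements in~\cite{CO}. Once these are in place, only the Conley--Zehnder gradings of Appendix~\ref{sec:grading} need to be tracked to read off the asserted degrees $-n$ and $-1$.
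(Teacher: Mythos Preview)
Your proposal is correct and follows precisely the paper's own approach: the paper simply states that the proof is the same as that of Theorem~\ref{thm:coh-product}, with gradings handled as in Appendix~\ref{sec:grading}. Your detailed write-up of the Lagrangian adaptation (dented profile, wrapped Floer complex, chain contraction $K$, half-pair-of-pants product, and application of Lemma~\ref{lem:secondary-product}) is exactly what the paper intends by ``the proof is the same.''
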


\begin{proof}
The proof is up to notation the same as that of Theorem~\ref{thm:coh-product}. Gradings are discussed in  Appendix~\ref{sec:grading}. 
\end{proof}

Before giving with the proof of Theorem~\ref{thm:coh-product}, it is useful to recall the definition of $SH_*(\p V)$. 
Denote the radial coordinate in the conical part of the symplectic completion $\wh V=V\cup [1,\infty)\times \p V$ by $r\in[1,\infty)$. Rabinowitz Floer homology $SH_*(\p V)$ is defined using the family  
$$
\cH=\{H_{\lambda,\mu}\, : \, \lambda,\mu\in\R\}
$$ 
of Hamiltonians on $\wh V$, equal to $0$ on $\p V$, linear in $r$ of slope $\mu$ on $[1,\infty)\times \p V$, linear in $r$ of slope $\lambda$ on $[1/2,1]\times \p V$, and constant equal to $-\lambda/2$ on $\{r\le 1/2\}$. See Figure~\ref{fig:Ham_RFH}.

\begin{figure} [ht]
\centering
\input{Ham_RFH.pstex_t}
\caption{Hamiltonian profile for $SH_*(\p V)$.}
\label{fig:Ham_RFH}
\end{figure}

Given $-\infty<a<b<\infty$ we define
$$
SH_*^{(a,b)}(\p V)=\lim^{\longrightarrow}_{\mu\to\infty,\, \lambda\to-\infty}FH_*^{(a,b)}(H_{\lambda,\mu}),
$$
and further 
$$
SH_*(\p V)=\lim^{\longrightarrow}_{b\to\infty}\lim^{\longleftarrow}_{a\to-\infty}SH_*^{(a,b)}(\p V).
$$

\begin{proof}[Proof of Theorem~\ref{thm:coh-product}]
Let $\wh V$ be the completion of $V$, with a canonical embedding of the symplectization $(0,\infty)\times\p V\subset \wh V$. Under this embedding the level $\{1\}\times\p V$ is canonically identified with $\p V$, and the restriction of this embedding to $(0,1]\times \p V$ takes values in $V$. Denote $V_\delta= \wh V\setminus (\delta,\infty)\times \p V$ for $\delta>0$. 

Key to the proof is the following construction.  Given $\tau>0$ define the function $\ell_{-\tau}:(0,\infty)\to\R$ constant equal to $3\tau/4$ on the interval $(0,\frac{1}{4}]$ and linear of slope $-\tau$ on $[\frac{1}{4},\infty)$, so that $\ell_{-\tau}(1)=0$. 
Fix now parameters $\tau'>\tau>0$ and $0<\epsilon<\frac 1 2$. (In the constructions that follow the parameter $\tau'$ will eventually be chosen very close to $\tau$.) Let $\epsilon'=\epsilon(\tau'-\tau)/(\tau'+\tau)$, so that $0<\epsilon'<\epsilon$. Consider the continuous piecewise linear function $h=h_{-\tau',\tau,\epsilon,\epsilon'}:(0,\infty)\to\R$ defined by the following conditions:
\begin{itemize}
\item $h$ coincides with $\ell_{-\tau}$ on $(0,1-\epsilon]$. 
\item $h$ is linear of slope $-\tau'$ on the interval $[1-\epsilon,1-\epsilon']$;
\item $h$ is linear of slope $\tau$ on the interval $[1-\epsilon',1]$;
\item $h(1)=0$;
\item $h$ coincides with $\ell_{-\tau}$ on the interval $[1,\infty)$. 
\end{itemize}
We call $h=h_{-\tau',\tau,\epsilon,\epsilon'}$ \emph{the $(-\tau',\tau,\epsilon,\epsilon')$-dent on the function $\ell_{-\tau}$}. See Figure~\ref{fig:dent}. 
\begin{figure} [ht]
\centering
\input{dent.pstex_t}
\caption{Hamiltonian profile $h_{-\tau',\tau,\epsilon,\epsilon'}$ for Poincar\'e duality.}
\label{fig:dent}
\end{figure}
Let $\tilde h$ be a smoothing of $h$ with the following properties: 
\begin{itemize}
\item Outside a small neighbourhood of $r=\frac 1 4$ and a small neighbourhood of the closed interval $[1-\epsilon,1]$, the function $\tilde h$ coincides with $h$. In particular, $\tilde h$ is constant equal to $3\tau/4$ for $r\le \frac 1 4-\delta$ and is linear of slope $-\tau$ for $r\in[\frac 1 4+\delta,1-\epsilon-\delta]$ and $r\ge 1+\delta$ for some small $\delta>0$.
\item Inside the neighbourhood of the closed interval $[1-\epsilon,1]$, and outside small neighbourhoods of $r=1-\epsilon$, $r=1-\epsilon'$, and $r=1$, the function $\tilde h$ is linear of negative slope $-\tau'_1$ smaller but close to $-\tau'$, then linear of positive slope $\tau_1$ larger but close to $\tau$.  
\item Inside the neighbourhood of $r=\frac 1 4$ where its derivative lies in $(-\tau,0)$, the function $\tilde h$ is strictly concave. 
\item Inside the neighbourhood of $r=1-\epsilon$ where its derivative lies in $(-\tau'_1,-\tau)$, the function $\tilde h$ is strictly concave. 
\item In a neighbourhood of $r=1-\epsilon'$ the function $\tilde h$ is constant equal to $-\epsilon'\tau$ on some open interval, and is strictly convex as its slope varies in $(-\tau'_1,0)$ and $(0,\tau_1)$.
\item In a neighbourhood of $r=1$ the function $\tilde h$ is constant equal to $0$ on some open interval, and is strictly concave as its slope varies in $(0,\tau_1)$ and $(-\tau,0)$.  
\end{itemize}

A function $\tilde h$ as above can be interpreted as a Hamiltonian $\tilde H:\wh V\to\R$ by extending it as constant equal to $3\tau/4$ over $\wh V\setminus (0,\infty)\times\p V$. 

Assume now that $\tau$ does not belong to the action spectrum of $\p V$. Since the latter is a closed set in $\R$, we can choose the other parameters $\tau',\tau_1,\tau'_1$ such that $\tau<\tau_1<\tau'<\tau'_1$ and such that the whole interval $[\tau,\tau'_1]$ does not intersect the spectrum. (In our later proof of Proposition~\ref{prop:c-iso} we will have to diminish even further the parameter $\tau'_1>\tau$.) The parameter $\epsilon$ is chosen arbitrarily, while the parameter $\epsilon'$ is determined by $\epsilon$, $\tau$, and $\tau'$, and is typically much smaller than $\epsilon$. The $1$-periodic orbits of $\tilde H$ fall then into three groups. 
\begin{itemize}
\item Group~I consists of constants in $V_{\frac 1 4}$ and nonconstant orbits in the concavity region near $r=\frac 1 4$. 
\item Group~II consists of orbits located in a neighbourhood of $r=1-\epsilon'$, and these are themselves of three types: constants on the trivial cobordism which constitutes the minimal level (type $II^0$), nonconstant orbits in the convexity region near its negative boundary (type $II^-$), nonconstant orbits in the convexity region near its positive boundary (type $II^+$). 
\item Group~III consists of orbits located in a neighbourhood of $r=1$, and these are again of three types: constants on the trivial cobordism which constitutes the maximal level (type $III^0$), nonconstant orbits in the concavity region of positive slope near its negative boundary (type $III^-$), nonconstant orbits in the concavity region of negative slope near its positive boundary (type $III^+$). 
\end{itemize}

Let $H=H_\tau$ be a $C^2$-small perturbation of $\tilde H$, time-dependent and supported near the nonconstant periodic orbits in the concavity or convexity regions, time-independent Morse in the flat regions, and such that in each flat region the gradient of the Morse perturbation is pointing outwards along the boundary if the latter is adjacent to a convexity region, and is pointing inwards along the boundary if the latter is adjacent to a concavity region. The orbits of $H$ naturally fall into classes $I$, $II$, $III$ as above, and their action is close to the action of the corresponding orbits of $\tilde H$.

Let $(a,b)$ be a fixed action window with $-\infty<a<0<b<\infty$, and assume $\tau$ is large enough so that $-3\tau/4 < a$. Then the action of all orbits in group $I$ lies below the action window $(a,b)$. Denote by
$$
   (C_{a,b},\p) := FC_*^{(a,b)}(H)
$$
the Floer complex in the action window $(a,b)$. Note that, up to canonical chain homotopy equivalence, this complex does not depend on $H=H_\tau$ as long as $\tau>\max\{4|a|/3,b\}$.  
The generators of $C_{a,b}$ are orbits of types $II$ and $III$, and we can write 
$$
   C_{a,b} = C_{a,b}^-\oplus C_{a,b}^+,
$$
where $C_{a,b}^-$ is the submodule generated by orbits of type $III$, and $C_{a,b}^+$ is the submodule generated by orbits of type $II$. 

It follows from~\cite[Lemmas~2.2, 2.3, and 2.5]{CO} that $C_{a,b}^-$ is a subcomplex if the size of the perturbation $H-\tilde H$ is small enough. In particular we can think of $C_{a,b}$ as the cone $C(f)$, where $f:C_{a,b}^+\to C_{a,b}^-[-1]$ is the part of the differential which maps elements of $C_{a,b}^+$ to elements of $C_{a,b}^-$. 

The Hamiltonian $H$ is homotopic by a monotone homotopy supported in an open neighbourhood of the region $\{1-\epsilon\le r\le 1\}$ to the Hamiltonian $L=L_\tau$ which coincides with $H$ in an open neighbourhood of $V_{\frac 1 4}$ and which is linear of slope $-\tau$ outside that neighbourhood. This homotopy defines a chain map $FC_*^{(a,b)}(H)\to FC_*^{(a,b)}(L)=0$. 

Choose now $\epsilon>0$ small enough so that the Hamiltonians $H$ and
$L$ are $C^0$-close. By~\cite[Lemma 7.2]{CO}, the reverse homotopy from $L$ to $H$ then induces a chain map $0=FC_*^{(a,b)}(L)\to FC_*^{(a,b)}(H)$ which is a homotopy inverse for the previous map. The homotopy of homotopies between the composition of these two homotopies and the constant homotopy for $H$ induces a chain homotopy 
$$
   K:C_{a,b}\to C_{a,b}[1]
$$
such that 
$$
   \mathrm{Id}_{C_{a,b}}= [\p,K]. 
$$
It follows from the discussion in \S\ref{sec:prod-algebraic} and from the definition of symplectic (co)homology in~\cite{CO} that the map $f:C_{a,b}^+\to C_{a,b}^-[-1]$ above induces on homology the Poincar\'e duality isomorphism
$$
   f_*:SH_*^{(a,b)}(\p V)\stackrel{\cong}\longrightarrow SH^{1-*}_{(-b,-a)}(\p V)
$$
from symplectic homology to cohomology in truncated action. (Alternatively, this follows from~\cite{CO-cones}.)
Consider now the pair-of-pants product
$$
   C_{a,b}\otimes C_{a,b}\to FC_*^{(a+b,2b)}(2H)
$$
as in \S\ref{sec:TQFT-Floer}, where we use a $1$-form $\beta$
satisfying $d\beta=0$ with positive weights $1$ and negative weight $2$. 
The Hamiltonian $2H$ is of the type $H_{2\tau}$, $C^0$-close to the corresponding linear Hamiltonian $L_{2\tau}$. 
Thus $FC_*^{(a+b,2b)}(2H)$ can be identified with $C_{a+b,2b}$ and we obtain a degree $-n$ product
$$
   \mu:C_{a,b}\otimes C_{a,b}\to C_{a+b,2b}. 
$$
Note that the target also splits as 
$C_{a+b,2b}=C_{a+b,2b}^-\oplus C_{a+b,2b}^+$, with $C_{a+b,2b}^-$ a subcomplex, 
and in the notation of~\S\ref{sec:prod-algebraic} we have 
$$
   [\p,\mu]=0\qquad \mbox{and}\qquad \mu_+^{--}=\mu_+^{-+}=\mu_+^{+-}=0,
$$
again as a consequence of~\cite[Lemmas~2.2, 2.3, and 2.5]{CO}. Hence we are in the situation of \S\ref{sec:prod-algebraic} with $C=C_{a,b}$ and $\ol C=C_{a+b,2b}$. In the notation of that section, it follows that $\mu^+=\mu^{++}_+$ descends to a product on homology
$$
   \mu^+_*:SH_*^{(a,b)}(\p V)\otimes SH_*^{(a,b)}(\p V)\stackrel{[-n]}\longrightarrow SH_*^{(a+b,2b)}(\p V).
$$
Moreover, Lemma~\ref{lem:secondary-product} implies:

(a) The primary product $\mu^-=\mu^{--}_-$ vanishes on homology (cohomology after dual identification), i.e.
$$
   \mu^-_*=0:SH^{-*}_{(-b,-a)}(\p V)\otimes SH^{-*}_{(-b,-a)}(\p V)\to SH^{-*}_{(-2b,-a-b)}(\p V).
$$
(b) The vanishing of the primary product in two ways gives rise to a secondary product 
$$
   \sigma^-_*:SH^{1-*}_{(-b,-a)}(\p V)\otimes SH^{1-*}_{(-b,-a)}(\p V)\stackrel{[-n]}\longrightarrow SH^{1-*}_{(-2b,-a-b)}(\p V).
$$
(c) The products $\mu^+_*$ and $\sigma^-_*$ on homology are related via the Poincar\'e duality isomorphism $f_*$ and its inverse $g_*$ as
$$
\xymatrix
@C=30pt
@R=20pt
{
   SH^{(a,b)}_*(\p V)\otimes SH^{(a,b)}_*(\p V) \ar[r]^{\ \ \ \ \ \ \mu^+_*} & SH^{(a+b,2b)}_*(\p V) \ar[d]^{f_*}_\cong \\
   SH_{(-b,-a)}^{1-*}(\p V)\otimes SH_{(-b,-a)}^{1-*}(\p V) \ar[u]_{g_*\otimes g_*}^\cong \ar[r]^{\ \ \ \ \ \ \sigma^-_*} & SH_{(-2b,-a-b)}^{1-*}(\p V).
}
$$
By construction, the maps in this diagram are compatible with the action filtrations, so for $a<a'$ and $b<b'$ the diagram is related to the corresponding diagram for $a',b'$ via the action truncation maps
$$
   SH^{(a,b)}_*(\p V)\to SH^{(a',b')}_*(\p V),\qquad SH_{(-b,-a)}^{1-*}(\p V)\to SH_{(-b',-a')}^{1-*}(\p V).
$$
Passing first to the inverse limit as $a\to-\infty$ and then to the direct limit as $b\to\infty$, we obtain a degree $-n$ product $\boldsymbol{\mu}=\mu^+_*$ on $SH_*(\p V)$ and a degree $n-1$ secondary product $\boldsymbol{\lambda}^\vee\tau=\sigma^-_*$ on $SH^*(\p V)$ which are related via the Poincar\'e duality isomorphism $PD=f_*$ as
$$
\xymatrix
@C=30pt
@R=20pt
{
   SH_*(\p V)\otimes SH_*(\p V) \ar[r]^{\ \ \ \ \ \ \boldsymbol{\mu}} \ar[d]_{PD\otimes PD}^\cong & SH_*(\p V) \ar[d]^{PD}_\cong \\
   SH^{1-*}(\p V)\otimes SH^{1-*}(\p V)  \ar[r]^{\ \ \ \ \ \ \ \ \ \boldsymbol{\lambda}^\vee\tau} & SH^{1-*}(\p V).
}
$$
This concludes the proof of Theorem~\ref{thm:coh-product}.
\end{proof}

%%%
\subsection{Poincar\'e duality with coproducts}\label{sec:PD-coproducts}
%%%

We keep the setup of~\S\ref{sec:PD-products}, with $V$ a Liouville domain of dimension $2n$. The degree $-n$ product on $SH_*(\p V)$ can be rephrased as a degree $n$ coproduct on $SH^*(\p V)$, see~\S\ref{sec:TQFT-Floer}. On the other hand $SH_*(\p V)$ carries a coproduct of degree $-n$ determined by counts of rigid pairs-of-pants in combination with suitable action truncations. We refer to this coproduct as the \emph{primary coproduct on $SH_*(\p V)$}. 

In \S\S\ref{sec:PD-coproducts}--\ref{ss:ord-hom} we work with coefficients in a field, so that the K\"unneth isomorphism holds. 
This ensures that chain-level coproducts $C\to C\otimes C$, which a priori induce in homology maps $H(C)\to H(C\otimes C)$, factor through coproducts $H(C)\to H(C)\otimes H(C)$. All our formulas actually hold with arbitrary coefficients if the targets of the homological coproducts are set to $H(C\otimes C)$ instead of $H(C)\otimes H(C)$.

\begin{theorem} \label{thm:coh-coproduct}
Let $V$ be a Liouville domain of dimension $2n$. Then:

(a) The primary coproduct on $SH_*(\p V)$ vanishes. As a consequence, the group $SH_*(\p V)$ carries a secondary coproduct $\boldsymbol{\lambda}$ of degree $-n+1$ which is coassociative, graded cocommutative, and counital (if degrees are shifted by $n-1$). 

(b) The Poincar\'e duality isomorphism 
$$
   PD:SH_*(\p V)\stackrel\simeq \longrightarrow SH^{1-*}(\p V)
$$
is a homomorphism of coalgebras, where $SH_*(\p V)$ is endowed with its degree $-n+1$ secondary coproduct and $SH^*(\p V)$ is endowed with its degree $n$ pair-of-pants coproduct. 
\end{theorem}

\begin{proof}[Proof of Theorem~\ref{thm:coh-coproduct}]
The proof is entirely analogous to that of Theorem~\ref{thm:coh-product}. For finite action intervals $(a,b)$, it uses the dual version of Lemma~\ref{lem:secondary-product} for coproducts (obtained by applying Lemma~\ref{lem:secondary-product} to the dual chain complexes), and the Hamiltonian profiles in Figure~\ref{fig:dent} turned upside-down (i.e., with $h$ replaced by $-h$). The resulting coproduct in the limit as $a\to-\infty$ and $b\to\infty$ lands in a suitable completed tensor product described in~\S\S\ref{sec:Tate}--\ref{sec:coFrob}.
\end{proof}

Theorem~\ref{thm:coh-coproduct} has a Lagrangian counterpart. Given the Liouville domain $V$, we consider Maslov 0 exact Lagrangians $L\subset V$ with boundary such that $\p L=L\cap \p V$ and $L$ is conical near $\p L$. 

\begin{theorem} \label{thm:coh-coproduct-Lag}
Let $V$ be a Liouville domain of dimension $2n$ and $L\subset V$ be an exact Lagrangian as above. Then:

(a) The primary coproduct on $SH_*(\p L)$ vanishes. Therefore $SH_*(\p L)$ carries a secondary coproduct of degree $1$ which is coassociative and counital (if degrees are shifted by $-1$). 

(b) The Poincar\'e duality isomorphism 
$$
   PD:SH_*(\p L)\stackrel\simeq \longrightarrow SH^{n-*+1}(\p L)
$$
is a homomorphism of coalgebras, where $SH_*(\p L)$ is endowed with its degree $1$ secondary coproduct and $SH^*(\p L)$ is endowed with its degree $n$ pair-of-pants coproduct. 
\end{theorem}

\begin{proof}
The proof is up to notation the same as that of Theorem~\ref{thm:coh-coproduct}. Gradings are discussed in  Appendix~\ref{sec:grading}. 
\end{proof}

%%%
\subsection{Relation to ordinary symplectic homology}\label{ss:ord-hom}
%%%
  
Recall from~\cite{CO} the pair-of-pants product $\mu$ on symplectic homology $SH_*(V)$ and the secondary pair-of-pants coproduct $\lambda$ on positive symplectic homology $SH_*^{>0}(V)$, as well as their algebraic duals $\mu^\vee$ and $\lambda^\vee$. The next result relates these to the product $\boldmu$ and coproduct $\boldlambda$ on $SH_*(\p V)$ defined above. 

\begin{theorem}[Relation to ordinary symplectic homology] \label{thm:RFH-SH}
There exists a commuting diagram with exact row
\begin{equation*}\label{eq:RFH-SH-intro}
\xymatrix
@C=22pt
{
   & & & \bigl(SH_{>0}^{1-*}(V),\lambda^\vee \tau\bigr) \ar[dl]_-{i} \ar[d]^{j} & \\
   \ar[r]^-\eps & \bigl(SH_*(V),\mu\bigr) \ar[r]^-{\iota} \ar[d]^-q & 
   \bigl(SH_*(\p V),\boldsymbol{\mu},\boldsymbol{\lambda}\bigr) \ar[r]^-{\pi} \ar[dl]_-{p} & 
   \bigl(SH^{1-*}(V), \tau \mu^\vee\bigr) \ar[r]^-\eps & \\
   & \bigl(SH_*^{>0}(V),\lambda\bigr) &
}
\end{equation*}
in which
\vspace{-5pt}
\begin{itemize} 
\item the maps $\iota$ and $i$ intertwine the products $\mu$,
  $\boldsymbol{\mu}$ and $\lambda^\vee \tau$;
\item the maps $p$ and $\pi$ intertwine the coproducts $\lambda$,
  $\boldsymbol{\lambda}$ and $\tau \mu^\vee$.
\end{itemize} 
Dualizing the diagram
and applying Poincar\'e duality $SH_*(\p V)\cong \break SH^{1-*}(\p V)$ 
reproduces the same diagram reflected at its center. 
\end{theorem}

\begin{proof}
By~\cite[Proposition 7.19]{CO} there exists a commuting diagram
\begin{equation*}
\xymatrix
@C=22pt
{
   & 0=SH_*^{<0}(V) \ar[r] & SH_*^{<0}(\p V) \ar[r]^-k_-\cong \ar[d]_-{i'} & SH^{<0}_{*-1}(V,\p V) \ar[d]^{j} \ar[r] & 0 \\
   \ar[r]^-\eps & SH_*(V) \ar[r]^-{\iota} \ar[d]^-q & 
   SH_*(\p V) \ar[r]^-{\pi} \ar[d]_-{p'} & 
   SH_{*-1}(V,\p V) \ar[r]^-\eps & \\
   0 \ar[r] & SH_*^{>0}(V) \ar[r]^-r_-\cong & SH_*^{>0}(\p V) \ar[r] & SH_{*-1}^{>0}(V,\p V)=0 
}
\end{equation*}
where the rows are exact sequences of the pair $(V,\p V)$, and the vertical maps are parts of the tautological sequences given by action truncation. (Note that the columns are {\em not} tautological sequences and thus not exact). Setting $i=i'k^{-1}$ and $p=r^{-1}p'$, in view of the canonical ring isomorphisms $SH_{*-1}(V,\p V)\cong SH^{1-*}(V)$ and $SH^{<0}_{*-1}(V,\p V)\cong SH_{>0}^{1-*}(V)$ this yields the desired commuting diagram. By~\cite[Theorem 10.2]{CO}, the maps $\iota$ and $i$ intertwine the respective pair-of-pants products. By Poincar\'e duality, which dualizes and reflects the diagram, this implies that the maps $p$ and $\pi$ intertwine the respective coproducts. 
\end{proof}

\begin{remark} \label{rmk:RFH-SH-Lag} Theorem~\ref{thm:RFH-SH} has a Lagrangian counterpart. The statement is entirely analogous and we omit it. 
\end{remark}

%%%%%%%%%%%%%%%%%%%%%%%%%%%%%%%%%%%%%%%%%%%%%%%%%%%%%%%%%%%%%%%%%%
\section{Graded Frobenius algebra structure}\label{sec:sec-bialg}
%%%%%%%%%%%%%%%%%%%%%%%%%%%%%%%%%%%%%%%%%%%%%%%%%%%%%%%%%%%%%%%%%%

In this section we prove that the Poincar\'e duality isomorphism from~\S\ref{sec:PD-SH} intertwines naturally defined graded Frobenius algebra structures. Up to the discussion of graded open-closed TQFT structures from~\S\ref{sec:TQFT}, this proves Theorems~\ref{thm:Frob-RFH-intro} and~\ref{thm:TQFT-RFH-intro} from the Introduction. 
Throughout this section we use coefficients in a field $\bk$.

Given a Liouville domain $V$ of dimension $2n$, we denote 
$$
   S\H_*(\p V)=SH_{*+n}(\p V),\qquad S\H^*(\p V)=SH^{*+n}(\p V)
$$
the shifted Rabinowitz Floer, or symplectic, (co)homology of $\p V$.

\begin{theorem}[Graded Frobenius algebra structure on symplectic homology]\label{thm:PD-sec-bialg-SH}
The product $\boldsymbol{\mu}$ and coproduct $\boldsymbol{\lambda}$ of~\S\ref{sec:PD-SH} make $S\H_*(\p V)$ into a commutative cocommutative graded Frobenius algebra in the sense of Definition~\ref{defi:biunital-coFrobenius-bialgebra}.
The dual product $\boldsymbol{\lambda}^\vee \tau$ and coproduct $\tau \boldsymbol{\mu}^\vee$ make
$S{\H}^*(\p V)$ into a commutative cocommutative graded Frobenius algebra. 
Poincar\'e duality yields an isomorphism of such (bi)algebras
$$
   PD: (S{\H}_*(\p V),\boldsymbol{\mu},\boldsymbol{\lambda})
   \stackrel\simeq\longrightarrow 
   (S{\H}^{1-2n-*}(\p V),\boldsymbol{\lambda}^\vee \tau, \tau \boldsymbol{\mu}^\vee).
$$
\end{theorem}

Given the Liouville domain $V$ of dimension $2n$, let $L\subset V$ be a Maslov 0 exact Lagrangian with boundary such that $\p L=L\cap \p V$ and $L$ is conical near $\p L$.  We denote 
$$
   S\H_*(\p L)=SH_{*+n}(\p L),\qquad S\H^*(\p L)=SH^{*+n}(\p L) 
$$
the shifted Rabinowitz Floer homology, or symplectic, or wrapped Floer (co)homology of $\p L$.

\begin{theorem}[Graded Frobenius algebra structure on symplectic homology, Lagrangian case]\label{thm:PD-sec-bialg-SH-Lag}
The product $\boldsymbol{\mu}$ and coproduct $\boldsymbol{\lambda}$ of~\S\ref{sec:PD-SH} make $S\H_*(\p L)$ into a graded Frobenius algebra in the sense of Definition~\ref{defi:biunital-coFrobenius-bialgebra}. 
The dual product $\boldsymbol{\lambda}^\vee \tau$ and coproduct $\tau \boldsymbol{\mu}^\vee$ make
$S{\H}^*(\p L)$ into a graded Frobenius algebra. 
Poincar\'e duality yields an isomorphism of such (bi)algebras
$$
   PD: (S{\H}_*(\p L),\boldsymbol{\mu},\boldsymbol{\lambda})
   \stackrel\simeq\longrightarrow 
   (S{\H}^{1-n-*}(\p L),\boldsymbol{\lambda}^\vee \tau, \tau \boldsymbol{\mu}^\vee).
$$
\end{theorem}

The explanation and proof of these theorems will occupy the remainder of this section. 
In~\$\ref{sec:Tate} and~\S\ref{sec:coFrob} we recall from~\cite{CHO-algebra} the notion of a graded Tate vector space and a graded Frobenius algebra and state an algebraic version of Poincar\'e duality. In the sequel subsections we give a direct definition of the secondary coproduct in symplectic homology and prove the relations of a graded Frobenius algebra. In the process, we obtain a more direct description of the Poincar\'e duality isomorphism in terms of the copairing (Proposition~\ref{prop:c-iso}). The closing subsection wraps up the ensemble and summarizes the proofs of Theorems~\ref{thm:PD-sec-bialg-SH} and~\ref{thm:PD-sec-bialg-SH-Lag}.

%%%%%%%%%%%%%%%%%%%%%%%%%%%%%%%%%%
\subsection{Tate vector spaces~\cite{CHO-algebra,CO-Tate}}\label{sec:Tate}
%%%%%%%%%%%%%%%%%%%%%%%%%%%%%%%%%%

We equip the field $\bk$ with the discrete topology. 
A {\em linearly topologized vector space} is a $\bk$-vector space $A$
with a Hausdorff topology which is translation invariant and has a
basis of open neighbourhoods of $0$ consisting of linear subspaces. 
Note that an open linear subspace $U$ is also closed
because its set-theoretic complement can be written as a union of translates of $U$.
Linearly topologized vector spaces form a category $\Top$ whose morphisms are
continuous linear maps.
The kernel and cokernel of a morphism $f:A\to B$ are defined as
$$
  \ker(f) = \{v\in A\mid f(v)=0\}, \qquad 
  \coker(f) = B/\ol{f(A)}. 
$$
The {\em completion} of a linearly topologized vector space $A$ is
$$
   \wh A := \lim_{U\in\mathcal{U}} A/U,
$$
where $\mathcal{U}$ denotes the collection of open linear subspaces in $A$.
The space $A$ is called {\em complete} if the canonical map $A \to \wh A$ is an isomorphism.

A linearly topologized vector space $A$ is called
\vspace{-5pt}
\begin{itemize}
\item \emph{discrete} if $\{0\}$ is an open set;
\item \emph{linearly compact} if it is complete and $\dim(A/U)<\infty$ for each open linear subspace $U\subset A$;
\item \emph{Tate}\footnote{
These spaces were introduced by Lefschetz~\cite[II]{Lefschetz-book} under the name of {\em locally linearly compact vector spaces}; see~\cite{CO-Tate} for further discussion and references.}
if it is complete and admits an open linearly compact subspace.
\end{itemize}

For example, the vector space $\bk[t^{-1},t]]$ of Laurent power series, with a basis of neighborhoods of $0$ given by $t^n\bk[[t]]$, $n\in\Z$, is Tate. Its open subspace $\bk[[t]]$ is linearly compact, and its complementary closed subspace $t^{-1}\bk[t^{-1}]$ is discrete. 
More generally, a linearly topologized vector space $A$ is Tate iff it is a topological direct sum $A= L\oplus D$ with $L$ linearly compact and $D$ discrete. 
It is finite dimensional iff it is discrete and linearly compact. 

The categorical product in $\Top$ of a family $\{V_i\}_{i\in I}$ is the \emph{direct product} $\prod_i V_i$ endowed with the initial topology with respect to the canonical projections $proj_i:\prod_i V_i\to V_i$. As such, a product of linearly compact vector spaces is linearly compact. The categorical coproduct in $\Top$ of a family $\{V_i\}_{i\in I}$ is the \emph{direct sum} $\oplus_i V_i$, endowed with the final topology with respect to the canonical inclusions $incl_i:V_i\to \oplus_i V_i$. As such, the coproduct of discrete spaces is discrete~\cite[\S2.1]{CO-Tate}.

{\em Duality. }
For linearly topologized vector spaces $A,B$ we denote by $\Hom(A,B)$ the space of continuous linear maps $A\to B$ equipped with the {\em compact-open topology}, i.e., the linear topology whose neighbourhood basis of the origin is given by the linear subspaces $\{f\in \Hom(A,B)\mid f(L)\subset V\}$ for $L\subset A$ linearly compact and $V\subset B$ linear open. 
An important special case of this is the topological dual $A^\vee=\Hom(A,\bk)$, whose neighbourhood basis of the origin is given by the linear subspaces
$$
   L^\perp=\{\alpha\in A^\vee\mid \alpha|_L=0\},\qquad
   L\subset A \text{ linearly compact}.
$$ 
The association $A\mapsto A^\vee$ is functorial: a continuous linear map $f:A\to B$ induces a continuous linear map $f^\vee:B^\vee\to A^\vee$, $f^\vee\beta=\beta\circ f$. 

\begin{theorem}[Lefschetz-Tate duality~{\cite[(II.28.2-29.1)]{Lefschetz-book}}]\label{thm:Tate-duality}\quad 

(a) If $A$ is discrete, then $A^\vee$ is linearly compact. 

(b) If $A$ is linearly compact, then $A^\vee$ is discrete.
  
(c) If $A$ is Tate, then so is $A^\vee$ and the canonical map $A\to
  A^{\vee\vee}$ is a topological isomorphism. \qed
\end{theorem}

E.g., $\bigoplus_\Z \bk$ is discrete, $\prod_\Z\bk$ is linearly compact, and these spaces are topological duals to each other. See~\cite[Lemma~4.3(c)]{CO-Tate} and~\cite[Remark~2.18]{CHO-algebra}. More generally, the dual of an arbitrary coproduct is the product of the duals, and the dual of an arbitrary product is the coproduct of the duals.

{\em Tensor products. }
Let $A,B$ be linearly topologized vector spaces. Beilinson describes in~\cite[\S1.1]{Beilinson} two topologies on the algebraic tensor product $A\otimes B$ that are uniquely characterized by the following universal properties:
\begin{itemize}
\item The $*$ topology is the finest linear topology such that the canonical bilinear map $A\times B\to A\otimes B$ is continuous;
\item Let $B(A^\vee\times B^\vee,\bk)$ be the space of continuous bilinear maps $A^\vee\times B^\vee\to \bk$, endowed with the compact-open topology.\footnote{See~\cite[\S4.3]{CO-Tate} for a discussion of spaces of bilinear maps. Note the slight ambiguity in the notation: the space $B(A^\vee\times B^\vee,\bk)$ depends on each of the factors $A^\vee$ and $B^\vee$, and not just on the product $A^\vee\times B^\vee$.} For $A$ and $B$ Tate, the $!$ topology is the coarsest topology such that the linear map 
$$
A\otimes B\to B(A^\vee\times B^\vee,\bk),\qquad a\otimes b \mapsto \big( (\varphi,\psi)\mapsto \varphi(a)\psi(b)\big)
$$
is continuous. 
\end{itemize}
We denote $A\hatotimes^*B$ and $A\hatotimes^!B$ the completions of $A\otimes B$ with respect to the $*$ and $!$ 
topologies. Since the $!$ topology is coarser than the $*$ topology, the identity map induces a continuous linear map
$$
  A\hatotimes^* B\to A\hatotimes^! B.
$$

Given Tate vector spaces $A,B$ we have canonical isomorphisms~\cite{Esposito-Penkov23,CO-Tate}
$$
(A\otimes^* B)^\vee\simeq A^\vee\otimes^! B^\vee,\qquad 
(A\otimes^! B)^\vee\simeq A^\vee\otimes^* B^\vee. 
$$
A key aspect is that neither $A\otimes^* B$ nor $A\otimes^! B$ need be Tate. Following Esposito-Penkov~\cite{Esposito-Penkov22,Esposito-Penkov23} and~\cite[\S\S5.5-6]{CO-Tate}, this ``monoidal'' duality statement can be understood in terms of the categories $\cI$ of ind-linearly compact vector spaces and $\cP$ of pro-discrete vector spaces. The category $\cI$ is closed under the $\otimes^*$ tensor product, the category $\cP$ is closed under the $\otimes^!$ tensor product, and topological duality induces an involutive equivalence $\cI\simeq \cP$ that intertwines the two tensor products. The category $Tate$ of Tate vector spaces is such that $Tate\subset\cI\cap \cP$. Under suitable countability conditions on $Tate$, $\cI$, and $\cP$, this inequality becomes an equality~\cite[Proposition~6.5]{CO-Tate}.

{\em Graded Tate vector spaces. }
A {\em ($\Z$-)graded linearly topologized vector space} is a direct sum $A=\bigoplus_{i\in\Z}A_i$ of linearly topologized vector spaces. It is a \emph{graded Tate vector space} if $A_i$ is Tate for each $i\in\Z$. The morphism spaces in the category of graded linearly topologized vector spaces are by definition  
\begin{align*}
  \Hom(A,B) = \bigoplus_{k\in\Z}\Hom_k(A,B),\quad
  \Hom_k(A,B) = \prod_i \Hom(A_i,B_{i+k}).
\end{align*}
(The product used in the definition of $\Hom_k(A,B)$ is the usual categorical product in $\Top$.) The degree of a homogeneous element $a\in A$ is denoted $|a|$. A morphism $\phi:A\to B$ is \emph{homogeneous of degree $k$} if $|\phi(a)|=|a|+k$ for homogeneous elements $a\in A$. A homogeneous morphism of degree $k$ is therefore a collection $\phi=(\phi_i)$ with $\phi_i\in\Hom(A_i,B_{i+k})$. The degree of a homogeneous morphism $\phi$ is denoted $|\phi|$. 

The \emph{dual} of a graded linearly topologized vector space is defined to be  
$$
  A^\vee = \bigoplus_{i\in\Z}A^\vee_i,\qquad A^\vee_i=\Hom(A_{-i},\bk).
$$
The $*$ and $!$ tensor products of graded linearly topologized vector spaces are the graded linearly topologized vector spaces 
\begin{gather*}
A\otimes^* B = \bigoplus_{k\in\Z}(A\otimes^* B)_k,\qquad 
  (A\otimes^* B)_k = \bigoplus_{i+j=k}A_i\hatotimes^* B_j, \cr
A\otimes^! B = \bigoplus_{k\in\Z}(A\otimes^! B)_k,\qquad 
  (A\otimes^! B)_k = \prod_{i+j=k}A_i\hatotimes^! B_j.
\end{gather*}
(The product used in the definition of $(A\otimes^! B)_k$ is again the usual categorical product in $\Top$.)

\begin{proposition} \label{prop:comm-ass-graded}
For graded linearly topologized vector spaces $A,B,C$ 
we have canonical isomorphisms
\begin{gather*}
  A\otimes^* B\simeq B\otimes^* A, \qquad
  (A\otimes^* B)\otimes^* C \simeq A\otimes^*(B\otimes^* C), \cr
  A\otimes^! B\simeq B\otimes^! A, \qquad
  (A\otimes^! B)\otimes^! C \simeq A\otimes^!(B\otimes^! C).
\end{gather*}
Moreover, the identity induces a continuous linear map
$$
   A\otimes^*(B\otimes^!C) \to (A\otimes^*B)\otimes^!C.
$$
\qed
\end{proposition}

\begin{theorem}
\label{thm:duality*!-Tate-graded}
For graded Tate vector spaces $A,B$ we have canonical isomorphisms
$$
(A\otimes^* B)^\vee\simeq A^\vee\otimes^! B^\vee,\qquad 
(A\otimes^! B)^\vee\simeq A^\vee\otimes^* B^\vee. 
$$
\qed
\end{theorem}

The following result fits Rabinowitz Floer homology into this framework. Here we view both homology and cohomology as graded topological vector spaces, i.e, direct sums of their pure degree components.

\begin{theorem}[\cite{CO-Tate}]
Rabinowitz Floer homology and cohomology of any Liouville domain $V$ are
graded Tate vector spaces such that $SH^*(\p V)$ is the topological dual of
$SH_*(\p V)$, and Poincar\'e duality (see Theorem~\ref{thm:coh-product}) defines an isomorphism of graded Tate vector spaces. \qed
\end{theorem}

%%%%%%%%%%%%%%%%%%%%%%%%%%%%%%%%%%
\subsection{Graded Frobenius algebras~\cite{CHO-algebra}} \label{sec:coFrob}
%%%%%%%%%%%%%%%%%%%%%%%%%%%%%%%%%%

Let $A=\bigoplus_{i\in\Z}A_i$ be a graded Tate vector space.
We denote the degree of a homogeneous element $a\in A$ by $|a|$. A continuous linear map $\phi:A\to B$ is homogeneous of degree $d$ if
$|\phi(a)|=|a|+d$ for homogeneous elements $a\in A$, and we denote its degree by $|\phi|$. We will use the following conventions.

{\it Identity map.} The identity map of $A$ is denoted $1:A\to A$. 

{\it Twist.} The twist $\tau:A\otimes A\to A\otimes A$ defined by $\tau(a\otimes b)=(-1)^{|a||b|}b\otimes a$ induces maps $A\otimes^* A\to A\otimes^* A$ and $A\otimes^! A\to A\otimes^! A$ still denoted $\tau$.  

{\it Product.} A morphism $\boldmu:A\otimes^* A\to A$ is called a \emph{product}.  
We say that $\boldmu$ is \emph{commutative} if 
$$
\boldmu \tau = (-1)^{|\boldmu|}\boldmu.
$$
We say that $\boldmu$ is \emph{associative} if 
$$
\boldmu(\boldmu\otimes 1) = (-1)^{|\boldmu|}\boldmu(1\otimes \boldmu).
$$
An element $\eta\in A$ is called a \emph{unit} for $\boldmu$ if
$$
(-1)^{|\boldmu|} \boldmu(\boldeta\otimes 1) = 1 = \boldmu(1\otimes \boldeta).
$$
{\it Coproduct.} A morphism $\boldlambda:A\to A\otimes^! A$ is called a \emph{coproduct}. 
We say that $\boldlambda$ is \emph{cocommutative} if 
$$
\tau \boldlambda = (-1)^{|\boldlambda|}\boldlambda. 
$$
We say that $\boldlambda$ is \emph{coassociative} if 
$$
(\boldlambda\otimes 1)\boldlambda = (-1)^{|\boldlambda|}(1\otimes \boldlambda)\boldlambda. 
$$
A morphism $\boldeps:A\to \bk$ is called a \emph{counit} for $\boldlambda$ if 
$$
(\boldeps\otimes 1)\boldlambda = 1 = (-1)^{|\boldlambda|}(1\otimes\boldeps)\boldlambda.
$$
 
\begin{definition} \label{defi:biunital-coFrobenius-bialgebra} 
A \emph{graded Frobenius algebra} is a graded Tate vector space $A$ endowed with a degree zero product $\boldmu:A\otimes^* A\to A$, a coproduct $\boldlambda:A\to A\otimes^! A$, and elements $\boldeta\in A$, $\boldeps:A\to \bk$ that satisfy the following relations:
\begin{itemize}
\item {\sc (unit)} the element $\boldeta$ is the unit for the product $\boldmu$.
\item {\sc (counit)} the element $\boldeps$ is the counit for the coproduct $\boldlambda$. 
\item {\sc (associativity)} the product $\boldmu$ is associative. 
\item {\sc (coassociativity)} the coproduct $\boldlambda$ is coassociative. 

\medskip 

\noindent Moreover, defining the \emph{copairing} by 
$$
\boldc = \boldlambda\boldeta,
$$
and the \emph{pairing} by 
$$
\boldp = (-1)^{|\boldlambda|}\boldeps\boldmu,
$$
we have: 

\item {\sc (Frobenius)} 
$$
\boldlambda=(1\otimes \boldmu)(\boldc\otimes 1) = (\boldmu\otimes 1)(1\otimes \boldc)
$$
and
$$
\boldmu = (-1)^{|\boldlambda|}(\boldp\otimes 1)(1\otimes \boldlambda) = (1\otimes \boldp)(\boldlambda\otimes 1).
$$
\item {\sc (symmetry)}
$$
\tau \boldc = (-1)^{|\boldlambda|} \boldc
$$
and
$$
\boldp \tau  = \boldp. 
$$
\end{itemize}
\end{definition}

In~\cite{CHO-algebra}, a graded Frobenius algebra is also called a {\em biunital coFrobenius bialgebra}. In contrast to~\cite{CHO-algebra}, we assume in this paper that the product has degree zero (which can always be achieved by a degree shift). We depict the Frobenius relations in Figure~\ref{fig:Frobenius}, where the operations have their inputs at the top and their outputs at the bottom. 

\begin{figure}
\begin{center}
\includegraphics[width=\textwidth]{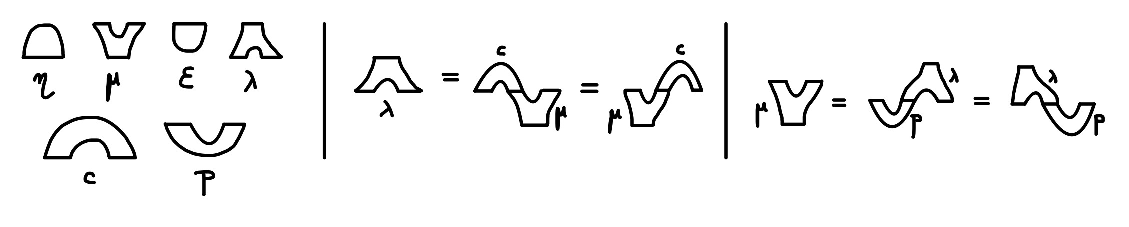}
\caption{The Frobenius relations.}
\label{fig:Frobenius} 
\end{center}
\end{figure}

A graded Frobenius algebra is called {\em commutative and cocommutative} if $\boldmu$ is commutative and $\boldlambda$ is cocommutative.
  
{\it Duality.}
Given a homogeneous morphism $\varphi:A\longrightarrow B$, its \emph{dual morphism} $\varphi^\vee:B^\vee\longrightarrow A^\vee$ is defined by $\varphi^\vee g= (-1)^{|g|  |\varphi|}g\circ \varphi$. 

Any operation with inputs/outputs in $A$ is equivalent to one with outputs/inputs in $A^\vee$. To navigate between these two points of view we use the \emph{evaluation} and \emph{coevaluation} maps. We have a degree $0$ continuous bilinear map $\langle\cdot,\cdot\rangle:A^\vee\times A\to \bk$ given by evaluation $\langle f,a\rangle = f(a)$~\cite[Proposition~4.12]{CO-Tate}. This induces a morphism $\ev:A^\vee\otimes^* A\to \bk$, which we call \emph{evaluation}. The dual morphism $\ev^\vee:\bk\to (A^\vee\otimes^* A)^\vee\cong A\otimes^! A^\vee$ is called \emph{coevaluation}. These two maps satisfy $(1\otimes \ev)(\ev^\vee\otimes1)=1_A$ and $(\ev\otimes1)(1\otimes\ev^\vee)=1_{A^\vee}$.  

Given a copairing $\boldc:\bk\to A\otimes^! A$ we denote 
$$
\vec\boldc:A^\vee\to A
$$
the map $\vec\boldc=(\ev\otimes 1)(1\otimes\boldc)$, i.e., $\vec\boldc(f)=(-1)^{|f||\boldc|}(f\otimes 1)\boldc$. 
Given a pairing $\boldp:A\otimes^* A\to \bk$ we denote 
$$
\vec\boldp:A\to A^\vee
$$
the map $\vec\boldp=(\boldp\otimes 1)(1\otimes\ev^\vee)$. This is equivalent to $\ev(\vec\boldp\otimes 1)=\boldp$, i.e., $\vec\boldp(a)(b)=\boldp(a\otimes b)$.

We say that a homogeneous morphism $\varphi:A\to B$ \emph{intertwines products} $\boldmu_B,\boldmu_B$ if 
$$
\varphi\boldmu_A = (-1)^{|\varphi||\boldmu_A|}\boldmu_B\varphi^{\otimes 2}.
$$ 
If this holds and $\boldmu_A$ is unital with unit $\boldeta_A$, then $\boldmu_B$ is unital with unit 
$$
\boldeta_B=(-1)^{|\varphi|}\varphi\boldeta_A.
$$
We say that $\varphi$ \emph{intertwines coproducts} $\boldlambda_A,\boldlambda_B$ if
$$
\varphi^{\otimes 2} \boldlambda_A = (-1)^{|\varphi||\boldlambda_A|}\boldlambda_B\varphi.
$$ 
If this holds and $\boldlambda_B$ is counital with counit $\boldeps_B$, then $\boldlambda_A$ is counital with counit 
$$
\boldeps_A=\boldeps_B\varphi. 
$$
With this terminology, we have

\begin{theorem}[Algebraic Poincar\'e duality~\cite{CHO-algebra}] \label{thm:algebraic_Poincare_duality}
Let $(A,\boldmu,\boldlambda,\boldeta,\boldeps)$ be a graded Frobenius algebra with pairing $\boldp$ and copairing $\boldc$.
Then the maps
$$
\vec\boldp:A\to A^\vee,\qquad \vec\boldc:A^\vee\to A
$$
are mutually inverse isomorphisms of graded Frobenius algebras
\begin{align*}
  (A,\boldmu,\boldlambda, \boldeta,\boldeps)
  \simeq (A^\vee,(-1)^{|\boldlambda|}\boldlambda^\vee\tau, (-1)^{|\boldlambda|}\tau\boldmu^\vee,\boldeps^\vee,(-1)^{|\boldlambda|}\boldeta^\vee),
\end{align*}
i.e. they intertwine the products, they intertwine the coproducts, and they preserve the units and counits. 
\end{theorem} 

Thus Poincar\'e duality holds automatically on each Frobenius algebra. The following criterion will be useful.

\begin{proposition}[\cite{CHO-algebra}]\label{prop:coFrob-criterion}
Let $(A,\boldmu,\boldlambda,\boldeta)$ with $\boldc = \boldlambda\boldeta$ satisfy the following conditions:
\begin{itemize}
\item {\sc (unit)} the element $\boldeta$ is the unit for the degree zero product $\boldmu$.
\item {\sc (associativity)} the product $\boldmu$ is associative. 
\item {\sc (unital coFrobenius)} 
$$
\boldlambda=(1\otimes \boldmu)(\boldc\otimes 1) = (\boldmu\otimes 1)(1\otimes \boldc).
$$
\item {\sc(symmetry)}
$$
\tau \boldc = (-1)^{|\boldlambda|} \boldc. 
$$
\item {\sc(isomorphism)} the induced map $\vec\boldc:A^\vee\to A$ is an isomorphism. 
\end{itemize}
Then $\boldeps=(-1)^{|\boldlambda|}\boldeta^\vee\vec\boldc^{-1}$ makes $(A,\boldmu,\boldlambda,\boldeta,\boldeps)$ a graded Frobenius algebra.  
\end{proposition}

%%%%%%%%%%%%%%%%%%%%%%%%%%%%%%%%%%
\subsection{Product and unit on $SH_*(\p V)$} \label{sec:product}
%%%%%%%%%%%%%%%%%%%%%%%%%%%%%%%%%%
  
To prove Theorems~\ref{thm:PD-sec-bialg-SH} and~\ref{thm:PD-sec-bialg-SH-Lag}, we will construct a product $\boldmu$ with unit $\boldeta$ and a coproduct $\boldlambda$ on $SH_*(\p V)$ and verify the relations in Proposition~\ref{prop:coFrob-criterion}.

The product $\boldmu$ is induced by the usual pair-of-pants product, and $\boldeta$ is induced by the count of spheres with one negative puncture of weight $1$. It is well-known that $\boldmu$ is commutative and associative with unit $\boldeta$. 
By~\cite{CO-Tate} the product defines a morphism $\boldmu:SH_*(\p V)\otimes^* SH_*(\p V)\to SH_*(\p V)$.

%%%%%%%%%%%%%%%%%%%%%%%%%%%%%%%%%%
\subsection{The primary coproduct on $SH_*(\p V)$ vanishes} \label{sec:primary_coproduct}
%%%%%%%%%%%%%%%%%%%%%%%%%%%%%%%%%%
  
We give in this subsection a second proof of the vanishing of the primary coproduct on $SH_*(\p V)$ from Theorem~\ref{thm:coh-coproduct}. This is useful for the direct definition of the secondary coproduct which we give in~\S\ref{sec:secondary_coproduct}. 

Let $H=H_{\lambda,\mu}$ with $\lambda<0<\mu$ as in~\S\ref{sec:PD-products} and $a<b$. The genus zero Riemann surface $\Sigma$ with 1 positive puncture, 2 negative punctures, and fixed cylindrical ends at the punctures, together with the choice of a non-positive $1$-form with weights $(1;2,2)$ at the punctures in the sense of~\S\ref{sec:TQFT-Floer}, defines a degree $-n$ primary coproduct 
$$
\boldsymbol{\lambda}^{\text{primary}}:FC_*(H)\to FC_*(2H)\otimes FC_*(2H).
$$
Since the sum of Hamiltonian actions at the output is less than or equal to the action at the input, it acts with respect to the action filtrations as
$$
\boldsymbol{\lambda}^{\text{primary}}:FC_*^{\le a}(H)\to FC_*^{\le \frac a 2}(2H)\otimes FC_*(2H) + FC_*(2H)\otimes FC_*^{\le \frac a 2}(2H)
$$
and
$$
\boldsymbol{\lambda}^{\text{primary}}:FC_*^{<b}(H)\to \sum_{b_1+b_2=b}FC_*^{<b_1}(2H)\otimes FC_*^{<b_2}(2H). 
$$
Combining these for $a<b$, we get
$$
\boldsymbol{\lambda}^{\text{primary}}:FC_*^{(a,b)}(H)\to \sum_{b_1+b_2=b}FC_*^{(\frac a 2,b_1)}(2H)\otimes FC_*^{(\frac a 2,b_2)}(2H).
$$ 
Passing to homology and in the limit over $H$ we obtain the degree $-n$ primary coproduct  
$$
\boldsymbol{\lambda}^{\text{primary}}:SH_*^{(a,b)}(\p V)\to \sum_{b_1+b_2=b}SH_*^{(\frac a 2,b_1)}(\p V)\otimes SH_*^{(\frac a 2,b_2)}(\p V).
$$

\begin{proposition} \label{prop:vanishinglambdaprimary}
The primary coproduct
$$
\boldsymbol{\lambda}^{\text{primary}}:SH_*^{(a,b)}(\p V)\to \sum_{b_1+b_2=b}SH_*^{(\frac a 2,b_1)}(\p V)\otimes SH_*^{(\frac a 2,b_2)}(\p V).
$$
vanishes for all $a<b$. 
\end{proposition}

\begin{proof} Given $H=H_{\lambda,\mu}$ we denote $L=H_{\lambda,\lambda}$. The homology coproduct 
$$
\boldsymbol{\lambda}^{\text{primary}}:FH_*^{(a,b)}(H)\to \sum_{b_1+b_2=b}FH_*^{(\frac a 2,b_1)}(2H)\otimes FH_*^{(\frac a 2,b_2)}(2H)
$$
does not depend on the choice of non-positive $1$-form on the Riemann surface $\Sigma$. In particular, the latter can be chosen as follows: it has weights $(1;-1,2)$ away from a neighborhood of $-\infty$ in the cylindrical end at the first negative puncture, it has weight $-1$ on a long neck in that cylindrical end, and it has weight $2$ at $-\infty$ in that cylindrical end. As such, we obtain a factorization
$$
\boldsymbol{\lambda}^{\text{primary}}=(c\otimes 1)\boldsymbol{\lambda}^{\prime},
$$
where at chain level
$$
\boldsymbol{\lambda}^{\prime}:FC_*^{(a,b)}(H)\to \sum_{b_1+b_2=b}FC_*^{(\frac a 2,b_1)}(-H)\otimes FC_*^{(\frac a 2,b_2)}(2H)
$$
is a primary coproduct and $c:FC_*^{(\frac a 2,b_1)}(-H)\to FC_*^{(\frac a 2,b_1)}(2H)$ is a continuation map.
Let us now choose the parameters $\lambda$ and $\mu$ such that $-\mu\le \lambda<0<\mu$. We then have $-H=H_{-\lambda,-\mu}\le L=H_{\lambda,\lambda}\le H=H_{\lambda,\mu}$, and therefore the continuation map $c$ factors as $FC_*^{(\frac a 2,b_1)}(-H)\to FC_*^{(\frac a 2,b_1)}(L)\to FC_*^{(\frac a 2,b_1)}(2H)$. Since $FC_*^{(\frac a 2,b_1)}(L)=0$ for $\lambda<a$, we obtain $c=0$ in this range. Passing to homology and the limit over $H=H_{\lambda,\mu}$ as $\lambda\to-\infty$ and $\mu\to\infty$ thus yields the desired vanishing result.
\end{proof}

%%%%%%%%%%%%%%%%%%%%%%%%%%%%%%%%%%
\subsection{Secondary coproduct on $SH_*(\p V)$}  \label{sec:secondary_coproduct}
%%%%%%%%%%%%%%%%%%%%%%%%%%%%%%%%%%

We give in this subsection a direct definition of the degree $-n+1$ secondary coproduct $\boldsymbol{\lambda}$ on $SH_*(\p V)$. It follows from~\cite{CO-cones} that this is dual to the degree $n-1$ secondary product on Rabinowitz Floer cohomology from Theorem~\ref{thm:coh-product}. 

The vanishing of $\boldsymbol{\lambda}^{\text{primary}}$ was proved above by factoring out a continuation map at the first negative puncture. The same argument could have been carried at the second negative puncture. The secondary coproduct $\boldsymbol{\lambda}$  is obtained by interpolating between these two constructions. 

Consider again the genus $0$ Riemann surface $\Sigma$ with 1 positive puncture, 2 negative punctures and fixed cylindrical ends at the punctures, with coordinates $z=s+it$, $t\in S^1$, $s\in (-\infty,0]$ at the negative punctures. We pick a smooth $1$-parameter family of Floer data $H_\tau,\beta_\tau$, $\tau\in (0,1)$, on $\Sigma$ as follows (see Figure~\ref{fig:Coproduct-RFH-limit}).

\begin{figure}
\begin{center}
\includegraphics[width=.7\textwidth]{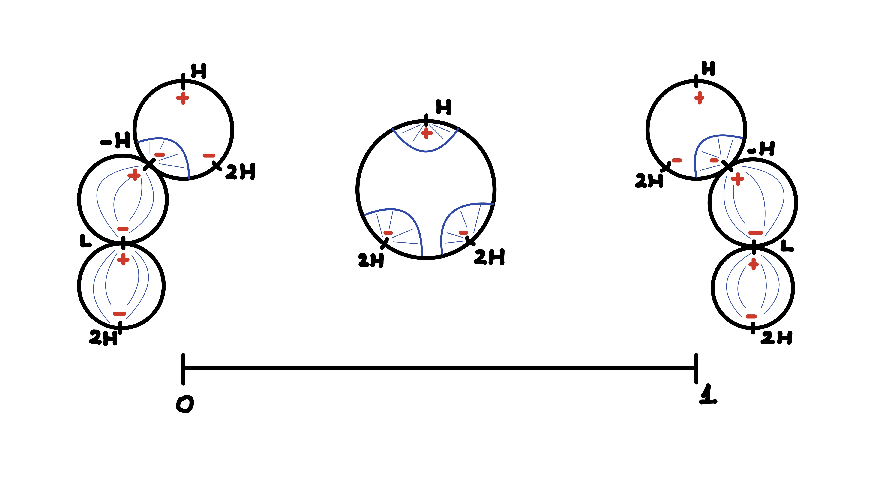}
\caption{Definition of the coproduct with broken homotopies at the endpoints.}
\label{fig:Coproduct-RFH-limit}
\end{center}
\end{figure}

{\em Outside the negative cylindrical ends} $H_\tau=H$ as in the previous subsection, and the $1$-forms $\beta_\tau\in\Omega^1(\Sigma)$ satisfy $d\beta_\tau= 0$ and are equal to $dt$ on the positive cylindrical end, and to $f(\tau)dt$ resp.~$(1-f(\tau))dt$ near the negative boundaries. Here $f:[0,1]\to [-1,2]$ is a smooth function which equals $-1$ on $[0,\delta]$ and $2$ on $[1-\delta,1]$, for some $\delta>0$. 

{\em On the first negative cylindrical end $(-\infty,0]\times S^1$} we write $H_\tau\beta_\tau=H_{\tau,s}dt$ for a family $H_{\tau,s}$ of $s$-dependent Hamiltonians with $\frac{\p H_{\tau,s}}{\p s}\leq 0$ and the following properties:
\begin{itemize}
\item $H_{\tau,s}$ is equal to $f(\tau)H$ near $s=0$, and to $2H$ for $s\leq -R(\tau)$;
\item $H_{\tau,s}=L$ for $\tau\in[0,\delta]$ and $s\in[-R(\tau)+1,-1]$. 
\end{itemize}
Here $R:(0,1]\to[0,\infty)$ is a nonincreasing smooth function with $R(1)=0$, $R(\delta)=3$, and $\lim_{\tau\to 0}R(\tau)=\infty$. 

{\em On the second negative cylindrical end $(-\infty,0]\times S^1$} we write $H_\tau\beta_\tau=H_{\tau,s}dt$ for a family $H_{\tau,s}$ of $s$-dependent Hamiltonians with $\frac{\p H_{\tau,s}}{\p s}\leq 0$ and the following properties:
\begin{itemize}
\item $H_{\tau,s}$ is equal to $(1-f(\tau))H$ near $s=0$, and to $2H$ for $s\leq -R(1-\tau)$;
\item $H_{\tau,s}=L$ for $\tau\in[1-\delta,1]$ and $s\in[-R(1-\tau)+1,-1]$. 
\end{itemize}

Note that at $\tau=0$ the Riemann surface $\Sigma$ degenerates into a nodal Riemann surface consisting of a 3-punctured sphere with asymptotics $(H;-H,2H)$, and a chain of two cylinders with asymptotics $(-H;L)$ and $(L;2H)$ attached at the first negative puncture. A similar degeneration occurs at $\tau=1$ at the second negative puncture, see Figure~\ref{fig:Coproduct-RFH-limit}. 

By construction, the Floer data $H_\tau\beta_\tau$ satisfy conditions (i) and (ii) in \S\ref{sec:TQFT-Floer}. So the moduli spaces of pairs $(\tau,u)$, with $\tau\in [0,1]$ and $u:\Sigma\to \wh V$ solving $(du-X_{H_\tau}\otimes d\beta_\tau)^{0,1}=0$ with given asymptotics, are compact up to Floer breaking. The signed count of $0$-dimensional such moduli spaces therefore defines a degree $-n+1$ map 
$$
\boldsymbol{\lambda}:FC_*(H)\to FC_*(2H)^{\otimes 2},
$$
where the $+1$ in the degree comes from the parameter $\tau$. 
As in the case of $\boldsymbol{\lambda}^{\text{primary}}$, for $a<b$ action considerations yield
$$
\boldsymbol{\lambda}:FC_*^{(a,b)}(H)\to \sum_{b_1+b_2=b}FC_*^{(\frac a 2,b_1)}(2H)\otimes FC_*^{(\frac a 2,b_2)}(2H).
$$ 
Again, the key point now is that for fixed parameters $a<b$ we have $FC_*^{(\frac a 2, b)}(L)=0$ if the slope of $L$ satisfies $\lambda <a$. Since the contributions to $\boldlambda$ at $\tau=0,1$ factor through $FC_*^{(\frac a 2, b)}(L)$, this implies that $\boldlambda$ is a chain map. 
Passing to homology and in the limit over $H$ we obtain  
\begin{equation*} 
\boldsymbol{\lambda}:SH_*^{(a,b)}(\p V)\to \sum_{b_1+b_2=b}SH_*^{(\frac a 2,b_1)}(\p V)\otimes SH_*^{(\frac a 2,b_2)}(\p V).
\end{equation*}
By~\cite[Theorem 8.1]{CO-Tate}, passing to the direct limit over $b\to \infty$ and then to the inverse limit over $a\to-\infty$ we obtain the degree $-n+1$ {\em secondary coproduct} as a morphism
$$
\boldsymbol{\lambda}:SH_*(\p V)\to SH_*(\p V)\,\otimes^!\, SH_*(\p V).
$$ 
Standard arguments show that this coproduct is canonical, i.e., it does not depend on the various choices involved in the construction.
It follows directly from the definition that $\boldlambda$ is cocommutative on $S\H_*(\p V)$.

%%%%%%%%%%%%%%%%%%%%%%%%%%%%%%%%%%
\subsection{The unital coFrobenius relation}
%%%%%%%%%%%%%%%%%%%%%%%%%%%%%%%%%%

Let $\boldmu,\boldlambda,\boldeta$ be the operations on $S\H_*(\p V)$ constructed in the preceding subsections and define
$
  \boldc = \boldlambda\boldeta.
$
A standard neck-stretching argument shows that $\boldc$ is induced by counting configurations as in Figure~\ref{fig:Coproduct-RFH-limit} without the positive puncture. This directly implies the {\sc(symmetry)} $\tau \boldc = (-1)^{|\boldlambda|} \boldc$.
  
\begin{proposition}\label{prop:unital-coFrob}
The operations $\boldmu$ and $\boldc$ on $S\H_*(\p V)$ satisfy the {\sc (unital coFrobenius)} relation
$$
  \boldlambda=(1\otimes \boldmu)(\boldc\otimes 1) = (\boldmu\otimes 1)(1\otimes \boldc).
$$
\end{proposition}

\begin{proof}
For $\lambda<0<\mu$ and $|\lambda|\le \mu$ recall the Hamiltonians $H=H_{\lambda,\mu}$ and $L=H_{\lambda,\lambda}$ from above, which satisfy $-H\le L\le H$. Consider the new Hamiltonians $S=H_{-\lambda,2\mu}$ and $T=H_{2\lambda,-\mu}$ in Figure~\ref{fig:ST}. Recalling the notation $V_\delta$, $\delta>0$ from the beginning of the proof of Theorem~\ref{thm:coh-product}, their explicit description is the following: the Hamiltonian $S$ is constant equal to $\lambda/2$ on $V_{\frac12}$, it is linear of slope $-\lambda$ on $V\setminus V_{\frac 12}$, and it is linear of slope $2\mu$ outside of $V$; the Hamiltonian $T$ is constant equal to $\lambda$ on $V_{\frac 12}$, it is linear of slope $2\lambda$ on $V\setminus V_{\frac 12}$, and it is linear of slope $-\mu$ outside $V$. 
\begin{figure}
\begin{center}
\includegraphics[width=\textwidth]{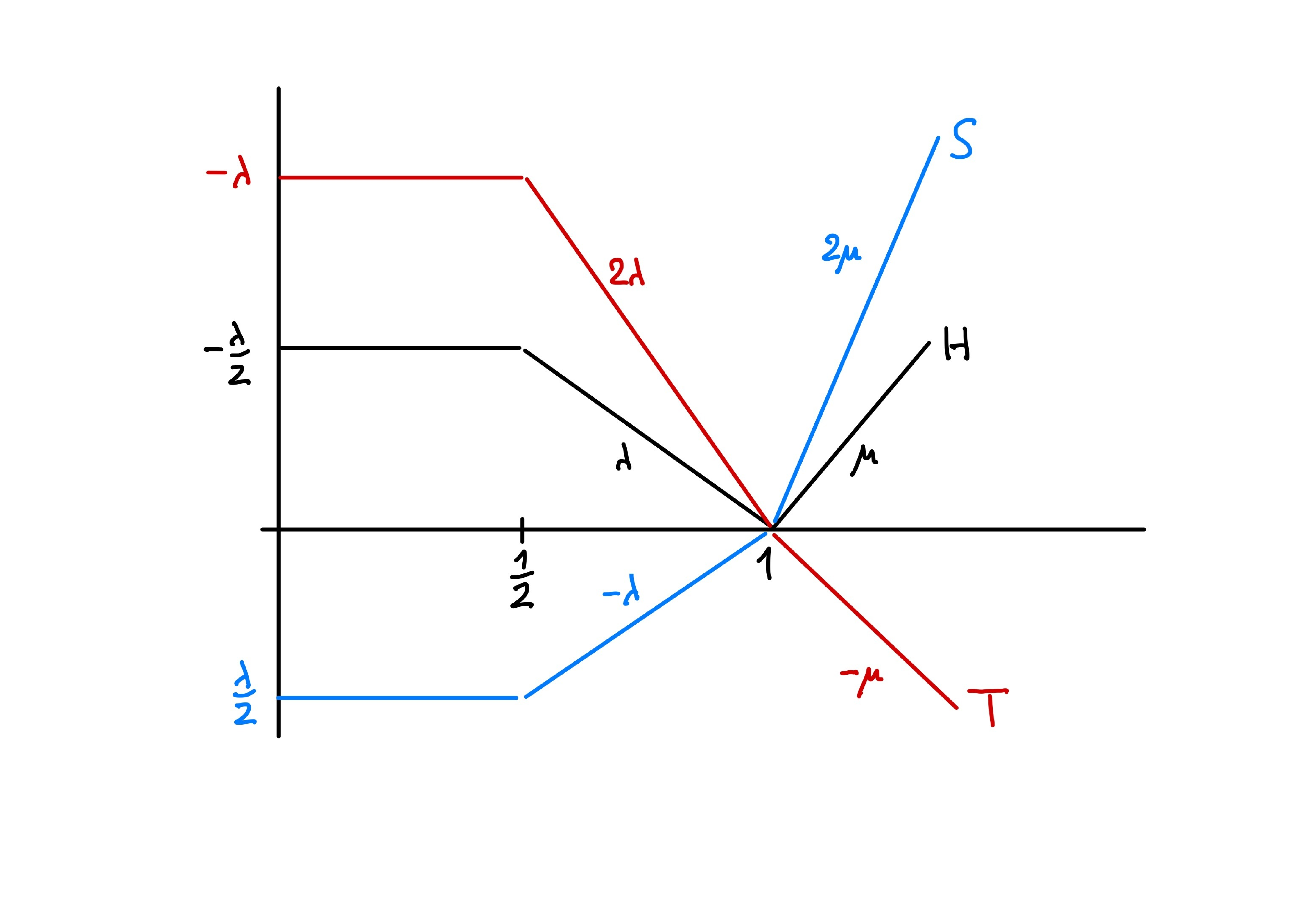}
\caption{The Hamiltonians $S$ and $T$.}
\label{fig:ST}
\end{center}
\end{figure}
They have the following properties:
\begin{enumerate}
\item $H=S+T$ and $-H\leq S,T\leq 2H$.
\item The count of Floer spheres with one positive puncture at $H$ and two negative punctures at $S$ and $T$ is well-defined. 
\item For fixed $-\infty<a<b<\infty$ and $\mu,-\lambda$ sufficiently large, $S$ and $T$ have no $1$-periodic orbits with action in $[a,b]$; the same holds for $(1-\tau)S-\tau L$ and $(1-\tau)T+\tau L$ with $\tau\in[0,1]$. 
\end{enumerate}
Consider now the moduli space of punctured Floer spheres shown in Figure~\ref{fig:coFrob1}.
\begin{figure}
\begin{center}
\includegraphics[width=.8\textwidth]{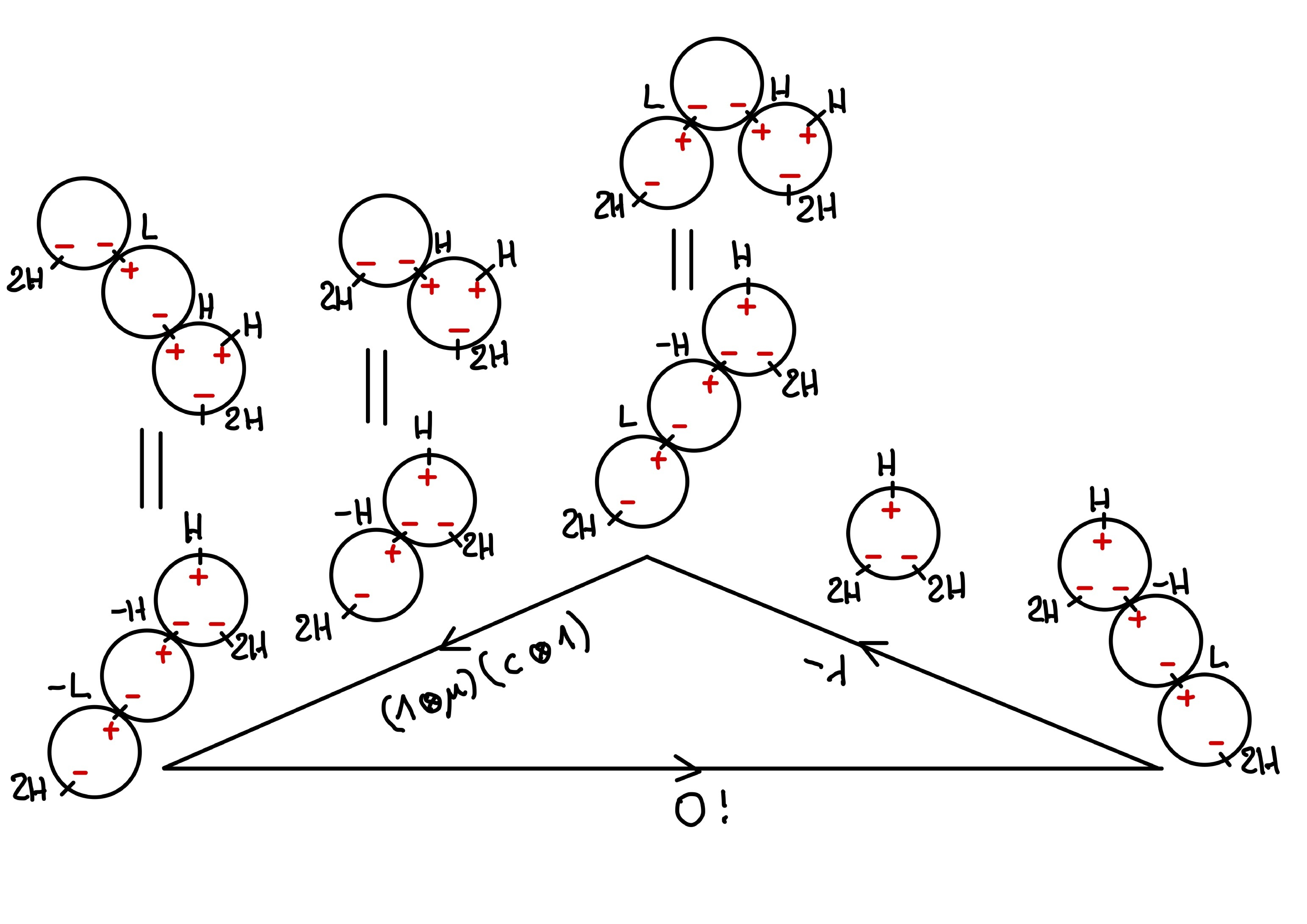}
\caption{Proof of the unital coFrobenius relation.}
\label{fig:coFrob1}
\end{center}
\end{figure}
Here the upper right side describes $-\boldlambda$ and the upper left side the composition $(1\otimes\boldmu)(\boldc\otimes 1)$. The lower side is described in Figure~\ref{fig:coFrob2}. 
\begin{figure}
\begin{center}
\includegraphics[width=.8\textwidth]{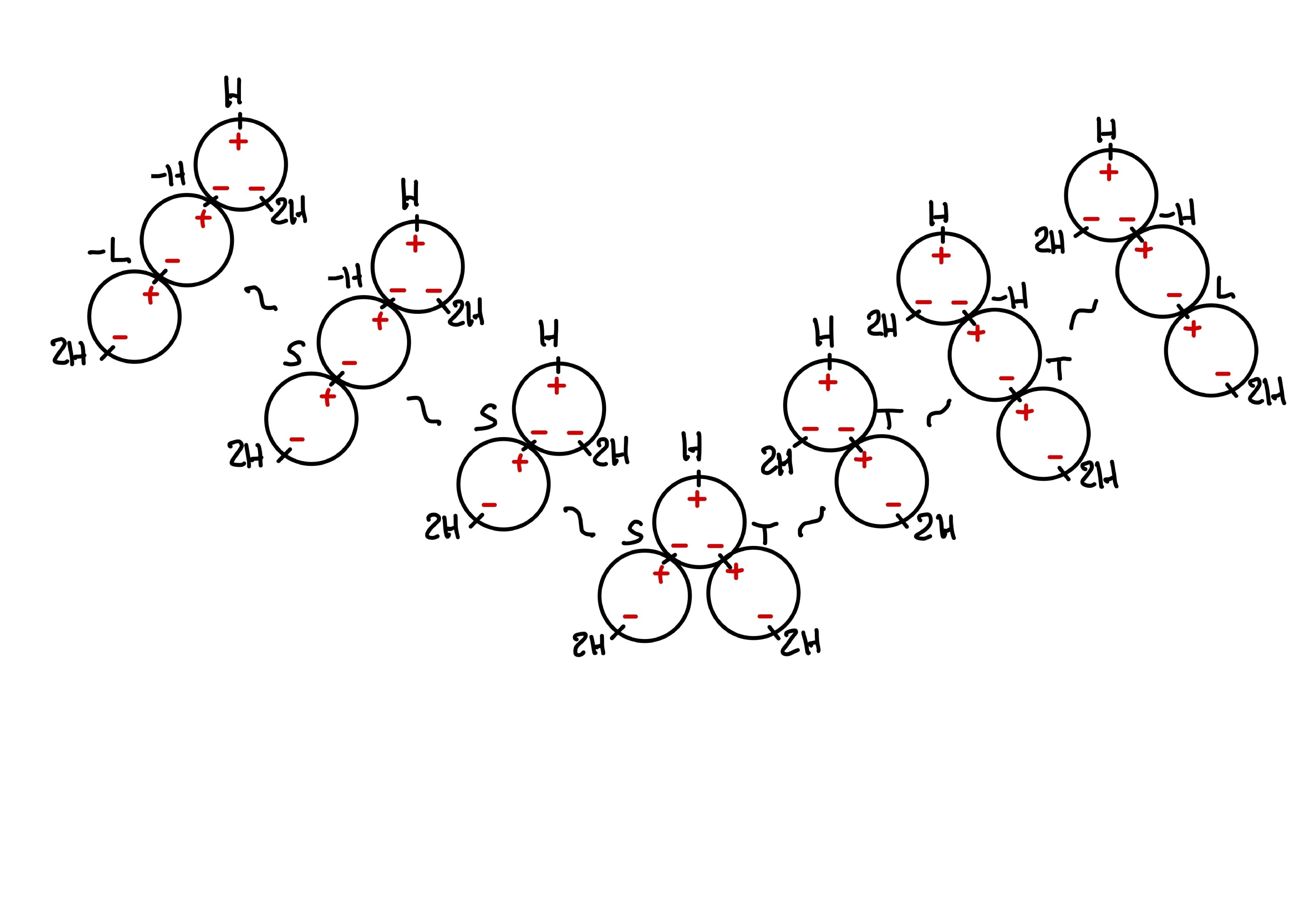}
\caption{Vanishing of the lower side.}
\label{fig:coFrob2}
\end{center}
\end{figure}
Here (going from left to right) the first deformation linearly interpolates between $-L$ and $S$, the second one glues the puncture at $-H$, the third one splits off the puncture at $T$, the fourth one glues the puncture at $S$, the fifth one splits off the puncture at $-H$, and the sixth one linearly interpolates between $T$ and $L$. In view of properties (1) and (2) above all the corresponding counts are well-defined. For a finite action window $(a,b)$ and $\mu,-\lambda$ sufficiently large, all these counts vanish by property (3) because we always have at least one puncture at $S$, $T$, $(1-\tau)S-\tau L$, or $(1-\tau)T+\tau L$ with $\tau\in[0,1]$. 
We extend the configurations over the interior of the triangle in Figure~\ref{fig:coFrob1} by 
first gluing everything in a small tubular neighborhood of the boundary - and then convexly interpolating the forms on the disc (all having $H$ as input and $2H$ as output).
This proves the first equality of the {\sc (unital coFrobenius)} relation in a fixed finite action window, and by taking inverse and direct limits it follows on $S\H_*(\p V)$.

The second equality can be proved analogously. Alternatively, it can be formally deduced from the first one by verifying the identity
$$
  \tau\boldlambda = (\boldmu\tau\otimes 1)(1\otimes\tau\boldc).
$$
and applying the symmetries
$$
  \tau\boldlambda=(-1)^{|\boldlambda|}\boldlambda,\qquad 
  \tau\boldc=(-1)^{|\boldlambda|}\boldc,\qquad 
  \boldmu\tau=(-1)^{|\boldmu|}\boldmu.
$$
\end{proof}

%%%%%%%%%%%%%%%%%%%%%%%%%%%%%%%%%%
\subsection{The copairing is the inverse of the Poincar\'e duality isomorphism}
%%%%%%%%%%%%%%%%%%%%%%%%%%%%%%%%%%

Let $\boldc=\boldlambda\boldeta$ be as in the preceding subsection and recall the notation $\vec\boldc=(\ev\otimes 1)(1\otimes\boldc)$. We shall sometimes refer to $\vec\boldc$ as being \emph{the secondary continuation map}. See~\cite{CHO-reducedSH} for a related discussion. 

\begin{proposition}\label{prop:c-iso}
The map $-\vec\boldc:SH^{1-*}(\p V)\to SH_*(\p V)$ is the inverse of the Poincar\'e duality isomorphism in Theorem~\ref{thm:coh-product}. 
\end{proposition}

\begin{proof}
Let $H=H_{\lambda,\mu}$ be a V-shaped Hamiltonian as in Figure~\ref{fig:Ham_RFH} and $L=H_{\lambda,\lambda}$ the corresponding linear Hamiltonian. By construction, the map $\vec\boldc$ counts cylinders in the $1$-parametric moduli space shown in Figure~\ref{fig:cvec}.

\begin{figure}
\begin{center}
\includegraphics[width=.7\textwidth]{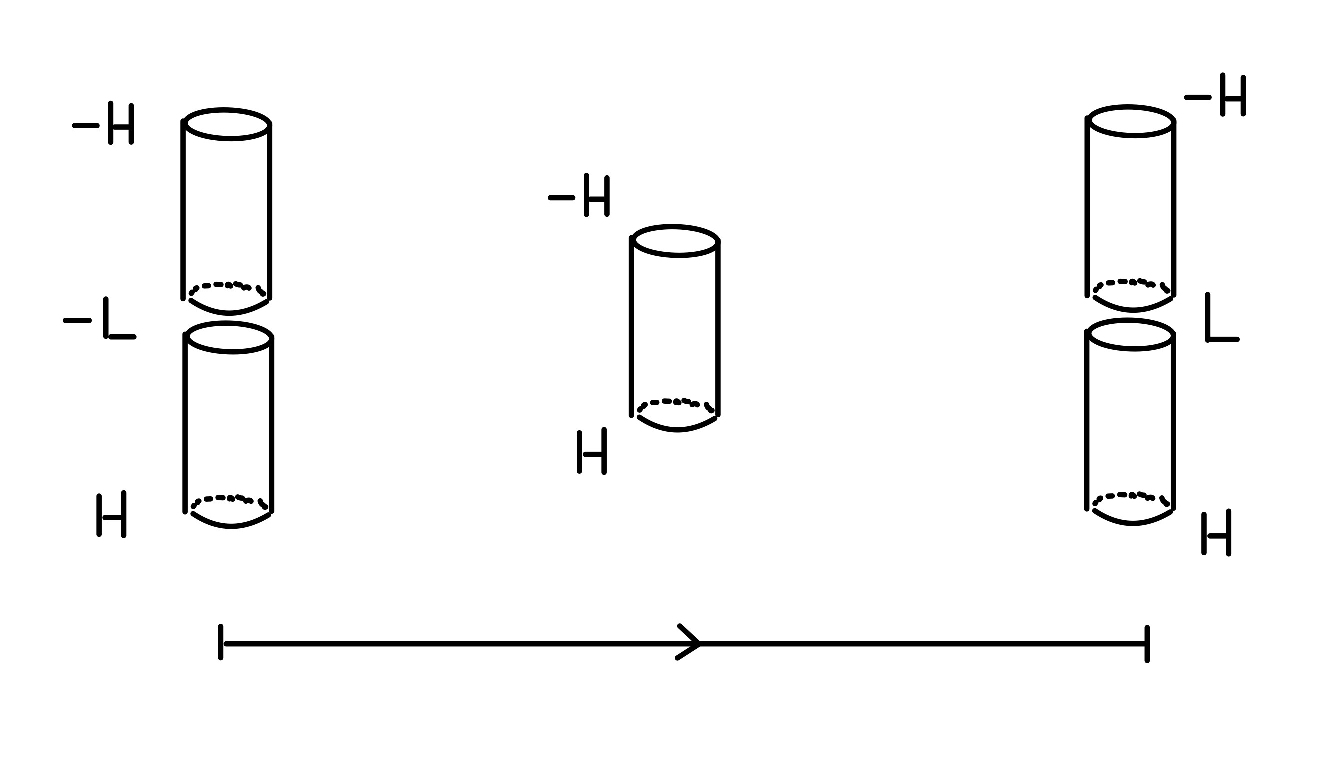}
\caption{The map $\vec\boldc$.}
\label{fig:cvec}
\end{center}
\end{figure}

As in the proof of Theorem~\ref{thm:coh-product}, we fix a finite action window $(a,b)$ and consider for $\tau\gg 0$ the dented Hamiltonian in Figure~\ref{fig:dent}, redrawn in Figure~\ref{fig:Z}. Let us denote this Hamiltonian by $Z$. Recall that its Floer chain complex in the action window $(a,b)$
has the form
$$
  FC^{(a,b)}(Z) = C^- \oplus C^+,\qquad \p_Z =
  \begin{pmatrix} \p^- & f \\ 0 & \p^+
  \end{pmatrix},  
$$
where $C^-$ and $C^+$ are generated by orbit groups III and II, respectively, and $f:C^+\to C^-$ is a map of degree $-1$. 

Let $K$ be the Hamiltonian which agrees with $Z$ to the left of orbit group III and with $H$ to its right, see Figure~\ref{fig:Z}. 
\begin{figure}
\begin{center}
\includegraphics[width=0.9\textwidth]{Z-evennewer.eps}\\ \qquad \\
\Hamlambdamuaxis{$-\tau$}{$\tau$} \qquad 
\HamLlambdaaxis{$-\tau$} \qquad 
\HamZaxis{$-\tau$}{$-\tau'$}{$\tau$} \qquad 
\HamKaxis{$-\tau$}{$-\tau'$}{$\tau$} \\ \qquad \\ \qquad 
\caption{The Hamiltonians $H,L,Z,K$.} 
\label{fig:Z}
\end{center}
\end{figure}
Monotone homotopies $Z\leadsto L\leadsto H$ and $Z\leadsto K\leadsto H$ give rise to a diagram of chain maps
$$
\xymatrix
@C=30pt
@R=20pt
{
  & FC^{(a,b)}(K) \ar[rd] &\\
  FC^{(a,b)}(Z) \ar[ru] \ar[rd] && FC^{(a,b)}(H) \\
  & FC^{(a,b)}(L) \ar[ru] &
}
$$
which commutes up to a degree $1$ chain homotopy
$$
  \Phi:\bigl(FC^{(a,b)}(Z),\p_Z\bigr) \to \bigl(FC^{(a,b)}(H),\p_H\bigr).
$$
This means that 
$$
  \p_H\Phi+\Phi\p_Z = F_1-F_0
$$
for the compositions
\begin{align*}
  &F_0:FC^{(a,b)}(Z)\to FC^{(a,b)}(L)\to FC^{(a,b)}(H),\cr
  &F_1:FC^{(a,b)}(Z)\to FC^{(a,b)}(K)\to FC^{(a,b)}(H).
\end{align*}
Writing $\Phi=(\Phi^-,\Phi^+)$ and $F_i=(F_i^-,F_i^+)$ with respect to the splitting $FC^{(a,b)}(Z)=C^-\oplus C^+$, the relation reads
$$
\left\{
\begin{aligned}
  \p_H\Phi^+ + \Phi^+\p^+ + \Phi^-f &= F_1^+ - F_0^+, \cr
  \p_H\Phi^- + \Phi^-\p^- &= F_1^- - F_0^-.
\end{aligned}
\right.
$$
The following 3 observations describe the chain maps $F_i^\pm$.  

(1) We have $F_0^\pm=0$ because the map $F_0$ factors through $FC^{(a,b)}(L)=\{0\}$.

(2) We have $F_1^-=0$. To see this, we show that the continuation map $C^-\to FC^{(a,b)}(K)$ vanishes. Consider an $s$-dependent Hamiltonian $H_s$ which agrees with $K$ near $-\infty$ and with $Z$ near $+\infty$ and satisfies $\p_sH_s\leq 0$. Suppose that there exists a Floer cylinder $u$ for $H_s$ with asymptotic orbits $x_+=(r_+,\gamma_+)$ in group III and $x_-$ in group II. 
If $\gamma_+$ is not constant, then, since $Z$ is concave near orbit group III, $u$ attains $r$-values $>r_+$ by~\cite[Lemma~2.3]{CO}. This is a contradiction to~\cite[Lemma~2.2]{CO}, which is applicable because in orbit group III the slope of $h_s$, and therefore also the action $A_{h_s}(r)$ at fixed $r$, decreases in $s$. If $\gamma_+$ is constant, then, since $Z$ is concave near orbit group III, the same argument applies using~\cite[Lemma~2.5]{CO}. 

(3) The map 
$$
  F_1^+:C^+
  \longrightarrow FC^{(a,b)}(K)
  \longrightarrow FC^{(a,b)}(H)
$$
is the identity under the canonical identifications of the chain complexes for a suitable choice of the homotopies. This is proved by analyzing separately each of these continuation maps. As we now explain, we only deform the Hamiltonians and keep the almost complex structure constant. 
\begin{itemize}
\item The first continuation map $C^+\to FC^{(a,b)}(K)$ can be chosen to be determined by a monotone homotopy $Z\leadsto K$ that is constant to the left of orbit group III. The almost complex structure is kept constant along the homotopy. By~\cite[Lemma~2.2]{CO}, the Floer continuation cylinders relevant for this continuation map must be entirely contained in the region where the homotopy $Z\leadsto K$ is constant, and they must therefore be constant. Since constant cylinders are regular for a constant almost complex structure, this continuation map is the identity. 
\item For the second continuation map, we keep the almost complex structure constant along the homotopy and we choose the slope $\tau'$ very close to $\tau$. Then $K$ and $H$ are $C^2$-close, and the following hold: 
\begin{itemize}
\item The generators of $FC(K)$ and $FC(H)$ are canonically identified. 
\item The continuation map induced by a generic $C^2$-small homotopy from $K$ to $H$ induces the canonical identification between $FC(K)$ and $FC(H)$. 
\end{itemize}
This is a consequence of the continuity and persistence, under $C^1$-small perturbations, of the zeroes of a transverse index 0 proper Fredholm section of a Banach bundle. For the first statement, the fixed points of a Hamiltonian diffeomorphism are intersections of its graph with the diagonal, and these can be seen locally as zeroes of an index 0 section of the normal bundle to the diagonal. For the second statement, the continuation Floer trajectories that connect two orbits with index difference 0 are the zeroes of the index 0 section given by the Cauchy-Riemann equation defined on a suitable space of maps. 
\end{itemize}

As a consequence of these observations, the second relation above says that $\Phi^-$ is a chain map
and the first relation becomes $\p_H\Phi^+ + \Phi^+\p^+ + \Phi^-f = 1$, so on homology the composition in the upper row of
$$
\xymatrix
@C=30pt
@R=20pt
{
  H_*(C^+) \ar@{=}[d] \ar[r]^{f_*} & H_{*-1}(C^-) \ar@{=}[d] \ar[r]^{\Phi^-_*} & FH_*^{(a,b)}(H) \ar@{=}[d] \\
  SH_*^{(a,b)}(\p V) \ar[r]^-{PD} & SH^{1-*}_{(-b,-a)}(\p V) \ar[r]^-{-\vec\boldc} & SH_*^{(a,b)}(\p V)
}
$$
is the identity. Commutativity of the first square follows from the definition of the Poincar\'e duality isomorphism in the proof of Theorem~\ref{thm:coh-product}. The second square commutes because the map $\Phi^-:C^-\to FC^{(a,b)}(H)$ interpolates between the factorizations through $FC^{(a,b)}(L)$ and $FC^{(a,b)}(K)$. 
We claim that this interpolation agrees with the definition of the secondary continuation map $-\vec\boldc$ (see Figure~\ref{fig:cvec}).
To prove the claim, consider the diagram of chain maps 
$$
\xymatrix
@C=30pt
@R=20pt
{ & & FC^{(a,b)}(L) \ar[dr] & \\
  & C^-\subset FC^{(a,b)}(Z) \ar[rd] \ar[ur] & -\Phi^- & FC^{(a,b)}(H) \\
  FC^{(a,b)}(-H) \ar@{=}[ru] \ar[rd] & 0 & FC^{(a,b)}(K) \ar@{=}[ur]\\
  & FC^{(a,b)}(-L) \ar[ru] &
}
$$
The identification $FC^{(a,b)}(K) \equiv FC^{(a,b)}(H)$ is the one described above. The image of $FC^{(a,b)}(-H)\to FC^{(a,b)}(Z)$ is contained in $C^-$ because the corresponding homotopy $-H\leadsto Z$ can be chosen to be constant to the right of orbit group II and we can apply~\cite[Lemmas~2.2, 2.3, 2.5]{CO}. 
As a consequence, the upper-right diamond describes $-\Phi^-$.
That the chain homotopy for commutativity of the bottom-left diamond is identically zero follows from the proof of observation (2) above, since the monotone homotopies in this $1$-parametric family differ from the one in (2) only to the left of orbit group 
III, which does not affect the argument. 
The claim follows because the bottom and top compositions from left to right in the diagram are the endpoints of $\vec\boldc$. 

We obtain $(-\vec\boldc)\circ PD = 1$ on $SH_*^{(a,b)}(\p V)$. In the inverse-direct limit we obtain that $(-\vec\boldc)\circ PD = 1$ on $SH_*(\p V)$, i.e. $-\vec\boldc$ is the inverse of the isomorphism $PD$. 
\end{proof}

%%%%%%%%%%%%%%%%%%%%%%%%%%%%%%%%%%
\subsection{Proof of Theorems~\ref{thm:PD-sec-bialg-SH} and~\ref{thm:PD-sec-bialg-SH-Lag}}
%%%%%%%%%%%%%%%%%%%%%%%%%%%%%%%%%% 

\begin{proof}[Proof of Theorem~\ref{thm:PD-sec-bialg-SH}]
By \S\ref{sec:product}, the degree $0$ product $\boldmu$ on $S\H_*(\p V)$ is commutative and associative with unit $\boldeta$. The degree $1-2n$ coproduct $\boldlambda$ is defined in \S\ref{sec:secondary_coproduct}. By construction, $\boldlambda$ is cocommutative and the associated copairing $\boldc = \boldlambda\boldeta$ is symmetric. By Proposition~\ref{prop:unital-coFrob} the {\sc (unital coFrobenius)} relation holds, and by Proposition~\ref{prop:c-iso} the map $-\vec\boldc:S\H^{1-2n-*}(\p V)\to S\H_*(\p V)$ is the inverse of the Poincar\'e duality isomorphism. Thus $(\boldmu,\boldlambda,\boldeta)$ satisfies the conditions in Proposition~\ref{prop:coFrob-criterion} and it follows that $\boldeps=-\boldeta^\vee\vec\boldc^{-1}$ makes $(S\H_*(\p V),\boldmu,\boldlambda,\boldeta,\boldeps)$ a graded Frobenius algebra. Finally, it follows from Theorem~\ref{thm:algebraic_Poincare_duality} that $-\vec\boldc$ and its inverse $-\vec\boldp = PD$ intertwine the graded Frobenius algebra structures on $S\H_*(\p V)$ and $S\H^{-*}(\p V)$. 
\end{proof}

\begin{proof}[Proof of Theorem~\ref{thm:PD-sec-bialg-SH-Lag}]
The proof is up to notation the same as that of Theorem~\ref{thm:PD-sec-bialg-SH}, using the 
gradings in Appendix~\ref{sec:grading}. Note that commutativity of $\boldmu$ and cocommutativity of $\boldlambda$ need not hold in this case, but symmetry of $\boldc$ is still satisfied because $\boldc$ is counting disks with only two boundary punctures. 
\end{proof}

%%%%%%%%%%%%%%%%%%%%%%%%%%%%%%%%%%%%%%%%%%%%%%%%%%%%%%%%%%%%%%%%%
\section{Graded open-closed TQFT structure}\label{sec:TQFT}
%%%%%%%%%%%%%%%%%%%%%%%%%%%%%%%%%%%%%%%%%%%%%%%%%%%%%%%%%%%%%%%%%

In this section we show that the graded Frobenius algebra structure on V-shaped symplectic homology and its Lagrangian counterpart from \S\ref{sec:sec-bialg} fit together to a graded version of a two-dimensional open-closed TQFT.
Throughout this section we use coefficients in a field $\bk$.

%%%
\subsection{Two-dimensional open-closed TQFTs}
%%%

We begin by recalling the description of a two-dimensional open-closed TQFT in terms of generators and relations from Lauda and Pfeiffer~\cite{Lauda-Pfeiffer}. It associates to the circle a finite dimensional $\bk$-vector space $C$ and to the closed interval a finite dimensional $\bk$-vector space $A$. Its generators are the following operations (see Figure~\ref{fig:TQFT-generators}):
\begin{itemize}
\item the product $\boldmu_C:C\otimes C\to C$ and the unit $\boldeta_C:\bk\to C$;
\item the coproduct $\boldlambda_C:C\to C\otimes C$ and the counit $\boldeps_C:C\to \bk$;
\item the product $\boldmu_A:A\otimes A\to A$ and the unit $\boldeta_A:\bk\to A$;
\item the coproduct $\boldlambda_A:A\to A\otimes A$ and the counit $\boldeps_A:A\to \bk$;
\item the {\em zipper} (or closed-open map) $\boldzeta:C\to A$;
\item the {\em cozipper} (or open-closed map) $\boldzeta^*:A\to C$.
\end{itemize}
These satisfy the following relations (in our terminology):
\begin{enumerate}
\item $(C,\boldmu_C,\boldeta_C,\boldlambda_C,\boldeps_C)$ is a commutative and cocommutative Frobenius algebra. 
\item $(A,\boldmu_A,\boldeta_A,\boldlambda_A,\boldeps_A)$ is a Frobenius algebra. 
\item The zipper is an algebra homomorphism, 
$$
  \boldmu_A(\boldzeta\otimes\boldzeta) = \boldzeta\,\boldmu_C,\qquad \boldzeta\,\boldeta_C=\boldeta_A.   
$$
\item The zipper lands in the center of $\boldmu_A$, 
$$
  \boldmu_A(\boldzeta\otimes 1) = \boldmu_A\tau(\boldzeta\otimes 1).  
$$
\item The cozipper is dual to the zipper via the copairings $\boldc_C=\boldlambda_C\boldeta_C$ and $\boldc_A=\boldlambda_A\boldeta_A$, 
$$
  (1\otimes\boldzeta)\boldc_C = (\boldzeta^*\otimes 1)\boldc_A.
$$
\item The {\em Cardy condition}
$$
  \boldzeta\,\boldzeta^* = \boldmu_A\tau\boldlambda_A. 
$$
\end{enumerate}
In~\cite{Lauda-Pfeiffer} condition (5) is stated as the equivalent condition in Lemma~\ref{lem:cozipper-dual-zipper} below (without signs), and it is proved that a 2D open-closed TQFT is equivalent to the algebraic structure defined by these generators and relations. 

\begin{figure}
\begin{center}
\includegraphics[width=.9\textwidth]{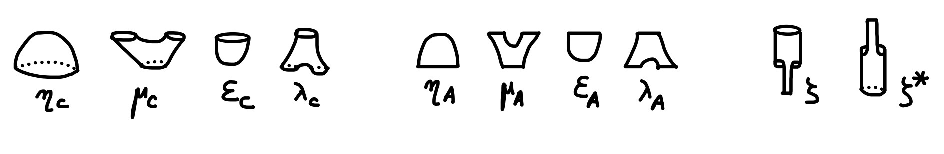}
\caption{The generators of an open-closed TQFT.}
\label{fig:TQFT-generators} 
\end{center}
\end{figure}

%%%
\subsection{Graded open-closed TQFTs}\label{ss:graded-open-closed-TQFT}
%%%

In this subsection we adapt the description of a two-dimensional open-closed TQFT to the $\Z$-graded setting, also dropping the finite dimensionality conditions on $C$ and $A$. Thus $C$ and $A$ are now graded Tate vector spaces.

After shifting degrees, we can and will assume that the products $\boldmu_C$ and $\boldmu_A$ have degree $0$. Then relations (1)--(5) determine all the other degrees in terms of the degrees of $\boldlambda_C$ and $\boldlambda_A$: 
\begin{gather*}
  |\boldmu_C| = |\boldeta_C| = |\boldmu_A| = |\boldeta_A| = |\boldzeta| = 0, \cr
  |\boldlambda_C|=|\boldc_C|=-|\boldeps_C|, \qquad 
  |\boldlambda_A|=|\boldc_A|=-|\boldeps_A|, \cr
  |\boldzeta^*| = |\boldlambda_C| - |\boldlambda_A|. 
\end{gather*}
Relation (6) yields the additional condition $|\boldzeta^*|=|\boldlambda_A|$, which combined with the previous one implies that $|\boldlambda_C|=2|\boldlambda_A|$ is even. This gives rise to an algebraic structure which recovers the one of the previous subsection in the case that $|\boldlambda_A|$ is also even. 

On degree shifted symplectic homology $S\H_*(\p V)$, however, the coproduct $\boldlambda_C$ has {\em odd} degree $1-2n$, so the Cardy condition makes no sense in its original form. This leads us to the following definition.

\begin{definition}[{cf.~\cite[Definition~6.1]{CHO-algebra}}]\label{defi:gradedTQFT}
A {\em graded (two-dimensional) open-closed TQFT} consists of two graded Tate vector spaces $C,A$ 
and the following morphisms:
\begin{itemize}
\item the degree $0$ product $ \boldmu_C:C\otimes^* C\to C$ and the unit $\boldeta_C:\bk\to C$;
\item the coproduct $\boldlambda_C:C\to C\otimes^! C$ and the counit $\boldeps_C:C\to \bk$;
\item the degree $0$ product $ \boldmu_A:A\otimes^* A\to A$ and the unit $\boldeta_A:\bk\to A$;
\item the coproduct $\boldlambda_A:A\to A\otimes^! A$ and the counit $\boldeps_A:A\to \bk$;
\item the {\em zipper} (or closed-open map) $\boldzeta:C\to A$;
\item the {\em cozipper} (or open-closed map) $\boldzeta^*:A\to C$.
\end{itemize} 
These are required to satisfy the following signed versions of relations (1)--(6) above: 
\begin{enumerate}
\item $(C, \boldmu_C,\boldeta_C,\boldlambda_C,\boldeps_C)$ is a commutative and cocommutative graded\break  Frobenius algebra, called \emph{the closed sector}. 
\item $(A, \boldmu_A,\boldeta_A,\boldlambda_A,\boldeps_A)$ is a graded Frobenius algebra, called \emph{the open sector}. 
\item The zipper is an algebra homomorphism, 
$$
   \boldmu_A(\boldzeta\otimes\boldzeta) = \boldzeta\, \boldmu_C,\qquad \boldzeta\,\boldeta_C=\boldeta_A.   
$$
\item The zipper lands in the center of $ \boldmu_A$, 
$$
  \boldmu_A(\boldzeta\otimes 1) = \boldmu_A\tau(\boldzeta\otimes 1).  
$$
\item The cozipper is dual to the zipper via the copairings $\boldc_C=\boldlambda_C\boldeta_C$ and $\boldc_A=\boldlambda_A\boldeta_A$, 
$$
  (1\otimes\boldzeta)\boldc_C = (\boldzeta^*\otimes 1)\boldc_A.
$$
\item The {\em graded Cardy condition}
$$
\left\{\begin{array}{ll}
\boldzeta\,\boldzeta^*=(-1)^{|\lambda_A|} \boldmu_A\tau\boldlambda_A & 
\mbox{ if } |\boldlambda_C|=2|\boldlambda_A| \mbox{ and } \dim A<\infty, 
\\
\boldzeta\,\boldzeta^*= \boldmu_A\tau\boldlambda_A=0
& \mbox{ if } |\boldlambda_C|\neq 2|\boldlambda_A| \mbox{ and } \dim A<\infty, 
\\
\boldzeta\,\boldzeta^*=0
& \mbox{ if } |\boldlambda_C|\neq 2|\boldlambda_A|  \mbox{ and } \dim A=\infty. 
\end{array}\right.
$$ 
\end{enumerate}
\end{definition}

\begin{lemma}[{\cite[Lemma~6.6]{CHO-algebra}}] \label{lem:cozipper-dual-zipper}
Assuming relations (1) and (2), relation (5) is equivalent to the following relation in terms of the pairings $\boldp_C= (-1)^{|\boldlambda_C|}\boldeps_C\boldmu_C$ and $\boldp_A=(-1)^{|\boldlambda_A|}\boldeps_A\boldmu_A$:
\begin{equation*}\label{eq:cozipper-dual-zipper}
  \boldp_C(1\otimes\boldzeta^*) = (-1)^{|\boldlambda_A|+|\boldlambda_C|} \boldp_A(\boldzeta\otimes 1). 
\end{equation*}
\end{lemma}

Consider now a Liouville domain $V$ of dimension $2n$ and an exact Lagrangian submanifold $L\subset V$ with Legendrian boundary $\p L\subset\p V$. We define the zipper (or closed-open map) and cozipper (or open-closed map)
$$
\boldzeta : S\H_*(\p V)\to S\H_*(\p L), \qquad \boldzeta^*:S\H_*(\p L)\to S\H_{*-n}(\p V)
$$
by counts of discs with one positive interior puncture and one negative boundary puncture, respectively one positive boundary puncture and one negative interior puncture, as shown in Figure~\ref{fig:zip-cozip-def}. 
\begin{figure}
\begin{center}
\includegraphics[width=.4\textwidth]{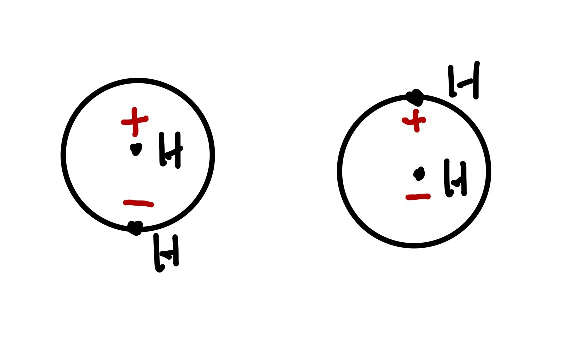}
\caption{The zipper and cozipper in symplectic homology.}
\label{fig:zip-cozip-def} 
\end{center}
\end{figure}

\begin{theorem}[Graded open-closed TQFT structure on symplectic homology]\label{thm:open-closed-RFH}
Let $V$ be a Liouville domain of dimension $2n$ and $L\subset V$ an exact Lagrangian submanifold with Legendrian boundary $\p L\subset\p V$. Then the graded Frobenius algebra structures on $S\H_*(\p V)$ and $S\H_*(\p L)$, together with the maps $\boldzeta$ and $\boldzeta^*$, fit together into a graded open-closed TQFT structure, with coproducts of degrees $|\boldlambda_C|=1-2n$ and $|\boldlambda_A|=1-n$. The Poincar\'e duality isomorphisms intertwine this structure with the corresponding structure on cohomology $S{\H}^{1-2n-*}(\p V)$ and $S{\H}^{1-n-*}(\p L)$. 
\end{theorem}

\begin{proof}
Relations (1) and (2) are Theorems~\ref{thm:PD-sec-bialg-SH} and~\ref{thm:PD-sec-bialg-SH-Lag}, respectively.
The proofs of relations (3) and (4) are shown in Figures~\ref{fig:zipper-algebra} and~\ref{fig:zipper-center}, respectively.\begin{figure}
\begin{center}
\includegraphics[width=.75\textwidth]{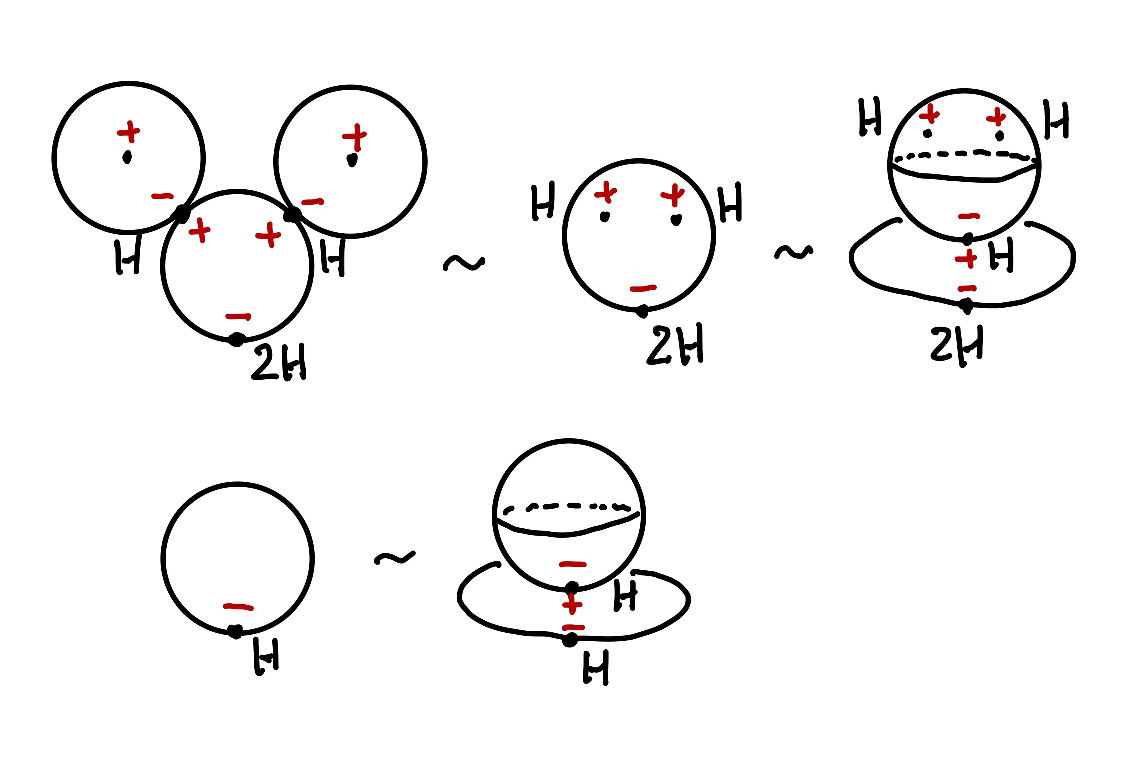}
\caption{The zipper is an algebra homomorphism.}
\label{fig:zipper-algebra} 
\end{center}
\end{figure}
\begin{figure}
\begin{center}
\includegraphics[width=.9\textwidth]{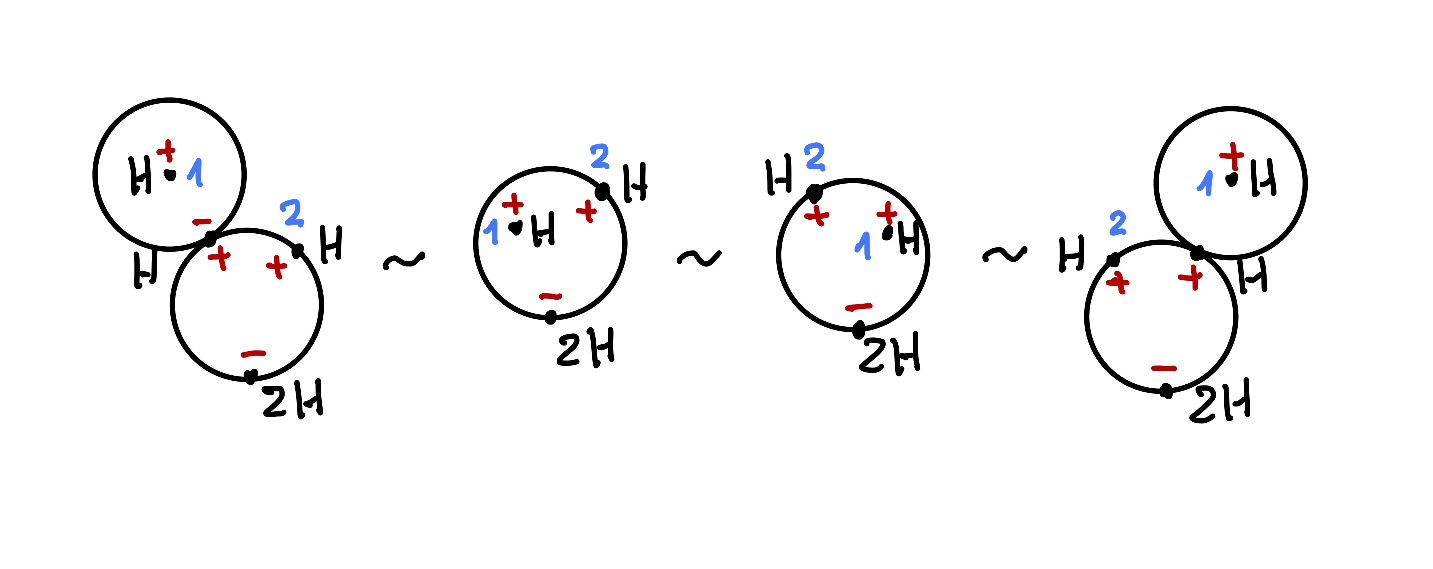}
\caption{The zipper lands in the center.}
\label{fig:zipper-center} 
\end{center}
\end{figure}
The proof of relation (5) is shown in Figure~\ref{fig:cozipper-dual-zipper}. There we consider a moduli space parametrized by a hexagon, where the left side corresponds to $(\boldzeta^*\otimes 1)\boldc_A$ and the right side to $-(1\otimes\boldzeta)\boldc_C$. The operations on the other four sides factor through the Hamiltonian $L$ and therefore vanish because $L$ has no periodic orbits in a given finite action window.
\begin{figure}
\begin{center}
\includegraphics[width=.9\textwidth]{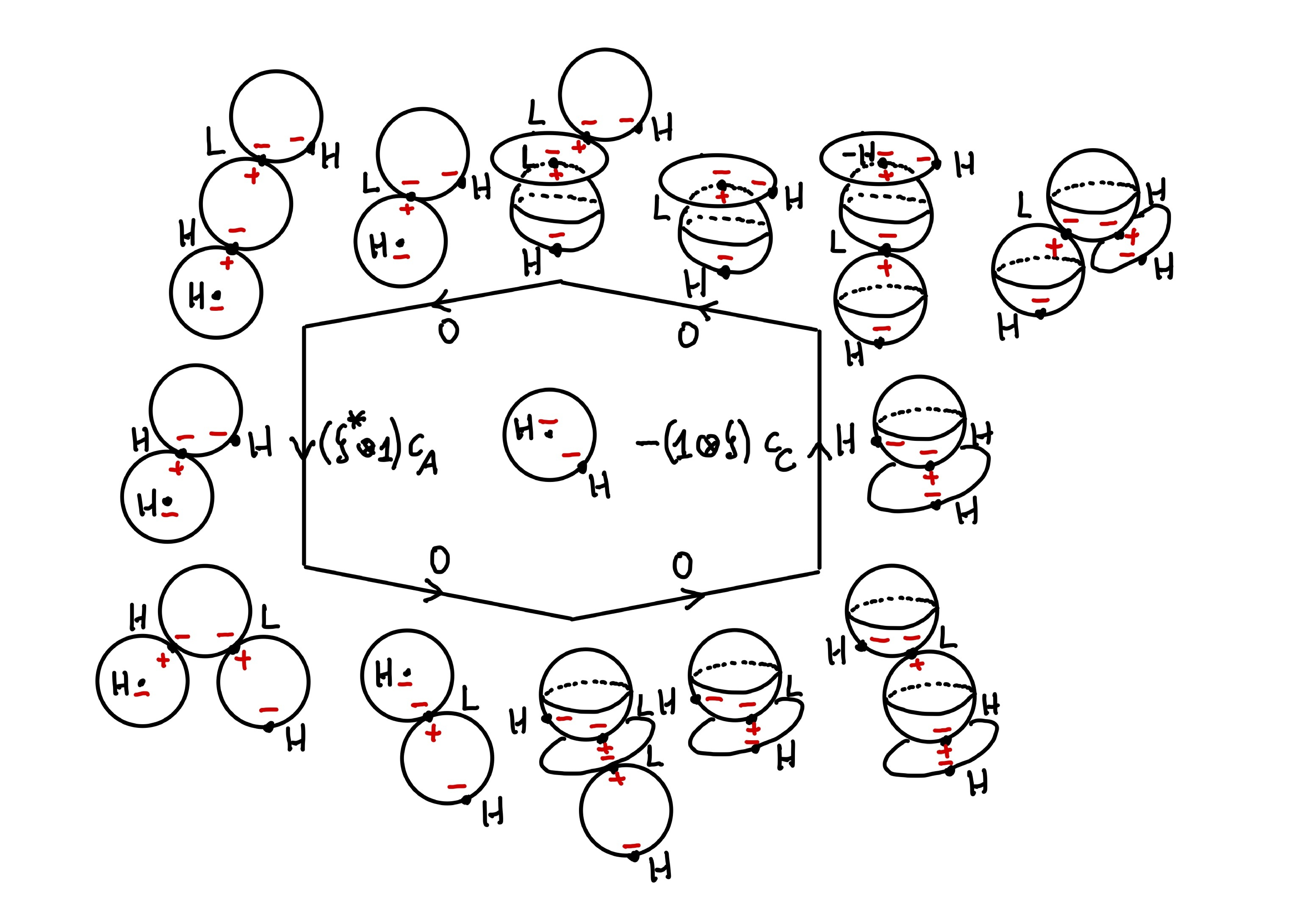}
\caption{The cozipper is dual to the zipper.}
\label{fig:cozipper-dual-zipper} 
\end{center}
\end{figure}
The proof of relation (6) is shown in Figure~\ref{fig:cardy}. Since $\boldlambda$ has odd degree it reads $\boldzeta\,\boldzeta^*=0$, and in addition $\boldmu_A\tau\boldlambda_A=0$ if $\dim S\H_*(\p L)<\infty$. Now $\boldzeta\,\boldzeta^*=0$ follows from the figure because the Hamiltonian $L$ has no periodic orbits in a given finite action window. The composition $\boldmu_A\tau\boldlambda_A$ is given by the right hand picture in the figure with $L$ replaced by $H$, which vanishes because we can deform $H$ back to $L$.  
\begin{figure}
\begin{center}
\includegraphics[width=.75\textwidth]{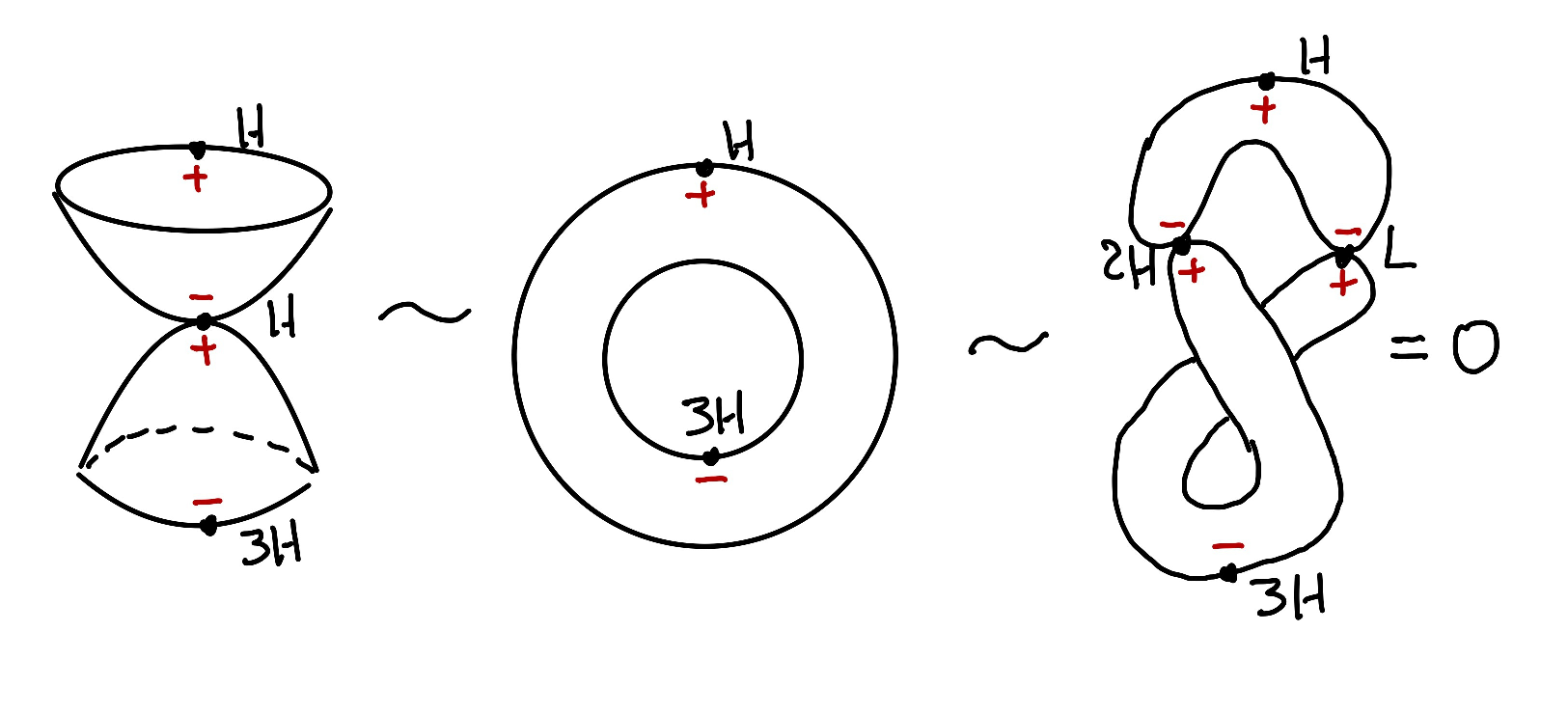}
\caption{The graded Cardy condition.}
\label{fig:cardy} 
\end{center}
\end{figure}
\end{proof}

%%%
\subsection{Topological open-closed and closed-open maps}  \label{sec:topological-oc-co}
%%%

Now we specialize to the pair $(V,L)=(D^*M,D^*_qM)$, so that we work with Rabinowitz loop homology $\wh{\H}_*\Lambda=S\H_*(S^*M)$ and Rabinowitz based loop homology $\wh H_*\Omega=S\H_*(S^*_qM)$. In this subsection we identify the closed-open and open-closed maps $\boldzeta,\boldzeta^*$ described above with their topological counterparts, the topological shriek and pullback/pushforward maps induced by the inclusion $i:\Om\into\Lambda$, denoted $i_!$, $i^*$ etc. Note that the latter are also well-defined at the level of reduced (co)homology groups.

\begin{proposition}\label{prop:topological_shriek} 
With respect to the splittings for $\wh{H}_*\Lambda$ and $\wh{H}_*\Omega$ in Theorem~\ref{thm:splitting-intro}, the following hold:
  
(a) The zipper $\boldzeta:\wh{H}_*\Lambda\to \wh{H}_{*-n}\Omega$  
restricts to maps $\ol{H}_*\Lambda\to H_{*-n}\Omega$ and $\ol{H}^{-*+1}(\Lambda)\to H^{-*+1}\Omega$ which coincide respectively with the topological shriek map $i_!$ and the cohomological pullback map $i^*$. 

(b) The cozipper $\boldzeta^*:\wh{H}_*\Omega\to \wh{H}_*\Lambda$  
 restricts to maps $H_*\Omega\to \ol{H}_*\Lambda$ and $H^{-*+1-n}\Omega\to \ol{H}^{-*+1}\Lambda$ which coincide respectively with the topological pushforward map $i_*$ and the cohomological shriek map $i^!$. 
\end{proposition}

\begin{proof}[Proof of Proposition~\ref{prop:topological_shriek}] 
We prove only assertion (a), the proof of (b) being similar. 
The topological shriek map $i_!:H_*(\Lambda)\to H_{*-n}(\Omega)$ descends to $i_!:\ol{H}_*(\Lambda)\to H_{*-n}(\Omega)$ because the point class lies in its kernel for degree reasons. It admits the following description in Morse theory (see~\cite{AS-product-structures,CHO-MorseFloerGH}). 
Consider a smooth Lagrange function $L:S^1\times TM\to\R$ which
outside a compact set has the form $L(t,q,v)=\frac12|v|^2-V_\infty(t,q)$ for
a smooth potential $V_\infty:S^1\times M\to\R$. It induces a smooth action
$$
   S_L:\Lambda\to\R,\qquad q\mapsto\int_0^1L(t,q,\dot q)dt, 
$$
which we can assume to be a Morse function whose negative flow with
respect to the $W^{1,2}$-gradient $\nabla S_L$ is Morse--Smale. 

View $\Omega\subset \Lambda$ as a codimension $n$ Hilbert submanifold and choose the base point $q$ generic so that all critical points of $S_L$ lie outside $\Omega$. We can further perturb $V_\infty$ so that $S_L|_\Omega$ is a Morse function and the following moduli spaces are cut out transversally. Given $a\in\mathrm{Crit}(S_L)$ and $b\in \mathrm{Crit}(S_L|_\Omega)$ set
$$
   \cM(a;b):=W^-(a)\cap W^+(b),
$$
where $W^-(a)$ is the unstable manifold of $a$ with respect to the flow of $-\nabla S_L$ on $\Lambda$, and $W^+(b)$ is the stable manifold of $b$ with respect to the flow of $-\nabla(S_L|_\Omega)$ on $\Om$.
Generically, this is a smooth manifold of dimension $\dim\cM(a,b)=\ind(a)-\ind(b)-n$, where $\ind(a)$ and $\ind(b)$ denote the Morse indices of $a$ and $b$ with respect to $S_L$ and $S_L|_\Omega$, respectively. This manifold is compact up to breaking of Morse trajectories, and it is already compact if its dimension is $0$. The oriented count of the elements of the 0-dimensional moduli spaces $\cM(a,b)$, $a\in\mathrm{Crit}(S_L)$, $b\in \mathrm{Crit}(S_L|_\Omega)$ defines the map $i_!$. 

The zipper $\boldzeta:\ol{H}_*\Lambda\to H_{*-n}\Omega$ can alternatively be described using the usual Hamiltonian profile for the symplectic homology $SH_*(T^*M)$, i.e. a Hamiltonian that is constant equal to zero on $D^*M$ and linear of positive slope with respect to the radial coordinate outside $D^*M$. 

We use the isomorphism $\Psi_\Lambda:SH_*(D^*M)\stackrel{\simeq}\longrightarrow H_*\Lambda$ from~\cite{AS-product-structures,Abouzaid-cotangent}. It is given by a count of mixed configurations consisting of a disc with boundary on the zero section, one marked point on the boundary and one positive interior puncture, solving a perturbed Cauchy-Riemann equation, together with a semi-infinite descending gradient line in $\Lambda$ starting at the loop determined by the restriction of the disc to its boundary.

We also use the isomorphism $\Psi_\Omega:SH_*(D^*_qM)\stackrel{\simeq}\longrightarrow H_{*-n}\Omega$ with $\Omega=\Omega_qM$, i.e., the Lagrangian counterpart of $\Psi_\Lambda$ defined in~\cite{AS-product-structures,Abouzaid-cotangent}.
It is given by a count of mixed configurations consisting of a disc with three boundary punctures, with one boundary component constrained to $M$, the other two boundary components constrained to $T^*_qM$, a positive puncture at the end bordered by the two $T^*_qM$-components, solving a perturbed Cauchy-Riemann equation, together with a semi-infinite descending gradient line in $\Omega$ starting at the loop determined by the restriction of the strip to the boundary component which is constrained to $M$. Note that the strip must be asymptotic to $q$ at each of the punctures bordered by $M$ and $T^*_qM$, since $q$ is the unique intersection point of $M$ and $T^*_qM$.  
Recall that in $H_*\Lambda$ and $H_{*-n}\Omega$ we use twisted coefficients as described in
Appendix~\ref{ss:local-systems}.

We need to show that the following diagram commutes:
$$
\xymatrix{
   SH_*(D^*M)  \ar[r]^{\boldzeta}\ar[d]_{\Psi_\Lambda}^\simeq & SH_*(D^*_qM) \ar[d]^{\Psi_\Omega}_\simeq\\
   H_*\Lambda \ar[r]^{i_!}& H_{*-n}\Omega.
}
$$
We prove that both compositions are equal to the map $\Gamma:SH_*(D^*M)\break \to H_{*-n}\Omega$ induced by the count of moduli spaces consisting of a disc with boundary on the zero section, one marked point on the boundary and one positive interior puncture, solving a perturbed Cauchy-Riemann equation, such that the 
marked point on the boundary is constrained to be equal to $q$, together with a semi-infinite descending gradient line in $\Omega$ starting at the loop determined by the restriction of the disc to its boundary. 

We first discuss the composition $i_!\circ \Psi_\Lambda$. After gluing, the composition is described by an obvious moduli space involving (starting at the component which contains the positive puncture) a disc with boundary constrained to $M$ and with one marked point on the boundary, a finite length descending gradient line in $\Lambda$ whose lowest energy point is a loop based at $q$, and a semi-infinite descending gradient line in $\Omega$. We bring the length of the intermediate cylinder to zero in a $1$-parameter family. This produces a homotopic operation and we observe that at the zero-length end of the homotopy we recover the map $\Gamma$. 

We now discuss the composition $\Psi_\Omega\circ \boldzeta$. After gluing, it is described by the count of mixed configurations consisting of a disc with one interior positive puncture, two boundary punctures, one boundary arc constrained to $M$, the other boundary arc constrained to $T^*_qM$, solving a perturbed Cauchy-Riemann equation, together with a semi-infinite descending gradient line in $\Omega$ starting at the loop determined by the restriction of the disc to the boundary arc which is constrained to $M$. Note that the disc must be asymptotic to $q$ at each boundary puncture, since $q$ is the unique intersection point of $M$ and $T^*_qM$. We vary the conformal type of the disc in a $1$-parameter family within its moduli space by bringing the two boundary punctures together and shrinking the boundary arc labeled $T^*_qM$, see Figure~\ref{fig:annuli-one-parameter}. This produces a homotopic operation. We find at the other end of the moduli space a nodal disc with two irreducible components $D_1$ and $D_2$, together with a semi-infinite descending gradient line in $\Omega$. The irreducible component $D_1$ of the nodal disc contains the interior puncture, it has the node on its boundary, and the boundary is labeled $M$. The irreducible component $D_2$ contains the two boundary punctures and the node, the boundary arcs adjacent to the node are labeled $M$ and the third boundary arc is labeled $T^*_qM$. On both components $D_1$ and $D_2$ the resulting curves $u_1$ and $u_2$ solve a Cauchy-Riemann equation perturbed by a non-negative $1$-form and a Hamiltonian $H$ as in the definition of symplectic homology which vanishes near $M$. The maximum principle forces the curve $u_2$ to be contained in $D^*M$, where the Hamiltonian vanishes, so that the curve solves a genuine, unperturbed Cauchy-Riemann equation. Since both $M$ and $T^*_qM$ are exact it follows that $u_2$ is constant, necessarily equal to $q$.  (This fact is akin to~\cite[Ch.~13, Exercise~5.3]{Abouzaid-cotangent}.)   
As a consequence, the node on the boundary of $u_1$ is sent to the point $q$ and we find that the count of such moduli spaces defines the map $\Gamma$.
\end{proof}

\begin{figure} [ht]
\centering
\input{annuli-one-parameter.pstex_t}
\caption{The $1$-dimensional moduli space of discs with 1 interior puncture and 2 boundary punctures.}
\label{fig:annuli-one-parameter}
\end{figure}

%%%%%%%%%%%%%%%%%%%%%%%%%%%%%%%%%%%%%%%%%%%%%%%%%%%%
\section{Proof of the main theorems on loop spaces}\label{sec:loop-proofs}
%%%%%%%%%%%%%%%%%%%%%%%%%%%%%%%%%%%%%%%%%%%%%%%%%%%%

In the preceding sections we have proved our main results in the setting of general Liouville domains. Their application to string topology is based on variations and refinements of Viterbo's isomorphism~\cite{Viterbo-cotangent}, which we now describe. 

In this section, $M$ denotes a closed connected manifold with free loop space $\Lambda$ and based loop space $\Om$. We denote by $T^*M$ the unit cotangent bundle with its canonical Liouville form, and by $D^*M\subset T^*M$ the unit disc bundle for some choice of Riemannian metric on $M$. This is a Liouville domain with boundary the unit sphere bundle $S^*M$.
We use coefficients in a unital commutative ring $R$.

%%%
%\subsection{Theorems on free loops}
%%%

The connection between Floer theory and the topology of free loops is provided by the following theorem, which is the result of a joint effort of many people. Let $\sigma$ be the local system on loops on $T^*M$ determined by the second Stiefel-Whitney class on $M$, let $\tilde\co=\ev_0^*\co$ the pull-back to loops on $M$ of the orientation local system $\co$ on $M$, and recall that we denote $H_*\Lambda=H_*(\Lambda;\tilde\co)$. In the sequel we compute Hamiltonian Floer homology groups with coefficients in $\sigma$, and Lagrangian Floer homology groups with coefficients in $\Sigma|_\Omega$. See also the discussion in the Introduction.

\begin{theorem}[]\label{thm:Viterbo}
(a)~\cite{Viterbo-cotangent,AS,AS-corrigendum,AS2,SW,Ritter,Cieliebak-Latschev,Kragh,Abouzaid-cotangent} There are isomorphisms of $R$-modules
\begin{equation*}
   SH_*(D^*M)\simeq H_*\Lambda,\qquad SH^*(D^*M)\simeq H^*\Lambda. 
\end{equation*}
The first isomorphism intertwines the pair-of-pants product on symplectic homology with the loop product on loop space homology.

(b)~\cite{CHO-MorseFloerGH} The second isomorphism induces  
$
SH^*_{>0}(D^*M)\simeq H^*(\Lambda,\Lambda_0),
$
an isomorphism which intertwines the secondary pair-of-pants product on positive symplectic cohomology with the loop product on loop space cohomology rel $\Lambda_0$.  
\hfill\qed
\end{theorem}

The connection between Floer theory and the topology of based loops is provided by the following theorem, which is the Lagrangian analogue of Theorem~\ref{thm:Viterbo}.

\begin{theorem}[]\label{thm:Viterbo-Lag}
(a)~\cite{Abbondandolo-Portaluri-Schwarz,AS2,AS-corrigendum,Abouzaid2012a} There are isomorphisms of $R$-modules
\begin{equation*}
   SH_{*+n}(D^*_qM)\simeq H_*\Omega,\qquad SH^{*+n}(D^*_qM)\simeq H^*\Omega. 
\end{equation*}
The first isomorphism intertwines the pair-of-pants product on wrapped Floer homology with the Pontrjagin product on based loop homology.

(b)~\cite{CHO-MorseFloerGH} The group $SH^{*+n}(D^*_qM)$ carries a secondary product which extends the one on $SH^{*+n}_{>0}(D^*_qM)$ defined in~\cite{CO}, the group $H^*\Omega$ carries a secondary product which extends the one on $H^*(\Omega,\{q\})$ defined in~\cite{Goresky-Hingston}, and the second isomorphism intertwines these two products.
\hfill\qed
\end{theorem}

For both these results we refer to the Appendix~\ref{ss:local-systems} for a discussion of the relevant local coefficient systems. 

\begin{proof}[Proof of Theorem~\ref{thm:TQFT-loop-intro}]
The existence of a graded open-closed TQFT structure on the pair $(\wh \H_*\Lambda,\wh H_*\Omega)$, and the fact that Poincar\'e duality intertwines it with the corresponding structure on cohomology, follows directly from Theorem~\ref{thm:open-closed-RFH} applied to $L=D^*_qM\subset V=D^*M$. 

The statement about the closed-open and open-closed maps is a consequence of Proposition~\ref{prop:topological_shriek}, which identifies the zipper and cozipper as extensions of the topological shriek and pushforward maps. That Proposition in turn uses Theorems~\ref{thm:Viterbo} and~\ref{thm:Viterbo-Lag}. 
\end{proof}

\begin{proof}[Proof of Theorem~\ref{thm:filtered-PD}]
The existence of the double filtrations on $\wh{H}_*\Lambda$ and $\wh{H}^*\Lambda$ and their compatibility with the products are immediate consequences of the existence of the action filtrations $SH_*^{(a,b)}(\p V)$, $SH^*_{(a,b)}(\p V)$ on symplectic (co)homology and their compatibility with the pair-of-pants products, applied to the unit disc cotangent bundle $V=D^*M$. Compatibility with Poincar\'e duality follows directly from the proof of Theorem~\ref{thm:coh-product}.
Compatibility with the splitting provided by Theorem~\ref{thm:splitting-intro}
follows from compatibility of the isomorphisms $SH_*(D^*M)\cong H_*\Lambda$ and $SH^*(D^*M)\cong H^*\Lambda$ with the action respectively length filtrations
proved in~\cite{CHO-MorseFloerGH}.
\end{proof}

\begin{proof}[Proof of Theorem~\ref{thm:splitting-intro}]
The Theorem is a consequence of combined results from~\cite{CO-cones,CHO-MorseFloerGH,CHO-reducedSH}. The existence of splittings is a particular case of results from~\cite[\S7]{CHO-reducedSH}, since cotangent bundles are Weinstein domains with essential skeleton. That the product on (based) Rabinowitz loop homology restricts to the the homological, resp. cohomological (based) loop product is proved in~\cite{CHO-MorseFloerGH}. Item (c) is a consequence of the cone description of (based) Rabinowitz loop homology, see~\cite[\S5]{CO-cones}.
\end{proof}

%%%%%%%%%%%%%%%%%%%%%%%%%%%%%%%%%%%%%%%%%%%%%%%%%
\section{Computations for odd-dimensional spheres}\label{sec:spheres}
%%%%%%%%%%%%%%%%%%%%%%%%%%%%%%%%%%%%%%%%%%%%%%%%%

In this section we illustrate the algebraic structures of this paper for loop spaces of odd-dimensional spheres $S^n$. For further details see~\cite{CHO-algebra,CHO-MorseFloerGH}.  
We use coefficients in a unital commutative ring $R$; this is possible even when it comes to coproducts because here the K\"unneth formula holds with arbitrary coefficients.

The computations that follow illustrate the fact that the structure of a graded Frobenius algebra is extremely rigid: for odd-dimensional spheres, the product determines uniquely the counital coproduct up to sign (and the sign is further determined by an additional extension property).
We need to distinguish the cases $n\geq 3$ and $n=1$.

%%%
\subsection{The case of odd $n\geq 3$} 
%%%

As a ring with respect to the loop product $\mu$, the degree shifted loop homology of $S^n$ (which equals reduced loop homology because $\chi(S^n)=0$ for $n$ odd) is given by
$$
   \H_*\Lambda S^n = \Lambda[a,u],\qquad |u|=n-1,\ |a|=-n.
$$
The loop coproduct was computed in~\cite{CHO-MorseFloerGH} to be
\begin{align*}
   \lambda(au^k) &= \sum_{i+j=k-1\atop i,j\geq 0} au^i\otimes au^j,\cr
   \lambda(u^k) &= \sum_{i+j=k-1\atop i,j\geq 0} (au^i\otimes u^j-u^i\otimes au^j).
\end{align*}
The coproduct $\lambda$ has odd degree $1-2n$, it is
coassociative and cocommutative, but it has no counit. 
Moreover, a direct computation shows that Sullivan's relation~\eqref{eq:Sullivan} holds in its original form. 

As a ring with respect to $\boldmu$, the degree shifted Rabinowitz loop homology of $S^n$ has been computed in Example~\ref{ex:spheres} to be
$$
   \wh{\H}_*\Lambda S^n = \Lambda[a,u,u^{-1}],\qquad |u|=n-1,\ |a|=-n.
$$
Here, as in Example~\ref{ex:spheres}, $\Lambda$ is the free graded commutative algebra and $u,a$ are the generators of the homology. 
Thus $\boldmu$ has degree $0$, it is associative, commutative, and unital with unit $\boldeta=\bold1$.  
(We write $\bold1$ in boldface to distinguish it from the identity map $1$.)

We claim that the coproduct $\boldlambda$ is given by 
\begin{align*}
   \boldlambda(au^k) &= \sum_{i+j=k-1} au^i\otimes au^j,\cr
   \boldlambda(u^k) &= \sum_{i+j=k-1} (au^i\otimes u^j-u^i\otimes au^j).
\end{align*}
Note that this coproduct has odd degree $1-2n$, it is coassociative, cocommuta\-tive, 
and counital with counit 
$$
  \boldeps(u^k)=0,\qquad \boldeps(au^k)=\begin{cases}
  1, & k=-1,\cr 0, & \text{else}. \end{cases}
$$
Moreover, it extends the coproduct $\lambda$ on $\H_*\Lambda$, meaning that upon restriction to $\H_*\Lambda$ and then truncation to $\H_*\Lambda\otimes \H_*\Lambda$ it is equal to $\lambda$. 

Assuming the claim, let us compute the pairing $\boldp$ and the copairing $\boldc$. The pairing $\boldp=-\boldeps\boldmu$ is given by
\begin{align*}
  \boldp(u^i\otimes u^j) = 0, \qquad \boldp (au^i\otimes&  au^j) = 0,\cr
  \boldp(au^i\otimes u^j) = \boldp(u^i\otimes au^j) &= \begin{cases}
  -1, & i+j=-1,\cr 0, & \text{else}. \end{cases}
\end{align*}
The copairing is determined by the relation $(1\otimes\boldp)(\boldc\otimes 1)=1$, giving 
$$
   \boldc = \sum_{i+j=-1} (au^i\otimes u^j-u^i\otimes au^j).
$$
Note that $\boldc= \boldlambda(\bold1)$ as expected. 

To prove the claim we use the fact that the coproduct on $\wh \H_*\Lambda$ is counital and extends the coproduct $\lambda$ on $\H_*\Lambda$ in the above sense. Counitality implies that the counit is nonzero, i.e. it acts nontrivially on $\wh{\H}_{1-2n}\Lambda S^n$. This last group is $1$-dimensional generated by $au^{-1}$, and since the coproduct is defined at chain level over $\Z$, the counit must be equal to $\pm 1$ on $a u^{-1}$ and zero elsewhere. In other words, the counit must be equal to $\pm\boldeps$, with $\boldeps$ defined above. The counit $\pm\boldeps$ determines a pairing equal to $\pm\boldp$, a copairing equal to $\pm\boldc$, and further, via the unital coFrobenius relation from Proposition~\ref{prop:coFrob-criterion}, a coproduct equal to $\pm\boldlambda$, with $\boldlambda$ as above. The restriction and truncation to $\H_*\Lambda$ of the coproduct equals $\pm\lambda$, so the correct coproduct is $\boldlambda$. This proves the claim. 

One can verify by direct computation that $(\wh{\H}_*\Lambda S^n,\boldmu,\boldlambda,\boldeta,\boldeps)$ is a graded Frobenius algebra. 

Denoting by $\{(u^k)^\vee, (au^k)^\vee\, : \, k\in \Z\}$ the basis dual to the basis $\{u^k,au^k \, : \, k\in\Z\}$, the Poincar\'e duality isomorphism is given by
$$
  -\vec\boldp(u^i) = (au^{-i-1})^\vee,\qquad
  -\vec\boldp(au^i) = (u^{-i-1})^\vee.
$$
By Example~\ref{ex:spheres}, 
the splitting from Theorem~\ref{thm:splitting-intro} can be described explicitly as follows (see also~\cite[\S7]{CHO-reducedSH})
\begin{equation*}
\xymatrix
@C=30pt
{  & & & u^{-2}\Lambda[a,u^{-1}] \ar[dl]_-i \ar[d]^-{\bar j}& \\
  0 \ar[r] & \Lambda[a,u] \ar[r]_\iota
  & \ar@/_/[l]_{\bar p} \Lambda[a,u,u^{-1}] \ar[r]_\pi
  & \ar@/_/[l]_{\bar i} u^{-1}\Lambda[a,u^{-1}] \ar[r] & 0 \\
}
\end{equation*}
where all maps are the obvious inclusions or projections and $\iota,i,\bar i,\bar j$ are algebra maps (nonunital except for $\iota$).
Note that the algebra structure on $\ol{\H}^{1-2n-*}\Lambda S^n$ in the lower right corner and the map $\bar i$ are uniquely determined by the rest of the diagram, so the splitting is canonical in this example. This confirms general results from~\cite[\S4]{CHO-MorseFloerGH}.

The preceding discussion carries over to the based loop space $\Om S^n$ by simply dropping the variable $a$, and one easily works out the maps $\boldi_!$ and $\boldi_*$ of the graded open-closed TQFT structure from Theorem~\ref{thm:TQFT-loop-intro} in this example, see~\cite{CHO-algebra}. 

The previous technique for computing $\boldlambda$ allows to recover the coproduct on reduced loop homology up to a global sign, without any preliminary knowledge of its value on any particular class. This is to be contrasted to~\cite{CHO-MorseFloerGH}, where the computation uses the value of the coproduct on a set of generators of the reduced loop homology algebra.

%%%
\subsection{The case $n=1$}  \label{sec:n=1}
%%%

As a ring with respect to the loop product $\mu$, the ordinary loop homology of $S^1$ (which equals the reduced homology) is given by
$$
   \H_*\Lambda S^1 = \Lambda[a,u,u^{-1}],\qquad |u|=0,\ |a|=-1.
$$
According to the description in~\cite{CHO-MorseFloerGH}, the loop coproduct in this case depends on the choice of a nowhere vanishing vector field on $S^1$, and we shall discuss this later in the section in relation to splittings. Our first goal is to compute the Frobenius algebra structure on $\wh \H_*\Lambda S^1$. To that effect, we record the fact that the canonically defined coproduct on $\H_*(\Lambda,\Lambda_0)$ is given by~\cite{CHO-MorseFloerGH}   
\begin{align*}
  \lambda(au^k) &= \begin{cases}
     \sum_{i=1}^{k-1}au^i\otimes au^{k-i} & k\geq 0, \\
     -\sum_{i=k+1}^{-1}au^i\otimes au^{k-i} & k<0,
  \end{cases} \cr 
  \lambda(u^k) &= \begin{cases}
     \sum_{i=1}^{k-1}(au^i\otimes u^{k-i} - u^i\otimes au^{k-i}) & k\geq 0, \\
     -\sum_{i=k+1}^{-1}(au^i\otimes u^{k-i} - u^i\otimes au^{k-i}) & k<0.
  \end{cases}  
\end{align*}
Rabinowitz loop homology $\wh\H_*\Lambda S^1$ is a direct sum as a graded Frobenius algebra (i.e., the product and coproduct both split)
$$
   \wh{\H}_*\Lambda S^1 = \Lambda[a_+,u_+,u_+^{-1}]]\oplus \Lambda[a_-,u_-^{-1},u_-]],\quad |u_\pm|=0,\ |a_\pm|=-1.
$$
That the product and the coproduct both split is a consequence of the confinement lemmas in~\cite{CO}. Here the two summands correspond to the two connected components of the unit sphere cotangent bundle $S^*S^1=\{+1,-1\}\times S^1$, and our convention is chosen such that a term $u_\pm^k$ has winding number $k\in\Z$ around the circle. 
Since both summands are identical under the transformation $u_+\to u_-^{-1}$,
let us describe one of them, dropping the subscript $\pm$. The graded $R$-module
$$
   \Lambda[a,u,u^{-1}]],\qquad |u|=0,\ |a|=-1
$$
consists of Laurent series
$$
   \sum_{i=-\infty}^N(\alpha_iu^i+\beta_iau^i),\qquad \alpha_i,\beta_i\in R,\ N\in\N
$$
with product given by the usual multiplication. Thus $\boldmu$ has degree $0$, it is associative, commutative, and unital with unit $\boldeta=\bold1$.  

We claim that the coproduct is 
\begin{align*}
   \boldlambda(au^k) &= \sum_{i+j=k} au^i\otimes au^j,\cr
   \boldlambda(u^k) &= \sum_{i+j=k} (au^i\otimes u^j-u^i\otimes au^j).
\end{align*}
Note the similarity to the coproduct on $\wh{\H}_*\Lambda S^n$ for $n\geq 3$ odd, with the difference that the condition $i+j=k-1$ in the sums now becomes $i+j=k$.\footnote{This difference can be understood geometrically as follows. In the case of the circle, the exponent of $u^k$ has a topological interpretation as a winding number around $S^1$, and the fact that the sum runs over $i+j=k$ expresses the fact that the winding number is preserved by the coproduct, which ``cuts the loops'' at self-intersections. In the case of spheres of odd dimension $n\ge 3$ endowed with the round metric, the exponent of $u^k$ is rather to be interpreted as a critical value, and the fact that the sum runs over $i+j=k-1$ expresses the fact that the critical value drops under the coproduct, since self-intersections typically happen along loops that are not great circles.} 
The coproduct $\boldlambda$ has odd degree $-1$, it is coassociative, cocommutative, and counital with counit 
$$
  \boldeps(u^k)=0,\qquad \boldeps(au^k)=\begin{cases}
  1, & k=0,\cr 0, & \text{else}. \end{cases}
$$
Also, it extends the coproduct $\lambda$ on $\H_*(\Lambda,\Lambda_0)$ in the previous sense. 

Towards proving the claim, we compute the copairing 
$$
   \boldc = \boldlambda(\bold1) = \sum_{i+j=0} (au^i\otimes u^j-u^i\otimes au^j),
$$
and the pairing $\boldp=-\boldeps\boldmu$ given by
\begin{align*}
  \boldp(u^i\otimes u^j) = 0,\qquad \boldp(au^i\otimes & au^j) = 0,\cr
  \boldp(au^i\otimes u^j) = \boldp(u^i\otimes au^j) &= \begin{cases}
  -1, & i+j=0,\cr 0, & \text{else}. \end{cases}
\end{align*}
The claim is a consequence of the following facts: (i) the coproduct on $\wh \H_*\Lambda$ is counital and extends the coproduct $\lambda$ on $\H_*(\Lambda,\Lambda_0)$, and (ii) the copairing is nondegenerate and the total winding number at its outputs is zero (this last fact is geometric and is specific to the circle). To prove the claim, denote $counit$, $coproduct$, $copairing$ the operations to be determined. We first show $counit=\pm\boldeps$, with $\boldeps$ defined as above: we apply the relation $(counit\otimes 1)coproduct=1$ to the unit, we note that (a) the unit is represented by loops with winding number zero, and (b) the copairing is given by $coproduct(unit)$ and is nondegenerate, and we deduce that $counit$ vanishes on all homology classes represented by loops with winding number $\neq 0$. Since $\wh\H_{-1}\Lambda S^1$ is one-dimensional in winding number zero with generator $a$, and $coproduct$ is defined over $\Z$, we find that the counit evaluates to $\pm 1$ on $a$ and zero elsewhere, i.e. $counit=\pm\boldeps$. 
 
From here we conclude as in the case of spheres of odd dimension $\ge 3$: the counit $\pm\boldeps$ determines a coproduct equal to $\pm\boldlambda$ with $\boldlambda$ as above, whose restriction and truncation to $\H_*(\Lambda,\Lambda_0)$ equals $\pm\lambda$. Therefore the correct coproduct is $\boldlambda$, which proves the claim.

One can verify by direct computation that $(\wh{\H}_*\Lambda S^1,\boldmu,\boldlambda,\boldeta,\boldeps)$ is a graded Frobenius algebra. 

Denoting by $\{(u^k)^\vee, (au^k)^\vee \, : \, k\in\Z\}$ the basis dual to the basis $\{u^k,au^k\, : \, k\in\Z\}$, 
the Poincar\'e duality isomorphism is given by
$$
  -\vec\boldp(u^i) = (au^{-i})^\vee,\qquad
  -\vec\boldp(au^i) = (u^{-i})^\vee.
$$

We now discuss the relationship with splittings and coproducts on reduced loop homology. 
According to~\cite{CHO-MorseFloerGH}, the loop coproduct on $\H_*\Lambda S^1$ depends on the choice of a nowhere vanishing vector field on $S^1$. 
Up to homotopy there are two such choices $v_\pm(x) = \pm 1$, giving rise to two coproducts 
\begin{align*}
  \lambda_+(au^k) &= \begin{cases}
     \sum_{i=0}^{k}au^i\otimes au^{k-i} & k\geq 0, \\
     -\sum_{i=k+1}^{-1}au^i\otimes au^{k-i} & k<0,
  \end{cases} \cr 
  \lambda_+(u^k) &= \begin{cases}
     \sum_{i=0}^{k}(au^i\otimes u^{k-i} - u^i\otimes au^{k-i}) & k\geq 0, \\
     -\sum_{i=k+1}^{-1}(au^i\otimes u^{k-i} - u^i\otimes au^{k-i}) & k<0,
  \end{cases}  
\end{align*}
\begin{align*}
  \lambda_-(au^k) &= \begin{cases}
     \sum_{i=1}^{k-1}au^i\otimes au^{k-i} & k > 0, \\
     -\sum_{i=k}^{0}au^i\otimes au^{k-i} & k\leq 0,
  \end{cases} \cr 
  \lambda_-(u^k) &= \begin{cases}
     \sum_{i=1}^{k-1}(au^i\otimes u^{k-i} - u^i\otimes au^{k-i}) & k> 0, \\
     -\sum_{i=k}^{0}(au^i\otimes u^{k-i} - u^i\otimes au^{k-i}) & k\leq 0.
  \end{cases}  
\end{align*}
The coproduct $\boldlambda_\pm$ has odd degree $-1$, it is coassociative and cocommutative, but it has no counit. 
One readily verifies that in this case Sullivan's relation holds in the generalized form~\eqref{eq:gen-Sullivan} with   
$$
   \lambda_\pm\eta = \lambda_\pm(\bold1) = \pm(a\otimes \bold1-\bold1\otimes a).
$$
The splitting from Theorem~\ref{thm:splitting-intro} reads
\begin{equation*}
\xymatrix
@C=30pt
{  & & {\begin{matrix} u_+^{-1}\Lambda[a_+,u_+^{-1}]] \\ \oplus \\ u_-\Lambda[a_-,u_-]] \end{matrix}} \ar[dl]_-i \ar[d]^-{\bar j}& \\
  \Lambda[a,u,u^{-1}] \ar[r]_\iota
  & \ar@/_/[l]_{\bar p}
  {\begin{matrix} \Lambda[a_+,u_+,u_+^{-1}]] \\ \oplus \\ \Lambda[a_-,u_-^{-1},u_-]] \end{matrix}}
  \ar[r]_\pi
  & \ar@/_/[l]_{\bar i} \H^{-1-*}\Lambda S^1
}
\end{equation*}
where $i$ is the obvious inclusion (which is a nonunital algebra map), $\pi$ is the obvious projection, and $\iota$ is the unital algebra map sending $u^k$ to $u_+^k+u_-^k$ and $au^k$ to $a_+u_+^k+a_-u_-^k$. 
This follows from the fact that the powers of $u$ and $u_\pm$ correspond to the winding number around the circle and are therefore preserved under all maps, and $i$ is the inclusion of the negative action part. The image of $\bar i$ must be a subalgebra complementary to the image of $\iota$, so it must be generated by the image of $i$ and two elements $\alpha_+a_++\alpha_-a_-$, $\beta_+\bold1_++\beta_-\bold1_-$ with $\alpha_+\neq\alpha_-$, $\beta_+\neq\beta_-$. A short computation shows that this is only possible if one of $\alpha_\pm$ is zero and one of $\beta_\pm$ is zero, so $\im\bar i$ must be the direct sum of $\im i$ and one of the four subspaces
$$
   {\rm span}\{\bold1_+,a_+\},\quad
   {\rm span}\{\bold1_+,a_-\},\quad
   {\rm span}\{\bold1_-,a_+\},\quad
   {\rm span}\{\bold1_-,a_-\}.
$$
So, algebraically, there are four possible splitting maps $\bar i$, each of which induces a product on $\H^{-1-*}\Lambda S^1$ extending the cohomology product on $\H^{-1-*}(\Lambda S^1,\Lambda_0S^1)$. Geometrically, only the first and last of the four possibilities are realized by choices of nowhere vanishing vector fields on $S^1$, corresponding to the loop coproducts $\lambda_\pm$ above. 

Again, the discussion carries over to the based loop space $\Om S^1$ by dropping the variable $a$ and one easily works out the graded open-closed TQFT structure, see~\cite{CHO-algebra}.

%%%%%%%%%%%%%%%%%%%%%%%%%%%%%%%%%%%%%%%%%%%%%%%%%%%%%%%%%%%%%%%%%
\appendix
%%%%%%%%%%%%%%%%%%%%%%%%%%%%%%%%%%%%%%%%%%%%%%%%%%%%%%%%%%%%%%%%%

%%%%%%%%%%%%%%%%%%%%%%%%%%%%%%%%%%%%%%%%%%%%%%%%%%%%%%%%%%%%%%%%%
\section{Grading and local systems} \label{sec:grading}
%%%%%%%%%%%%%%%%%%%%%%%%%%%%%%%%%%%%%%%%%%%%%%%%%%%%%%%%%%%%%%%%%

%%%
\subsection{Grading conventions} \label{sec:grading1}
%%%

  Rabinowitz loop homology $\wh{H}_*\Lambda$ is graded by the
  Conley--Zehnder indices of $1$-periodic orbits of Hamiltonians
  involved in the definition of symplectic homology. These indices are defined using the
  canonical trivializations of the tangent bundle of $T^*M$ along
  loops defined in~\cite{Abouzaid-cotangent}, with degrees shifted
  down by $1$ along loops which are orientation reversing (i.e., along
  which the pullback bundle $TM$ is nonorientable). With this grading
  the maps $\eps$ and $\iota$ in~\eqref{eq:les-loop-intro} are degree preserving.
  Similarly, based Rabinowitz loop homology $\wh{H}_*\Omega$ is graded by the Conley-Zehnder indices of Hamiltonian chords.  
We refer to~\cite[\S4.1]{Ekholm-Oancea} and references therein for more details.

%%%
\subsection{Local systems}\label{ss:local-systems}
%%%

All the results on loop space homology in this paper hold for any closed connected (not necessarily orientable) manifold $M$. For this, we use loop space homology with coefficients twisted by suitable local systems described in~\cite{Abouzaid-cotangent}, see also~\cite[Appendix A]{CHO-MorseFloerGH}. Denote $\ev:\Lambda\to M$ the evaluation map at the starting point of a loop and $\co$ the orientation local system on $M$. The loop space $\Lambda$ carries two canonical local systems: the {\em orientation local system} $\wt\co=\ev^*\co$ on components of orientation preserving 
loops (which is trivial iff $M$ is orientable), and the {\em spin local system} $\sigma$ (which is trivial iff the second Stiefel--Whitney class of $M$ vanishes). Then all results on closed loops in this paper hold in the following two situations (with corresponding twists in cohomology):
\begin{enum}
\item $H_*\Lambda$ is twisted by $\sigma\otimes\wt\co$ and $SH_*(D^*M)$ is untwisted;
\item $H_*\Lambda$ is twisted by $\wt\co$ and $SH_*(D^*M)$ is twisted by $\sigma^{-1}$.
\end{enum}
Similarly, the results on based loops hold in the following situations:
\begin{enum}
\item wrapped Floer homology is untwisted and based loop homology is twisted by $\sigma|_\Omega$, the restriction of the spin local system to $\Omega$;
\item wrapped Floer homology is twisted by $\sigma|_\Omega^{-1}$ and based loop homology is untwisted.
\end{enum}

%%%%%%%%%%%%%%%%%%%%%%%%%%%%%%%%%%%%%%%%%%%%%%%%%%%%%%%%%%%%%%%%%
\section{Proof of Proposition~\ref{prop:index_reverse}} \label{sec:index_reverse}
%%%%%%%%%%%%%%%%%%%%%%%%%%%%%%%%%%%%%%%%%%%%%%%%%%%%%%%%%%%%%%%%%

We keep the notation from \S\ref{sec:index}.
The proof uses the \emph{Long index} for paths $P:[0,1]\to \mathrm{Sp}(2N)$ which do not necessarily start at the identity, cf.~\cite[Definition~6.2.9]{Long-book}. We denote it 
$$
i_L(P)\in\Z. 
$$
The Long index is defined from the Bott-Long index by attaching to $P$ an arbitrary path $\xi$ starting at $\mathrm{Id}$ and ending at $P(0)$ and setting $i_L(P)=i(\xi\# P)-i(\xi)$. Among the properties of the Long index we will use additivity under concatenations, vanishing on paths for which the nullity is constant along the path, and homotopy invariance with fixed endpoints. Taking into account that $i(P\equiv \mathrm{Id})=-N$, we see that the Long index and the Bott-Long index for paths $P$ which start at the identity are related by the equation 
$$
i_L(P)=i(P)+N.
$$ 
In particular, equation~\eqref{eq:index_reverse} for paths starting at the identity is equivalent to
\begin{equation}\label{eq:index_reverse_Long}
i_L(\ol{P})=-i_L(P)-\nu(P)+2N.
\end{equation}
To prove~\eqref{eq:index_reverse_Long} we use a method from~\cite[Lemma~2.3]{Cieliebak-Frauenfelder-Oancea}. Denote $P_-(t)=P(1-t)$, so that $\ol{P}=P_-P(1)^{-1}$. We start from the relation $P_-P_-^{-1}\equiv \mathrm{Id}$. Given two paths $Q,R$ starting at the identity, their product $QR:t\mapsto Q(t)R(t)$ is homotopic to the concatenation $Q\#Q(1)R$. As a particular case, the path $P_-P_-^{-1}=\ol{P}P(1)P_-^{-1}$ is homotopic to the concatenation of $P_-P(1)^{-1}=\ol{P}$ and $\ol{P}(1)P(1)P_-^{-1}=P_-^{-1}$. From $i_L(\mathrm{Id})=0$ we therefore obtain 
$$
   i_L(\ol{P})=-i_L(P_-^{-1}). 
$$
Since the concatenation $P\#P_-$ is homotopic with fixed endpoints to $\mathrm{Id}$, we get $i_L(P_-)=-i_L(P)$. On the other hand, we prove below that  
\begin{equation} \label{eq:index_Long_inverse}
i_L(R^{-1})=-i_L(R)-\nu(R(1))+\nu(R(0))  
\end{equation}
for any path $R:[0,1]\to\mathrm{Sp}(2N)$. Then~\eqref{eq:index_reverse_Long} follows: 
\begin{eqnarray*}
i_L(\ol{P}) & = & -i_L(P_-^{-1})\\
& = & -(-i_L(P_-)-\nu(P_-(1))+\nu(P_-(0))) \\
& = & -(i_L(P)-\nu(P(0))+\nu(P(1)))\\
& = & -i_L(P)-\nu(P)+2N. 
\end{eqnarray*}

Equation~\eqref{eq:index_Long_inverse} follows directly from the definition of the Long index $i_L$ and the equation  
\begin{equation} \label{eq:iR-1}
i(R^{-1})=-i(R)-\nu(R)
\end{equation}
for paths $R:[0,1]\to \mathrm{Sp}(2N)$ with $R(0)=\mathrm{Id}$. 

We are thus left to prove~\eqref{eq:iR-1}. 
Denote $\cP^*(2N)=\{R:[0,1]\to \mathrm{Sp}(2N)\, : \, R(0)=\mathrm{Id}, \quad \det (R(1)-\mathrm{Id})\neq 0\}$. We first observe that~\eqref{eq:iR-1} holds for paths $R\in\cP^*(2N)$: the Bott-Long index $i(R)$ is equal to the Conley-Zehnder index on $\cP^*(2N)$~\cite[Definition~5.2.7]{Long-book}, and the definition of the Conley-Zehnder index in terms of the canonical extension $\rho:\mathrm{Sp}(2N)\to U(1)$ of the determinant function $\det:U(N)\to U(1)$ from~\cite[Theorem~3.1]{SZ92} implies the equality $i(R^{-1})=-i(R)$ for all paths $R\in\cP^*(2N)$ since $\rho(A^{-1})=\ol{\rho(A)}$ for every $A\in\mathrm{Sp}(2N)$. Since $\nu(R)=0$ for any path $R\in\cP^*(2N)$, this proves~\eqref{eq:iR-1}.

To prove~\eqref{eq:iR-1} for arbitrary paths we use the following minimizing characterization of the Bott-Long index, which combines Theorem~5.4.1, Definition~5.4.2, Corollary~6.1.9 and Definition~6.1.10 from~\cite{Long-book}: 
\begin{align*}
i(R) & =\sup_{U\in\cN(R)}\inf\{i(B)\, : \, B\in U\cap \cP^*(2N)\} \\
& = \inf_{U\in\cN(R)}\sup\{i(B)\, : \, B\in U\cap \cP^*(2N)\} - \nu(R).
\end{align*}
Here $\cN(R)$ is the set of neighbourhoods of $R$ in the space of paths $[0,1]\to \mathrm{Sp}(2N)$ starting at $\mathrm{Id}$. We obtain
\begin{align*}
i(R^{-1}) & = \sup_{U\in\cN(R^{-1})}\inf\{i(B)\, : \, B\in U\cap \cP^*(2N)\} \\
& = \sup_{U\in\cN(R)}\inf\{i(B^{-1})\, : \, B\in U\cap \cP^*(2N)\} \\
& = \sup_{U\in\cN(R)}\inf\{-i(B)\, : \, B\in U\cap \cP^*(2N)\} \\
& = - \inf_{U\in\cN(R)}\sup\{i(B)\, : \, B\in U\cap \cP^*(2N)\} \\
& = -i(R)-\nu(R).
\end{align*}
The third equality makes use of~\eqref{eq:iR-1} for paths in $\cP^*(2N)$.

%%%%%%%%%%%%%%%%%%%%%%%%%
\section{Proof of Theorem~\ref{thm:level-potency-Hingston}}\label{app:level-potency}
%%%%%%%%%%%%%%%%%%%%%%%%%

The proof of Theorem~\ref{thm:level-potency-Hingston} uses the following lemma. Given a closed geodesic $c$ of length $L$, index $\lambda$, and nullity $\nu$, we denote 
$$
H_*(c) = \lim_{U\supset c} H_*(\Lambda \cap U,\Lambda ^{<L}\cap U),\
H^*(c) = \lim_{U\supset c} H^{\ast }(\Lambda \cap U,\Lambda ^{<L}\cap U),
$$
where $U$ is an open set in $\Lambda$ containing the point $c\in\Lambda$.

\begin{lemma}\label{lem:level-potency}
The condition $H^\lambda(Sc)\neq 0$ in Theorem~\ref{thm:level-potency-Hingston}(a3) is equivalent to $H_\lambda(c)\neq 0$. 
The condition $H_{\lambda+\nu+1}(Sc)\neq 0$ in Theorem~\ref{thm:level-potency-Hingston}(b3) is equivalent to $H_{\lambda+\nu}(c)\neq 0$. 
\end{lemma}

\begin{proof}
The groups $H_*(c)$ and $H^*(c)$ have the following properties: (i) They are supported in degrees $\{\lambda,\dots,\lambda+\nu\}$~\cite{Gromoll-Meyer-Topology}. (ii) They are isomorphic with rational coefficients to the $S^1$-equivariant (co)homology groups
$H_*(c)\simeq H_*^{S^1}(Sc)$, $H^*(c)\simeq H^*_{S^1}(Sc)$. 
This holds because $c$ has finite isotropy and finite groups are $\Q$-acyclic~\cite[Prop.~6.1.10]{Weibel}.

To prove the first equivalence, consider the fragment of the Gysin sequence in cohomology $H^{\lambda-2}_{S^1}(Sc)\to H^\lambda_{S^1}(Sc)\to H^\lambda(Sc)\to H^{\lambda-1}_{S^1}(Sc)$. 
By (i) and (ii) the first and fourth term vanish, so that $H^\lambda(c)\simeq H^\lambda_{S^1}(Sc)\simeq H^\lambda(Sc)$. On the other hand, $H^\lambda(c)\neq 0$ if and only if $H_\lambda(c)\neq 0$. 

To prove the second equivalence, let $k=\lambda+\nu$ and consider the fragment $H_{k+2}^{S^1}(Sc)\to H_k^{S^1}(Sc)\to H_{k+1}(Sc)\to H_{k+1}^{S^1}(Sc)$
of the Gysin sequence in homology.  
By (i) and (ii) the first and fourth term vanish, so that $H_k(c)\simeq H_k^{S^1}(Sc)\simeq H_{k+1}(Sc)$. 
\end{proof}

\begin{proof}[Proof of Theorem~\ref{thm:level-potency-Hingston}]
We assume without loss of generality that all closed geodesics are isolated (otherwise there are infinitely many of them), and prove 
$$
(\mathrm{a1})\Rightarrow (\mathrm{a2}) \Rightarrow (\mathrm{a3})
\qquad 
\mbox{and} 
\qquad    
(\mathrm{b1})\Rightarrow (\mathrm{b2}) \Rightarrow (\mathrm{b3}).
$$
That (a3) implies the existence of infinitely many closed geodesics is the main theorem in~\cite{Hingston97}, in view of the first part of Lemma~\ref{lem:level-potency}. That (b3) implies the existence of infinitely many closed geodesics is~\cite[Proposition~1]{Hingston93} in view of the second part of Lemma~\ref{lem:level-potency}. 

(a1) $\Rightarrow$ (a2) is~\cite[Lemma~11.2]{Goresky-Hingston}, while (b1) $\Rightarrow$ (b2) is~\cite[Lemma~7.2]{Goresky-Hingston}.

We prove (a2) $\Rightarrow$ (a3). Recall that $c$ is an isolated closed geodesic of index $\lambda$, $Sc$ denotes its saturation with respect to the $S^1$-action, and $x\in H^*(Sc)$ is a class such that $x^m\in H^*(Sc^m)$ is nonzero for all $m\ge 1$. Here $x^m=x \oast \dots \oast x$ ($m$ times) is defined in terms of the loop product in local cohomology, which has degree $n-1$. Let $\deg(x)=k$, so that $\deg(x^m)=mk + (m-1)(n-1)$. 
We first observe that $\lambda\le k$ because $H^*(Sc)$ is supported in degrees $\{\lambda,\dots,\lambda+\nu+1\}$. Next we observe that $\ol{\ind}(c)=\lim_m \ind(c^m)/m$ (the \emph{mean index} of $c$) equals 
$$
\ol{\ind}(c)=k + n-1. 
$$
This follows from $\lim_m \ind(c^m)/m=\lim_m \deg(x^m)/m$, which equals $k+n-1$. To prove the equality between the limits we note that $|\deg(x^m)-\ind(c^m)|\le 2n-1$, by the support property of $H^*(Sc^m)$. 

The Bott index iteration formula~\cite[Theorem~A]{Bott56} gives that $\ind(c^m)=\sum_{z^m=1}\Lambda(z)$, where $\Lambda:S^1\to \N$ is the \emph{Bott index function}. The latter is lower semi-continuous, it can have discontinuities only at eigenvalues of the Poincar\'e return map lying on the circle~\cite[Theorem~C]{Bott56}, and the total jump at an eigenvalue, i.e. the sum of the left and right jumps, is bounded from above by the multiplicity of the eigenvalue as a root of the characteristic polynomial (this is a consequence of the characterization of the jumps in terms of the Krein type of the eigenvalue~\cite[Corollary~I.5.15]{Ekeland-book}, \cite[Theorem~9.1.7]{Long-book}, see also \cite[\S6.3]{Goresky-Hingston}). These properties imply that: (i) 
$\Lambda\le \lambda + n-1$ (because $\Lambda(1)=\lambda$ and the total multiplicity of the eigenvalues is $2n-2$), hence also $\Lambda\le k+n-1$; (ii) $\ol{\ind}(c)=\frac 1{2\pi}\int_0^{2\pi}\Lambda(e^{i\theta})d\theta$ (this is~\cite[Corollary~1]{Bott56}). Since $\ol{\ind}(c)=k+n-1$, we infer that the Bott function is constant equal to $k+n-1$ except possibly at the eigenvalues of the Poincar\'e return map. This implies $\lambda=k$, the eigenvalue 1 has the maximal multiplicity $2n-2$, and the Bott function is equal to $\lambda$ at $1$ and is constant equal to $\lambda+n-1$ elsewhere on the circle.
This gives $\ind(c^m)=m\lambda + (m-1)(n-1)$. 

We prove (b2) $\Rightarrow$ (b3). Let $\deg(X)=k$, so that $\deg(X^m)=mk - (m-1)n$. We first observe that $\lambda+\nu+1\ge k$ because the homology $H_*(Sc)$ is supported in degrees $\{\lambda,\dots,\lambda+\nu+1\}$. Next we observe that the mean index of $c$ is 
$$
\ol{\ind}(c)=k-n.
$$ 
This follows from $\lim_m \ind(c^m)/m=\lim_m \deg(X^m)/m$, which holds because $\deg(X^m)$ stays at uniformly bounded distance from $\ind(c^m)$. We consider the Bott iteration formula $\ind(c^m)+\nu(c^m)=\sum_{z^m=1}(\Lambda(z)+N(z))$, with $\Lambda$ as above and $N:S^1\to\N$ the \emph{Bott nullity function}. The latter is zero away from the eigenvalues of the Poincar\'e return map, and is equal to the nullity at each eigenvalue. The properties of $\Lambda$ listed previously, together with the fact that the left and right jumps at an eigenvalue are bounded from above by the nullity of the eigenvalue~\cite[Theorem~C]{Bott56}, imply that $\Lambda+N$ is upper semi-continuous, it can have discontinuities only at eigenvalues of the Poincar\'e return map lying on the circle, and the total jump at an eigenvalue is bounded from above by the multiplicity of the eigenvalue. Arguing as in the proof of (a2) $\Rightarrow$ (a3), from $\Lambda(1)+N(1)=\lambda+\nu\ge k-1$ we infer that $\lim_m(\ind(c^m)+\nu(c^m))/m=k-n$ is possible only if $\lambda+\nu=k-1$, the eigenvalue $1$ has the maximal multiplicity $2n-2$, and the Bott function $\Lambda+N$ is equal to $\lambda+\nu$ at $1$ and is constant equal to $\lambda+\nu - (n-1)$ elsewhere on the circle.
This gives $\ind(c^m)+\nu(c^m)=m(\lambda+\nu) - (m-1)(n-1)$. 
\end{proof}

Observe that the proof of (b2) $\Rightarrow$ (b3) is analogous to that of (a2) $\Rightarrow$ (a3), using $-(\Lambda+N)$ instead of $\Lambda$ and noting that $-(\Lambda+N)$ has exactly the same properties as $\Lambda$.

\bibliographystyle{abbrv}
\bibliography{000_SHpair}

\end{document}